  \newcommand{\fig}[2]{
    \IfFileExists{#1.pdf_tex}{
      \def\svgwidth{#2}\input{#1.pdf_tex}
    }{
      \frame{Missing figure ``#1.pdf\_tex''}
      \message{LaTeX Warning: Missing figure ``#1.pdf\_tex'' on input line \the\inputlineno}
    }
  }
      \def\svgwidth{#2}\input{#1.eps_tex}
\newcommand{\dimension}{n}
\newcommand{\dist}{\operatorname{dist}}
\newcommand{\supp}{\operatorname{supp}}
\newcommand{\interior}{\operatorname{int}}
\newcommand{\trace}{\operatorname{tr}}
\newcommand{\esssup}{\operatorname*{ess\,sup}}
\newcommand{\essinf}{\operatorname*{ess\,inf}}
\newcommand{\argmax}{\operatorname*{arg\,max}}
\newcommand{\divo}{\operatorname{div}}
\newcommand{\dx}{\;dx}
\newcommand{\abs}[1]{\left|#1\right|}
\newcommand{\pth}[1]{\left(#1\right)}
\newcommand{\bra}[1]{\left[#1\right]}
\newcommand{\set}[1]{{\left\{#1\right\}}}
\newcommand{\at}[2]{{{\left.{#1}\right|}_{#2}}}
\newcommand{\norm}[1]{\left\|#1\right\|}
\newcommand{\cl}[1]{\overline{#1}}	% closure
\newcommand{\al}{\ensuremath{\alpha}}
\newcommand{\de}{\ensuremath{\delta}}
\newcommand{\e}{\ensuremath{\varepsilon}}
\newcommand{\vp}{\ensuremath{\varphi}}
\newcommand{\ta}{\ensuremath{\theta}}
\newcommand{\R}{\ensuremath{\mathbb{R}}}
\newcommand{\Rd}{\ensuremath{{\mathbb{R}^{\dimension}}}}
\newcommand{\Rn}{\Rd}
\newcommand{\Z}{\ensuremath{\mathbb{Z}}}
\newcommand{\N}{\ensuremath{\mathbb{N}}}
\newcommand{\halflimsup}{\operatorname*{\star-limsup}}
\newcommand{\halfliminf}{\operatorname*{\star-liminf}}
\newcommand{\Lip}{{\rm Lip}}
\definecolor{grey}{rgb}{0.6,0.6,0.6}
\numberwithin{equation}{section}
\newtheorem{theorem}{Theorem}[section]
\newtheorem{lemma}[theorem]{Lemma}
\newtheorem{proposition}[theorem]{Proposition}
\newtheorem{corollary}[theorem]{Corollary}
\newtheorem{definition}[theorem]{Definition}
\theoremstyle{definition}
\newtheorem{remark}[theorem]{Remark}
\newtheorem{example}[theorem]{Example}
\newcommand{\Wulff}{\operatorname{Wulff}}
\newcommand{\nbd}[1]{{\mathcal U}^{#1}}
\newcommand{\graph}{\operatorname{graph}}
\newcommand{\sign}{\operatorname{sign}}
\newcommand{\Span}{\operatorname{span}}
\newcommand{\domain}{\operatorname{\mathcal{D}}}
\newcommand{\SE}{E^{\rm sl}}
\newcommand{\SW}{W^{\rm sl}}
\newcommand{\Pair}{\operatorname{Pair}}
\newcommand{\aff}{\operatorname{aff}}
\newcommand{\ri}{\operatorname{ri}}
\newcommand{\epi}{\operatorname{epi}}
\newcommand{\loc}{{\rm loc}}
\newcommand{\CH}{{CH}}
\newcommand{\SlCH}{CH^{\rm sl}}
\newcommand{\facet}[1]{#1_-^c \cap #1_+^c}
\newcommand{\TT}{{\mathcal T}}
\title{A level set crystalline mean curvature flow of surfaces}
\author[Y. Giga]{Yoshikazu Giga}
\address[Y. Giga]{Graduate School of Mathematical Sciences, University of Tokyo, 3-8-1 Komaba Meguro-ku,
Tokyo 153-8914, Japan.}
\author[N. Po\v{z}\'{a}r]{Norbert Po\v{z}\'{a}r}
\address[N. Po\v{z}\'{a}r]{Faculty of Mathematics and Physics, Institute of Science and Engineering, Kanazawa University,
Kakuma town, Kanazawa, Ishikawa 920-1192, Japan.}
\email{npozar@se.kanazawa-u.ac.jp}
\date{\today}
\keywords{crystalline mean curvature flow, level set method, viscosity solutions,
comparison theorems}
\subjclass[2010]{35K67, 35D40, 35K55, 35B51, 35K93}
\begin{document}

\begin{abstract}
% begin abstract
We introduce a new notion of viscosity solutions for the level set formulation of the motion by
crystalline mean curvature in three dimensions.
The solutions satisfy the comparison principle, stability with respect to an approximation by
regularized problems, and we also show the uniqueness and existence of a level set
flow for bounded crystals.
% end abstract
\end{abstract}

\maketitle

\tableofcontents
\section{Introduction}

A crystalline mean curvature flow is a typical example of an anisotropic mean curvature flow, which
can be regarded as a mean curvature flow under a Minkowski or Finsler metric \cite{BP96}.
A crystalline mean curvature flow was proposed by S.~B.~Angenent and M.~E.~Gurtin \cite{AG89} and
independently by J.~Taylor \cite{T91} to describe the motion of an anisotropic antiphase boundary in materials science.
There is a large amount of literature devoted to the study of the motion by crystalline mean curvature.
However, even local-in-time unique solvability of its initial value problem has been a
long-standing open problem except in the case of planar motion or convex initial data.
The main reason is that the surface energy density is not smooth and hence the speed of evolution is determined by a nonlocal quantity.

Our goal in this paper is to solve this long-standing open problem for purely crystalline mean
curvature flow in $\R^3$.
In fact, we shall introduce a new notion of solutions which corresponds to a generalization of a
level set flow for the mean curvature flow equation and establish its unique existence.

To motivate the problem, let us explain an example of anisotropic mean curvature flow equation and
its level set formulation; see e.g.\ \cite{CGG,GG92,G06}.
Let $\gamma:S^2 \to (0,\infty)$ be a given interfacial energy density on the unit sphere $S^2$.
For a given closed surface $\Gamma$ we define the interfacial energy
\[
I_\gamma(\Gamma) = \int_\Gamma \gamma(\mathbf{n})\;d\mathcal{H}^2,
\]
and call $I_\gamma$ the interfacial energy of $\Gamma$ with density $\gamma$.
Here $\mathbf{n}$ denotes the unit exterior normal of $\Gamma$ and $d\mathcal{H}^2$ denotes the area element.
The anisotropic mean curvature $\kappa_\gamma$ is the first variation of $I_\gamma$ with respect to change of volume enclosed by $\Gamma$.
Its explicit form is
\[
\kappa_\gamma = -\operatorname{div}_\Gamma \left(\nabla_p \gamma(\mathbf{n})\right)
\]
where $\gamma$ is $1$-homogeneously extended as
$\gamma(p)=|p|\gamma\left(p/|p|\right)$ for $p\in \R^3\backslash\{0\}$ and $\gamma(0)=0$; $\operatorname{div}_\Gamma$ denotes the surface divergence \cite{Si83,G06}.
If $\gamma(p)=|p|$, $I_\gamma$ is the surface area and $\kappa_\gamma=-\operatorname{div}_\Gamma
\mathbf{n}$, which is nothing but (two times) the classical mean curvature.
When the interfacial energy density $\gamma$ is not a constant function on $S^2$, we say $\kappa_\gamma$ is an anisotropic mean curvature.
Let $\{\Gamma_t\}_{t>0}$ be a smooth family of closed surfaces in $\R^3$ and let $V$ be its normal velocity in the direction of $\mathbf{n}$.
The equation for $\{\Gamma_t\}$ of the form
\[
V = \kappa_\gamma \quad \text{on} \quad \Gamma_t
\]
is a simple example of an anisotropic mean curvature flow equation.
Of course, if $\gamma(p)=|p|$, then this equation is nothing but the standard mean curvature flow equation $V=\kappa$.
A typical feature of this equation is that even if one starts with a smooth surface $\Gamma_0$, the solution $\Gamma_t$ may pinch in finite time, for example a dumbbell with thin neck \cite{Gr89}.
So a weak formulation is necessary to track the evolution after the formation of singularities.
There are two standard approaches for the (isotropic) mean curvature flow equation.
One is a variational way like a varifold solution initiated by K.~Brakke \cite{B78} and developed
further by T.~Ilmanen \cite{Il93} and K.~Takasao and Y.~Tonegawa \cite{TT}.
Another approach is a level set method based on a comparison principle introduced by \cite{CGG,ES}.
As already noted in \cite{CGG} the level set method is very flexible and it applies to anisotropic
curvature flow equation \cite{GG92} while a varifold solution is still limited to the isotropic mean curvature flow equation.

Let us explain the idea of the level set formulation.
We introduce an auxiliary function $u:\R^3\times [0,\infty)\to\R$ so that its zero level set agrees with $\Gamma_t$.
To fix the idea we assume that $u>0$ in a region $D_t$ enclosed by $\Gamma_t$ and $u<0$ outside of $D_t \cup \Gamma_t$.
Then the equation $V=\kappa_\gamma$ is represented as
\[
\frac{u_t}{|\nabla u|}=-\operatorname{div}\left(\nabla_p \gamma\left(-\frac{\nabla u}{|\nabla u|}\right)\right) \quad \text{on} \quad \Gamma_t
\]
since $V=u_t/|\nabla u|$, $\mathbf{n}=-\nabla u/|\nabla u|$.
The idea of the level set method is to consider this equation not only on $\Gamma_t$ but also in
$\R^3$, i.e.\ each level set of $u$ is required to move by $V=\kappa_\gamma$.
In other words, we consider
\begin{equation}
\label{level set mcf}
u_t-|\nabla u|\left(-\operatorname{div}\left(\nabla_p \gamma \left(-\nabla u/|\nabla u|\right)\right)\right) = 0
\quad \text{in} \quad \R^3 \times (0,\infty)
\end{equation}
with initial condition
\begin{equation}
\label{level set mcf initial}
u(x,0)=u_0(x), \ x \in \R^3.
\end{equation}
Here $u_0$ is taken so that $\Gamma_0$ is its zero level set.
In the case $\gamma(p)=|p|$, \eqref{level set mcf} is nothing but the famous level set mean curvature flow equation
\[
u_t - |\nabla u|\operatorname{div} \left(\nabla u/|\nabla u|\right)=0.
\]
The level set equation \eqref{level set mcf} is degenerate even if $\gamma$ is convex.
It is unexpected that the problem can be solved even locally-in-time in classical sense even if $u_0$ is smooth.

Fortunately, if $\gamma$ is $C^2$ on $\R^3 \setminus \set0$ and convex, the notion of viscosity
solutions \cite{CIL} is adjustable to solve \eqref{level set mcf}--\eqref{level set mcf initial} uniquely and globally-in-time for any uniformly continuous initial data \cite{CGG,G06}.
One shall notice that there is a large freedom to choose $u_0$ for given $\Gamma_0$.
However, it is known \cite{CGG,G06} that the zero level set is uniquely determined by $\Gamma_0$
(independently of the choice of $u_0$).
Although the zero level set of $u$ may fatten, it is often called a level set flow (solution) of $V=\kappa_\gamma$ with initial data $\Gamma_0$.
The theory is based on a comparison principle for viscosity solutions and it applies when $\gamma$ is not necessarily $C^2$ but the singularity is weak.
For example, in planar motion even if the second derivative of $\gamma\in C^1\left(\R^2 \backslash\{0\}\right)$ is allowed to jump at finitely many point in $S^1$, the result of \cite{CGG} is extendable \cite{OS93,GSS}; see \cite{I96} for higher dimensional problem.
However, if the singularity of $\gamma$ is strong, such that the first derivative of $\gamma$ may have jumps, then the situation is completely different.
The equation becomes very singular in the sense that the speed becomes a nonlocal quantity and
establishing the level set method becomes totally non-trivial even if only a planar motion is considered, although it has been established in \cite{GG01}.
However, it has been a long-standing open problem for surface evolution even if $\gamma$ is
(purely) crystalline, i.e.\ $\gamma$ is piecewise linear and convex in $\R^3$.
Such functions are often in convex analysis referred to as \emph{polyhedral} \cite{Rockafellar}.

Our purpose is to establish a level set method for a crystalline mean curvature flow, whose typical example includes $V=\kappa_\gamma$ for crystalline $\gamma$.
Our theory can apply to more general equations such as $V=\kappa_\gamma+1$.
We shall introduce a new notion of viscosity solutions so that the following well-posedness result holds.

\begin{theorem}[Unique existence]
\label{th:unique existence}
Let $\gamma$ be crystalline in $\R^3$.
Assume that $f=f(m,\lambda)$ is continuous on $S^2\times\R^3$ and $\lambda 	\mapsto f(m,\lambda)$ is non-decreasing.
Assume that $\left|f(m,\lambda)\right|/\left(|\lambda|+1\right)$ is bounded in $S^2\times\R$.
Let $D_0$ be a bounded open set in $\R^3$ with the boundary $\Gamma_0=\partial D_0$.
Then there exists a global unique level set flow $\{\Gamma_t\}_{t\geq 0}$ with
\begin{equation}
\label{general mcf}
V=f(\mathbf{n},\kappa_\gamma) \quad \text{on} \quad \Gamma_t
\end{equation}
and initial data $\Gamma_0$.
\end{theorem}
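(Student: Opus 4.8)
The plan is to prove the theorem by realizing $\set{\Gamma_t}$ as the zero level set of a viscosity solution of the level set equation associated to \eqref{general mcf}, and to construct that solution by approximation, using the comparison principle and the stability theorem established in the preceding sections. Since $D_0$ is bounded, we first fix $u_0\in C(\R^3)$ with $\set{u_0>0}=D_0$, $\set{u_0=0}=\Gamma_0$, $\set{u_0<0}=\R^3\setminus\cl{D_0}$, and $u_0\equiv-1$ off a large ball; then $u_0$ is bounded and uniformly continuous with some modulus $\omega$. Next choose smooth, positively $1$-homogeneous, elliptic $\gamma_k\in C^\infty(\R^3\setminus\set0)$ with $\gamma_k\to\gamma$ locally uniformly (e.g.\ mollify $\gamma$ on $S^2$ and add a small elliptic term, $\e_k\downarrow0$). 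For each $k$ the level set equation for $V=f(\mathbf n,\kappa_{\gamma_k})$ is a geometric, degenerate parabolic equation of the type treated in \cite{CGG,GG92,G06} — the monotonicity of $\lambda\mapsto f(m,\lambda)$ gives degenerate ellipticity, and the bound on $\abs{f(m,\lambda)}/(\abs\lambda+1)$ is the admissible growth — so it admits a unique global continuous viscosity solution $u_k$ with $u_k(\cdot,0)=u_0$.

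The second step is to establish estimates on $\seq{u}{k}$ that are uniform in $k$. Comparison with the constant solutions $\pm1$ gives $\abs{u_k}\le1$. Since the equation is invariant under spatial translations and under adding constants, comparing $u_k$ with $u_k(\cdot-h,t)\pm\omega(\abs h)$ shows that $u_k(\cdot,t)$ has modulus of continuity $\omega$ for every $t$, uniformly in $k$. For equicontinuity in time up to $t=0$ and for spatial confinement we use barriers: if $W_k$ denotes the Wulff shape of $\gamma_k$, a dilate $R\,W_k$ with $R\ge R_0$ has $\gamma_k$-curvature of order $1/R$, hence, by the growth bound on $f$, its boundary moves with normal speed bounded by a constant $c$ independent of $k$ and $R$; comparing $u_k$ from above and below with functions whose superlevel sets are dilated-and-evolving copies of $W_k$ and of $\R^3\setminus W_k$ shows that $\seq{u}{k}$ is equicontinuous on $\R^3\times[0,\infty)$ uniformly in $k$, and that $\set{u_k(\cdot,t)>0}$ stays in a compact set depending only on $D_0$, $\gamma$, and $t$.

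By Arzel\`a--Ascoli there is a bounded $u\in C(\R^3\times[0,\infty))$, with modulus of continuity $\omega$ in space and $u(\cdot,0)=u_0$, and $u_k\to u$ locally uniformly along a subsequence; by the stability theorem $u$ is both a viscosity sub- and a supersolution of the crystalline level set equation for \eqref{general mcf}, hence a solution. The comparison principle forces uniqueness: if $u,v$ are bounded continuous solutions with the same initial datum then $u\le v$ and $v\le u$, so in fact the whole family converges to $u$. Put $D_t=\set{u(\cdot,t)>0}$ and $\Gamma_t=\set{u(\cdot,t)=0}$; by the second step $D_t$ is bounded, $\partial D_t\subseteq\Gamma_t$, and $\partial D_0=\Gamma_0$.

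It remains to see that $\set{D_t}$ and $\set{\Gamma_t}$ do not depend on the choice of $u_0$ among admissible initial functions, which is what makes $\set{\Gamma_t}$ a level set flow and yields uniqueness. Given another admissible $\tilde u_0$, boundedness of $D_0$ gives that $\set{u_0\ge a}$ is, for each $a>0$, a compact subset of $D_0=\set{\tilde u_0>0}$, while $\set{u_0\le-a}\subseteq\set{\tilde u_0<0}$; hence there is a nondecreasing $\theta\in C(\R)$ with $\theta(0)=0$, $\theta(s)>0\iff s>0$, and $\theta\circ u_0\le\tilde u_0$ on $\R^3$. Because the equation is geometric, $\theta(u)$ is again a subsolution (as in \cite{CGG}), so the comparison principle gives $\theta(u(\cdot,t))\le\tilde u(\cdot,t)$ and therefore $\set{u(\cdot,t)>0}\subseteq\set{\tilde u(\cdot,t)>0}$; by symmetry these sets coincide, and running the same argument with $-u$ and $-\tilde u$ shows that $\set{u(\cdot,t)<0}$ is likewise independent of the choice, hence so is $\Gamma_t$. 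Thus $\set{\Gamma_t}_{t\ge0}$ is the unique level set flow for \eqref{general mcf} with initial data $\Gamma_0$. Now that the comparison principle and the stability theorem are available, the main obstacle is the uniform-in-$k$ control of the barriers in the second step — which is exactly where the linear growth of $f$ and the boundedness of $D_0$ enter — together with checking that the relabelling $u\mapsto\theta(u)$ used in the last step is compatible with the facet-based definition of $\kappa_\gamma$ underlying the new notion of solution.
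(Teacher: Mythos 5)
Your proposal captures the right skeleton --- approximate $\gamma$ by smooth elliptic $\gamma_k$, solve the regularized level set equations, pass to the limit via stability, invoke the comparison principle, then use the $\theta$-relabelling device to show the zero level set is independent of the choice of $u_0$ --- and this is exactly the architecture of the paper's argument (Theorem~\ref{th:well-posedness} and its proof, followed by the short wrap-up proof of Theorems~\ref{th:unique existence} and~\ref{th:convergence}).

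The one genuinely different choice is the mechanism for the limit passage. You propose establishing uniform-in-$k$ estimates (a spatial modulus $\omega$ by translation comparison, temporal equicontinuity and spatial confinement by Wulff-shape barriers) and then applying Arzel\`a--Ascoli to extract a uniform limit. The paper instead uses half-relaxed limits $\overline u = \halflimsup u_m$, $\underline u = \halfliminf u_m$ and proves the stability theorem (Theorem~\ref{th:linear growth stability}) precisely in that setting; then comparison squeezes $\overline u \leq \underline u$. The half-relaxed approach is what the paper's stability machinery is built for, and it needs only the uniform $L^\infty$ bound plus barriers at $t=0$ and at spatial infinity --- not a full uniform modulus of continuity. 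Your route would also work, but the temporal equicontinuity near $t=0$ uniformly in $k$ is more delicate to establish than you suggest (the Wulff barrier argument controls the zero level set, not automatically the whole function), and you would still need to verify that your Arzel\`a--Ascoli limit is a viscosity solution in the paper's faceted sense, which is again most naturally done through the half-relaxed stability statement. In short: same ingredients, but the paper's half-relaxed-limit packaging is the more economical one, as it avoids the equicontinuity step entirely. Your closing concern about the compatibility of $u \mapsto \theta(u)$ with the faceted test functions is a fair flag; the paper treats it by reference to \cite[Section~4.1.1]{G06}, and since the faceted test functions are composed of a support function plus a $C^1$ part, the usual geometric-equation argument does go through, but you are right that it deserves an explicit check.
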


The assumption of the linear growth for $f$ in $\lambda$ is just for simplicity.
One can remove it by introducing a special class of test functions \cite{IS,G06} or by a flattening argument \cite{Go}.

To prove the uniqueness part a key step is to establish a comparison principle for the level set
equation of \eqref{general mcf} which is of the form
\begin{equation}
\label{P}
u_t + F \left( \nabla u, \operatorname{div}\partial W(\nabla u) \right) = 0,
\end{equation}
where
\begin{align}
\label{geometric F}
F(p,\Lambda) = -|p|f \left(-p/|p|,\Lambda \right),\ W(p)=\gamma(-p).
\end{align}
Here we rather use the subdifferential notion $\partial W$ instead of $\nabla W$ since $W$ is
piecewise linear and so not everywhere differentiable.
To prove the existence part, one cannot unfortunately apply Perron's method since the nonlocal
quantity ``$\operatorname{div}\partial W(\nabla u)$'' is not constant in a flat part of the
solution (which is different from planar case.) We thus construct a solution by smoothing $W$.
Here we need to establish a stability of our viscosity solutions.
The basic idea of proofs is an elaboration on the idea for establishing uniqueness based on the
comparison principle and stability for the total variation flow of non-divergence type \cite{GGP13JMPA,GGP13AMSA}.
We shall establish comparison principle for a more general nonlinearity $F$ than \eqref{geometric
F}, see Remark~\ref{rem:general F} below.

The bibliography of \cite{GGP13AMSA} includes many references on unique solvability. We take this
opportunity to mention related results for evolution of closed surfaces by crystalline or
more general singular interface energy.
In three dimensions and higher, the crystalline mean curvature $\kappa_\gamma$ is not only a
nonlocal quantity as mentioned above, but it might be non-constant on facets of the crystal \cite{BNP99}. In fact, it might be
discontinuous, and in general it is known to be only a function of bounded variation
\cite{BNP01a,BNP01b}.
Therefore facet breaking and bending might occur and we cannot restrict the solutions only to
surfaces with facets parallel to those of the Wulff shape corresponding to the crystalline energy
density $\gamma$. A more general notion of solutions is necessary. The variational approach have
led to a significant progress by understanding the properties of $\kappa_\gamma$. A notion of solutions via an approximation by reaction-diffusion equations for $V
= \gamma \kappa_\gamma$ was established in \cite{BGN00,BN00}. An approximation via
minimizing movements was used in \cite{CasellesChambolle06,BCCN06,BCCN09}. However, all these
results only provide existence for \emph{convex} initial data.

We also establish a convergence result which is useful to discuss approximation by an Allen-Cahn type equation.

\begin{theorem}[Convergence]
\label{th:convergence}
Under the assumption of Theorem~\ref{th:unique existence}, let $u$ be a viscosity solution of \eqref{P} with initial data
$u_0 \in C(\R^3)$ such that $u_0(x)=-c$ for $|x|\geq R$ with some $R$ and $c>0$.
Assume that $\gamma_\varepsilon$ is smooth in $\R^3\setminus\set0$, convex and $1$-homogeneous and
$\gamma_\varepsilon\to\gamma$ uniformly on $S^2$.
Let $u^\varepsilon$ be a viscosity solution of \eqref{P} with
$W=W_\varepsilon(p)=\gamma_\varepsilon(-p)$, with initial data $u_0^\varepsilon$ such that
$u_0^\varepsilon(x)=-c$ for $|x|\geq R$.
Assume that $u_0^\varepsilon\to u_0$ uniformly.
Then  $u^\varepsilon$ converges locally uniformly to $u$ in $\R^3\times [0,\infty)$.
\end{theorem}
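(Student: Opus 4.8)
The plan is to use the standard stability machinery of viscosity solutions via half-relaxed limits, combined with the comparison principle already available for \eqref{P}. First I would define the upper and lower half-relaxed limits
\[
\bar u(x,t) = \limsup_{\varepsilon\to0,\ (y,s)\to(x,t)} u^\varepsilon(y,s),
\qquad
\underline u(x,t) = \liminf_{\varepsilon\to0,\ (y,s)\to(x,t)} u^\varepsilon(y,s).
\]
Before these are even well-defined one needs a locally uniform bound on the family $\{u^\varepsilon\}$, which I would get from the barrier hypothesis: since $u_0^\varepsilon(x)=-c$ for $|x|\ge R$ and $u_0^\varepsilon\to u_0$ uniformly, the constant $-c$ and a large constant $M\ge\sup u_0^\varepsilon$ are stationary (sub/super)solutions of \eqref{P} (using that $F$ is geometric, so constants are solutions), hence $-c\le u^\varepsilon\le M$ everywhere; moreover the finite speed of propagation for these equations — provable by an explicit shrinking-Wulff-shape barrier whose size is controlled uniformly in $\varepsilon$ because $\gamma_\varepsilon\to\gamma$ uniformly on $S^2$ — confines the region where $u^\varepsilon\ne -c$ to a fixed compact set on each time interval. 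This also gives equicontinuity at $t=0$, so that $\bar u(\cdot,0)=\underline u(\cdot,0)=u_0$.

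The core step is the stability statement: $\bar u$ is a viscosity subsolution and $\underline u$ a viscosity supersolution of \eqref{P} with the limiting $W$. This is exactly where the \emph{new} notion of viscosity solution introduced in the paper must cooperate with the convergence $W_\varepsilon\to W$, and I expect this to be the main obstacle. The difficulty is that the nonlocal curvature term $\operatorname{div}\partial W(\nabla u)$ is not continuous under $W_\varepsilon\to W$ in any naive pointwise sense: for smooth $\gamma_\varepsilon$ the operator is a genuine (degenerate) second-order local operator, while in the limit it becomes the nonlocal crystalline curvature evaluated by a facet-wise minimization. So the passage to the limit in the test-function inequalities requires that, whenever a smooth test function touches $\bar u$ from above in the sense of the paper's definition (i.e.\ with an admissible ``faceted'' test function and the associated subadmissible/optimal vector field on the facet), one can construct, for each small $\varepsilon$, test functions touching $u^\varepsilon$ and pass to the limit in the value of $\operatorname{div}\partial W_\varepsilon(\nabla\cdot)$ along them. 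I would follow the scheme of \cite{GGP13JMPA,GGP13AMSA}: use the fact that $\partial W_\varepsilon$ converges to $\partial W$ in the sense of graphs (Hausdorff convergence of subdifferentials of convex functions follows from uniform convergence of the $W_\varepsilon$), so that an optimal vector field on a facet for $W$ can be perturbed to a near-optimal one for $W_\varepsilon$, and the nonlocal curvature is characterized variationally as an infimum of $\operatorname{div}$ of such fields — lower/upper semicontinuity of that infimum under the graph convergence is what feeds the sub/supersolution inequalities. The needed property of the test-function class — that it is stable and that the ``resolved'' curvature value behaves semicontinuously — should be a lemma already established in the part of the paper preceding this statement, and I would invoke it here.

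Once stability is in hand, I finish by comparison: $\bar u$ and $\underline u$ are respectively a sub- and a supersolution of the same problem \eqref{P} with the same initial data $u_0$ (and the same spatial decay to $-c$, which lets the comparison principle apply on the whole of $\R^3$, or alternatively one compares on a large ball where the boundary values agree with $-c$ by finite speed of propagation). Hence $\bar u\le\underline u$. Since trivially $\underline u\le\bar u$, we get $\bar u=\underline u=:v$, and $v$ is a (continuous) viscosity solution of \eqref{P} with initial data $u_0$; by the uniqueness part of Theorem~\ref{th:unique existence} (the comparison principle applied to $v$ and $u$), $v=u$. Finally, equality of the two half-relaxed limits is exactly the statement that $u^\varepsilon\to u$ locally uniformly on $\R^3\times[0,\infty)$, which is the assertion of the theorem. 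The only genuinely new input beyond \cite{GGP13JMPA,GGP13AMSA} is the three-dimensional, facet-based stability of the test-function class under $W_\varepsilon\to W$, and that is the step I would write out with the most care.
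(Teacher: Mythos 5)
Your proposal matches the paper's approach: form half-relaxed limits of $u^\varepsilon$, establish a stability theorem saying these limits are a sub- and supersolution of the crystalline problem, use Wulff-shape barriers with finite propagation speed to fix the initial data and localize the comparison, and finish via the comparison principle and uniqueness. This is exactly how Theorem~\ref{th:convergence} is deduced in the paper, namely as an immediate consequence of Theorem~\ref{th:linear growth stability} and Theorem~\ref{th:comparison principle}.

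One point where your description of the stability mechanism differs in flavor, and which you should be aware of when writing it out carefully: the paper does not argue through Hausdorff (graph) convergence of the subdifferentials $\partial W_\varepsilon \to \partial W$ and perturbation of optimal Cahn-Hoffman vector fields. The operative tool is Mosco convergence of the relaxed energy functionals $E_{W_\varepsilon} \to E_W$, established from the uniform convergence $W_\varepsilon \to W$ on the unit ball (Lemma~\ref{le:lingrowthapproximation}); this yields convergence of the resolvents $(I + a\partial E_{W_\varepsilon})^{-1}\psi$, and the curvature enters as the $a\to 0$ limit of the difference quotients. The test function is perturbed by one step of the implicit Euler scheme (the resolvent), not by perturbing the vector field directly. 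There is also a further technical regularization inside Theorem~\ref{th:linear growth stability}: the one-homogeneous $W_\varepsilon$ has non-smooth resolvents, so an additional quadratic regularization $W_\varepsilon^\delta$ is inserted to get $C^{2,\gamma}$ solutions of the resolvent problem before taking $\delta \to 0$ and then $\varepsilon \to 0$. Your plan of ``invoking the established stability lemma'' is the right call; just be precise that the lemma rests on Mosco/resolvent convergence rather than on pointwise convergence of subdifferentials.
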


This gives a convergence of diffuse interface model to the sharp interface model even if $\gamma$ is crystalline; see \cite{GOS,TC98}.

After this work had been completed, the authors learned of a recent work by
A. Chambolle, M. Morini and M. Ponsiglione \cite{CMP}, where they established a
unique global solvability (up to fattening) for $V=\gamma \kappa_\gamma$
for
any convex $\gamma$ by introducing a new notion of a solution related to
the
anisotropic distance function. Their approach applies to all dimension
and all
initial data not necessarily bounded. However, their approach requires a
special
form of the equation so that the mobility is proportional to the
interfacial
energy density $\gamma$ and it does not apply to $V=\kappa_\gamma$ or
$V=\kappa_\gamma +1$. Our approach applies to all $V=f(\mathbf{n},\kappa_\gamma)$
including these equations but the dimension $n$ is limited as $n \leq 3$
and
$\gamma$ is limited to crystalline. It is not yet clear whether or not our
solution agrees with theirs in the case when both approaches are available
although it is very likely.

\begin{remark}
\label{rem:general F}
In full generality, we will assume that $F \in C(\Rn \times \R)$, $n \geq 1$, and that it is nonincreasing in the second variable, that is,
\begin{align}
\label{F ellipticity}
F(p, \xi) \leq F(p, \eta) \qquad \text{for all } p \in \Rn,\ \xi \geq \eta.
\end{align}
For simplicity, we shall also assume that
\begin{align*}
F(0,0) = 0.
\end{align*}
In particular, constants are solutions of \eqref{P}.
\end{remark}

\subsection*{Viscosity solutions and the contribution of this paper}

We extend the notion of viscosity solutions to the problem \eqref{P} with crystalline $W$. The main
strength of the viscosity solution approach is that it can handle general problems that are not of
divergence form by exploiting their comparison principle structure \cite{CIL,G06}.

The main difficulty in defining a solution of \eqref{P} is the singular, nonlocal operator $\divo
\partial W(\nabla \cdot)$. We interpret this operator as the minimal section (also known as the canonical
restriction) of the subdifferential of the anisotropic total variation energy in the Hilbert space
$L^2(\Omega)$,
\begin{align*}
E(\psi) :=
\begin{cases}
\int_\Omega W(D \psi) \dx, & \psi \in L^2(\Omega) \cap BV(\Omega),\\
+\infty, & \text{otherwise},
\end{cases}
\end{align*}
where $\Omega$ is the flat torus $\R^n / L\Z^n$ for some $L > 0$, $n \geq 1$, and $BV(\Omega)$ is the space of
functions of bounded variation. That is, we only consider this
energy for periodic functions $\psi$ to avoid issues with handling the boundary of $\Omega$.
Since $D\psi$ is in general only a Radon measure, the functional $E$ is understood as the lower
semi-continuous envelope (closure) of the functional defined for Sobolev functions
$W^{1,1}(\Omega)$.

It is well-known that the subdifferential of $E$ defined above is the set of divergences of
certain vector fields, often called \emph{Cahn-Hoffman vector fields} \cite{Moll}. More precisely, if $\psi$ is a Lipschitz function on $\Omega$, then
\begin{align*}
\partial E(\psi) = \set{-\divo z: z(x) \in \partial W(\nabla \psi(x)) \text{ for a.e. $x$, } \divo
z \in L^2(\Omega)}.
\end{align*}
The subdifferential $\partial E(\psi)$ is a closed convex, possibly empty subset of the Hilbert
space $L^2(\Omega)$. If it is nonempty, we say that $\psi \in \domain(\partial E)$ and the unique
element of the subdifferential with the minimal $L^2$-norm is called the
minimal section of $\partial E(\psi)$ and is denoted as $\partial^0 E(\psi)$. In such a case we
will interpret $\divo \partial W(\nabla \psi)$ as $- \partial^0 E(\psi)$.

This interpretation is consistent with the classical theory of monotone operators for the
solvability of problems of the form
\begin{align*}
u'(t) \in - \partial E(u(t)).
\end{align*}
Indeed, it is known that a solution is right-differentiable and the right derivative $d^+u/dt (t) = -\partial^0
E(u(t))$. As we noted above, the mean curvature flow can be viewed as the gradient flow of the
surface energy functional.

The viscosity solutions are defined via a comparison with a suitable class of test functions. It is
therefore necessary to identify a sufficiently large class of functions for which we can define
$\divo \partial W(\nabla \cdot)$ so that they can serve as test functions in the definition of
viscosity solutions. In particular, it must be possible to prove both uniqueness (via a comparison principle)
and existence (via a stability property of solutions).

Since the energy density $W$ is crystalline, that is, piecewise linear, the domain of the
subdifferential of $E$ can be understood as functions that have flat parts with gradients that fall
into the set where $W$ is not differentiable. These flat parts then correspond to the features of
the crystal---facets and edges---depending on the dimension of the subdifferential $\partial
W(\nabla \psi)$ on the given flat part of $\psi$.
This then leads to an idea of energy stratification with respect to the subdifferential dimension.
It turns out that the value of $\divo \partial W(\nabla \psi)$ at a point $x$ depends only on the shape of $\psi$
in the directions parallel to $\partial W(\nabla \psi(x))$, and it is basically independent of the
shape in the orthogonal direction.

Because of the simple structure of $W$, the local behavior of $W$ (and $\partial W$) in a
neighborhood of a given gradient $p$ can be
completely captured by a one-homogeneous function that is linear in directions orthogonal to the
subspace spanned by the directions in $\partial W(p)$, Proposition~\ref{pr:direction-decomposition}.
We therefore for a given slope $p$ define a sliced energy $\SE_p$ to capture the interesting
behavior, and reduce the analysis to a space $\R^k$, where $k$ is the dimension of $\partial W(p)$.
Then we consider \emph{stratified faceted functions} by separating the variables into the directions
parallel to $\partial W(p)$, in which we assume that the function has a ``nice'' facet, and the
orthogonal directions where the function can be of any form (as long as it is differentiable),
Definition~\ref{def:strat-faceted-test-function}.

It can be easily seen that $\divo \partial W(\nabla \psi)(x) = 0$ whenever $\psi$ is twice
continuously differentiable in a neighborhood of $x$ and $W$ is differentiable at
$\nabla \psi(x)$.
We therefore have to identify the value of this operator at points where $\partial W(\nabla \psi)$
is not a singleton, that is, on the flat parts of the stratified faceted functions. These flat
parts can be thought of as $k$-dimensional facets, and they can be described by a pair of open sets $(A_-, A_+)$, which specify where the function is
below ($A_-$) or above $(A_+)$ the flat part. It turns out that $\divo \partial W(\nabla \psi)$ is
independent of the particular choice of $\psi$, Corollary~\ref{co:lambda support func indep}, but only depends on the sets $(A_-, A_+)$ and the
slope $p = \nabla \psi$ of the flat part. We call this value $\Lambda_p(\psi)$ to emphasize this
dependence on $p$, and connect this to the previous results \cite{GGP13JMPA,GGP13AMSA}, see
Section~\ref{sec:crystalline curvature}.
While $\Lambda_p(\psi)$ might be discontinuous on the flat parts, it satisfies a comparison
principle property with respect to a natural ordering of the $k$-dimensional facets.

We use the stratified faceted functions as the test functions for the definition of viscosity
solutions. Heuristically speaking, a continuous function $u$ is a viscosity solution of \eqref{P}
if it satisfies a comparison principle with all stratified faceted functions that are local
solutions of \eqref{P}.

To show that this definition of viscosity solutions is reasonable, we have to establish a general
comparison principle and stability of solutions (with respect to approximation by regularized
problems). For the comparison principle, we need a sufficiently large class of stratified faceted
test functions. In particular, for any given gradient $p$ such that $\partial W(p)$ is not a
singleton and a pair of smooth disjoint open sets $(A_-, A_+)$ in
$\R^k$, $k = \dim \partial W(p)$, we need
to be able to construct a $k$-dimensional facet arbitrarily close to the facet given by $(A_-, A_+)$
such that there exists a stratified faceted function with this facet, and for which
$\Lambda_p(\psi)$ is well-defined. See Corollary~\ref{co:approximate pair sliced} for details. This unfortunately seems to be quite nontrivial, and we
currently know how to do this construction in one and two dimensions. This allows us to prove the
comparison principle for \eqref{P} in three dimensions. However, if this approximated admissible
facet construction in Corollary~\ref{co:approximate pair sliced} can be extended to
higher dimensions, our results Theorem~\ref{th:unique existence} and Theorem~\ref{th:convergence} will automatically apply to the higher dimensions as well.

The proof of the comparison principle Theorem~\ref{th:comparison principle} follows the standard doubling-of-variables argument with an
additional parameter as in \cite{GGP13JMPA,GGP13AMSA}. This is substantially extended to
handle the stratified energy and the stratified faceted test functions.
We consider two solutions $u$, $v$ of \eqref{P} that are ordered as $u \leq v$ at $t = 0$ and consider the
function
\begin{align*}
\Phi_{\zeta,\e}(x,t,y,s) := u(x,t) - v(y,s) - \frac{\abs{x-y-\zeta}^2}{2\e}-
    S_\e(t,s),
\end{align*}
on $(x, t, y, s) \in \R^n \times (0, T) \times \R^n \times (0, T)$, where $S_\e$ is defined in
\eqref{Se}, and $T, \e > 0$ are fixed. We then analyze the maxima of $\Phi_{\zeta, \e}$ for $\zeta
\in \R^n$ small. This extra parameter $\zeta$ allows us to recover additional information about the
behavior of $u$ and $v$ near the maximum of $\Phi_{\zeta, \e}$. We then argue by contradiction: if
$u > v$ at some point, we can construct stratified faceted test functions for $u$ and $v$ near the
maximum of $\Phi_{\zeta, \e}$. These test functions have ordered facets, which then together with
the comparison principle for $\Lambda_p$ yields a contradiction.

The stability of solutions with respect to approximation of \eqref{P} by regularized problems then
follows from an extension of the argument developed in \cite{GGP13JMPA}. We have to again overcome the
discrepancy between the test functions of the regularized problem, which are only smooth functions,
and the stratified faceted functions for the limit problem \eqref{P}. This is related to the fact
that we are approximating a singular, nonlocal operator by local operators. The idea is to perturb the
test function by solving the resolvent problem for the energy $E$ and the regularized (elliptic) energy $E_m$
with a small parameter $a > 0$:
\begin{align*}
\psi_a = (I + a \partial E)^{-1} \psi, \qquad \psi_{a,m} = (I + a \partial E_m)^{-1} \psi,
\end{align*}
which amounts to solving one step of the implicit Euler discretization of the gradient flow of
those energies. This transfers the nonlocal information onto the perturbed test function and
allows passing in the limit, Theorem~\ref{th:stability quadratic}. The main extension in this paper is the handling of the sliced
energy. An elaboration on this argument yields also stability with respect to an approximation by
one-homogeneous energies $E_m$, Theorem~\ref{th:linear growth stability}.

Combining the above results we obtain the existence of a unique solution of \eqref{P}. Since the
level set of the solution does not depend on the choice of the initial level set function, we
have uniqueness of the level set flow.

\subsection*{Outline}

We open with a review of the theory for convex functionals with linear growth in
Section~\ref{sec:convex func lin growth}. This will allow us to introduce the idea of energy
stratification and the slicing of the energy density $W$ according to its features,
Section~\ref{sec:energy-stratification}. We then define the crystalline mean curvature $\Lambda$ on various features
of the evolving surface such as edges and facets, Section~\ref{sec:crystalline curvature}, and
establish its properties, including a comparison principle.
At this point we introduce the notion of viscosity solutions, Section~\ref{sec:viscosity
solutions}, and construct faceted test functions in Section~\ref{sec:faceted functions}.
The comparison principle for viscosity solutions is established in Section~\ref{sec:comparison principle}, followed by the stability results, Section~\ref{se:stability}.
Finally, the main result on the well-posedness of \eqref{P} is presented in
Section~\ref{sec:well-posedness}.

\section{Convex functionals with linear growth}
\label{sec:convex func lin growth}

There are a considerable number of publications on the topic of convex functionals with linear
growth, see \cite{ACM} for a list of references.
In this section we review the rather standard notation and results that we will use
throughout the paper, and prove two important lemmas that will allow us to better
understand the crystalline mean curvature later.

Suppose that $W: \R^d \to \R$, $d \geq 1$, is a convex function that satisfies the growth condition
\begin{align}
\label{growth-condition}
\abs{W(p)} \leq M (1 + \abs{p}), \qquad p \in \R^d,
\end{align}
for some $M > 0$.
Note that it is usually also assumed that $W(p) \geq c\abs{p}$ for some $c > 0$, or that $W(p) = W(-p)$,
but we make no such assumption since they are unnecessary for our purposes,
and in fact we need the generality.

Let $\Omega$ be either
an bounded open subset of $\R^d$ or the $d$-dimensional flat torus $\R^d / L \Z^d$.
We are interested in the functional $E_W(\cdot; \Omega): L^2(\Omega) \to \R$ defined as
\begin{align}
\label{EW}
E_W(\psi; \Omega) &=
\begin{cases}
\int_\Omega W(D\psi) & \psi \in L^2(\Omega) \cap BV(\Omega),\\
+\infty & \text{otherwise}
\end{cases}
\end{align}
that is understood as the \emph{relaxation}
(also the \emph{closure} or the \emph{lower semi-continuous envelope}) of the functional
\begin{align}
\label{functional-w11}
\psi \mapsto
\begin{cases}
\int_\Omega W(\nabla\psi) & \psi \in L^2(\Omega) \cap W^{1,1}(\Omega),\\
+\infty & \text{otherwise}.
\end{cases}
\end{align}

The relaxed functional $E_W$ can be expressed more explicitly following \cite{GiaquintaModicaSoucek,BouchitteDalMaso}.
Indeed, we introduce the recession function of $W$,
\begin{align*}
W0^+(p) = \lim_{\lambda\to 0+} \lambda W(\lambda^{-1} p),
\end{align*}
which is a positively one-homogeneous convex function on $\R^d$
due to the growth condition \eqref{growth-condition}.
If $W$ is one-homogeneous itself, we have $W0^+ = W$.

For $\psi \in BV(\Omega)$,
$\nabla\psi$ will denote the Radon-Nikod\'{y}m derivative of the absolutely continuous part of $D\psi$
with respect to the Lebesgue measure
$L^d \lfloor \Omega$ and $D^s \psi$ will be the singular part.
Then we have
\begin{align*}
D \psi = \nabla \psi L^d \lfloor \Omega + D^s \psi,
\end{align*}
and we can write $E_W$ as
\begin{align}
\label{ew-decomp}
E_W(\psi; \Omega) =
\int_\Omega W(\nabla \psi) \dx + \int_\Omega W0^+\pth{\frac{D^s \psi}{\abs{D^s \psi}}} \;d\abs{D^s\psi},
\end{align}
where $\frac{D^s \psi}{\abs{D^s \psi}}$ is the Radon-Nikod\'{y}m derivative of $D^s\psi$ with respect to $\abs{D^s\psi}$.
We note that if $\psi \in L^2(\Omega) \cap W^{1,1}(\Omega)$, or even $\psi \in \Lip(\Omega)$, then
this formula simplifies to \eqref{functional-w11} since $D^s \psi = 0$.

\subsection{Subdifferentials}

Since $E_W(\cdot; \Omega)$ is a proper closed (that is, lower semi-continuous) convex functional on $L^2(\Omega)$,
its subdifferential
\begin{align*}
\partial E_W(\psi; \Omega) = \set{
v \in L^2(\Omega):
E_W(\psi + h; \Omega) - E_W(\psi; \Omega) \geq (h, v)
\text{ for all $h \in L^2(\Omega)$}}
\end{align*}
is a closed convex, possibly empty subset of the Hilbert space $L^2(\Omega)$ equipped with the
inner product $(h, v) := \int_\Omega h v \dx$.
If $\partial E_W(\psi; \Omega)$ is nonempty,
we say that $\psi \in \domain(\partial E_W(\cdot; \Omega))$,
the \emph{domain} of the subdifferential,
and we define
the \emph{minimal section} (also known as the \emph{canonical restriction})
$\partial^0 E_W(\psi; \Omega)$ of the subdifferential as the unique element
of $\partial E_W(\psi; \Omega)$ with the minimal norm in $L^2(\Omega)$.

The characterization of the subdifferential of $E_W$ is well-known when
$W$ is a positively one-homogeneous function, that is, when
\begin{align*}
W(tp) = t W(p) \qquad t \geq 0.
\end{align*}
We will need this characterization for Lipschitz functions only,
and we therefore present it in this simplified settings.
Let $\Omega$ be an open subset of $\R^d$ or a $d$-dimensional torus $\R^d / L \Z^d$ for some $L > 0$.
Following \cite{Anzellotti}, let us introduce the space of vector fields with $L^2$ divergence,
\begin{align*}
X_2(\Omega) = \set{z \in L^\infty(\Omega; \R^d): \divo z \in L^2(\Omega)}.
\end{align*}
For given $\psi \in \Lip(\Omega)$, we define the set of \emph{Cahn-Hoffman vector fields}
on $\psi$ as
\begin{align}
\label{cahn-hoffman}
\CH_W(\psi; \Omega) := \set{z \in X_2(\Omega): z(x) \in \partial W(\nabla \psi(x)) \text{ a.e. $x \in \Omega$}}.
\end{align}
Note that the set
\begin{align}
\label{div-cahn-hoffman}
\divo \CH_W(\psi; \Omega) := \set{\divo z: z \in \CH_W(\psi; \Omega)}
\end{align}
is a closed convex, possibly empty subset of $L^2(\Omega)$.
We have the well-known characterization of the subdifferential of $E_W$ in the periodic case, see
\cite[Section~1.3]{ACM} or \cite{Moll}.
\begin{proposition}
\label{pr:subdiff-char-periodic}
Let $\Omega = \R^d / L \Z^d$ for some $d \in \N$ and $L > 0$, and assume that
$W$ is a positively one-homogeneous convex function on $\R^d$.
If $\psi \in \Lip(\Omega)$ then
\begin{align*}
\partial E_W(\psi; \Omega) = \set{-\divo z: z \in \CH_W(\psi; \Omega)} =
-\divo \CH_W(\psi; \Omega).
\end{align*}
\end{proposition}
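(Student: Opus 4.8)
The plan is to prove the two inclusions separately, working inside the Hilbert space $L^2(\Omega)$ with $\Omega = \R^d/L\Z^d$. For the inclusion $-\divo \CH_W(\psi;\Omega) \subseteq \partial E_W(\psi;\Omega)$, I would take $z \in \CH_W(\psi;\Omega)$ and show that $-\divo z$ is a subgradient of $E_W$ at $\psi$, i.e.\ that $E_W(\psi+h;\Omega) - E_W(\psi;\Omega) \geq (h, -\divo z)$ for all $h \in L^2(\Omega)$. It suffices to verify this for $h$ with $\psi + h \in BV(\Omega)$ (otherwise the left side is $+\infty$). Using the pairing theory of Anzellotti, for $z \in X_2(\Omega)$ and $v \in BV(\Omega) \cap L^2(\Omega)$ on the torus one has a well-defined measure $(z, Dv)$ with $\abs{(z,Dv)} \leq \norm{z}_\infty \abs{Dv}$ as measures, and the integration-by-parts formula $\int_\Omega (z, Dv) = -\int_\Omega v \divo z \dx$ (no boundary term, since $\Omega$ is closed). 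The key pointwise/measure inequality is that $W(\nabla(\psi+h)) \geq W(\nabla\psi) + \nabla W(\nabla\psi)\cdot \nabla h$ wherever $W$ is differentiable, and more precisely, since $z(x) \in \partial W(\nabla\psi(x))$ a.e., convexity of $W$ gives $W(q) \geq W(\nabla\psi(x)) + z(x)\cdot(q - \nabla\psi(x))$ for all $q$; applied to the absolutely continuous part and, via the recession function $W0^+$ and the homogeneity of $W$, to the singular part $D^s(\psi+h)$ and $D^s\psi$, this yields $E_W(\psi+h;\Omega) - E_W(\psi;\Omega) \geq \int_\Omega (z, D(\psi+h)) - \int_\Omega (z, D\psi) = \int_\Omega (z, Dh) = -(h, \divo z)$, which is exactly the subgradient inequality.

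For the reverse inclusion $\partial E_W(\psi;\Omega) \subseteq -\divo\CH_W(\psi;\Omega)$, the plan is to take $v \in \partial E_W(\psi;\Omega)$ and produce a Cahn-Hoffman field $z \in X_2(\Omega)$ with $-\divo z = v$. This is the harder direction and is where I expect the main obstacle to lie. The standard route is a duality/convex-analysis argument: one characterizes $v \in \partial E_W(\psi;\Omega)$ by the equality case in Young's inequality, $E_W(\psi;\Omega) + E_W^*(v) = (v,\psi)$, where $E_W^*$ is the Legendre-Fenchel conjugate on $L^2(\Omega)$. Because $W$ is one-homogeneous, its conjugate is the indicator of the convex set $K = \partial W(0) = \set{\xi \in \R^d : \xi\cdot p \leq W(p)\ \forall p}$, the Wulff-type set, and correspondingly $E_W^*(v)$ is finite (and zero) precisely when $v = -\divo z$ for some $z \in L^\infty(\Omega;\R^d)$ with $z(x) \in K$ a.e.; this representation of the conjugate of the total variation functional is classical (see \cite{ACM,Anzellotti,Moll}). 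Thus $v \in \partial E_W(\psi;\Omega)$ forces $v = -\divo z$ for some such $z$, and since $v \in L^2(\Omega)$ we get $z \in X_2(\Omega)$. It remains to upgrade the constraint $z(x) \in K$ to the sharper $z(x) \in \partial W(\nabla\psi(x))$ a.e.: this comes from the equality $E_W(\psi;\Omega) = (v,\psi) = -\int_\Omega \psi \divo z \dx = \int_\Omega (z, D\psi)$, combined with $E_W(\psi;\Omega) = \int_\Omega W(\nabla\psi)\dx + \int_\Omega W0^+(\tfrac{D^s\psi}{\abs{D^s\psi}})\,d\abs{D^s\psi}$ from \eqref{ew-decomp} and the pointwise bounds $(z,D\psi) \leq W(\nabla\psi) L^d + W0^+(\tfrac{D^s\psi}{\abs{D^s\psi}})\abs{D^s\psi}$ (valid because $z \in K$). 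Since the total integrals agree, the integrand inequality must be an equality a.e., which by the characterization of equality in the subgradient inequality for convex $W$ gives $z(x) \in \partial W(\nabla\psi(x))$ for a.e.\ $x$, i.e.\ $z \in \CH_W(\psi;\Omega)$.

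The main obstacle, as indicated, is making the duality argument for the reverse inclusion fully rigorous on the torus: one must know that the conjugate $E_W^*$ on $L^2(\Omega)$ has the stated representation as (the indicator of) divergences of $K$-valued $L^\infty$ fields, and that the Anzellotti pairing $(z, D\psi)$ together with the integration-by-parts formula and the pointwise comparison $(z,D\psi) \leq W(D\psi)$ as measures are available in the periodic Lipschitz setting. Since $\psi$ is Lipschitz, $D^s\psi = 0$ and the singular-part bookkeeping drops out, which simplifies matters considerably; this is presumably why the proposition is only stated for $\psi \in \Lip(\Omega)$. All of these facts are part of the standard theory referenced in the excerpt (\cite{ACM,Anzellotti,Moll}), so in the write-up I would cite them rather than reprove them, and present the equality-case extraction of $z \in \CH_W(\psi;\Omega)$ as the one genuinely computational step. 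The forward inclusion, by contrast, is essentially a direct application of convexity plus integration by parts and should be short.
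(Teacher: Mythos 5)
The paper does not prove this proposition itself; it simply cites \cite[Section~1.3]{ACM} and \cite{Moll} and moves on. Your sketch correctly reproduces the standard argument found in those references: the easy inclusion $-\divo\CH_W(\psi;\Omega)\subseteq\partial E_W(\psi;\Omega)$ via convexity of $W$ and integration by parts on the torus, and the hard inclusion via identifying $E_W^*$ on $L^2(\Omega)$ with the indicator of $\set{-\divo z : z\in X_2(\Omega),\ z(x)\in\partial W(0)\text{ a.e.}}$, followed by extracting the sharper Cahn--Hoffman constraint from the equality $\int_\Omega W(\nabla\psi)\dx = \int_\Omega z\cdot\nabla\psi\dx$ using the one-homogeneous fact that, for $z\in\partial W(0)$, one has $z\in\partial W(p)$ if and only if $W(p)=z\cdot p$. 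Your observation that $\psi\in\Lip(\Omega)$ forces $D^s\psi=0$, so the Anzellotti pairing collapses to a pointwise statement and the constraint $z(x)\in\partial W(\nabla\psi(x))$ is well-posed, is exactly why the proposition is stated in this restricted generality.
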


\begin{remark}
If $\Omega$ is a bounded open subset of $\R^d$ with a Lipschitz boundary,
then the subdifferential is given by the vector fields
$z \in \CH_W(\psi; \Omega)$ such that $[z \cdot \nu] = 0$ on $\partial\Omega$;
see \cite{ACM} for details.
We will work on periodic domains to not have to deal with this technicality.
We will see later (Lemma~\ref{le:cahn-hoffman-patch} and Properties~\ref{pr:lambda-well-defined}) that this does not change the value of the crystalline curvature on the facet.
\end{remark}

Let us also mention one trivial result concerning the subdifferential of one-homogeneous
convex functions on $\R^d$.

\begin{lemma}
\label{le:one-homogeneous-subdiff}
Suppose that $W$ is positively one-homogeneous convex function on $\R^d$.
Then $\partial W(p) \subset \partial W(0)$ for any $p \in \R^d$.
We also have $(x - y) \perp p$ for any $x, y \in \partial W(p)$ and any $p \in \R^d$.
\end{lemma}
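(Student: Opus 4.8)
The plan is to derive both claims directly from the subgradient inequality combined with the positive one-homogeneity of $W$. Recall that $x \in \partial W(p)$ means $W(q) \geq W(p) + \ang{x, q - p}$ for all $q \in \R^d$. For the first claim, fix $p$ and $x \in \partial W(p)$, and I want to show $x \in \partial W(0)$, i.e.\ $W(q) \geq W(0) + \ang{x, q}$ for all $q$. Since $W$ is one-homogeneous we have $W(0) = 0$, so this reduces to $W(q) \geq \ang{x, q}$. The key trick is to test the subgradient inequality at the point $q = \lambda p$ for $\lambda \geq 0$ large; but more cleanly, I would test at a general $q$ after rescaling: for $t > 0$, the inequality $W(t q) \geq W(p) + \ang{x, tq - p}$ combined with $W(tq) = t W(q)$ gives $t W(q) \geq W(p) - \ang{x,p} + t\ang{x,q}$. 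Here the crucial observation is that $W(p) - \ang{x, p} = 0$: indeed, plugging $q = 0$ into the subgradient inequality gives $0 = W(0) \geq W(p) - \ang{x,p}$, and plugging $q = 2p$ (using $W(2p) = 2W(p)$) gives $2 W(p) \geq W(p) + \ang{x, p}$, i.e.\ $W(p) \geq \ang{x,p}$. Hence $W(p) = \ang{x,p}$ (this is just the Euler relation for one-homogeneous functions / the fact that $\partial W(p)$ lies on the supporting hyperplane through $0$). Therefore $t W(q) \geq t \ang{x, q}$ for every $t > 0$, and dividing by $t$ yields $W(q) \geq \ang{x, q}$, which is exactly $x \in \partial W(0)$.

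For the second claim, I would use the identity just established: for any $x \in \partial W(p)$ we have $\ang{x, p} = W(p)$. Thus if $x, y \in \partial W(p)$, then $\ang{x, p} = W(p) = \ang{y, p}$, so $\ang{x - y, p} = 0$, i.e.\ $(x - y) \perp p$. This is immediate once the Euler relation is in hand.

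There is no real obstacle here; the lemma is genuinely elementary (the paper itself calls it ``trivial''), and the only point requiring a moment's care is extracting the relation $W(p) = \ang{x,p}$ from one-homogeneity by testing the subgradient inequality at $q = 0$ and at $q = 2p$ (or any $q = \lambda p$ with $\lambda \neq 1$). One should also note the degenerate bookkeeping that $W(0) = 0$ follows from one-homogeneity with $t = 0$, and that all the scaling steps only use $t \geq 0$, consistent with \emph{positive} one-homogeneity. I would present the argument in the order above: first $W(0)=0$ and the Euler relation $W(p) = \ang{x,p}$, then the rescaling argument for $\partial W(p) \subset \partial W(0)$, then the orthogonality as a one-line corollary.
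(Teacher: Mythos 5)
Your proof is correct. The paper states this lemma without proof (it is introduced as a ``trivial result''), so there is no paper argument to compare against, but your route is the standard one: establish $W(0)=0$ and the Euler relation $\ang{x,p}=W(p)$ for $x\in\partial W(p)$ by testing the subgradient inequality at $q=0$ and $q=2p$, then read off both claims. One small simplification: once you have $W(p)-\ang{x,p}=0$, the inclusion $\partial W(p)\subset\partial W(0)$ follows directly from the subgradient inequality
\begin{align*}
W(q)\geq W(p)+\ang{x,q-p}=\pth{W(p)-\ang{x,p}}+\ang{x,q}=\ang{x,q},
\end{align*}
with no further rescaling in $t$ needed; the $t$-scaling step in your writeup is harmless but redundant.
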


\subsection{The resolvent problem and the approximation by regularized functionals}
\label{sec:resolvent-approximation}

Let $W$ be a convex function satisfying the growth condition \eqref{growth-condition}.
For some flat torus $\Gamma = \R^d / L \Z^d$, $d \geq 1$,
we want to approximate $E_W(\cdot; \Gamma)$ defined in \eqref{EW} by
certain regularized functionals.

Suppose therefore that $\set{W_m}_{m\in \N}$ is a sequence of convex functions on $\R^d$
that satisfies the following:
\begin{enumerate}
\item $\set{W_m}_{m\in\N}$ is a decreasing sequence,
\item $W_m \in C^2(\R^d)$,
\item $W_m \searrow W$ as $m\to\infty$ locally uniformly on $\R^d$,
\item there exist positive numbers $a_m$ such that
$a_m^{-1} I \leq \nabla_p^2 W_m(p) \leq a_m I$ for all $p \in \R^d$, $m\in\N$,
where $I$ is the $d\times d$ identity matrix.
\end{enumerate}

We introduce the regularized functionals
\begin{align*}
E_m(\psi; \Gamma) := \begin{cases}
\int_\Rn W_m(\nabla \psi) \dx & \psi \in H^1(\Gamma),\\
+\infty & \psi \in L^2(\Gamma) \setminus H^1(\Gamma),
\end{cases}
\end{align*}
where $H^k(\Gamma) := W^{k,2}(\Gamma)$ is the standard Sobolev space of $L\Z^d$-periodic functions.

Let us give an example of a regularized $W_m$ first.

\begin{example}
\label{ex:wm-example}
Let $\eta_m$ be the standard mollifier
with support of radius $1/m$.
Define the smoothing
\begin{align*}
W_m(p) = (W * \eta_m)(p) + \frac 1{2m} \abs{p}^2 \qquad p \in \Rn.
\end{align*}
By convexity we have $W_m \geq W$ and $W_m$ convex,
$W_m \in C^\infty(\R^d)$,
$\nabla^2 W_m \geq \frac 1m I$
and $W_m \searrow W$ as $m\to0$ locally uniformly.
The uniform upper bound on $\nabla^2 W_m$ follows immediately from $\partial_{p_i p_j}(W * \eta_m)
= \partial_{p_i} W * \partial_{p_j} \eta_m$, and the right-hand side is bounded since $\nabla
W$ is bounded.
\end{example}

We need the following result similar to \cite[Proposition~5.1]{GGP13JMPA}.

\begin{proposition}
\label{pr:energy-convergence}
\begin{enumerate}
\item $E_m(\cdot; \Gamma)$ form
a decreasing sequence of proper closed convex functionals on $L^2(\Gamma)$.
\item The subdifferential $\partial E_m$ is a singleton for all
\begin{align*}
\psi \in \domain(\partial E_m) = H^2(\Gamma)
\end{align*}
containing the unique element
\begin{align*}
-\trace \bra{\pth{\nabla_p^2 W_m} \pth{\nabla \psi} \nabla^2 \psi} \qquad \text{a.e.}
\end{align*}
\item
$(\inf_m E_m(\cdot; \Gamma))_* = E_W(\cdot; \Gamma)$, the lower semi-continuous envelope of $\inf_m E_m$ in $L^2(\Gamma)$.
\end{enumerate}
\end{proposition}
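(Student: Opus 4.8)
The plan is to treat the three claims in turn, dispatching (1) and (2) with standard convex‑analysis and elliptic‑regularity arguments parallel to \cite[Proposition~5.1]{GGP13JMPA}. For (1), convexity of $E_m(\cdot;\Gamma)$ is immediate from convexity of $W_m$ and linearity of $\psi \mapsto \nabla\psi$; it is proper because constants lie in $H^1(\Gamma)$ and the lower Hessian bound $\nabla_p^2 W_m \ge a_m^{-1} I$ forces $W_m(p) \ge c_m + \tfrac{1}{2a_m}\abs{p}^2$, so $E_m(\cdot;\Gamma)$ never takes the value $-\infty$ on $H^1(\Gamma)$. This quadratic coercivity makes every sublevel set bounded in $H^1(\Gamma)$, which together with the standard weak $H^1$-lower semicontinuity of $\psi\mapsto\int_\Gamma W_m(\nabla\psi)\dx$ gives $L^2(\Gamma)$-closedness. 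Monotonicity follows at once from $W_{m+1}\le W_m$, which gives $E_{m+1}(\cdot;\Gamma)\le E_m(\cdot;\Gamma)$ on $H^1(\Gamma)$ and equality ($=+\infty$) elsewhere.

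For (2), I would first prove $H^2(\Gamma)\subset\domain(\partial E_m)$ with the stated formula. The upper bound $\nabla_p^2 W_m\le a_m I$ gives $\abs{\nabla_p W_m(p)}\le C_m(1+\abs{p})$, so $t\mapsto E_m(\psi+th;\Gamma)$ is differentiable for every $h\in H^1(\Gamma)$ with derivative $\int_\Gamma \nabla_p W_m(\nabla\psi)\cdot\nabla h\dx$ at $t=0$; when $\psi\in H^2(\Gamma)$ the vector field $\nabla_p W_m(\nabla\psi)$ has weak gradient $(\nabla_p^2 W_m)(\nabla\psi)\nabla^2\psi\in L^2(\Gamma)$, so an integration by parts turns this into $-\int_\Gamma \trace\bra{(\nabla_p^2 W_m)(\nabla\psi)\nabla^2\psi}\,h\dx$, and convexity together with G\^ateaux differentiability forces $\partial E_m(\psi)$ to be the singleton $\set{-\trace\bra{(\nabla_p^2 W_m)(\nabla\psi)\nabla^2\psi}}$. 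Conversely, if $\psi\in\domain(\partial E_m)$ with subgradient $v\in L^2(\Gamma)$, then $\psi$ is a periodic weak solution of the uniformly elliptic equation $-\divo(\nabla_p W_m(\nabla\psi))=v$, and the two-sided bound $a_m^{-1}I\le\nabla_p^2 W_m\le a_m I$ lets one run the Nirenberg difference-quotient method on the torus to deduce $\psi\in H^2(\Gamma)$. I expect this last $H^2$-regularity step to be the main technical point, since the difference-quotient constants must be controlled by $a_m$ alone and matched with the periodic boundary condition.

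For (3), I would first identify the pointwise infimum $\tilde E:=\inf_m E_m(\cdot;\Gamma)$, which equals $\lim_m E_m(\cdot;\Gamma)$ by (1). Off $H^1(\Gamma)$ every $E_m(\cdot;\Gamma)$ is $+\infty$, so $\tilde E=+\infty$ there; on $H^1(\Gamma)$, since $W_m(\nabla\psi)\searrow W(\nabla\psi)$ a.e.\ and $W_1(\nabla\psi)-W(\nabla\psi)\in L^1(\Gamma)$ (using the quadratic growth of $W_1$, the growth bound \eqref{growth-condition} on $W$, and $\nabla\psi\in L^2(\Gamma)$), monotone convergence gives $E_m(\psi;\Gamma)\to\int_\Gamma W(\nabla\psi)\dx$. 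Hence $\tilde E$ is exactly the functional $\psi\mapsto\int_\Gamma W(\nabla\psi)\dx$ on $H^1(\Gamma)$, extended by $+\infty$, and it remains to show $(\tilde E)_*=E_W(\cdot;\Gamma)$. Since $H^1(\Gamma)\subset W^{1,1}(\Gamma)$, $\tilde E$ dominates the functional in \eqref{functional-w11} everywhere, so $(\tilde E)_*\ge E_W(\cdot;\Gamma)$. For the reverse, since $(\tilde E)_*$ is lower semicontinuous and $E_W(\cdot;\Gamma)$ is the largest lower semicontinuous functional below the one defined in \eqref{functional-w11}, it is enough to verify $(\tilde E)_*(\psi)\le\int_\Gamma W(\nabla\psi)\dx$ for every $\psi\in L^2(\Gamma)\cap W^{1,1}(\Gamma)$; mollifying on the torus, $\psi*\rho_k\in C^\infty(\Gamma)\subset H^1(\Gamma)$ converges to $\psi$ in $L^2(\Gamma)$, and Jensen's inequality for the convex $W$ gives $\int_\Gamma W(\nabla(\psi*\rho_k))\dx\le\int_\Gamma (W(\nabla\psi))*\rho_k\dx=\int_\Gamma W(\nabla\psi)\dx$, whence $(\tilde E)_*(\psi)\le\liminf_k\tilde E(\psi*\rho_k)\le\int_\Gamma W(\nabla\psi)\dx$. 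Combining the two bounds proves (3).
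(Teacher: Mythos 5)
Your proposal is correct. Parts (a) and (b) are handled in the paper simply by a citation to \cite[Section~9.6.3]{Evans}; your arguments (quadratic coercivity from the two-sided Hessian bound for closedness, G\^ateaux differentiability plus integration by parts for the subgradient formula, and difference-quotient regularity for $\domain(\partial E_m)=H^2(\Gamma)$) are precisely the contents of that reference, so no disagreement there.

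For part (c) you take a genuinely different route in the nontrivial direction. The paper proves $F_*\le E_W$ by fixing $\psi\in BV(\Gamma)$, approximating by smooth $\psi_k$ strictly in $BV$ (i.e.\ $\psi_k\to\psi$ in $L^2$ and $\int|D\psi_k|\to\int|D\psi|$), and invoking Re\v{s}etnjak's continuity theorem to get $E_W(\psi_k)\to E_W(\psi)$, whence $F_*(\psi)\le\liminf_k F(\psi_k)=\lim_k E_W(\psi_k)=E_W(\psi)$. You instead verify $(\tilde E)_*\le\int_\Gamma W(\nabla\psi)\dx$ only on $L^2\cap W^{1,1}(\Gamma)$, via mollification and Jensen's inequality $\int_\Gamma W(\nabla\psi*\rho_k)\dx\le\int_\Gamma W(\nabla\psi)\dx$, and then appeal to the characterization of $E_W$ as the largest lower semicontinuous minorant of the unrelaxed functional \eqref{functional-w11} to conclude $(\tilde E)_*\le E_W$. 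This trades Re\v{s}etnjak's theorem for elementary convexity plus the abstract definition of relaxation; the cost is that the intermediate pointwise estimate is established only on $W^{1,1}$ rather than on all of $BV$, but the minorant argument makes that immaterial. The easy direction ($(\tilde E)_*\ge E_W$, via $W\le W_m$ and monotonicity of the lsc envelope) matches the paper, and your substitution of monotone for dominated convergence in identifying $\tilde E$ on $H^1(\Gamma)$ is harmless. Both routes are sound.
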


\begin{proof}
For (a) and (b) see \cite[Section~9.6.3]{Evans}.

(c): $\set{E_m(\cdot; \Gamma)}$ is decreasing since $\set{W_m}$ is decreasing.
Therefore $E_m(\psi; \Gamma) \to E_W(\psi; \Gamma)$
for any $\psi \in H^1(\Gamma)$ by the Dominated convergence theorem,
since $E_W$ is of the form \eqref{functional-w11} in this case.
If $\psi \notin H^1(\Gamma)$, $E_m(\psi; \Gamma) = \infty$ by definition
and therefore $E_W(\cdot; \Gamma) \leq \inf_m E_m(\cdot; \Gamma)$,
with equality on $H^1(\Gamma)$.
Let us now denote $F(\psi) = \inf_m E_m(\psi; \Gamma)$.
By a standard approximation result,
for any $\psi \in BV(\Gamma)$
there exists a sequence $\set{\psi_k} \subset C^\infty(\Gamma) \cap BV(\Gamma) \subset H^1(\Gamma)$
such that
$\psi_k \to \psi$ in $L^2(\Omega)$
and $\int_\Gamma \abs{D\psi_k} \to \int_\Gamma \abs{D\psi}$, which yields
$E_W(\psi_k; \Gamma) \to E_W(\psi; \Gamma)$ due to \cite{Resetnjak};
see \cite{GiaquintaModicaSoucek}.
In particular,
\begin{align*}
F_*(\psi) \leq \liminf_{k\to\infty} F(\psi_k) = \liminf_{k\to\infty} E_W(\psi_k; \Gamma)
= E_W(\psi; \Gamma).
\end{align*}
Hence $F_* = E_W$ by the lower semi-continuity of $E_W$.
\end{proof}

We will need the following approximation and convergence result
for the resolvent problems.

\begin{proposition}
\label{pr:resolvent-problems}
For $\psi \in \Lip(\Gamma)$,
and $m \in \N$, $a > 0$, the resolvent
problems
\begin{align*}
\psi_a + a \partial E_W(\psi_a; \Gamma) \ni \psi,\\
\psi_{a,m} + a \partial E_m(\psi_{a,m}; \Gamma) \ni \psi,
\end{align*}
admit unique solutions $\psi_a$ and $\psi_{a,m}$ in $L^2(\Gamma)$, respectively.
Moreover, $\psi_a$ and $\psi_{a,m}$ are Lipschitz continuous
and
\begin{align*}
\norm{\nabla \psi_a}_\infty, \norm{\nabla \psi_{a,m}}_\infty \leq
\norm{\nabla \psi}_\infty.
\end{align*}
Finally, $\psi_{a,m} \in C^{2,\al}(\Gamma)$ for some $\al = \alpha_m > 0$.

We also introduce the functions
\begin{align*}
h_a := \frac{\psi_a - \psi}{a}, &&&
h_{a,m} := \frac{\psi_{a,m} - \psi}{a} = -\trace\bra{(\nabla_p^2 W_m)(\nabla \psi_{a,m}) \nabla^2 \psi_{a,m}}.
\end{align*}
Then, for fixed $a>0$,
\begin{align*}
\psi_{a,m} &\rightrightarrows \psi_a && \text{uniformly as $m \to \infty$, and},\\
h_{a,m} &\rightrightarrows h_a && \text{uniformly as $m \to \infty$}.
\intertext{Moreover,}
\psi_a &\rightrightarrows \psi && \text{uniformly as $a \to 0$.}
\intertext{If furthermore $\psi \in \domain\pth{\partial E_W(\cdot; \Gamma)}$ then
also}
h_a &\to -\partial^0 E(\psi; \Gamma)&& \text{in $L^2(\Gamma)$ as $a \to 0$.}
\end{align*}
\end{proposition}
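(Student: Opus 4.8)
The plan is to combine the standard theory of gradient flows of proper closed convex functionals on the Hilbert space $L^2(\Gamma)$ with elliptic regularity and a compactness argument, following the line of \cite[Proposition~5.1]{GGP13JMPA}. First, since $E_W(\cdot;\Gamma)$ and $E_m(\cdot;\Gamma)$ are proper closed convex, the operators $I+a\,\partial E_W(\cdot;\Gamma)$ and $I+a\,\partial E_m(\cdot;\Gamma)$ are bijections of $L^2(\Gamma)$; equivalently, $\psi_a$ and $\psi_{a,m}$ are the unique minimizers of $\phi\mapsto\frac{1}{2a}\norm{\phi-\psi}_{L^2(\Gamma)}^2+E_W(\phi;\Gamma)$ and of $\phi\mapsto\frac{1}{2a}\norm{\phi-\psi}_{L^2(\Gamma)}^2+E_m(\phi;\Gamma)$, which exist and are unique by the direct method and strict convexity, and the explicit formula for $h_{a,m}$ follows at once from the resolvent inclusion together with the identification of $\partial E_m(\psi_{a,m};\Gamma)$ in Proposition~\ref{pr:energy-convergence}(b). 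For the bounds I would use that $E_W(\cdot;\Gamma)$ and $E_m(\cdot;\Gamma)$ are invariant under translations of $\Gamma$ and under addition of constants, that constant functions are their equilibria, and that the resolvents are order-preserving --- the last point following from the submodularity $E(\phi\wedge\chi)+E(\phi\vee\chi)\leq E(\phi)+E(\chi)$ of these energies together with the strict convexity of the minimized functionals. Then $\psi_a(\cdot+\xi)=(\psi(\cdot+\xi))_a$ and $(\psi+c)_a=\psi_a+c$ for any constant vector $\xi$ and constant $c$, so applying the resolvent to the pointwise inequalities $-\norm{\psi}_\infty\leq\psi\leq\norm{\psi}_\infty$ and $\psi(\cdot+\xi)\leq\psi+\norm{\nabla\psi}_\infty\abs{\xi}$ gives $\norm{\psi_a}_\infty\leq\norm{\psi}_\infty$ and $\norm{\nabla\psi_a}_\infty\leq\norm{\nabla\psi}_\infty$; the identical argument bounds $\psi_{a,m}$ uniformly in $a$ and $m$.

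For the regularity of $\psi_{a,m}$, I would rewrite the resolvent problem as the quasilinear equation $\psi_{a,m}-a\,\divo\pth{\nabla_p W_m(\nabla\psi_{a,m})}=\psi$ on $\Gamma$, which is uniformly elliptic by the Hessian bounds $a_m^{-1}I\leq\nabla_p^2 W_m\leq a_m I$ and has right-hand side in $L^\infty(\Gamma)$. Since $\psi_{a,m}\in H^2(\Gamma)$ is Lipschitz by the previous step, standard elliptic regularity (De Giorgi--Nash applied to the differentiated equation, then Schauder estimates using the smoothness of $W_m$) yields $\psi_{a,m}\in C^{2,\al}(\Gamma)$ for some $\al=\alpha_m>0$; for the remaining conclusions, the weaker bound $\psi_{a,m}\in W^{2,p}(\Gamma)$ for all $p<\infty$ would already suffice.

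Next, for the limit $m\to\infty$ with $a>0$ fixed, Proposition~\ref{pr:energy-convergence} tells us that $\set{E_m(\cdot;\Gamma)}$ is a decreasing sequence with $(\inf_m E_m(\cdot;\Gamma))_*=E_W(\cdot;\Gamma)$, which makes $E_m(\cdot;\Gamma)\to E_W(\cdot;\Gamma)$ converge in the Mosco sense; hence the resolvents converge, $\psi_{a,m}\to\psi_a$ in $L^2(\Gamma)$ (cf.\ \cite[Proposition~5.1]{GGP13JMPA}). Since $\set{\psi_{a,m}}_m$ is equi-Lipschitz and equi-bounded by the first step, it is precompact in $C(\Gamma)$ by the Arzel\`{a}--Ascoli theorem, and as every uniform cluster point must coincide with the $L^2$-limit $\psi_a$ we obtain $\psi_{a,m}\rightrightarrows\psi_a$ uniformly; because $\psi$ and $a$ are fixed, $h_{a,m}=\frac1a(\psi_{a,m}-\psi)\rightrightarrows\frac1a(\psi_a-\psi)=h_a$ uniformly as well.

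Finally, for $a\to0$: $E_W(\cdot;\Gamma)$ is finite on the dense subspace $C^\infty(\Gamma)\subset BV(\Gamma)\cap L^2(\Gamma)$, so the domain of $\partial E_W(\cdot;\Gamma)$ is dense in $L^2(\Gamma)$ and $\psi_a\to\psi$ in $L^2(\Gamma)$ as $a\to0$; the equi-Lipschitz/equi-bounded property upgrades this to $\psi_a\rightrightarrows\psi$ uniformly, exactly as above. If moreover $\psi\in\domain(\partial E_W(\cdot;\Gamma))$, then $-h_a=\frac1a(\psi-\psi_a)$ is precisely the Yosida approximation of $\partial E_W(\cdot;\Gamma)$ at $\psi$, and by the classical theory of maximal monotone operators it converges in $L^2(\Gamma)$ to the minimal section $\partial^0 E(\psi;\Gamma)$, so $h_a\to-\partial^0 E(\psi;\Gamma)$. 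The one step that really uses the variational structure rather than abstract operator theory is the gradient estimate $\norm{\nabla\psi_a}_\infty\leq\norm{\nabla\psi}_\infty$, resting on order-preservation of the resolvent via submodularity together with translation invariance; I expect this to require the most care, while everything else --- in particular upgrading $L^2$-convergence to uniform convergence --- is routine once the a priori Lipschitz and sup bounds are in hand.
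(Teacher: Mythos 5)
Your proof follows the same main structure as the paper's (which in turn follows \cite[Proposition~5.3]{GGP13JMPA}): Mosco convergence of $E_m$ to $E_W$ gives resolvent convergence in $L^2(\Gamma)$; equi-Lipschitz bounds plus Arzel\`{a}--Ascoli upgrade this to uniform convergence; and the $a\to 0$ statements are abstract facts about resolvents and Yosida approximations of a proper closed convex functional on a Hilbert space. Where you diverge is the justification of order-preservation of the resolvent that underlies the sup and gradient bounds: the paper derives $\norm{\nabla\psi_{a,m}}_\infty\leq\norm{\nabla\psi}_\infty$ from the maximum principle for the smooth quasilinear PDE $\psi_{a,m}-a\,\divo\nabla_p W_m(\nabla\psi_{a,m})=\psi$ and then transfers the bound to $\psi_a$ via the uniform limit $m\to\infty$, whereas you invoke submodularity of $E_W$ and $E_m$ directly together with translation invariance. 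Both are correct and standard. Your route has the modest advantage of giving the Lipschitz bound for $\psi_a$ without first passing through the regularization, at the price of needing to verify submodularity of the relaxed $BV$ functional $E_W$ (routine, via the pointwise identity $g(\nabla(u\wedge v))+g(\nabla(u\vee v))=g(\nabla u)+g(\nabla v)$ a.e.\ for $W^{1,1}$ functions and an approximation step for $BV$), whereas the paper only invokes a maximum principle at the smooth $E_m$ level, where it is immediate, and gets the corresponding property of $E_W$ for free from the limit.
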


\begin{proof}
We follow the proof of \cite[Proposition~5.3]{GGP13JMPA}.
Due to Proposition~\ref{pr:energy-convergence}(a), \cite[Theorem~3.20]{Attouch} implies
the \emph{Mosco convergence} of $E_m$ to $E$.
This yields the resolvent convergence \cite[Theorem~3.26]{Attouch},
namely, for fixed $a > 0$ we have
\begin{align}
\label{res-l2-conv}
\psi_{a,m} \to \psi_a  \quad \text{in $L^2(\Gamma)$.}
\end{align}

The $C^{2,\alpha}$ regularity of $\psi_{a,m}$ is standard from the elliptic theory,
as $I + a \partial^0 E_m(\cdot; \Gamma)$ is a quasilinear uniformly elliptic operator
as noted in Proposition~\ref{pr:energy-convergence}.

Since the $E_m$-resolvent problem is translation invariant and has a maximum principle,
we find that $\psi_{a,m}$ is Lipschitz since $\psi$ is Lipschitz, and
\begin{align*}
\norm{\nabla \psi_{a,m}}_\infty \leq \norm{\nabla \psi}_\infty.
\end{align*}
Therefore the Arzel\'{a}-Ascoli theorem and \eqref{res-l2-conv} yield
the uniform convergence of
$\psi_{a,m} \to \psi_a$ and $h_{a,m} \to h_a$ as $m\to\infty$ for fixed $a > 0$,
and hence also the Lipschitz bound $\norm{\nabla \psi_a}_\infty \leq \norm{\nabla \psi}_\infty$.
Moreover, since the $E_m$-resolvent problem has a maximum principle,
the $E_W$-resolvent problem has a maximum principle as well.

Finally, a standard result implies that $\psi_a \to \psi$ in $L_2(\Gamma)$ as $a\to0$
\cite[Theorem~3.24]{Attouch},
therefore with Arzel\'{a}-Ascoli and the uniform Lipschitz bound we conclude that
$\psi_a \to \psi$ uniformly.
If furthermore $\psi \in \domain(\partial E_W(\cdot; \Gamma))$,
also $h_a \to -\partial^0 E_W(\psi; \Gamma)$ \cite[Proposition~3.56]{Attouch}.
\end{proof}

We give a lemma on the Mosco convergence of functionals with linear growth.

\begin{lemma}
\label{le:lingrowthapproximation}
Suppose that $W_m$ are convex positively one-homogeneous functions such that $W_m \rightrightarrows
W$ uniformly on the unit ball. Then $E_m(\psi) = \int_\Gamma W_m(\nabla \psi)$ Mosco-converges to
$E(\psi) = \int_\Gamma W(D\psi)$ as $m \to \infty$.
\end{lemma}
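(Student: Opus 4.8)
The plan is to verify the two conditions in the definition of Mosco convergence: the $\liminf$ inequality along $L^2$-weakly convergent sequences, and the $\limsup$ inequality (existence of a recovery sequence) along $L^2$-strongly convergent sequences. Since all $W_m$ and $W$ are positively one-homogeneous and convex, the growth condition \eqref{growth-condition} holds uniformly in $m$: uniform convergence on the unit ball gives $\abs{W_m(p)} \leq M(1+\abs p)$ with a single $M$, and in fact $\abs{W_m(p) - W(p)} \leq \e_m \abs{p}$ for all $p$, where $\e_m := \sup_{\abs q \leq 1}\abs{W_m(q) - W(q)} \to 0$. The recession functions satisfy $W_m 0^+ = W_m$ and $W 0^+ = W$, so by \eqref{ew-decomp} the energies are simply $E_m(\psi) = \int_\Gamma W_m(D\psi)$ and $E(\psi) = \int_\Gamma W(D\psi)$ interpreted via the one-homogeneous extension applied to the polar decomposition of the measure $D\psi$.

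For the $\liminf$ inequality, suppose $\psi_m \toweak \psi$ in $L^2(\Gamma)$. If $\liminf E_m(\psi_m) = +\infty$ there is nothing to prove, so pass to a subsequence with $\sup_m E_m(\psi_m) < \infty$; by the uniform linear growth bound this controls $\int_\Gamma \abs{D\psi_m}$, so $\psi_m$ is bounded in $BV(\Gamma)$ and $\psi \in BV(\Gamma)$ with $D\psi_m \towstar D\psi$ as measures. The key point is to compare $E_m(\psi_m)$ with $E(\psi_m)$: from $\abs{W_m(p) - W(p)} \leq \e_m \abs p$ we get
\begin{align*}
\abs{E_m(\psi_m) - E(\psi_m)} \leq \e_m \int_\Gamma \abs{D\psi_m} \leq C \e_m \to 0.
\end{align*}
Then $\liminf_m E_m(\psi_m) = \liminf_m E(\psi_m) \geq E(\psi)$ by the lower semicontinuity of $\psi \mapsto \int_\Gamma W(D\psi)$ with respect to weak-$*$ convergence of measures (Reshetnyak lower semicontinuity, as already invoked in the proof of Proposition~\ref{pr:energy-convergence}). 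This also covers the original sequence along any subsequence, giving the claim.

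For the recovery sequence, given $\psi \in L^2(\Gamma)$ we may take the constant sequence $\psi_m = \psi$. If $\psi \notin BV(\Gamma)$ then $E(\psi) = E_m(\psi) = +\infty$ and there is nothing to show; if $\psi \in BV(\Gamma) \cap L^2(\Gamma)$, the same estimate $\abs{E_m(\psi) - E(\psi)} \leq \e_m \int_\Gamma \abs{D\psi} \to 0$ gives $E_m(\psi) \to E(\psi)$, so $\limsup_m E_m(\psi_m) = E(\psi)$ exactly. Combining the two parts yields Mosco convergence. The only mild subtlety—and the step I would be most careful about—is the first half: making sure the energy bound genuinely yields $BV$-boundedness and weak-$*$ compactness of $D\psi_m$, and that Reshetnyak's theorem applies to the (possibly non-symmetric, merely one-homogeneous) integrand $W$ on a torus; but since $W$ is convex, one-homogeneous and of linear growth, this is exactly the setting of the results cited around \eqref{ew-decomp} and in Proposition~\ref{pr:energy-convergence}, so no genuinely new difficulty arises.
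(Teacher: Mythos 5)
Your recovery-sequence argument is fine and in fact a bit cleaner than the paper's: you take the constant sequence $\psi_m = \psi$ and use the pointwise estimate $\abs{W_m(p) - W(p)} \leq \e_m \abs p$ together with \eqref{ew-decomp} to get $\abs{E_m(\psi) - E(\psi)} \leq \e_m \int_\Gamma \abs{D\psi}$ directly on $BV$. The paper instead approximates $\psi$ by smooth functions $\psi_k$ with strict $BV$ convergence, invokes Re\v{s}etnjak to get $E(\psi_k) \to E(\psi)$, and only uses the $\abs{W_m - W}$ estimate on $W^{1,1}$ functions. Both routes work; yours avoids the diagonalization.

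The $\liminf$ half, however, has a real error, and it is exactly the step you flagged but then waved away. You assert that ``by the uniform linear growth bound this controls $\int_\Gamma \abs{D\psi_m}$''. The linear growth bound \eqref{growth-condition} is an \emph{upper} bound, $\abs{W_m(p)} \leq M(1+\abs p)$; for one-homogeneous $W_m$ it gives $E_m(\psi_m) \leq M \int_\Gamma \abs{D\psi_m}$, i.e., the inequality goes the wrong way. To conclude $\int_\Gamma \abs{D\psi_m} \leq C$ from $E_m(\psi_m) \leq C'$ you need a uniform \emph{lower} bound $W_m(p) \geq c\abs p$, which is not among the hypotheses of the lemma (and the paper explicitly declines to assume such a bound on $W$ in Section~\ref{sec:convex func lin growth}). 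Without $BV$-boundedness of $\{\psi_m\}$, you neither get $\psi \in BV(\Gamma)$ with $D\psi_m \towstar D\psi$ nor can you make your error term $\e_m \int_\Gamma \abs{D\psi_m}$ vanish, so the chain $\liminf_m E_m(\psi_m) = \liminf_m E(\psi_m) \geq E(\psi)$ collapses.

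The paper sidesteps this entirely by a different mechanism: it writes $E(\psi)$ via the dual formula of \cite{AB} as a supremum of the $L^2$-weakly continuous linear functionals $\psi \mapsto \int_\Gamma \psi \divo \varphi$ over vector fields $\varphi$ constrained by $W$, which makes $E$ weakly lower semicontinuous without any compactness argument, and then compares the admissible vector field classes for $W_m$ and $W$ using the uniform convergence. That is a genuinely different route from yours, and it is what removes the need for $BV$ compactness. If you want to keep your estimate-plus-Reshetnyak strategy, you must either add a uniform coercivity hypothesis $W_m(p) \geq c\abs p$ (which does hold in the sliced setting where the lemma is actually applied, by Lemma~\ref{le:linear growth SW}) or replace the compactness step with the duality argument.
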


\begin{proof}
By \cite[Proposition~3.19]{Attouch}, we need to show that for every $\psi$, $\psi_m
\stackrel{w}{\to} \psi$ weakly in $L^2(\Gamma)$ we have
$E(\psi) \leq \liminf_m E_m(\psi_m)$ and that for every $\psi \in L^2(\Gamma)$ there exists a sequence $\psi_m \to
\psi$ strongly in $L^2(\Gamma)$ such that $E(\psi) = \lim_m E_m(\psi_m)$.

If $\psi_m \stackrel{w}\to \psi$ weakly in $L^2(\Gamma)$, we can deduce $E(\psi) \leq \liminf_m
E_m(\psi_m)$ from the formula \cite{AB}
\begin{align*}
E(\psi) := \sup \Big\{\int_\Gamma \psi \divo \varphi: &\varphi \in
C^1(\Gamma), \norm{\varphi}_\infty \leq 1,\\ &\varphi(x) \cdot p \leq 1 \text{ whenever } W(p) \leq
1, x \in \Gamma\Big\}.
\end{align*}

By a standard approximation result,
for any $\psi \in BV(\Gamma)$
there exists a sequence $\set{\psi_k} \subset C^\infty(\Gamma) \cap BV(\Gamma) \subset W^{1, 1}(\Gamma)$
such that
$\psi_k \to \psi$ in $L^2(\Omega)$
and $\int_\Gamma \abs{D\psi_k} \to \int_\Gamma \abs{D\psi}$, which yields
$E(\psi_m) \to E(\psi)$ by the theorem of Re\v{s}etnjak \cite{Resetnjak}.
On the other hand, by the uniform convergence of $W_m$ to $W$ on the unit ball we have for any
$\xi \in W^{1,1}(\Gamma)$
\begin{align*}
\abs{\int_\Gamma W_m(\nabla\xi) - W(\nabla \xi) \dx} \leq &\int_\Gamma |\nabla \xi|
\abs{W_m\pth{\frac{\nabla\xi}{|\nabla \xi|}} - W\pth{\frac{\nabla\xi}{|\nabla \xi|}}} \dx
\\
&\leq \int_\Gamma |\nabla \xi| \dx\norm{W_m - W}_{L^\infty(B_1(0))}.
\end{align*}
Therefore $E(\psi) = \lim_m E_m(\psi_m)$.
\end{proof}

\subsection{Cahn-Hoffman vector field patching}

We shall use the minimal section $\partial^0 E_W(\psi; \Omega)$ of the subdifferential of $E_W$ to define the crystalline
curvature for a given Lipschitz function $\psi$ on $\Omega$.
However, the minimal section is a solution of a variational problem and therefore
its value might depend strongly on the set $\Omega$, and nonlocally on the values of $\psi$.
Fortunately, the situation is not as dire as it might appear at first,
and in fact, the minimal section is nonlocal only on flat parts (facets) of $\psi$.
This restriction of nonlocality is expressed by the following lemma.
Intuitively, we can patch the Cahn-Hoffman vector fields as much as we please as long
as we do it across the level sets of $\psi$.

\begin{lemma}
\label{le:cahn-hoffman-patch}
Let $W: \R^d \to \R$ be a positively one-homogeneous convex function, $d \geq 1$.
Suppose that $\psi_1 \in \Lip(\Omega_1)$ and $\psi_2 \in \Lip(\Omega_2)$ are two
Lipschitz functions on two open subsets $\Omega_1, \Omega_2$ of $\R^d$.
Let $G = \set{x \in \Omega_1: a < \psi_1(x) < b}$ for some $a < b$
such that $\cl G \subset \Omega_1 \cap \Omega_2$
and $\psi_1 = \psi_2$ on $G$.
If $z_i \in \CH_W(\psi_i; \Omega_i)$ are two Cahn-Hoffman vector fields,
then
\begin{align}
\label{z-patch}
z(x) =
\begin{cases}
z_2(x) & x \in G,\\
z_1(x) & x \in \Omega_1 \setminus G,
\end{cases}
\end{align}
is also a Cahn-Hoffman vector field $z \in \CH_W(\psi_1; \Omega_1)$,
and
\begin{align}
\label{divz-patch}
\divo z(x) =
\begin{cases}
\divo z_2(x) & \text{a.e. } x \in G,\\
\divo z_1(x) & \text{a.e. } x \in \Omega_1 \setminus G.
\end{cases}
\end{align}
\end{lemma}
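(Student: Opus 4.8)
The plan is to verify the two claimed properties of $z$ separately: first that $z \in \CH_W(\psi_1; \Omega_1)$, meaning $z \in X_2(\Omega_1)$ and $z(x) \in \partial W(\nabla \psi_1(x))$ a.e., and second the gluing formula \eqref{divz-patch} for the divergence. The pointwise inclusion $z(x) \in \partial W(\nabla \psi_1(x))$ a.e.\ is immediate: on $\Omega_1 \setminus G$ we have $z = z_1$ and $z_1(x) \in \partial W(\nabla \psi_1(x))$ by hypothesis, while on $G$ we have $z = z_2$, and since $\psi_1 = \psi_2$ on the \emph{open} set $G$ we get $\nabla \psi_1 = \nabla \psi_2$ a.e.\ on $G$, so $z_2(x) \in \partial W(\nabla \psi_2(x)) = \partial W(\nabla \psi_1(x))$ there. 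Also $z \in L^\infty(\Omega_1; \R^d)$ trivially, so the only real issue is showing $\divo z \in L^2(\Omega_1)$ together with \eqref{divz-patch}.

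The key observation, and the main technical point, is that the jump of the vector field across $\partial G$ has no normal component, so no singular part (no distributional surface term) appears in $\divo z$. The set $G = \{a < \psi_1 < b\}$ has topological boundary contained in the level sets $\{\psi_1 = a\} \cup \{\psi_1 = b\}$ (using $\cl G \subset \Omega_1$). On $\partial G$, where $\psi_1$ is differentiable, the outer normal to $G$ is parallel to $\nabla \psi_1$. Now both $z_1(x)$ and $z_2(x)$ lie in $\partial W(\nabla \psi_1(x))$, and by Lemma~\ref{le:one-homogeneous-subdiff} any two elements of $\partial W(p)$ differ by a vector orthogonal to $p$; hence $(z_1(x) - z_2(x)) \perp \nabla \psi_1(x)$, i.e.\ the difference is tangent to the level set of $\psi_1$ and thus tangent to $\partial G$. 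This is precisely the condition under which patching two $X_2$ vector fields produces no distributional interface term. I expect this orthogonality--via--one-homogeneity step to be the crux of the argument.

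To make this rigorous I would proceed via the coarea formula / a mollification argument rather than a direct trace computation on the possibly irregular level sets. One clean route: approximate by replacing the sharp interface $\{\psi_1 = a\}$, $\{\psi_1 = b\}$ with a convex combination. For a.e.\ pair of regular values $a', b'$ with $a < a' < b' < b$, set $G' = \{a' < \psi_1 < b'\}$; by Sard-type arguments these level sets are nice for a.e.\ choice, but even without that one tests against $\varphi \in C_c^\infty(\Omega_1)$ and writes
\begin{align*}
\int_{\Omega_1} z \cdot \nabla \varphi \dx = \int_{G} z_2 \cdot \nabla \varphi \dx + \int_{\Omega_1 \setminus G} z_1 \cdot \nabla \varphi \dx.
\end{align*}
One then integrates by parts in each region. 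The boundary contributions are
\begin{align*}
-\int_{\partial G} (z_2 \cdot \nu_G)\, \varphi \diff{\mathcal H^{d-1}} \quad \text{and} \quad \int_{\partial G} (z_1 \cdot \nu_G)\, \varphi \diff{\mathcal H^{d-1}},
\end{align*}
with $\nu_G$ the outer unit normal of $G$, so they combine into $\int_{\partial G} (z_1 - z_2)\cdot \nu_G\, \varphi \diff{\mathcal H^{d-1}}$, which vanishes by the orthogonality just established, since $\nu_G \parallel \nabla \psi_1$ a.e.\ on $\partial G$ and $(z_1 - z_2) \perp \nabla \psi_1$. The only technical care needed is to justify the integration by parts for $X_2$ vector fields on a set of finite perimeter whose reduced boundary sits inside level sets of a Lipschitz function; this is exactly the setting of the Anzellotti--Chen--Frid theory of pairings $[z \cdot \nu]$, and one can also avoid it entirely by first proving the claim for smooth approximations $\psi_1^\varepsilon$ and passing to the limit, or by mollifying in the $\psi_1$-direction. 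Having shown the boundary terms cancel, we conclude $\int_{\Omega_1} z \cdot \nabla \varphi = -\int_{\Omega_1} g \varphi$ where $g = \divo z_2$ on $G$ and $g = \divo z_1$ on $\Omega_1 \setminus G$; since $g \in L^2(\Omega_1)$, this gives $\divo z = g \in L^2$, which is both $z \in X_2(\Omega_1)$ and the formula \eqref{divz-patch}, completing the proof.
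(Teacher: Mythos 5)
You have found the crux of the lemma — that any two Cahn--Hoffman vectors for the same gradient differ by a vector orthogonal to that gradient (Lemma~\ref{le:one-homogeneous-subdiff}), so the jump $z_1-z_2$ across $\partial G$ should have no normal component — and this is indeed the same key fact the paper uses via \eqref{zetae-ortho}. However, the main route you propose (integrate by parts on $G$ and $\Omega_1\setminus G$ separately and cancel the boundary terms) has a genuine gap in the step you dismiss as ``the only technical care needed.'' The orthogonality $(z_1-z_2)\cdot\nabla\psi_1=0$ holds only a.e.\ with respect to Lebesgue measure in the bulk of $G$; it does not by itself say anything about the behavior of $z_1-z_2$ on $\partial G$, which is Lebesgue-null. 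Since $z_1, z_2$ are merely $L^\infty$ and the Anzellotti normal trace $[z\cdot\nu_G]$ is a weak object not given by pointwise evaluation, the conclusion $[(z_1-z_2)\cdot\nu_G]=0$ on $\partial^* G$ is not immediate from the bulk orthogonality; proving it is the real content of the lemma, not a regularity footnote.

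The paper sidesteps the whole trace question by never restricting to $\partial G$. It builds a Lipschitz cutoff $\zeta_\e$ out of $\psi_1$ itself (so $\nabla\zeta_\e\parallel\nabla\psi_1$ a.e.), mollifies $z_1, z_2$ to $z_i^\rho$, forms the Lipschitz interpolant $z_\e^\rho = z_1^\rho\zeta_\e + z_2^\rho(1-\zeta_\e)$, integrates by parts against a test function, and then passes $\rho\to 0$ — at which point the cross term $\nabla\zeta_\e\cdot(z_1-z_2)$ drops out by the orthogonality, applied in the bulk where it is actually available — and finally $\e\to 0$. This is essentially the ``mollifying in the $\psi_1$-direction'' you mention at the end as an alternative, and it is the alternative you should take: it turns the would-be boundary cancellation into a bulk cancellation where the a.e.\ statement applies. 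Your other fallback, smoothing $\psi_1$ to $\psi_1^\e$, does not work cleanly, since perturbing $\psi_1$ changes $\partial W(\nabla\psi_1)$ and hence which vector fields are Cahn--Hoffman, so there is no natural candidate $z$ for the regularized function to compare with. So: right key lemma, right intuition, but the proposed primary argument would need the paper's cutoff device (or an equivalent) to actually close.
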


\begin{proof}
Since adding the same constant to both $\psi_1$ and $\psi_2$ does not change anything,
we can assume that $a = -\delta$ and $b = \delta$ for some $\delta > 0$.
For given $\e \in (0, \delta)$ we introduce the Lipschitz function
\begin{align*}
\zeta_\e(x) = 1 + \max \pth{ -1, \min \pth{0, \frac{\abs{\psi_1(x)} - \delta}\e}}.
\end{align*}
Note that $\zeta_\e = 0$ on $\set{\abs{\psi_1} \leq \delta - \e}$
and $\zeta_\e = 1$ on $\set{\abs{\psi_1} \geq \delta}$.
Furthermore,
\begin{align}
\label{zeta-deriv}
\nabla \zeta_\e(x) =
\begin{cases}
\sign \psi_1(x)\frac{\nabla \psi_1(x)}\e & \delta - \e < \psi_1(x) < \delta,\\
0 & \text{otherwise}
\end{cases}
\end{align}
for a.e. $x$.
Finally, $\zeta_\e \searrow \chi_{\Omega_1 \setminus G}$ monotonically pointwise as $\e \to 0$.

Now for $\rho > 0$ we define $z_i^\rho = z_i * \eta_\rho$, where $\eta_\rho$ is the standard mollifier with radius $\rho$, and we extend $z_i$ as $0$ to $\Omega_i^c$.
We have $z_i^\rho \to z_i$ in $L^\infty(\Omega_i)$-weak$^*$ and strongly in $L^p_{\rm loc}(\Omega_i)$
for any $1 \leq p < \infty$ as well as $\divo z_i^\rho \to \divo z_i$ strongly in $L^2_\loc(\Omega_i)$
as $\rho \to 0$, $i = 1,2$.
Define
\begin{align*}
z_\e^\rho = z_1^\rho \zeta_\e + z_2^\rho (1 - \zeta_\e).
\end{align*}
This function is clearly Lipschitz.

On $G$ we have $z_i(x) \in \partial W(\nabla \psi_1(x)) = \partial W(\nabla \psi_2(x))$
for a.e. $x$.
Therefore $(z_1(x) - z_2(x)) \cdot \nabla \psi_1(x) = 0$ for a.e. $x \in G$ by Lemma~\ref{le:one-homogeneous-subdiff}, which together with \eqref{zeta-deriv} implies
\begin{align}
\label{zetae-ortho}
\nabla \zeta_\e \cdot (z_1 - z_2) = 0 \qquad \text{a.e.}
\end{align}
Thus we have for any $\varphi \in C^\infty_c(\Omega_1)$
\begin{align*}
\int z_\e^\rho \cdot \nabla \varphi
&= -\int \varphi \divo z^\rho\\
&= -\int \varphi \bra{\zeta_\e \divo z_1^\rho + (1- \zeta_\e) \divo z_2^\rho + \nabla \zeta_\e \cdot (z_1^\rho - z_2^\rho)}.
\end{align*}
Now we send $\rho \to 0$ and obtain
\begin{align*}
\int z_\e \cdot \nabla \varphi &=
-\int \varphi \bra{\zeta_\e \divo z_1 + (1- \zeta_\e) \divo z_2 + \nabla \zeta_\e \cdot (z_1 - z_2)}\\
&= -\int \varphi \bra{\zeta_\e \divo z_1 + (1- \zeta_\e) \divo z_2},
\end{align*}
where we used \eqref{zetae-ortho}.
Finally we send $\e \to 0$ and use the Dominated convergence theorem to conclude that
\begin{align*}
\int z \cdot \nabla \varphi = - \int \varphi \bra{\chi_{\Omega_1 \setminus G} \divo z_1 + \chi_G \divo z_2}.
\end{align*}
Since this holds for any test function, we see that $\divo z \in L^2(\Omega_1)$ and it can be expressed as in \eqref{divz-patch}.
\end{proof}

\begin{remark}
\label{arb-convex-patch}
We can take an arbitrary convex combination of $z_1$ and $z_2$ on $G$ in \eqref{z-patch}.
Indeed, take $z$ as in $\eqref{z-patch}$.
Then $\lambda z_1 + (1-\lambda) z = (\lambda z_1 + (1 - \lambda) z_2) \chi_G + z_1 \chi_{G_1 \setminus G} \in \CH_W(\psi_1; G_1)$ by convexity.
\end{remark}

\begin{remark}
\label{re:patching-on-facet-boundaries}
In the proof of \cite[Proposition~2.10]{GGP13JMPA} in the case of $W$ with a smooth $1$-level set
we used the fact that Cahn-Hoffman vector fields can be patched across the boundary of a facet
arbitrarily, as a consequence of \cite[Proposition~2.8]{GGP13JMPA}.
This is stronger than Lemma~\ref{le:cahn-hoffman-patch} above where we can patch the Cahn-Hoffman
vector field only if the support functions coincide on a neighborhood of the facet. We believe that
this requirement can be removed as in \cite{GGP13JMPA}, but we do not pursue this matter further in
the current paper.
\end{remark}

Finally, let us briefly consider the characterization of the subdifferential of $E_W$
in the case when $W$ is not positively one-homogeneous.
Proposition~\ref{pr:subdiff-char-periodic} does not apply in such a case.
However, if $W$ is equal to a positively one-homogeneous function $W'$ in the neighborhood of
the origin, the subdifferentials of $E_W$ and $E_{W'}$ coincide at least for functions
with small Lipschitz constant.

\begin{lemma}
\label{le:subdiff-homog-relation}
Suppose that $W$ is a convex function and $W'$ is a positively one-homogeneous convex function
on $\R^d$, $d \geq 1$, and there exists $\e > 0$ such that $W(p) = W'(p)$ for $\abs p < \e$.
Suppose that $\Omega$ is a bounded open subset of $\Rd$ or the torus $\R^d / L \Z^d$ for some $L > 0$.
If $\psi \in \Lip(\Omega)$ and $\norm{\nabla \psi}_\infty < \e$, then
\begin{align*}
\partial E_W(\psi; \Omega) = \partial E_{W'}(\psi; \Omega).
\end{align*}
\end{lemma}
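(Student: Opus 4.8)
The plan is to reduce everything to a local statement about the energy density. Since $W = W'$ on $\{|p| < \e\}$ and $\|\nabla\psi\|_\infty < \e$, the two functionals $E_W(\cdot;\Omega)$ and $E_{W'}(\cdot;\Omega)$ genuinely differ only for competitors whose gradients leave the ball $B_\e(0)$, so the claim is intuitively clear; the work is in making the subdifferential inequality insensitive to such far-away competitors. The natural device is a \emph{truncation of the gradient}: given $h \in L^2(\Omega)$, replace $\psi + h$ by a function whose gradient stays in $B_\e(0)$ while keeping it close to $\psi + h$ in $L^2$, and use the definition of the subdifferential together with the fact that $E_W = E_{W'}$ on such functions.

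First I would record the key reduction. Fix $\psi \in \Lip(\Omega)$ with $\|\nabla\psi\|_\infty = c < \e$, and let $v \in L^2(\Omega)$. I claim it suffices to test the subdifferential inequality against perturbations $h$ for which $\psi + h$ still has gradient (a.e.) in $B_\e(0)$: indeed, for such $h$ one has $D(\psi+h) = \nabla(\psi+h)\,L^d + D^s(\psi+h)$ with the absolutely continuous part taking values in $B_\e(0)$, and moreover $W0^+ = W'0^+ = W'$ on $\R^d$ since $W'$ is one-homogeneous and agrees with $W$ near $0$ (one-homogeneity pins down the recession function from the behavior near the origin); hence by the decomposition \eqref{ew-decomp}, $E_W(\psi+h;\Omega) = E_{W'}(\psi+h;\Omega)$, and likewise $E_W(\psi;\Omega) = E_{W'}(\psi;\Omega)$. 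So on this restricted class of test perturbations the inequality defining $\partial E_W(\psi;\Omega)$ is literally the same as the one defining $\partial E_{W'}(\psi;\Omega)$.

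Next I would upgrade the restricted inequality to the full one. Take any $h \in L^2(\Omega)$ and set $g = \psi + h$. I want a sequence $g_k \to g$ in $L^2(\Omega)$ with $g_k \in L^2(\Omega)\cap BV(\Omega)$, with the a.c. part of $Dg_k$ valued in $B_\e(0)$, and with $\liminf_k E_W(g_k;\Omega) \le E_W(g;\Omega)$; feeding $h_k = g_k - \psi$ into the already-established inequality and passing to the limit then gives $E_W(g;\Omega) - E_W(\psi;\Omega) \ge (h,v)$ for arbitrary $h$, which is exactly $v \in \partial E_{W'}(\psi;\Omega) \Leftrightarrow v \in \partial E_W(\psi;\Omega)$. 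The simplest construction, given $c < \e$: mollify $g$ and scale, or more directly, since only the definition of the subdifferential is at stake, it is enough to handle $g \in L^2(\Omega)$ with $E_W(g;\Omega) < \infty$, i.e.\ $g \in BV(\Omega)$; then approximate $g$ by smooth functions $\tilde g_k$ with $\tilde g_k \to g$ in $L^2$ and $\int|D\tilde g_k| \to \int|Dg|$ (standard, as used already in Proposition~\ref{pr:energy-convergence}), and then truncate each $\tilde g_k$'s gradient by composing with a $1$-Lipschitz map or by the standard trick of replacing $\tilde g_k$ with $\frac{\e}{\e'}\tilde g_k$ after first arranging $\|\nabla \tilde g_k\|_\infty \le \e'$; convexity and the growth bound give $\limsup_k E_W(\tilde g_k;\Omega) \le E_W(g;\Omega)$ via Re\v{s}etnjak's theorem and $W0^+$-homogeneity, exactly as in the proof of Proposition~\ref{pr:energy-convergence}(c). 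This yields both inclusions at once, since the argument is symmetric in $W$ and $W'$ (both satisfy the growth condition once we note $W'$ is one-homogeneous hence linearly bounded), so the two subdifferentials are equal.

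The main obstacle is the gradient-truncation step: one must approximate an arbitrary $BV$ competitor by $BV$ (or Lipschitz) functions whose a.c.\ gradient is confined to $B_\e(0)$ \emph{without raising the energy in the limit}. A naive truncation (e.g.\ capping $|\nabla g_k|$) need not converge in energy, so the cleanest route is the rescaling trick above: first mollify to get $\tilde g_k \in C^\infty$ with controlled (large) Lipschitz constant and $E_W(\tilde g_k;\Omega)\to E_W(g;\Omega)$, then multiply by $\e / (\|\nabla\tilde g_k\|_\infty + \e)$ — this scales the gradient into $B_\e(0)$, changes $\tilde g_k$ by an $L^2$-small amount when $\|\nabla\tilde g_k\|_\infty$ is moderate, and one must check the energy does not jump; handling the case $\|\nabla\tilde g_k\|_\infty \to \infty$ requires a diagonal argument balancing the mollification scale against the rescaling factor. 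Alternatively, one can bypass $BV$ altogether by testing the subdifferential only against $h$ with $\psi+h \in W^{1,1}(\Omega)$ and $\|\nabla(\psi+h)\|_\infty < \e$ and invoking density of such $h$ in the relevant sense, but the rescaling-plus-diagonal argument is the most self-contained and is the step I expect to need the most care.
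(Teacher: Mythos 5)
Your proposal takes a different route from the paper's, and unfortunately both of its key steps have gaps.

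\textbf{The false claim about recession functions.} You assert that $W0^+ = W'0^+ = W'$ on $\R^d$ because ``one-homogeneity pins down the recession function from the behavior near the origin.'' This is backwards: the recession function $W0^+(p) = \lim_{\lambda\to 0^+}\lambda W(\lambda^{-1}p)$ is determined by the behavior of $W$ at infinity, not near the origin. For example, $W(p) = |p| + (|p|-1)_+^2$ agrees with $W'(p) = |p|$ on $B_1(0)$, but $W0^+ \equiv +\infty$ off the origin. Consequently, for a competitor $\psi+h$ with a nontrivial singular part $D^s(\psi+h)$, the singular-measure integrals in \eqref{ew-decomp} for $E_W$ and $E_{W'}$ need not coincide even if the a.c.\ gradient stays in $B_\e(0)$. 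So your ``restricted class'' reduction is not correct as stated (it does hold if you restrict further to Lipschitz competitors with $\norm{\nabla(\psi+h)}_\infty < \e$, since then $D^s = 0$ and the recession function never enters).

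\textbf{The gradient-truncation step does not exist.} To upgrade the restricted inequality to all of $L^2$, you need $g_k \to g := \psi+h$ in $L^2(\Omega)$ with the a.c.\ part of $Dg_k$ confined to $B_\e(0)$ and $\limsup_k E_{W'}(g_k) \le E_{W'}(g)$. Such an approximation cannot exist in general: if $g$ has a genuine jump (a nonzero $D^s g$), any $g_k$ with uniformly bounded gradients is Lipschitz and hence cannot reproduce the jump in the $L^2 \cap BV$ sense without the gradients blowing up. Your proposed fix of first arranging $\norm{\nabla\tilde g_k}_\infty \le \e'$ and then rescaling by $\e/\e'$ fails for the same reason — for $BV$ functions with singular part one cannot ``first arrange'' a uniform Lipschitz bound while keeping energy convergence; and if $\norm{\nabla\tilde g_k}_\infty\to\infty$ your rescaled functions tend to zero in $L^2$, not to $g$. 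You acknowledge needing a diagonal argument here, but this is not a technicality to be balanced — the approximation you want is impossible.

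\textbf{What the paper does instead.} The paper establishes $W \geq W'$ globally (from convexity, one-homogeneity of $W'$, and $W=W'$ near the origin), which gives $E_W \geq E_{W'}$ and hence $\partial E_{W'}(\psi) \subset \partial E_W(\psi)$ immediately. For the reverse inclusion it does \emph{not} try to squeeze the competitor's gradient into $B_\e(0)$. Instead, given $v\in\partial E_W(\psi)$ and a smooth approximant $h_m$ of $h$, it chooses a \emph{scalar} $\lambda_m\in(0,1)$ so small that $\norm{\nabla(\lambda_m h_m)}_\infty < \e - \norm{\nabla\psi}_\infty$; then $E_W(\psi+\lambda_m h_m) = E_{W'}(\psi+\lambda_m h_m)$ is exact (both Lipschitz, both gradients in $B_\e(0)$), the subdifferential inequality gives $E_{W'}(\psi+\lambda_m h_m) - E_{W'}(\psi) \geq \lambda_m(h_m,v)$, and convexity of $E_{W'}$ lets one ``divide out'' $\lambda_m$ to get $E_{W'}(\psi+h_m) - E_{W'}(\psi) \geq (h_m,v)$. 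This convexity step is the essential idea missing from your proposal: rather than deform the competitor to have small gradient, one shrinks the \emph{size} of the perturbation and uses Jensen's inequality for the convex functional to recover the full-strength inequality.
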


\begin{proof}
We shall denote the functionals as $E$ and $E'$ for short.
Fix $\psi \in \Lip(\Omega)$ with $\norm{\nabla \psi}_\infty < \e$.
By definition of the functionals and our assumption on the equality of $W$ and $W'$, we have
\begin{align}
\label{en-equality}
E(\psi + h) = E'(\psi + h) \qquad h \in \Lip(\Omega),\ \norm{\nabla h}_\infty < \delta = \e - \norm{\nabla \psi}_\infty.
\end{align}

The convexity of $W$, $W'$, and one-homogeneity of $W'$ imply for $p \in \R^d$ and $\lambda \in (0,1)$ such that $\lambda \norm p < \e$
\begin{align*}
\lambda W(p) \geq W(\lambda p) - (1 -\lambda) W(0) = W'(\lambda p) = \lambda W'(p).
\end{align*}
In particular, $W(p) \geq W'(p)$ on $\R^d$.
Therefore $E(\psi + h) - E(\psi) \geq E'(\psi + h) - E'(\psi)$ for all $h \in L^2(\Omega)$
since $E(\psi) = E'(\psi)$.
We conclude that $\partial E'(\psi) \subset \partial E(\psi)$.

To prove the opposite inclusion, take $v \in \partial E(\psi)$, if such an element exists.
We want to prove
\begin{align}
\label{subdiff-def}
E'(\psi + h) - E'(\psi) \geq (h, v) \qquad \text{for all $h \in L^2(\Omega)$.}
\end{align}
If $h \notin BV(\Omega)$, $E'(\psi + h) = \infty$ by definition.
Thus we can assume that $h \in BV(\Omega)$.
By a standard approximation result, there exists a sequence $\set{h_m} \subset C^\infty(\Omega) \cap BV(\Omega)$ such that
$h_m \to h$ in $L^2(\Omega)$ and $Dh_m \to Dh$ weakly$^*$ as measures, which yields
$E'(\psi + h_m) \to E'(\psi + h)$ due to \cite{Resetnjak}; see \cite{GiaquintaModicaSoucek}.
But we can choose $\lambda_m \in (0,1)$ such that $\lambda_m \norm{\nabla h_m}_\infty < \delta$.

Then \eqref{en-equality} implies
\begin{align*}
E'(\psi + \lambda_m h_m) - E'(\psi) =
E(\psi + \lambda_m h_m) - E(\psi) \geq \lambda_m (h_m, v).
\end{align*}
By convexity, we have
\begin{align*}
E'(\psi + h_m) - E'(\psi) \geq (h_m, v).
\end{align*}
Indeed,
\begin{align*}
\lambda_m E'(\psi + h_m) + (1-\lambda_m) E'(\psi)
&\geq E'(\lambda_m(\psi + h_m) + (1-\lambda_m)\psi)\\
&=E'(\psi +\lambda_m h_m) \geq E'(\psi) +\lambda_m(h_m, v).
\end{align*}
Sending $m \to \infty$ yields \eqref{subdiff-def}.
\end{proof}

\section{Energy stratification}
\label{sec:energy-stratification}

In this section we shall assume that $W$ is a convex \textbf{polyhedral} function on $\Rn$.
Since $W$ is polyhedral, it can be locally viewed as a positively one-homogeneous convex function; we will give a detailed explanation in this section.
The features of $W$ correspond to the dual features of the crystal such as facets, edges and vertices, depending on the dimension.
For each gradient, we will decompose the space into orthogonal subspaces of interesting directions, corresponding to the given feature of the crystal, and the directions in which $W$ is linear and therefore its behavior simple.

\subsection{Slicing of $W$}
\label{sec:slicing of W}

To perform the decomposition, we need a few standard concepts from convex analysis (see for example \cite{Rockafellar}).
For a given convex set $C$ let $\aff C$ denote the affine hull of $C$, that is, the smallest affine space containing $C$.
The dimension of the convex set is defined as the dimension of its affine hull, $\dim C := \dim \aff C$.
Let $\ri C$ be the relative interior of $C$ with respect to $\aff C$.
A convex set is said to be relatively open if $C = \ri C$.
We know that $\ri C \neq \emptyset$ if $C \neq \emptyset$ (\cite[Theorem~6.2]{Rockafellar}).
We say that $\aff C$ is parallel to a subspace $V \subset \Rn$ if $\aff C = p + V$ for some $p \in \Rn$.

We can decompose $\Rn$ based on the features of the crystal, which correspond to the value of $\partial W$.

\begin{proposition}[Feature decomposition]
\label{pr:feature-decomposition}
For given $W$ polyhedral with $W < \infty$ on $\Rn$
there exist a finite number of mutually disjoint maximal sets $\Xi_i$, $i \in \mathcal N$, such that $\Rn = \bigcup_{i \in \mathcal N} \Xi_i$
and $\partial W$ is constant on each $\Xi_i$.
Furthermore, each $\Xi_i$ is a relatively open convex set and $\aff \Xi_i \perp \aff \partial W(p)$ for $p \in \Xi_i$ in the sense that whenever $p,
q \in \Xi_i$ and $\xi, \zeta \in \partial W(p)$ then $p - q \perp \xi - \zeta$.
\end{proposition}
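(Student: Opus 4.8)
The plan is to exploit the fact that a polyhedral convex function has a subdifferential map $p \mapsto \partial W(p)$ taking only finitely many distinct values, together with the standard duality between the polyhedral function $W$ and its conjugate $W^*$. First I would recall that since $W$ is polyhedral and finite on $\Rn$, its conjugate $W^*$ is also polyhedral, with $\operatorname{dom} W^* = \partial W(\Rn)$ a (bounded) polyhedron, namely $\partial W(0)$. The subdifferential relation $\xi \in \partial W(p) \iff p \in \partial W^*(\xi) \iff W(p) + W^*(\xi) = p \cdot \xi$ means that $\partial W(p)$ is exactly the face of the polyhedron $\operatorname{dom} W^*$ on which the linear functional $\xi \mapsto p\cdot\xi - W^*(\xi)$ attains its maximum. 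A polyhedron has only finitely many faces, so $\partial W$ takes only finitely many values; let $F_1, \dots, F_N$ be these faces (each a closed convex polyhedron), and define $\Xi_i := \set{p \in \Rn : \partial W(p) = F_i}$. These are mutually disjoint by construction, they cover $\Rn$, and there are finitely many. Maximality is automatic from the definition (each $\Xi_i$ collects \emph{all} $p$ with that subdifferential value).

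Next I would identify the structure of each $\Xi_i$. Fix $i$ and pick any $p_0 \in \Xi_i$, $F := F_i = \partial W(p_0)$. For $p$ near $p_0$, $W$ agrees with its supporting affine function on the normal cone directions; more precisely, $\partial W(p) \supseteq F$ (i.e. $p \in \Xi_j$ with $F_j \supseteq F$) exactly when $p$ lies in the normal cone $N_F$ to $\operatorname{dom} W^*$ along $F$, and $\partial W(p) = F$ precisely when $p$ lies in the relative interior of that normal cone. Thus $\Xi_i = \ri(N_{F_i})$, the relative interior of a polyhedral cone, which is a relatively open convex set; this also shows $\aff\Xi_i$ is a linear subspace (translate of the lineality space of $N_{F_i}$ through $0$, after shifting by a point of $\Xi_i$). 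For the orthogonality claim: if $p, q \in \Xi_i$ and $\xi, \zeta \in \partial W(p) = F = \partial W(q)$, then $\xi, \zeta \in \partial W(q)$ as well, so by the subgradient inequality $W(p) \geq W(q) + \xi\cdot(p - q)$ and $W(q) \geq W(p) + \xi\cdot(q-p)$, giving $\xi\cdot(p-q) = W(p) - W(q)$; the same with $\zeta$ gives $\zeta\cdot(p-q) = W(p)-W(q)$; subtracting yields $(\xi - \zeta)\cdot(p - q) = 0$, which is exactly $p - q \perp \xi - \zeta$, i.e. $\aff\Xi_i \perp \aff\partial W(p)$ in the stated sense.

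The main obstacle, and the only place requiring genuine care, is verifying the finiteness and the polyhedral-face description cleanly---i.e. establishing rigorously that for a polyhedral $W$ the map $\partial W$ has finite range and that the level sets $\Xi_i$ are relative interiors of polyhedral cones. This is where one needs the polyhedrality of $W^*$ (\cite[Theorem~12.2, 19.2]{Rockafellar}) and the face structure of polyhedra; once that is in hand, disjointness, covering, maximality, relative openness and the orthogonality are all short formal arguments as above. An alternative, more hands-on route avoiding conjugates: write $W(p) = \max_{1\le j \le m}(a_j \cdot p + b_j)$ as a finite max of affine functions; then $\partial W(p) = \conv\set{a_j : a_j\cdot p + b_j = W(p)}$, so $\partial W(p)$ is determined by the \emph{active index set} $J(p) \subseteq \set{1,\dots,m}$, of which there are at most $2^m$; grouping $p$ by the value of $\partial W(p) = \conv\set{a_j : j \in J(p)}$ gives the $\Xi_i$ directly, and each $\set{p : J(p) \supseteq J}$ is the polyhedron cut out by the equalities $a_j\cdot p + b_j = a_k \cdot p + b_k$ ($j,k \in J$) and inequalities $a_j \cdot p + b_j \ge a_l\cdot p + b_l$ ($l \notin J$), from which the relatively-open convex (indeed relatively-open polyhedral) structure of $\Xi_i$ follows by standard polyhedral geometry. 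I would likely present this second, self-contained version, and note the orthogonality step exactly as above.
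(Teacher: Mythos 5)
Your proposal is correct and takes a genuinely different route from the paper. The paper works from the primal side: it takes the projections onto $\Rn$ of the relative interiors of the proper faces of $\epi W$, which by \cite[Theorem 18.2]{Rockafellar} partition $\graph W$ and by \cite[Theorem 6.6]{Rockafellar} are automatically relatively open and convex; it then has to \emph{prove} that $\partial W$ is constant on each such piece (using a trick: pick $\mu>1$ with $W(\mu p + (1-\mu)q) = \mu W(p) + (1-\mu)W(q)$, which is possible because the relative interior of a face contains the segment $[p,q]$ and a little beyond, and then use the subgradient inequality at both $p$ and $\mu p + (1-\mu)q$ to upgrade $\geq$ to $=$), and finally invokes the face correspondence to get maximality. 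You instead \emph{define} $\Xi_i$ as the level sets of $\partial W$, so constancy, maximality, disjointness and covering are immediate, and the work shifts to showing finiteness and the relatively-open convex structure; your orthogonality step is then shorter than the paper's because the identity $W(q)-W(p)=\xi\cdot(q-p)$ for $\xi\in\partial W(p)=\partial W(q)$ comes for free from the two subgradient inequalities without the $\mu>1$ device. Both routes are legitimate; yours front-loads the polyhedral geometry into the structure of the level sets, theirs front-loads it into the face decomposition of $\epi W$.

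Two points of care in your version. First, your statement that $\operatorname{dom} W^* = \partial W(0)$ and that $\partial W(p)$ is a face of $\operatorname{dom} W^*$ holds only when $W$ is positively one-homogeneous (so that $W^*$ is an indicator); for general polyhedral $W$ one must work with faces of $\epi W^*$ rather than faces of $\operatorname{dom} W^*$. You sidestep this by electing to present the self-contained max-of-affine route, which is indeed cleaner here, but be aware the duality version needs that adjustment. Second, in the max-of-affine route the claim that the level set $\Xi_i=\set{p:\partial W(p)=F_i}$ is relatively open is not immediate from the cells $\set{p: J(p)=J}$ alone, since $\Xi_i$ is a priori a union of several such cells (different active index sets $J$ can have the same $\conv\set{a_j: j\in J}$); convexity of the level set is in fact a soft fact for any finite convex function (the argument you give at $p,q$ also shows $\partial W(r)\subseteq F_i$ for $r\in(p,q)$, not just $\supseteq$), but relative openness genuinely uses polyhedrality and deserves a line of justification (e.g. that each $\Xi_i$ is the relative interior of the closed polyhedron $\set{p: \partial W(p)\supseteq F_i}$). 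These are fillable gaps, not errors of approach.
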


\begin{proof}
We use the projections of relative interiors of the non-empty faces of the epigraph $\epi W :=
\set{(p, \lambda): \lambda \geq W(p), p \in \Rn}$, other than $\epi W$
itself, onto $\Rn$.
For the definition of a face of a convex set see \cite[Section~18]{Rockafellar}.
By \cite[Corollary~18.1.3]{Rockafellar}, all faces of $\epi W$ other than $\epi W$ itself must lie
in the relative boundary of $\epi W$. The relative boundary of $\epi W$, the set $\epi W \setminus
\ri \epi W$, is just the regular boundary and therefore it is the graph of $W$,
$\graph W := \set{(p, W(p)): p \in \Rn} \subset \R^{n+1}$.
By \cite[Theorem~18.2]{Rockafellar}, the relative interiors $\hat \Xi_i$ of the faces of $\epi W$ other than
$\epi W$ itself form a partition of $\graph W$. By projecting these relative interiors $\hat \Xi_i$ onto $\Rn$
we obtain sets $\Xi_i$, which form a partition of $\Rn$ and are again relatively open by
\cite[Theorem~6.6]{Rockafellar}.

Let us now prove that $\partial W$ is constant on $\Xi_i$. Fix two points $p, q \in \Xi_i$. Since
$\hat \Xi_i$ are relatively open, there exists $\mu > 1$ such that $W(\mu p + (1 - \mu) q) = \mu
W(p) + (1- \mu) W(q)$. Let $\xi \in \partial W(p)$. By definition of the subdifferential, we have
$W(\mu p + (1 - \mu) q) \geq W(p) + (\mu - 1) \xi \cdot (p - q)$ and $W(q) \geq W(p) + \xi \cdot (q
- p)$. Using the equality in the first inequality and dividing by $\mu - 1$ we obtain $W(q) \leq
W(p) + \xi \cdot (q - p)$. Therefore $W(q) - W(p) = \xi \cdot (q - p)$ and we deduce that $\xi \in
\partial W(q)$. Finally, if $\zeta \in \partial W(p)$ as well, we have $(\zeta - \xi)
\cdot (q - p) = 0$.
Maximality, that is, that $\partial W(p) \neq \partial W(q)$ for $p \in \Xi_i$, $q \in \Xi_j$, $i
\neq j$, follows from the definition of convex faces.
\end{proof}

\begin{lemma}
\label{le:aff Xi origin}
Suppose that $\Xi_i$ are as in Proposition~\ref{pr:feature-decomposition} and suppose that $W$ is
also positively one-homogeneous. Then $0 \in \aff \Xi_i$ for every $i$.
\end{lemma}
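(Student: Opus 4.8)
\emph{Proof proposal.} The plan is to use the positive one-homogeneity of $W$ to show that $\partial W$ is positively zero-homogeneous, which forces each $\Xi_i$ to be a cone with apex at the origin, and then read off $0 \in \aff \Xi_i$ from this. First I would record that $\partial W(tp) = \partial W(p)$ for every $p \in \Rn$ and every $t > 0$. This is immediate from the definition of the subdifferential: if $\xi \in \partial W(p)$, then $W(q) \geq W(p) + \xi \cdot (q - p)$ for all $q \in \Rn$; substituting $q = ts$ and using $W(ts) = tW(s)$ and $W(tp) = tW(p)$ turns this into $W(ts) \geq W(tp) + \xi \cdot (ts - tp)$ for all $s \in \Rn$, i.e.\ $\xi \in \partial W(tp)$. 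Applying the same computation with $t$ replaced by $1/t$ gives the reverse inclusion.

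Next I would invoke the maximality clause of Proposition~\ref{pr:feature-decomposition} to conclude that $t\Xi_i = \Xi_i$ for every $t > 0$. Indeed, fix $p \in \Xi_i$ and $t > 0$; then $tp$ lies in some $\Xi_j$ of the partition, and the constant value of $\partial W$ on $\Xi_j$ equals $\partial W(tp) = \partial W(p)$, which is the constant value of $\partial W$ on $\Xi_i$. Since distinct sets of the decomposition carry distinct values of $\partial W$, necessarily $j = i$, so $tp \in \Xi_i$. Finally, since the $\Xi_i$ partition $\Rn$, each $\Xi_i$ is nonempty; choosing any $p \in \Xi_i$, the open ray $\set{tp : t > 0}$ is contained in $\Xi_i$, hence in $\aff \Xi_i$. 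The affine hull of this ray is the whole line $\R p$, which contains the origin, so $0 \in \aff \Xi_i$.

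I do not expect a serious obstacle here; this is essentially the trivial consequence of one-homogeneity that the statement advertises. The only points needing a little care are the rescaling identity for the subdifferential and the correct use of maximality to send $tp$ back into $\Xi_i$. An alternative, equally short route avoids the subdifferential entirely: since $W$ is convex with $W(0) = 0$ and $W(tp) = tW(p)$, the epigraph $\epi W$ is a convex cone with apex at the origin, so its faces other than $\epi W$ itself are themselves cones; the sets $\Xi_i$ are the projections onto $\Rn$ of the relative interiors of these faces, hence cones as well, and the conclusion $0 \in \aff \Xi_i$ follows exactly as above.
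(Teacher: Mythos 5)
Your proof is correct and follows the same approach as the paper, which simply observes that $\partial W(p) = \partial W(tp)$ for all $t>0$ by one-homogeneity and leaves the rest implicit. You have merely spelled out the details (the rescaling identity, the use of maximality, and the affine-hull-of-a-ray observation) that the paper compresses into one line.
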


\begin{proof}
This follows immediately from one-homogeneity since $\partial W(p) = \partial W(tp)$ for any $p \in
\R^n$, $t > 0$.
\end{proof}

Since $W$ is finite everywhere, $\partial W(p)$ is a nonempty closed convex set for any $p \in \Rn$.
For given $p_0 \in \Rn$ we introduce the one-sided directional derivative of $W$ at $p_0$ with respect to a vector $p \in \Rn$ as (\cite[Section~23]{Rockafellar})
\begin{align*}
W_{p_0}'(p) := \lim_{\lambda \to 0+} \frac{W(p_0 + \lambda p) - W(p_0)}{\lambda}.
\end{align*}
Then $W'(p_0; \cdot)$ is a positively one-homogeneous convex function, and (\cite[Theorem~23.4]{Rockafellar})
\begin{align}
\label{W'-convex-conjugate}
W_{p_0}'(p) \equiv \delta^*(p \mid \partial W(p_0)) := \sup \set{p \cdot \xi: \xi \in \partial W(p_0)}.
\end{align}
In particular, $W_{p_0}'$ is the convex conjugate of the indicator function of $\partial W(p_0)$.
Therefore by \cite[Theorem~13.4]{Rockafellar} the lineality space of $W_{p_0}'$ (the subspace of directions in which $W_{p_0}'$ is affine) is the orthogonal complement of the subspace parallel to $\aff \partial W(p_0)$.
This provides the orthogonal decomposition of $\Rn$ for a given gradient.

\begin{proposition}[Direction decomposition]
\label{pr:direction-decomposition}
Let $W$ be a polyhedral convex function on $\Rn$ finite everywhere and let $p_0 \in \Rn$.
Let $V$ be the subspace of $\Rn$ parallel to $\aff \partial W(p_0)$ and set $U = V^\perp$.
Then $W_{p_0}'$ is linear on $U$ and
\begin{align}
\label{W'-linear}
W_{p_0}'(p) = W_{p_0}'(P_V p) + \xi \cdot P_U p \qquad \text{for any $p \in \Rn$, $\xi \in \aff \partial W(p_0)$},
\end{align}
where $P_U$ and $P_V$ are the orthogonal projections onto $U$ and $V$, respectively.

Moreover, there exists $\delta > 0$ such that
\begin{align*}
W(p) = W_{p_0}'(p - p_0) + W(p_0) \qquad \text{for all $\abs{p - p_0} < \delta$.}
\end{align*}
\end{proposition}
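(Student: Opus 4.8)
The plan is to handle the two assertions separately, both by exploiting that $W_{p_0}'$ is a support function: by \eqref{W'-convex-conjugate}, $W_{p_0}'(p) = \delta^*(p \mid \partial W(p_0)) = \sup\set{p\cdot\xi : \xi\in\partial W(p_0)}$. Since $W$ is finite on all of $\Rn$ it is locally Lipschitz, so $\partial W(p_0)$ is a nonempty compact convex (polyhedral) set, and $\aff\partial W(p_0) = \xi_0 + V$ for any fixed $\xi_0 \in \partial W(p_0)$.

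To prove \eqref{W'-linear}, I would decompose an arbitrary $p \in \Rn$ as $p = P_U p + P_V p$ and note that for every $\xi \in \aff\partial W(p_0)$ we have $\xi - \xi_0 \in V = U^\perp$, hence $\xi \cdot P_U p = \xi_0 \cdot P_U p$ is independent of $\xi$. Therefore for each $\xi \in \partial W(p_0)$ one has $p\cdot\xi = \xi_0\cdot P_U p + (P_V p)\cdot\xi$; taking the supremum over $\xi \in \partial W(p_0)$ and using $\sup_\xi (P_V p)\cdot\xi = W_{p_0}'(P_V p)$ yields \eqref{W'-linear}. Specializing to $p \in U$, so $P_V p = 0$, and using $W_{p_0}'(0) = 0$ (positive one-homogeneity), one gets $W_{p_0}'(p) = \xi_0 \cdot p$ on $U$, which is linear; this also recovers the lineality-space description quoted just before the statement.

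For the remaining local identity $W(p) = W_{p_0}'(p - p_0) + W(p_0)$ near $p_0$, the essential input is that a polyhedral convex function finite everywhere is a pointwise maximum of finitely many affine functions \cite[Section~19]{Rockafellar}: write $W = \max_{j\in J}\ell_j$ with $\ell_j(p) = a_j\cdot p + b_j$ and $J$ finite. Let $J_0 = \set{j\in J : \ell_j(p_0) = W(p_0)}$ be the active set at $p_0$. A direct continuity argument produces $\delta > 0$ so that each inactive piece ($j\notin J_0$) stays strictly below $\max_{i\in J_0}\ell_i$ on $\set{\abs{p - p_0} < \delta}$, whence $W = \max_{j\in J_0}\ell_j$ on that ball. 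Since the subdifferential of a finite max of affine functions at a point is the convex hull of the gradients of the active pieces (\cite[Section~23]{Rockafellar}), $\partial W(p_0) = \conv\set{a_j : j\in J_0}$, so its support function is $W_{p_0}'(q) = \max_{j\in J_0} a_j\cdot q$. Substituting $q = p - p_0$ and using $\ell_j(p_0) = W(p_0)$ for $j\in J_0$, the right-hand side $W_{p_0}'(p - p_0) + W(p_0)$ collapses to $\max_{j\in J_0}\ell_j(p) = W(p)$ for $\abs{p - p_0} < \delta$, finishing the proof.

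The projection computation and the continuity estimate for $\delta$ are routine. The one place that needs a little care — and the reason polyhedrality, rather than only the feature decomposition of Proposition~\ref{pr:feature-decomposition}, is invoked — is that the local identity must hold on a full neighborhood of $p_0$ in every direction, not just inside the relatively open feature $\Xi_i$ containing $p_0$; the finite max-of-affine representation together with the identification $\partial W(p_0) = \conv\set{a_j : j\in J_0}$ is exactly what delivers this.
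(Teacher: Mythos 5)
Your proposal is correct and, in substance, follows the same route as the paper: both arguments rest on \eqref{W'-convex-conjugate} plus the orthogonality $V \perp U$ for \eqref{W'-linear}, and both exploit that a polyhedral $W$ is a finite max of affine functions for the local identity. The only stylistic difference is in the second half: the paper argues by contradiction (assume $W(p_k)-W(p_0) > W_{p_0}'(p_k-p_0)$, pass to a subsequence on which the active affine piece is fixed, and derive $\xi\in\partial W(p_0)$ to reach a contradiction), whereas you argue directly by identifying the active index set $J_0$, showing $W=\max_{j\in J_0}\ell_j$ on a small ball via continuity of the finitely many inactive pieces, and recognizing $W_{p_0}'(q)=\max_{j\in J_0} a_j\cdot q$ from $\partial W(p_0)=\conv\set{a_j:j\in J_0}$. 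Your direct version is a bit more explicit but both are instances of the same underlying idea, and both are correct.
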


\begin{proof}
\eqref{W'-linear} follows from \eqref{W'-convex-conjugate} and from the orthogonality of $U$ and $V$.

The existence of $\delta > 0$ can be proved by contradiction: suppose that there exists a sequence $\set{p_k}$, $p_k \to p_0$ such that $W(p_k) - W(p_0) > W_{p_0}'(p_k - p_0)$
(it is clear that $W_{p_0}'(p - p_0) \leq W(p) - W(p_0)$ by convexity).
Since $W$ is polyhedral, it is given as the maximum of a finite number of affine functions, and therefore by taking a subsequence we can assume that $W(p_k) = \xi \cdot p_k + c$ for some fixed $\xi \in \Rn$, $c \in \R$.
By continuity we have $W(p_k) - W(p_0) = \xi \cdot (p_k - p_0)$.
Therefore $\xi \in \partial W(p_0)$.
But this yields a contradiction since then \eqref{W'-convex-conjugate} implies $W_{p_0}'(p_k - p_0) \geq \xi \cdot (p_k - p_0)$.
\end{proof}

The previous proposition tells us that the behavior of $W$ is interesting only in the directions parallel to $\aff \partial W$.
That motivates the following notation.
For given $W: \Rn \to \R$ convex polyhedral and $p \in \Rn$
let $V$ be the subspace of $\Rn$ parallel to $\aff \partial W(p)$, $U = V^\perp$, $k  = \dim V$, and
we fix an arbitrary rotation
\begin{align}
\label{rotation}
\TT: \Rn \to \Rn
\end{align}
that maps $\R^k \times \set 0$ onto $V$ and $\set 0 \times \R^{n-k}$ onto $U$.
For given $x \in \Rn$, we define the unique $x' \in \R^k$ and $x'' \in \R^{n-k}$
such that
\begin{align}
\label{rotation'}
\TT(x', x'') = x.
\end{align}
We set $\TT_V: \R^k \to V$ and $\TT_U: \R^{n-k} \to U$ by
\begin{align}
\label{rotationUV}
\TT_V x' = \TT(x', 0), \qquad \TT_U x'' = \TT(0, x'').
\end{align}
In the above we also allow for $k = 0$ and $k = n$, in which case
terms containing $x'$ respectively $x''$ simply do not appear in the formulas,
and $\TT_V$ respectively $\TT_U$ are trivial maps.
Note that
\begin{align*}
\left( \TT_V z \right)' = z, \quad \left( \TT_U w \right)'' = w, \qquad z \in \R^k, w \in
\R^{n-k},
\end{align*}
and
\begin{align*}
\TT_V x' = P_V x, \quad \TT_U x'' = P_U x, \qquad x \in \R^n,
\end{align*}
where $P_V$ and $P_U$ are respectively the orthogonal projections on $V$ and $U$.
Since $\TT$ is a linear isometry, it preserves the inner product
\begin{align*}
z_1 \cdot z_2 = \TT_V z_1 \cdot \TT_V z_2, \qquad z_1, z_2 \in \R^k,
\end{align*}
and similarly for $\TT_U$.

\begin{remark}
We are free to choose any such $\TT$, as long as we keep this choice consistent throughout
the paper for given $W$ and $p$.
We can in fact choose the same $\TT$ for all $p \in \Xi_i$
from Proposition~\ref{pr:feature-decomposition}.
\end{remark}

We will introduce the sliced energy density $\SW$ that locally captures behavior
of $W$ in the directions $V$.

\begin{definition}
\label{def:sliced-W}
We define the \emph{sliced density} $\SW_p: \R^k \to \R$ as
\begin{align}
\label{W-red}
\begin{aligned}
\SW_p &:= W_p' \circ \TT_V.
\end{aligned}
\end{align}
\end{definition}

\begin{lemma}[Decomposition]
\label{le:decomposition-subdiff-W}
For any fixed $p_0 \in \Rn$ we have
\begin{align*}
\partial W_{p_0}'(p) =
\set{\TT (\zeta', \xi''): \zeta' \in \partial \SW_{p_0}(p')}
\qquad \text{for all $p \in \Rn$, $\xi \in \partial W_{p_0}'(0)$}.
\end{align*}
\end{lemma}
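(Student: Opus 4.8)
The plan is to exploit that $W_{p_0}'$ is a support function which splits, in the $\TT$-coordinates, into a ``sliced'' support function in the first $k$ variables plus a term that is linear in the remaining variables. Write $C := \partial W(p_0)$, a nonempty convex polytope since $W$ is polyhedral and finite on $\Rn$, and recall from \eqref{W'-convex-conjugate} that $W_{p_0}' = \delta^*(\cdot \mid C)$ is its (finite) support function. First I would record two elementary facts. (i) $\partial W_{p_0}'(0) = C = \partial W(p_0)$, so the hypothesis $\xi \in \partial W_{p_0}'(0)$ just means $\xi \in \partial W(p_0)$. (ii) Fixing any $\xi_0 \in C$ we have $C \subset \aff \partial W(p_0) = \xi_0 + V$ with $V = \TT(\R^k\times\set0)$; hence any two elements of $C$ differ by an element of $V$, so all $\xi \in C$ share one and the same second coordinate $\xi'' \in \R^{n-k}$, and the map $\xi \mapsto \xi'$ is a bijection of $C$ onto $C' := \set{\xi' : \xi \in C} \subset \R^k$. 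In particular the right-hand side of the claimed identity does not depend on the choice of $\xi$, consistent with the ``for all $\xi$'' in the statement. (The degenerate cases $k = 0$ and $k = n$ are covered by the standing conventions on $\TT$, $\TT_V$, $\TT_U$.)

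Next I would identify the sliced density as the support function of $C'$. Since $\TT$ is a linear isometry, $\xi \cdot \TT_V p' = \xi' \cdot p'$ and $\xi \cdot \TT_U p'' = \xi'' \cdot p''$ for $\xi \in \Rn$, $p' \in \R^k$, $p'' \in \R^{n-k}$. Hence, by Definition~\ref{def:sliced-W} and \eqref{W'-convex-conjugate},
\begin{align*}
\SW_{p_0}(p') = W_{p_0}'(\TT_V p') = \sup_{\xi \in C} \xi \cdot \TT_V p' = \sup_{\zeta' \in C'} \zeta' \cdot p' = \delta^*(p' \mid C').
\end{align*}
Applying the decomposition \eqref{W'-linear} to an arbitrary $p = \TT(p', p'')$ and using $P_V p = \TT_V p'$, $P_U p = \TT_U p''$, I obtain, for all $p\in\Rn$ and all $\xi \in C$,
\begin{align*}
W_{p_0}'(p) = \SW_{p_0}(p') + \xi'' \cdot p'', \qquad \xi \cdot p = \xi' \cdot p' + \xi'' \cdot p''.
\end{align*}

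Finally I would conclude with the exposed-face description of the subdifferential of a support function: for a nonempty closed convex set $C$ and any $q$ with $\delta^*(q\mid C)$ finite, $\partial \delta^*(q \mid C) = \set{\eta \in C: \eta \cdot q = \delta^*(q\mid C)}$ (and likewise for $C'$), which follows at once from the definition of the subdifferential together with (i). Combining this with the two displayed identities, for $\xi \in C$ we get $\xi \in \partial W_{p_0}'(p)$ iff $\xi \cdot p = W_{p_0}'(p)$ iff $\xi' \cdot p' = \SW_{p_0}(p')$ iff $\xi' \in \partial \SW_{p_0}(p')$. Since $\partial W_{p_0}'(p) \subset C$ and every $\zeta' \in \partial \SW_{p_0}(p') \subset C'$ is realized by a unique $\xi \in C$, namely $\xi = \TT(\zeta', \xi'')$ with $\xi''$ the common second coordinate from (ii), this yields $\partial W_{p_0}'(p) = \set{\TT(\zeta', \xi'') : \zeta' \in \partial \SW_{p_0}(p')}$, as asserted. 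An equivalent, slightly more pedestrian route avoids the exposed-face formula: observe that $(p',p'') \mapsto W_{p_0}'(\TT(p',p'')) = \SW_{p_0}(p') + \xi'' \cdot p''$, apply the subdifferential sum rule (legitimate since both summands are finite convex, one of them linear) together with the elementary fact that if $\phi(p',p'') = g(p')$ then $\partial\phi(p',p'') = \partial g(p')\times\set0$, and then transport through the isometry using $\partial(g\circ\TT)(x) = \TT^{-1}\partial g(\TT x)$ applied to $g = W_{p_0}'$. In either case there is no genuine obstacle here: the only real content is recognizing $\SW_{p_0}$ as the support function of the sliced set $C'$ together with the constancy of $\xi''$ on $\partial W(p_0)$, and the one thing to watch is the consistent bookkeeping of the isometry $\TT$ between $\Rn$ and $\R^k\times\R^{n-k}$.
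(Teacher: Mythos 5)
The paper states Lemma~\ref{le:decomposition-subdiff-W} without proof, so there is no argument of the authors' to compare against; your proof fills that gap correctly and in what is almost certainly the intended way. The two key observations --- that $\SW_{p_0}$ is the support function of the projected polytope $C' = \set{\xi' : \xi \in \partial W(p_0)}$, and that $\xi''$ is constant on $\partial W(p_0)$ because $\partial W(p_0) \subset \xi_0 + V$ --- combined with the decomposition \eqref{W'-linear} and the standard exposed-face formula $\partial \delta^*(q \mid C) = \set{\eta \in C : \eta \cdot q = \delta^*(q \mid C)}$ (valid here since $\partial W(p_0)$ and hence $C'$ are compact, as $W$ is finite-valued polyhedral), give a clean and complete argument using only material the paper has already established in Proposition~\ref{pr:direction-decomposition}, \eqref{W'-convex-conjugate}, and Definition~\ref{def:sliced-W}.
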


The following lemma states that the behavior of $W$
in the neighborhood of some $p$ is completely captured
by the sliced density $\SW_p$.

\begin{lemma}
\label{le:W-decomposition}
For every $p_0 \in \R^n$
there exists $\e > 0$ such that
\begin{align}
\label{W-decomposition}
W(p) = \SW_{p_0}(p'-p_0')
+ P_U \xi \cdot (p - p_0) + W(p_0)
\end{align}
for any $p \in \Rn$,
$\abs{p - p_0} <\e$, $\xi \in \partial W(p)$.
Since $\TT$ is an isometry, we have $P_U \xi \cdot (p - p_0) = \xi'' \cdot (p'' - p_0'')$.
\end{lemma}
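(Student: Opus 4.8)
The plan is to combine the local linear representation of $W$ near $p_0$ provided by Proposition~\ref{pr:direction-decomposition} with the orthogonal splitting \eqref{W'-linear} of the directional derivative $W_{p_0}'$, and then to check that the $U$-component of an arbitrary subgradient $\xi\in\partial W(p)$ is independent of $p$ near $p_0$ and of $\xi$.

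First, take $\e=\delta$, the radius from Proposition~\ref{pr:direction-decomposition}, so that $W(p)=W_{p_0}'(p-p_0)+W(p_0)$ whenever $\abs{p-p_0}<\e$. Fix any $\xi_0\in\partial W(p_0)$; since $\partial W(p_0)\subset\aff\partial W(p_0)$, we may apply \eqref{W'-linear} with this $\xi_0$ to $q:=p-p_0$ to get $W_{p_0}'(q)=W_{p_0}'(P_Vq)+\xi_0\cdot P_Uq$. Because $\TT_V(p'-p_0')=P_V(p-p_0)$ and $\SW_{p_0}=W_{p_0}'\circ\TT_V$ by Definition~\ref{def:sliced-W}, the first term equals $\SW_{p_0}(p'-p_0')$. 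Using that $P_U$ is self-adjoint, we arrive at
\[
W(p)=\SW_{p_0}(p'-p_0')+P_U\xi_0\cdot(p-p_0)+W(p_0),\qquad \abs{p-p_0}<\e.
\]
It then remains to show that $P_U\xi=P_U\xi_0$ for every $\xi\in\partial W(p)$, which yields \eqref{W-decomposition}.

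For this, note that on $B_\e(p_0)$ the function $W$ coincides with the convex function $p\mapsto W_{p_0}'(p-p_0)+W(p_0)$, and the subdifferential of a convex function is determined by its restriction to any open set (the subgradient inequality, valid on a neighborhood, extends to all of $\Rn$ along line segments by convexity); hence $\partial W(p)=\partial W_{p_0}'(p-p_0)$ there, and in particular $\partial W(p_0)=\partial W_{p_0}'(0)$. Lemma~\ref{le:decomposition-subdiff-W} then shows that every element of $\partial W_{p_0}'(p-p_0)$ has the form $\TT(\zeta',\xi_0'')$ with $\zeta'\in\partial\SW_{p_0}\!\big((p-p_0)'\big)$ and $\xi_0\in\partial W_{p_0}'(0)=\partial W(p_0)$; here $\xi_0''$ is unambiguous because $\aff\partial W(p_0)$ is parallel to $V$, so all elements of $\partial W(p_0)$ share the same $U$-component. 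Therefore $P_U\xi=\TT_U\xi''=\TT_U\xi_0''=P_U\xi_0$, as needed. Finally, the last claim of the lemma follows from the orthogonality of $U$ and $V$ and the fact that $\TT$ is a linear isometry: $P_U\xi\cdot(p-p_0)=P_U\xi\cdot P_U(p-p_0)=\TT_U\xi''\cdot\TT_U(p''-p_0'')=\xi''\cdot(p''-p_0'')$.

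The only step beyond bookkeeping is the locality of the subdifferential invoked above; alternatively one can bypass Lemma~\ref{le:decomposition-subdiff-W} and argue directly: for $\xi\in\partial W_{p_0}'(p-p_0)$ and $u\in U$, testing the subgradient inequality at $r=(p-p_0)+tu$ and evaluating $W_{p_0}'(r)-W_{p_0}'(p-p_0)=t\,\xi_0\cdot u$ via \eqref{W'-linear} forces $\xi\cdot u=\xi_0\cdot u$ for all $t\in\R$, hence $P_U\xi=P_U\xi_0$. Everything else is routine.
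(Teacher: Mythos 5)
Your proof is correct and follows the same route as the paper's one-line proof, which simply cites Definition~\ref{def:sliced-W} and Proposition~\ref{pr:direction-decomposition}. You supply the step the paper leaves implicit---namely that for $\abs{p-p_0}<\e$ and any $\xi\in\partial W(p)$ one has $P_U\xi=P_U\xi_0$ for $\xi_0\in\partial W(p_0)$, which justifies the ``for any $\xi\in\partial W(p)$'' clause in the statement---and both your route via Lemma~\ref{le:decomposition-subdiff-W} and your direct one-parameter subgradient argument are sound.
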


\begin{proof}
The claim follows from Definition~\ref{def:sliced-W} and Proposition~\ref{pr:direction-decomposition}.
\end{proof}

\begin{lemma}
\label{le:linear growth SW}
Suppose that $p_0 \in \Rn$ and $\xi_0 \in \ri \partial W(p_0)$. Then there exists $\delta > 0$ such
that
\begin{align*}
\SW_{p_0}(z) - \xi_0' \cdot z \geq \delta |z|, \qquad z \in \R^k,
\end{align*}
where $k = \dim \aff \partial W(p_0)$.
\end{lemma}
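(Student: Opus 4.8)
The plan is to unwind the definitions and reduce the inequality to a standard property of the support function of $\partial W(p_0)$. By Definition~\ref{def:sliced-W} and \eqref{W'-convex-conjugate},
\[
\SW_{p_0}(z) = W_{p_0}'(\TT_V z) = \delta^*(\TT_V z \mid \partial W(p_0)) = \sup_{\xi \in \partial W(p_0)} \TT_V z \cdot \xi, \qquad z \in \R^k .
\]
Let $V$ be the subspace parallel to $\aff \partial W(p_0)$, as in Proposition~\ref{pr:direction-decomposition}, so that $\TT_V$ maps $\R^k$ isometrically onto $V$ and $\TT_V z \in V$ for every $z$. Since $\xi_0 \in \partial W(p_0)$ we have $\aff \partial W(p_0) = \xi_0 + V$, hence $K := \partial W(p_0) - \xi_0$ is a convex subset of $V$ with $\aff K = V$. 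Because $\TT_V z \in V$ and $\TT$ is an isometry, $\TT_V z \cdot \xi_0 = \TT_V z \cdot P_V \xi_0 = \TT_V z \cdot \TT_V \xi_0' = z \cdot \xi_0'$, so that
\[
\SW_{p_0}(z) - \xi_0' \cdot z = \sup_{w \in K} \TT_V z \cdot w = \delta^*(\TT_V z \mid K) .
\]

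Next I would exploit the assumption $\xi_0 \in \ri \partial W(p_0)$, which gives $0 \in \ri K$. Since $K$ is convex with $\aff K = V$, its relative interior is open in $V$, so there is $r > 0$ with $\set{w \in V : \abs{w} < r} \subset K$. For $z \in \R^k$, using $\TT_V z \in V$ once more,
\[
\delta^*(\TT_V z \mid K) \geq \sup \set{\TT_V z \cdot w : w \in V,\ \abs{w} < r} = r \abs{\TT_V z} = r \abs{z},
\]
the final equality again by the isometry property of $\TT$. Taking $\delta = r$ finishes the argument; the case $k = 0$, i.e.\ $\partial W(p_0)$ a singleton, is trivial since then $z = 0$ and both sides vanish.

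There is no serious obstacle here: the proof is essentially bookkeeping with the rotation $\TT$, combined with the two standard facts recalled in the excerpt — that one-sided directional derivatives of a finite convex function are support functions of the subdifferential (see \eqref{W'-convex-conjugate}), and that a relative interior is relatively open. The only point requiring a little care is to stay inside the subspace $V$ throughout: the ball one inscribes in $K$ must be a ball of $V$ rather than of $\Rn$, and the identity $\TT_V z \cdot \xi = z \cdot \xi'$ is valid precisely because $\TT_V z \in V$.
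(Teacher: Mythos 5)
Your argument is correct and is essentially the same as the paper's: both rely on the support-function representation \eqref{W'-convex-conjugate} of $W_{p_0}'$ and the fact that $\ri \partial W(p_0)$ contains a relative ball around $\xi_0$ in $\xi_0 + V$. You phrase this by translating to $K = \partial W(p_0) - \xi_0$ and inscribing a ball, while the paper directly substitutes $\zeta = \xi_0 + \delta \TT_V z/|z|$ into the supremum, but the computation is the same.
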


\begin{proof}
Let again $V$ be the subspace parallel to $\aff \partial W(p_0)$. Then $\aff \partial W(p_0)
= \xi_0 + V$. Since $\xi_0 \in \ri \partial W(p_0)$, there exists $\delta > 0$ with $\xi \in
\partial W(p_0)$ for all $|\xi - \xi_0| \leq \delta$, $\xi \in \xi_0 + V$.
Take $z \in \R^k$ and set $\zeta = \xi_0 + \delta \frac{\TT_V z}{|z|} \in \partial W(p_0)$. From
the definition of $\SW_{p_0}$ we have from \eqref{def:sliced-W} and \eqref{W'-convex-conjugate}
\begin{align*}
\SW_{p_0}(z) = W_{p_0}'(\TT_V z) =\sup \set{\TT_V z \cdot \xi : \xi \in \partial W(p_0)} \geq \TT_V
z \cdot \zeta = \xi_0'\cdot z + \delta |z|.
\end{align*}
This yields the lower bound.
\end{proof}

\subsection{Sliced energy}

Suppose now that $p \in \Rn$ such that $k = \dim \partial W(p) > 0$
and recall the definition of $\TT$ in \eqref{rotation}.
We shall consider the rotated flat torus $\Gamma = \R^n / L \TT \Z^n$ for some $L > 0$.
We can write $\Gamma = \TT(\Gamma' \times \Gamma'')$,
where $\Gamma' = \R^k / L \Z^k$ and $\Gamma'' = \R^{n - k} / L\Z^{n-k}$,
and $x \in \Gamma$ is given as $x = \TT(x', x'')$
for $x' \in \Gamma'$, $x'' \in \Gamma''$.

We define the functionals
\begin{align*}
E_p(\psi) &:= E_{W(\cdot + p) - W(p)}(\psi; \Gamma), && \psi \in L^2(\Gamma),\\
E'_p(\psi) &:= E_{W_p'}(\psi; \Gamma), && \psi \in L^2(\Gamma),\\
\SE_p(\psi) &:= E_{\SW_p}(\psi; \Gamma'), && \psi \in L^2(\Gamma').
\end{align*}
All three functionals are proper closed convex functions on $L^2(\Gamma)$ resp. $L^2(\Gamma')$.

Since $W_p'$ and $\SW_p$ are positively one-homogeneous,
the characterization of the subdifferential in Proposition~\ref{pr:subdiff-char-periodic}
applies.

The function $q \mapsto W(q + p) - W(p)$ is not one-homogeneous in general, however,
and therefore the same
characterization does not apply for the subdifferential of $E_p$.
Nevertheless, it coincides with the subdifferential of $E_p'$ at $\psi \in \Lip(\Gamma)$
when $\norm{\nabla \psi}_\infty$ is small
by Lemma~\ref{le:subdiff-homog-relation}.
This observation allows us to use the simpler, positively one-homogeneous energy $E_p'$
when defining the crystalline curvature of a facet.

What follows is the main justification of the energy stratification.
We show that since $W_p'$ is linear on the subspace $U$,
we need to only consider the directions in $V = U^\perp$
when computing the crystalline curvature of a \textbf{stratified} function.

\begin{lemma}
\label{le:subdiff-slicing}
Let $p$ be as above.
Suppose that $\bar\psi \in \Lip(\Gamma')$ and $f \in C^1(\Gamma'')$
are given functions and let $\psi(x) = \bar\psi(x') + f(x'')$.
Let $\psi_a$ and $\bar \psi_a$ be the unique solutions of the resolvent problems
\begin{align*}
\psi_a + a \partial E_p'(\psi_a) &\ni \psi,\\
\bar\psi_a + a \partial \SE_p(\bar\psi_a) &\ni \bar\psi,
\end{align*}
for given $a > 0$.
Then
\begin{align*}
\psi_a(x)  = \bar \psi_a(x') + f(x''), \qquad \text{$x = \TT(x', x'') \in \Gamma$}.
\end{align*}
or, equivalently,
\begin{align*}
(I +a\partial E_p')^{-1}(\psi)(x) = (I +a \partial \SE_p)^{-1}(\bar\psi)(x') + f(x'').
\end{align*}
If moreover $\bar\psi \in \domain(\partial \SE_p)$,
then $\psi \in \domain(\partial E_p')$, $\partial^0 E_p'(\psi)$ is independent of $x''$ and
\begin{align}
\label{minsectionslicing}
\partial^0 E_p'(\psi)(x) = \partial^0 \SE_p(\bar \psi)(x') \qquad \text{a.e. $x = \TT(x', x'') \in \Gamma$}.
\end{align}
\end{lemma}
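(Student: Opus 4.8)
The plan is to exhibit the solution of the first resolvent problem explicitly, verify it through the subdifferential characterization, and then read off the remaining assertions. Set $\tilde\psi_a(x) := \bar\psi_a(x') + f(x'')$ for $x = \TT(x', x'') \in \Gamma$. By Proposition~\ref{pr:resolvent-problems} applied to $\SW_p$ on $\Gamma'$ we have $\bar\psi_a \in \Lip(\Gamma')$ with $\norm{\nabla\bar\psi_a}_\infty \leq \norm{\nabla\bar\psi}_\infty$, so $\tilde\psi_a \in \Lip(\Gamma)$; and by Proposition~\ref{pr:subdiff-char-periodic}, valid on the rotated torus $\Gamma'$ since it is isometric to $\R^k/L\Z^k$ via $\TT^{-1}$, there is a Cahn--Hoffman field $\bar z \in \CH_{\SW_p}(\bar\psi_a; \Gamma')$, that is, $\bar z \in X_2(\Gamma')$ with $\bar z(x') \in \partial\SW_p(\nabla\bar\psi_a(x'))$ a.e., such that $-\divo\bar z = (\bar\psi - \bar\psi_a)/a$. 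The key idea is to \emph{lift} $\bar z$ to a Cahn--Hoffman field on $\Gamma$ for $\tilde\psi_a$ with respect to $W_p'$, exploiting that $W_p'$ is affine in the $U = V^\perp$ directions (Proposition~\ref{pr:direction-decomposition}): the $x''$-dependence of $\psi$ should contribute nothing but a constant, divergence-free vector.

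Fix $\xi_0 \in \partial W(p) = \partial W_p'(0)$; since $\aff \partial W(p) = \xi_0 + V$ and $U \perp V$, the $U$-component $\xi_0''$ of $\xi_0$ (via $\TT$) is the same for every element of $\partial W(p)$. Define
\[
z(x) := \TT\pth{\bar z(x'),\, \xi_0''}, \qquad x = \TT(x', x'') \in \Gamma.
\]
Writing $\psi = g \circ \TT^{-1}$ and $z = \TT\,(w \circ \TT^{-1})$ with $g(y) = \bar\psi(y') + f(y'')$, $w(y) = (\bar z(y'), \xi_0'')$, and using $\TT^{-1} = \TT^{T}$ together with the product structure $\Gamma = \TT(\Gamma' \times \Gamma'')$, a direct computation gives
\[
\nabla\tilde\psi_a(x) = \TT\pth{\nabla\bar\psi_a(x'),\, \nabla f(x'')}, \qquad \divo z(x) = \divo\bar z(x') \in L^2(\Gamma),
\]
the second identity by Fubini, the constant component $\xi_0''$ contributing no divergence; in particular $z \in X_2(\Gamma)$. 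Since $(\nabla\tilde\psi_a(x))' = \nabla\bar\psi_a(x')$, Lemma~\ref{le:decomposition-subdiff-W} shows $z(x) = \TT(\bar z(x'), \xi_0'') \in \partial W_p'(\nabla\tilde\psi_a(x))$ for a.e.\ $x$, hence $z \in \CH_{W_p'}(\tilde\psi_a; \Gamma)$ and Proposition~\ref{pr:subdiff-char-periodic} gives $-\divo z \in \partial E_p'(\tilde\psi_a)$. But $-\divo z(x) = -\divo\bar z(x') = (\bar\psi(x') - \bar\psi_a(x'))/a = (\psi(x) - \tilde\psi_a(x))/a$, so $\tilde\psi_a + a\partial E_p'(\tilde\psi_a) \ni \psi$; by uniqueness of the resolvent, $\psi_a = \tilde\psi_a$, which is the first claim and its equivalent reformulation.

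For the last part, suppose in addition that $\bar\psi \in \domain(\partial\SE_p)$. Since $\bar\psi \in \Lip(\Gamma')$, Proposition~\ref{pr:subdiff-char-periodic} again provides $\bar z \in \CH_{\SW_p}(\bar\psi; \Gamma')$ with $-\divo\bar z = \partial^0\SE_p(\bar\psi)$, and the identical lift $z(x) = \TT(\bar z(x'), \xi_0'')$ lies in $\CH_{W_p'}(\psi; \Gamma)$; hence $-\divo z \in \partial E_p'(\psi)$ and $\psi \in \domain(\partial E_p')$. To match the minimal sections I would combine the first part with the resolvent convergence: by Proposition~\ref{pr:resolvent-problems}, $h_a := (\psi_a - \psi)/a \to -\partial^0 E_p'(\psi)$ in $L^2(\Gamma)$ and $\bar h_a := (\bar\psi_a - \bar\psi)/a \to -\partial^0\SE_p(\bar\psi)$ in $L^2(\Gamma')$, while the first part gives $h_a(x) = \bar h_a(x')$. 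Since $\TT$ is measure preserving and $\Gamma = \TT(\Gamma' \times \Gamma'')$, Fubini shows $x \mapsto \bar h_a(x')$ converges in $L^2(\Gamma)$ to $x \mapsto -\partial^0\SE_p(\bar\psi)(x')$, so uniqueness of $L^2(\Gamma)$-limits forces $\partial^0 E_p'(\psi)(x) = \partial^0\SE_p(\bar\psi)(x')$ a.e.; in particular it is independent of $x''$.

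This is mostly bookkeeping rather than a serious obstruction. The one conceptual ingredient is that $W_p'$ is affine in the $U$-directions (Proposition~\ref{pr:direction-decomposition}, Lemma~\ref{le:decomposition-subdiff-W}), which is exactly what lets the $f(x'')$ part of $\psi$ be absorbed by appending the constant, automatically divergence-free vector $\xi_0''$. The points that need care are the transformation rules for $\nabla$ and $\divo$ under the rotation $\TT$, checking that the lifted field lies in $X_2(\Gamma)$ (bounded, with $L^2$ divergence), and the harmless observation that Proposition~\ref{pr:subdiff-char-periodic} may be applied on the rotated tori $\Gamma$ and $\Gamma'$.
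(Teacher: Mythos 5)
Your proof is correct and follows essentially the same route as the paper's: both lift a Cahn--Hoffman field $\bar z$ from $\Gamma'$ to $\Gamma$ by appending the fixed transverse component $\xi''$ of $\partial W(p)$, verify via the subdifferential characterization (Proposition~\ref{pr:subdiff-char-periodic}) and the decomposition of $\partial W_p'$ (Lemma~\ref{le:decomposition-subdiff-W}) that $\bar\psi_a(x')+f(x'')$ solves the resolvent problem, and then pass $a\to0$ with $h_a(x)=\bar h_a(x')$ and resolvent convergence to identify the minimal sections. Your added remarks on the $\TT$-transformation rules and Fubini simply make explicit what the paper treats as routine.
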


\begin{proof}
Suppose that $\psi(x) = \bar \psi(x') + f(x'')$ for some $\bar \psi \in \Lip(\Gamma')$
and $f \in C^1(\Gamma'')$.
By the characterization of the subdifferentials in Proposition~\ref{pr:subdiff-char-periodic},
we have
\begin{align*}
\partial E'_p(\psi) = - \divo \CH_{W_p'}(\psi; \Gamma), \qquad
\partial \SE_p(\bar \psi) = - \divo \CH_{\SW_p}(\bar\psi; \Gamma').
\end{align*}
The decomposition lemma~\ref{le:decomposition-subdiff-W} implies
\begin{align}
\label{subdiffslicing}
\partial W'_p(\nabla \psi(x)) = \set{\TT(\xi', \xi''):
\xi' \in \partial \SW_p(\nabla \bar\psi(x'))}
\end{align}
for some fixed $\xi'' \in \R^{n-k}$
since
\begin{align*}
\nabla \psi(x) = \TT (\nabla \bar \psi(x'), \nabla f(x'')).
\end{align*}

By Proposition~\ref{pr:resolvent-problems}, both $\psi_a$ and $\bar\psi_a$ are Lipschitz.
As $\bar\psi_a$ is the unique solution of the resolvent problem,
the characterization of the subdifferential of $\SE_p$ above yields that
there exists $\bar z_a \in \CH_{\SW_p}(\bar\psi_a; \Gamma')$
such that $\bar\psi_a - \bar\psi = a \divo \bar z_a$.
Set $z_a(x) = \TT(\bar z_a(x'), \xi'')$ for some fixed $\xi''$ as above
and $\zeta_a(x) = \bar\psi_a(x') + f(x'')$.
Note that $z_a \in \CH_{W_p'}(\zeta_a; \Gamma)$
by \eqref{subdiffslicing}.
Moreover $\divo_x z_a(x) = \divo_{x'} \bar z_a(x')$.
Therefore
\begin{align}
\label{hgyfhbs}
\begin{aligned}
\zeta_a(x) - \psi(x) &= \bar\psi_a(x') +f(x'') - \bar\psi(x') + f(x'') =
\bar \psi_a(x') - \bar\psi(x')\\
&= a \divo_{x'} \bar z_a(x') =
a \divo_x z_a(x).
\end{aligned}
\end{align}
The characterization of the subdifferential of $E_p'$ above implies that
$\zeta_a - \psi \in  - \partial E_p'(\zeta_a; \Gamma)$
and therefore $\zeta_a$ is a solution of the resolvent problem.
However, the solution is unique and therefore $\psi_a = \zeta_a$ almost everywhere.

Now we suppose that $\bar \psi \in \domain(\partial \SE_p)$, that is,
that $\partial \SE_p(\bar\psi)$ is nonempty.
And so there exists $\bar z\in \CH_{\SW_p}(\bar\psi; \Gamma')$.
But then $z(x) = \TT(\bar z(\bar x), \xi'')  \in \CH_{W_p'}(\psi; \Gamma)$ as we just observed.
In particular, $\psi \in \domain(\partial E_p')$.

Let us set $h_a = (\psi_a - \psi)/a$ and $\bar h_a = (\bar \psi_a - \bar\psi)/a$
as in Proposition~\ref{pr:resolvent-problems}.
Observe that due to \eqref{hgyfhbs}
\begin{align*}
h_a(x) = \bar h_a(x').
\end{align*}
Since $-h_a \to \partial^0 E_p'(\psi; \Gamma)$ in $L^2(\Gamma)$
and $-\bar h_a \to \partial^0 \SE_p(\bar \psi; \Gamma')$ in $L^2(\Gamma')$ as $a \to 0$,
we conclude \eqref{minsectionslicing}.
\end{proof}

\section{Crystalline curvature}
\label{sec:crystalline curvature}

We introduce an operator $\Lambda_p$ that
assigns the crystalline curvature to a facet with slope $p$ given by a faceted function,
as long as the faceted function is admissible in a certain sense.

\subsection{Facets}
To describe facets,
let us recall the notation for pairs that was introduced
in \cite{GGP13AMSA}.
Since we need to construct facets of various dimensions, depending
on the dimension of $\partial W(p)$,
$\mathcal P^k$ will denote the set of pairs on $\R^k$:

\begin{definition}[cf. \cite{GGP13AMSA}]
For any $k \in \N$ we will denote by $\mathcal P^k$ the set of all
ordered pairs $(A_-, A_+)$ of disjoint sets $A_\pm \subset \R^k$,
$A_- \cap A_+ = \emptyset$.

We will introduce a partial ordering $(\mathcal P^k, \preceq)$
by
\begin{align*}
(A_-, A_+) \preceq (B_-, B_+)
\qquad \Leftrightarrow \qquad
A_+ \subset B_+ \text{ and } B_- \subset A_-
\end{align*}
for $(A_-, A_+), (B_-, B_+) \in \mathcal P^k$,
as well as the \emph{reversal}
\begin{align*}
-(A_-, A_+) := (A_+, A_-).
\end{align*}
Clearly, if $(A_-, A_+) \preceq (B_-, B_+)$
then $-(B_-, B_+) \preceq -(A_-, A_+)$.

A pair $(A_-, A_+) \in \mathcal P^k$ is said to be \emph{open}
if both $A_-$ and $A_+$ are open.

A \emph{smooth} pair is then an open pair $(A_-, A_+) \in \mathcal P^k$
for which we also have
\begin{enumerate}[(i)]
\item
$\dist(A_-, A_+) > 0$, where we use the convention
$\dist(\emptyset, E) = + \infty$ for any $E$, and
\item
$\partial A_- \in C^\infty$ and $\partial A_+ \in C^\infty$.
\end{enumerate}

We will refer to the set
\begin{align*}
\R^k \setminus \pth{A_- \cup A_+} = A_-^c \cap A_+^c
\end{align*}
as the \emph{facet} of the pair $(A_-, A_+) \in \mathcal P^k$.
\end{definition}

\begin{remark}
We will drop $k$ if the dimension is understood from the context
or is irrelevant.
\end{remark}

We will add the notion of a bounded pair.
\begin{definition}
We say that a pair $(A_-, A_+) \in \mathcal P^k$ is \emph{bounded}
if either $A_-^c$ or $A_+^c$ is bounded.
\end{definition}

\begin{remark}
Note that if $(A_-, A_+)$ is a bounded pair, then the facet $A_-^c \cap A_+^c$ is bounded.
If $(A_-, A_+)$ is an open pair, the reverse implication also applies.
\end{remark}

Let us also recall the useful notion of a support function.

\begin{definition}[cf. \cite{GGP13AMSA}]
A Lipschitz function $\psi \in \Lip(\R^k)$ is called
a \emph{support function} of an open pair $(A_-, A_+) \in \mathcal P^k$
if
\begin{align*}
\psi(x)
\begin{cases}
> 0 & x \in A_+,\\
= 0 & x \in A_-^c \cap A_+^c,\\
< 0 & x \in A_-.
\end{cases}
\end{align*}

On the other hand, for any function $\psi$ on $\R^k$
we define the pair
\begin{align*}
\Pair(\psi) := \pth{\set{x \in \R^k : \psi(x) <0},
\set{x \in \R^k: \psi(x) >0}}.
\end{align*}
\end{definition}

\begin{example}
\label{ex:trivial-support-function}
For any open pair $(A_-, A_+) \in \mathcal P^k$ the function
\begin{align*}
\psi(x) := \dist(x, A_+^c) - \dist(x, A_-^c)
\end{align*}
is a support function of the pair $(A_-, A_+)$.
\end{example}

Finally, let us recall the notion of a generalized neighborhood of
a subset of $\R^k$.

\begin{definition}[cf. \cite{GGP13AMSA}]
For any set $E \subset \R^k$ and $\rho \in \R$ the
\emph{generalized neighborhood} is defined as
\begin{align*}
\nbd\rho(E) :=
\begin{cases}
E + \cl B_\rho(0) & \rho > 0,\\
E & \rho = 0,\\
\set{x \in E : \cl B_{\abs\rho}(x) \subset E} & \rho < 0.
\end{cases}
\end{align*}

For a pair $(A_-, A_+) \in \mathcal P^k$ we introduce the
generalized neighborhood
\begin{align*}
\nbd\rho(A_-,A_+) := \pth{\nbd{-\rho}(A_-),\nbd{\rho}(A_+)}.
\end{align*}
\end{definition}

A part of the following proposition was stated in \cite{GGP13AMSA} for the $n$-dimensional torus, but it can be easily restated for $\Rn$.
The proof is straighforward.

\begin{proposition}
\label{pr:nbd-properties}
\begin{enumerate}
\item $\mathcal U^{-\rho}(A) \subset A \subset \mathcal U^\rho(A)$
for $\rho > 0$.
\item (complement)
\begin{align}
\label{compl-nbd}
\pth{\nbd\rho(A)}^c = \nbd{-\rho}(A^c)
\qquad \text{for any set $A \subset \Rn$ and $\rho \in \R$}
\end{align}
\item (monotonicity)
\begin{align*}
\nbd\rho(A_1) \subset \nbd\rho(A_2)\qquad
\text{for $A_1 \subset A_2 \subset \Rn$ and $\rho \in \R$.}
\end{align*}
\item
$\nbd\rho(A_1 \cap A_2) \subset \nbd\rho(A_1) \cap \nbd\rho(A_2)$
 for all $\rho \in \R$, with equality for $\rho \leq 0$.
\item
$\nbd{r}(\nbd\rho(A)) \subset \nbd{r+\rho}(A)$
for $r \geq 0$ and $\rho \in \R$; equality holds if $\rho \geq 0$.
\item
For any $\rho \in \R$, we have
$\nbd\rho(A_1) \subset A_2$ if and only if $A_1 \subset \nbd{-\rho} (A_2)$.
\item (interior and closure)
\begin{align*}
\bigcup_{\rho > 0} \nbd{-\rho}(A) = \interior A \subset A \subset \cl A = \bigcap_{\rho>0} \nbd\rho(A) \qquad \text{for any set $A \subset \Rn$.}
\end{align*}
\item (distance)
\begin{align*}
\dist(A_1, A_2) = \sup \set{\rho \geq 0: \nbd\rho(A_1) \subset A_2^c} \qquad \text{for all $A_1, A_2 \subset \Rn$.}
\end{align*}
\end{enumerate}
\end{proposition}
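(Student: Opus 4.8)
The plan is to establish the complement formula (b) first, because it turns every assertion about the ``eroded'' neighborhood $\nbd\rho(\cdot)$ with $\rho<0$ into an assertion about a Minkowski sum $\nbd\rho(\cdot)$ with $\rho>0$; the remaining items then follow by unwinding the definitions. The case $\rho=0$ of every item is trivial, so I would fix $\rho>0$. The key observation for (b) is that $x\in\nbd\rho(A)=A+\cl B_\rho(0)$ exactly when $\cl B_\rho(x)\cap A\neq\emptyset$; hence $(\nbd\rho(A))^c=\set{x:\cl B_\rho(x)\subset A^c}$, and since $x\in\cl B_\rho(x)$ the constraint ``$x\in A^c$'' in the definition of $\nbd{-\rho}$ is automatic, so this set equals $\nbd{-\rho}(A^c)$. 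Applying the identity with $A^c$ in place of $A$ yields \eqref{compl-nbd} for $\rho<0$ as well. Item (a) is then immediate ($0\in\cl B_\rho(0)$ gives $A\subset\nbd\rho(A)$, and $\nbd{-\rho}(A)\subset A$ by definition), and monotonicity (c) follows from monotonicity of the Minkowski sum for $\rho>0$ and by dualizing the $(-\rho)$-case through (b) for $\rho<0$.

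Next I would handle (d) and (e). The inclusion in (d) holds for every $\rho$ by applying (c) to $A_1\cap A_2\subset A_i$; for $\rho\le0$ the reverse inclusion is just the observation that $\cl B_{\abs\rho}(x)\subset A_1$ together with $\cl B_{\abs\rho}(x)\subset A_2$ means precisely $\cl B_{\abs\rho}(x)\subset A_1\cap A_2$. For (e) with $r,\rho\ge0$ I would use associativity of the Minkowski sum together with $\cl B_\rho(0)+\cl B_r(0)=\cl B_{r+\rho}(0)$ to get equality. The case $\rho<0$, $r\ge0$ is the first point where a genuine (if short) argument is needed: given $y=x+b$ with $\cl B_{\abs\rho}(x)\subset A$ and $\abs b\le r$, one decomposes $b$ into a vector of length $\min(\abs b,\abs\rho)$, which moves $x$ to a point still inside $\cl B_{\abs\rho}(x)\subset A$, plus a remainder of length $\le r+\rho$ when $r+\rho\ge0$, placing $y$ in $\nbd{r+\rho}(A)$; the subcase $r+\rho<0$ is an even simpler triangle-inequality estimate showing $\cl B_{\abs{r+\rho}}(y)\subset\cl B_{\abs\rho}(x)\subset A$.

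For (f) the substantive case is $\rho\ge0$, a Galois adjunction: $\nbd\rho(A_1)\subset A_2$ says $\cl B_\rho(a)\subset A_2$ for every $a\in A_1$, which is exactly $A_1\subset\nbd{-\rho}(A_2)$; the case $\rho<0$ is treated by the symmetric argument together with \eqref{compl-nbd}. Items (g) and (h) are then corollaries. For (g), $x\in\nbd{-\rho}(A)$ forces $\cl B_\rho(x)\subset A$, hence $x\in\interior A$, while conversely any interior point lies in some $\nbd{-\rho}(A)$ with $\rho>0$; the closure identity follows by applying this to $A^c$ and complementing via (b). For (h), unwinding $\nbd\rho(A_1)\subset A_2^c$ as ``no $a_1\in A_1$, $a_2\in A_2$ satisfy $\abs{a_1-a_2}\le\rho$'' shows that $\set{\rho\ge0:\nbd\rho(A_1)\subset A_2^c}$ contains every $\rho<\dist(A_1,A_2)$ and no $\rho>\dist(A_1,A_2)$, so its supremum is $\dist(A_1,A_2)$; the convention $\dist(\emptyset,E)=+\infty$ is consistent because $\nbd\rho(\emptyset)=\emptyset$ for all $\rho$. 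I expect the negative-$\rho$ instances of (e), and to a lesser extent (f), to be the only steps requiring care; once (b) is available, everything else reduces to a direct inspection of the definitions.
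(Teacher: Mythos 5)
Your reduction of every negative-$\rho$ instance to the Minkowski-sum case via the complement identity (b) is clean and correct, and the treatments of (a)--(e), (g), (h) all go through; your splitting of the $\rho<0$, $r\ge0$ case of (e) into the subcases $r+\rho\ge0$ (split $b$ into a piece of length $\min(|b|,|\rho|)$ plus a remainder) and $r+\rho<0$ (triangle inequality) is exactly the right amount of care. The paper itself offers no argument---only the remark that ``the proof is straightforward''---so there is no in-paper proof to compare against.

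The one genuine gap is in (f) for $\rho<0$. Your adjunction argument for $\rho\ge0$ is fine, but the claim that the case $\rho<0$ ``is treated by the symmetric argument together with \eqref{compl-nbd}'' does not close: complementing $\nbd\rho(A_1)\subset A_2$ via (b) yields $A_2^c\subset\nbd{-\rho}(A_1^c)$, which is not of the form $\nbd\sigma(B_1)\subset B_2$ handled by the $\rho\ge0$ case, and the same mismatch occurs on the other side. Indeed the biconditional is false for $\rho<0$: taking $A_1=\set{0}$, $A_2=\emptyset$, $\rho=-1$ gives $\nbd{-1}(\set{0})=\emptyset\subset A_2$ while $\set{0}\not\subset\nbd{1}(\emptyset)=\emptyset$; for the converse direction take $A_1=\Rn$, $A_2=\Rn\setminus\set0$, $\rho=-1$, so that $A_1\subset\nbd{1}(A_2)=\Rn$ yet $\nbd{-1}(A_1)=\Rn\not\subset A_2$. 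Erosion and dilation form a one-sided Galois connection, not a symmetric one, so there is no ``$\rho<0$ adjunction''; the statement should be read (and proved) only for $\rho\ge0$, which is what is used in the rest of the paper.
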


\subsection{Definition of crystalline curvature}
We assume for the rest of the paper that $W$ is a convex polyhedral function on $\Rn$.
Let $p \in \Rn$ such that $k = \dim \partial W(p) > 0$.
Let $\psi$ be a support function of a bounded open pair $(A_-, A_+) \in \mathcal P^k$.
We say that $\psi$ is an \emph{$p$-admissible support function} if there exists
an open set $G \supset A_-^c \cap A_+^c$ such that
the set of Cahn-Hoffman vector fields
\begin{align*}
\SlCH_p(\psi; G) := \CH_{\SW_p}(\psi; G)
\end{align*}
is nonempty.
We denote this for short as $\psi \in \domain(\Lambda_p)$.
If for a given bounded open pair $(A_-, A_+)$ there exists at least one
$p$-admissible support function, we say that $(A_-, A_+)$ is a \emph{$p$-admissible pair}.
If $p$ is understood from the context, we refer to them as an admissible support function and
an admissible pair.

Let $\psi \in \domain(\Lambda_p)$ be an admissible support function of an admissible pair
$(A_-, A_+)$.
We define the function $\Lambda_p[\psi] \in L^2(A_-^c \cap A_+^c)$ on the facet as
\begin{align}
\label{crystalline-curvature}
\Lambda_p[\psi](x) = \divo z_{\rm min}(x), \qquad x \in A_-^c \cap A_+^c,
\end{align}
where $z_{\rm min}$ is an element of $\SlCH_p(\psi; G)$ that minimizes
$\norm{\divo z}_{L^2(G)}$.
We call $\Lambda_p$ the \emph{crystalline curvature}.

\begin{remark}
As we shall see later in Corollary~\ref{co:lambda support func indep} at the end of this section,
the crystalline curvature satisfies a comparison principle and therefore its value on a facet of a
given admissible pair is independent of the choice of an admissible support function of this pair.
\end{remark}

We first prove that the crystalline curvature is well-defined $\Lambda_p$.

\begin{proposition}
\label{pr:lambda-well-defined}
The quantity $\Lambda_p[\psi]$ is well-defined in the sense that the value is unique a.e.
and it does not depend on $G$ nor on the value of $\psi$ away from the facet.
More precisely, if $\psi_1$ and $\psi_2$ are two support functions of a bounded open pair $(A_-, A_+) \in \mathcal P^k$
with $\psi_i \in \domain(\Lambda_p)$ for some $p \in \Rn$ with $k = \dim \partial W(p) > 0$
such that $\psi_1 = \psi_2$ on a neighborhood of the facet $A_-^c \cap A_+^c$,
then $\Lambda_p[\psi_1] = \Lambda_p[\psi_2]$ a.e. on $A_-^c \cap A_+^c$.
\end{proposition}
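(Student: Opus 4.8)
The plan is to reduce the statement to an application of the patching Lemma~\ref{le:cahn-hoffman-patch}, using the one-homogeneity of $\SW_p$. The key observation is that $\Lambda_p[\psi_i]$ is computed via the minimal-divergence Cahn-Hoffman field for $\SW_p$ on an open set $G_i \supset A_-^c \cap A_+^c$, and on the facet itself $\psi_i = 0$, so the facet sits inside a thin level-set band $\{|\psi_i| < \delta\}$ on which the two support functions agree. This is precisely the setup where the patching lemma applies to $W = \SW_p$ (which is positively one-homogeneous by construction, Definition~\ref{def:sliced-W}).

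First I would fix $\psi_1, \psi_2$ as in the statement, with associated open sets $G_1, G_2$ and nonempty $\SlCH_p(\psi_i; G_i)$. Choose $\delta > 0$ small enough that the open set $G := \set{x : |\psi_1(x)| < \delta}$ satisfies $\cl G \subset G_1 \cap G_2$ and $\psi_1 = \psi_2$ on $G$; such a $\delta$ exists since $\psi_1 = \psi_2$ on a neighborhood of the compact facet (the facet is compact because the pair is bounded and the facet is $A_-^c \cap A_+^c$), and because the facet is exactly $\{\psi_1 = 0\}$ so small sublevel bands shrink toward it. Note the facet $A_-^c \cap A_+^c \subset G$. Let $z_i^{\min} \in \SlCH_p(\psi_i; G_i)$ be the minimizers of $\norm{\divo z}_{L^2(G_i)}$ defining $\Lambda_p[\psi_i]$.

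Next I would prove the inequality $\norm{\divo z_2^{\min}}_{L^2(G)} \geq \norm{\Lambda_p[\psi_1]}_{L^2(G)}$ and its symmetric counterpart, then upgrade to the pointwise-a.e.\ equality on the facet. Concretely, apply Lemma~\ref{le:cahn-hoffman-patch} with $W = \SW_p$, $\psi_1, \psi_2$ on $G_1, G_2$, and the band $G$: this produces $z \in \CH_{\SW_p}(\psi_1; G_1)$ that equals $z_2^{\min}$ on $G$ and $z_1^{\min}$ off $G$, with $\divo z = \divo z_2^{\min}$ a.e.\ on $G$. Restricting to $G_1$ this $z$ competes in the minimization for $\psi_1$; but actually I want the cleaner route: restrict everything to the fixed band $G$ and observe that both $z_1^{\min}|_G$ and (the patched) $z_2^{\min}|_G$ lie in $\CH_{\SW_p}(\psi_1; G) = \CH_{\SW_p}(\psi_2; G)$, so it suffices to show the $L^2(G)$-minimal-divergence element of $\CH_{\SW_p}(\psi_1; G)$ has divergence agreeing a.e.\ on the facet with both $\divo z_1^{\min}$ and $\divo z_2^{\min}$. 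For this, given $z_1^{\min}$ on $G_1$, patch it with any field from $\CH_{\SW_p}(\psi_1;G)$ across the band $G$ (using Lemma~\ref{le:cahn-hoffman-patch} in the other direction, enlarging from $G$ to $G_1$) to see that the $G$-minimal divergence cannot be smaller on $G$ than $\divo z_1^{\min}$ restricted there, while conversely $z_1^{\min}|_G$ is admissible on $G$; a standard convexity/uniqueness argument for the minimal-norm element of a closed convex set (here $\divo \CH_{\SW_p}(\psi_1;G) \subset L^2(G)$, which is closed convex) then forces the divergences to coincide a.e.\ on $G$, hence in particular a.e.\ on the facet. Running the symmetric argument with the roles of $\psi_1, \psi_2$ swapped gives $\Lambda_p[\psi_1] = \Lambda_p[\psi_2]$ a.e.\ on $A_-^c \cap A_+^c$. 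Independence from $G$ and from $\psi$ away from the facet is then immediate since $\psi_1 = \psi_2$ was the only hypothesis used near the facet.

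The main obstacle I anticipate is the bookkeeping in the minimization step: the minimal sections $z_i^{\min}$ are defined relative to possibly different sets $G_i$, so one must carefully transport the minimality property to the common band $G$ via two applications of the patching lemma (shrinking $G_i \to G$ and enlarging $G \to G_i$), and invoke uniqueness of the minimal-$L^2$-norm element in a closed convex subset of a Hilbert space to conclude that divergences, not just norms, match a.e.; the orthogonality relation $(z_1 - z_2)\cdot\nabla\psi = 0$ from Lemma~\ref{le:one-homogeneous-subdiff} (used inside Lemma~\ref{le:cahn-hoffman-patch}) is what makes the patch divergence-compatible across level sets, and this is exactly why the hypothesis "$\psi_1 = \psi_2$ on a neighborhood of the facet" rather than merely "on the facet" is needed.
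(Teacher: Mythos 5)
Your approach is the same as the paper's: patch Cahn--Hoffman fields across a small level band around the facet using Lemma~\ref{le:cahn-hoffman-patch}, and then exploit strict convexity / uniqueness of the minimal $L^2$-norm element in a closed convex set to upgrade equality of norms to a.e.\ equality of divergences. Your detour through the intermediate $G$-minimizer $w_G$ and the equality $\CH_{\SW_p}(\psi_1; G) = \CH_{\SW_p}(\psi_2; G)$ is slightly longer than the paper's direct comparison of $\norm{\divo z_1}_{L^2(G)}$ with $\norm{\divo z_2}_{L^2(G)}$, but it is equivalent in content, and your closing remark correctly identifies why ``$\psi_1 = \psi_2$ on a neighborhood of the facet'' rather than merely on the facet is the right hypothesis.

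There is, however, one concrete gap in the setup. You set $G := \set{x : \abs{\psi_1(x)} < \delta}$ and claim $\cl G \subset G_1 \cap G_2$ for $\delta$ small, arguing that the sublevel bands ``shrink toward'' the compact facet. That reasoning is incorrect: $\bigcap_{\delta > 0}\set{\abs{\psi_1} < \delta} = \set{\psi_1 = 0}$ only as a nested intersection, not in Hausdorff distance, and nothing in the definition of a support function prevents $\psi_1$ from approaching $0$ far away from the facet. In that case $\set{\abs{\psi_1} < \delta}$ is unbounded for every $\delta > 0$ and can never be contained in the bounded open sets $G_1 \cap G_2$ surrounding the facet. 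The paper avoids this by first choosing a \emph{bounded} open set $H \supset A_-^c \cap A_+^c$ with $\psi_1 = \psi_2$ on $H$ and $H \subset G_1 \cap G_2$, and then taking $0 < \delta < \min_{\partial H}\abs{\psi_1}$ and $G := \set{x \in H : \abs{\psi_1(x)} < \delta}$, which guarantees $G \subset\subset H \subset G_1 \cap G_2$. After this correction your argument goes through as written.
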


\begin{proof}
Let $\psi_i \in \domain(\Lambda_p)$, $i = 1,2$, be two support functions that satisfy the hypothesis.
Then there are open sets $G_i \supset A_-^c \cap A_+^c$ and
associated Cahn-Hoffman vector fields $z_i \in \SlCH_p(\psi_i; G_i)$
that minimize $\norm{\divo z_i}_{L^2(G_i)}$ over $\SlCH_p(\psi_i; G_i)$.
Since the facet $A_-^c \cap A_+^c$ is assumed to be bounded, we can find a bounded open set
$H \supset A_-^c \cap A_+^c$ with $\psi_1 = \psi_2$ on $H$ and $H \subset G_1 \cap G_2$.
Let us take $0 < \delta < \min_{\partial H} \abs{\psi_1}$
and set $G = \set{x \in H: \abs{\psi_1} < \delta} \subset\subset H$.

Set $z = z_1 \chi_{G_1 \setminus G} + z_2 \chi_G$.
By Lemma~\ref{le:cahn-hoffman-patch} we have that $z \in \SlCH_p(\psi_1; G_1)$
and therefore
$\norm{\divo z}_{L^2(G_1)} \geq \norm{\divo z_1}_{L^2(G_1)}$,
which with \eqref{divz-patch} implies
\begin{align*}
\norm{\divo z_2}_{L^2(G)} \geq \norm{\divo z_1}_{L^2(G)}.
\end{align*}
Reversing the roles of $\psi_1$ and $\psi_2$, and $G_1$ and $G_2$, we get the opposite inequality.

Therefore the strict convexity of the $L^2$-norm implies
that $\divo z_1 = \divo z_2$ a.e. on $L$.
Indeed, if they are not equal, we can decrease the norm by taking the vector field
$z = \frac 12 \pth{z_1  + z_2}$ on $G$
which is still admissible due to Remark~\ref{arb-convex-patch}.
\end{proof}

The following crucial result will allow us to express the crystalline curvature
as the minimal section of the subdifferential of the sliced energy on a periodic domain.

\begin{proposition}
\label{pr:curvature-as-min-section}
Let $p \in \R^k$ be such that $k = \dim \partial W(p) > 0$.
Suppose that $\psi \in \domain(\Lambda_p)$, that is, $\psi$ is an admissible
support function of a bounded open pair $(A_-, A_+)$.
Let $L > 0$ be such that $A_-^c \cap A_+^c \subset B_{L/4}(0)$.
Denote $\Gamma = \R^k / L \Z^k$.

There exists an $L$-periodic Lipschitz function $\psi_2 \in \Lip(\Gamma)$
such that $\psi_2$ is a support function of the open pair $(A_- + L \Z^k, A_+ + L \Z^k)$
and $\SlCH_p(\psi_2; \Gamma)$ is nonempty,
and for some open set $H$, $A_-^c \cap A_+^c \subset H \subset B_{L/4}(0)$
we have $\psi_1 = \psi_2$ on $H$.
Moreover,
\begin{align}
\label{lambda-per-expr}
\Lambda[\psi_1](x) = -\partial^0 \SE_p(\psi_2; \Gamma)(x) \qquad \text{a.e. $x \in A_-^c \cap A_+^c$.}
\end{align}
\end{proposition}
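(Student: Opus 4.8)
The plan is to produce $\psi_2$ by modifying the given support function away from the facet $F := A_-^c \cap A_+^c$ so that it becomes $L$-periodic while still admitting a Cahn--Hoffman field, and then to match the minimal section of $\SE_p$ at $\psi_2$ with the crystalline curvature $\Lambda_p$ on $F$ by applying the patching Lemma~\ref{le:cahn-hoffman-patch} twice, using uniqueness of the minimal-norm element of a closed convex set.

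First I would reduce. By Proposition~\ref{pr:lambda-well-defined} the value $\Lambda_p[\psi_1]$ on $F$ is unchanged if $\psi_1$ is replaced by any admissible support function of $(A_-,A_+)$ coinciding with it near $F$; call such a function again $\psi$, and choose it bounded away from $0$ outside a small ball around $F$, with $\psi=\psi_1$ on a neighborhood of $F$. Then shrink the open set $G\supset F$ from the admissibility of $\psi$ so that $\overline G$ is compact and $\overline G\subset B_{L/4}(0)$, and fix $z_{\min}\in\SlCH_p(\psi;G)$ minimizing $\norm{\divo z}_{L^2(G)}$, so that $\Lambda_p[\psi]=\divo z_{\min}$ a.e.\ on $F$. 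Since $F\subset G$ is open, $\delta_0:=\min_{\partial G}\abs\psi>0$, and for small $0<\delta<\delta_0$ the sublevel set $G_\delta:=\set{x\in G:\abs{\psi(x)}<\delta}$ satisfies $F\subset G_\delta$, $\overline{G_\delta}\subset G$, $G_\delta\subset B_{L/4}(0)$, and $\psi=\psi_1$ on $G_\delta$.

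The crux is the construction of $\psi_2$. I want a periodic $\psi_2\in\Lip(\Gamma)$ with $\psi_2=\psi$ on $G_\delta$, with $\set{x\in\Gamma:\abs{\psi_2}<\delta}=G_\delta$ (so that $\psi_2$ is a support function of the periodized pair and its small sublevel sets sit compactly in the fundamental cell), and with $\SlCH_p(\psi_2;\Gamma)$ nonempty; the neighborhood $H$ of the statement is then $G_\delta$. The idea is to keep $\psi_2=\psi$ on $G_\delta$ and, on the complement of a smaller neighborhood of $F$, to take $\psi_2$ smooth with gradient staying off the singular slopes of $\SW_p$ (in the simplest regions just equal to the constants $\pm\delta$, where $\nabla\psi_2=0$), so that there the Cahn--Hoffman field is forced and locally constant, hence divergence-free; this field is then patched to $z_{\min}$ across a level set of $\psi$ lying in $G_\delta$ via Lemma~\ref{le:cahn-hoffman-patch}. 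I expect this to be the main obstacle: the patched field must have $L^2$ divergence, i.e.\ matching normal traces across the interface, and since the interface is a level set, with normal parallel to $\nabla\psi$, Lemma~\ref{le:one-homogeneous-subdiff} shows this forces the outward gradient of $\psi_2$ to be chosen so that its (forced) Cahn--Hoffman value lies in $\partial\SW_p(\nabla\psi)$ along the interface. Carrying this out — which is the reason $\psi$ must first be modified and why $F$ must be compactly contained in $B_{L/4}(0)$ — is nontrivial.

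Granting such a $\psi_2$, let $\tilde z_{\min}\in\SlCH_p(\psi_2;\Gamma)$ minimize $\norm{\divo z}_{L^2(\Gamma)}$; since $\SW_p$ is positively one-homogeneous and $\psi_2\in\Lip(\Gamma)$, Proposition~\ref{pr:subdiff-char-periodic} gives $\partial^0\SE_p(\psi_2;\Gamma)=-\divo\tilde z_{\min}$, so it remains to show $\divo z_{\min}=\divo\tilde z_{\min}$ a.e.\ on $F$. Applying Lemma~\ref{le:cahn-hoffman-patch} with $W=\SW_p$ to patch $z_{\min}$ on $G$ with $\tilde z_{\min}$ on $\Gamma$ across the slab $\set{x\in G:-\delta<\psi(x)<\delta}=G_\delta$ — legitimate since $\psi=\psi_2$ on $G_\delta$ and $\overline{G_\delta}\subset G$ — produces $z\in\SlCH_p(\psi;G)$ with $\divo z=\divo\tilde z_{\min}$ on $G_\delta$ and $\divo z=\divo z_{\min}$ on $G\setminus G_\delta$; minimality of $z_{\min}$, after cancelling the identical contribution on $G\setminus G_\delta$, gives $\norm{\divo z_{\min}}_{L^2(G_\delta)}\le\norm{\divo\tilde z_{\min}}_{L^2(G_\delta)}$. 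Running the lemma with the roles reversed, patching $z_{\min}$ into $\tilde z_{\min}$ across the slab $\set{x\in\Gamma:-\delta<\psi_2(x)<\delta}=G_\delta$, yields $z^*\in\SlCH_p(\psi_2;\Gamma)$ with $\divo z^*=\divo z_{\min}$ on $G_\delta$ and $\divo z^*=\divo\tilde z_{\min}$ on $\Gamma\setminus G_\delta$, whence minimality of $\tilde z_{\min}$ gives the reverse inequality. Thus the two $L^2(G_\delta)$-norms are equal, so $\norm{\divo z}_{L^2(G)}=\norm{\divo z_{\min}}_{L^2(G)}$, which is the minimum; since $\divo\SlCH_p(\psi;G)$ is a closed convex subset of $L^2(G)$ it has a unique minimal-norm element, so $\divo z=\divo z_{\min}$ a.e.\ on $G$, and on $G_\delta$ this reads $\divo\tilde z_{\min}=\divo z=\divo z_{\min}$. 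Since $F\subset G_\delta$, this gives $\Lambda_p[\psi_1]=\Lambda_p[\psi]=\divo z_{\min}=\divo\tilde z_{\min}=-\partial^0\SE_p(\psi_2;\Gamma)$ a.e.\ on $F$, as claimed.
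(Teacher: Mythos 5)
Your argument for \eqref{lambda-per-expr} \emph{given} $\psi_2$ --- patching $z_{\rm min}$ and $\tilde z_{\rm min}$ across the slab $G_\delta$ in both directions, deducing equality of the two $L^2(G_\delta)$-norms, and invoking strict convexity of the $L^2$-norm to conclude $\divo z_{\rm min}=\divo\tilde z_{\rm min}$ a.e.\ on $G_\delta\supset A_-^c\cap A_+^c$ --- is correct and is essentially the paper's argument.

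The gap is the construction of $\psi_2$, which you leave as an admitted obstacle, and the difficulty you anticipate there is misdiagnosed. You propose taking $\nabla\psi_2$ ``off the singular slopes of $\SW_p$'' on the complement of $G_\delta$ so that the Cahn--Hoffman field there is ``forced,'' and you worry about ``matching normal traces'' at the interface. But $0$ is itself a singular slope of the one-homogeneous $\SW_p$ (in fact the most degenerate one), and having the Cahn--Hoffman field forced to a singleton is the opposite of what helps: it would impose a genuine compatibility constraint on admissible fields outside. The paper exploits the opposite extreme. Take $\xi$ to be $\psi_1$ clipped to $[-\delta,\delta]$ on $G$ and constant $\mp\delta$ on $A_\mp\setminus G$ (then $L$-periodically extended; $\xi$ is constant outside $B_{L/4}(0)$ since that complement is connected and contained in $A_-$ or in $A_+$). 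Then $\nabla\xi=0$ a.e.\ on $\{\abs\xi=\delta\}$, where by Lemma~\ref{le:one-homogeneous-subdiff} we have $\partial\SW_p(q)\subset\partial\SW_p(0)$ for every $q$ and $0\in\partial\SW_p(0)$, which is convex. Hence the smooth cutoff $w=\phi\,z$ (with $\phi\in C_c^\infty(G)$, $\phi=1$ on $\{\abs{\psi_1}\le\delta\}$) automatically satisfies $w(x)\in\partial\SW_p(\nabla\xi(x))$ a.e.\ and has $\divo w=\phi\,\divo z+\nabla\phi\cdot z\in L^2$. No trace-matching argument and no application of the patching lemma is required to build $\psi_2$; and in any case Lemma~\ref{le:cahn-hoffman-patch} already guarantees the $L^2$-divergence of a field patched across a slab where the two support functions coincide, so your trace-matching worry is not an actual obstruction even in your scheme.
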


\begin{proof}
Let us first show \eqref{lambda-per-expr} if we have function $\psi_2$
with the properties stated in the proposition.
We use the characterization of the differential in Proposition~\ref{pr:subdiff-char-periodic}.
Let therefore $z_2 \in \SlCH_p(\psi_2; \Gamma)$ be a Cahn-Hoffman vector field
that minimizes $\norm{\divo z_2}_2$ in this set.
Note that we have $\partial^0 \SE_p(\psi_2; \Gamma)(x) = - \divo z_2$ by the
characterization of the subdifferential in Proposition~\ref{pr:subdiff-char-periodic}.

Now we can proceed as in the proof of Proposition~\ref{pr:lambda-well-defined}.
Let $z_1$ minimize $\norm{\divo z_1}_2$ in $\SlCH_p(\psi_1; G_1)$
for some open set $G_1 \supset A_-^c \cap A_+^c$.
We can assume that $G_1 \subset H$.
We proceed as follows: given that $G \subset\subset H$ with $G$ as defined in that proof,
Lemma~\ref{le:cahn-hoffman-patch} can be applied to $\psi_1$ on $G_1$ and $\psi_2$ on $G_2 = H$,
and since we are only modifying the vector fields away from the boundary of $H$,
replacing $z_2$ by $z_1$ on the set $G$ yields again a vector field in $\SlCH_p(\psi_2; \Gamma)$.
We again deduce that $\divo z_1 = \divo z_2$ on $A_-^c \cap A_+^c$,

We shall now construct $\psi_2$.
Since $\psi_1$ is admissible there are an open set $G \supset A_- \cap A_+$,
$G \subset B_{L/4}(0)$,
and a vector field $z \in \SlCH_p(\psi_1; G)$.
Let us choose a positive $\delta$ such that $\delta < \min_{\partial G} \abs{\psi_1}$.
This is possible since $\psi_1$ is continuous and
$\partial G \subset A_- \cup A_+ = \set{\psi_1 \neq 0}$.
We set
\begin{align*}
\xi(x) =
\begin{cases}
-\delta & x \in A_- \setminus G,\\
\max(-\delta, \min(\delta, \psi_1(x))) & x \in G\\
\delta & x \in A_+ \setminus G.
\end{cases}
\end{align*}
Note that $\xi$ is Lipschitz on $\R^k$,
$\nabla \xi(x) = \nabla \psi_1$ whenever $\abs{\xi(x)} < \delta$ and $\nabla \xi(x) = 0$
if $\abs{\xi(x)} = \delta$, almost everywhere.
Moreover, we see that
\begin{align}
\label{xi-H-eq}
\xi = \psi_1 \qquad \text{on } H := \set{x \in G: \abs{\psi_1} < \delta}.
\end{align}
Since the complement of $B_{L/4}(0)$ is connected and $A_-$, $A_+$ are open disjoint sets,
we must have either $A_- \subset B_{L/4}(0)$ or $A_+ \subset B_{L/4}(0)$.
In any case, $\xi$ is constant outside of $B_{L/4}(0)$.

Let $\phi \in C^\infty(\R^k)$ be such that $0 \leq \phi \leq 1$, $\supp \phi \subset G$ and $\phi = 1$ on
$\set{x \in G: \abs\psi \leq \delta}$.
Define
\begin{align*}
w(x) =
\begin{cases}
z(x) \phi(x) & x \in G,\\
0 & \text{otherwise}.
\end{cases}
\end{align*}
Clearly $w \in L^\infty(\R^k; \R^k)$, $\divo w \in L^2(\R^k)$, $\supp w \subset G$.
Moreover, since $\partial \SW_p(q) \subset \partial \SW_p(0) \ni 0$ for any $q \in \R^k$
by Lemma~\ref{le:one-homogeneous-subdiff}
and $\partial \SW_p(0)$ is convex, we have
$w(x) \in \partial \SW_p(\nabla \xi(x))$ for a.e. $x \in \R^k$.
Therefore $\xi$ is an admissible support function of the pair $(A_-, A_+)$.

Since $\xi = \psi$ in a neighborhood of $A_-^c \cap A_+^c$, we conclude that
$\Lambda_p[\xi] = \Lambda_p[\psi_1]$ a.e. on $A_-^c \cap A_+^c$ due to
Proposition~\ref{pr:lambda-well-defined}.

Now we $L$-periodically extend $\xi$ and $w$ from $[-L/2, L/2)^k$ to $\R^k$
and call them $\psi_2$ and $z_2$, respectively.
This gives a support function of an open pair $(A_- + L \Z^k, A_+ + L \Z^k)$
and clearly $\SlCH_p(\psi_2; \Gamma) \ni z_2$.

By construction, $\psi_2 = \psi_1$ on $H$ due to \eqref{xi-H-eq}.
\end{proof}

\subsection{Comparison principle for the crystalline curvature}

We can prove the following comparison theorem for the crystalline curvature of ordered facets, as
in \cite{GGP13JMPA}. This will imply that $\Lambda_p[\psi]$ on a given admissible pair is in fact
independent of the choice of an admissible support function $\psi$, Corollary~\ref{co:lambda support func indep} below.

\begin{proposition}[Comparison principle for $\Lambda_p$]
\label{pr:comparison Lambda}
Let $p \in \Rn$ such that $k = \dim \partial W(p) > 0$.
Suppose that $(A_{1,-}, A_{1,+})$ and $(A_{2,-}, A_{2,+})$ are two $p$-admissible pairs in
$\mathcal P^k$.
If the pairs are ordered in the sense of
\begin{align*}
(A_{1,-}, A_{1,+}) \prec (A_{2,-}, A_{2,+}),
\end{align*}
then for any two $p$-admissible support functions $\psi_1$ and $\psi_2$
of the respective pairs we have
\begin{align}
\label{curv-ordered-facet}
\Lambda_p[\psi_1](x) \leq \Lambda_p[\psi_2](x) \quad \text{a.e. $x \in A_{1,-}^c \cap A_{1,+}^c \cap
A_{2,-}^c \cap A_{2,+}^c$.}
\end{align}
\end{proposition}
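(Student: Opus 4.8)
The plan is to reduce the comparison of crystalline curvatures of the two facets to a comparison principle for the minimal sections of a single sliced energy $\SE_p$ on a common periodic domain, using Proposition~\ref{pr:curvature-as-min-section} to pass to the torus. First I would fix a large $L > 0$ so that both facets $A_{i,-}^c \cap A_{i,+}^c$ are contained in $B_{L/4}(0)$, and apply Proposition~\ref{pr:curvature-as-min-section} to each pair: this produces $L$-periodic Lipschitz support functions $\tilde\psi_1, \tilde\psi_2 \in \Lip(\Gamma)$, $\Gamma = \R^k/L\Z^k$, with $\SlCH_p(\tilde\psi_i; \Gamma) \neq \emptyset$, agreeing with $\psi_i$ near the respective facet, and such that $\Lambda_p[\psi_i](x) = -\partial^0 \SE_p(\tilde\psi_i; \Gamma)(x)$ a.e.\ on the facet. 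Thus it suffices to show
\begin{align*}
\partial^0 \SE_p(\tilde\psi_1; \Gamma)(x) \geq \partial^0 \SE_p(\tilde\psi_2; \Gamma)(x)
\end{align*}
a.e.\ on the intersection of the two facets, given that $\Pair(\tilde\psi_1) \preceq \Pair(\tilde\psi_2)$ there (the periodic extensions preserve the ordering on the relevant region). Since $\SE_p = E_{\SW_p}(\cdot\,;\Gamma')$ with $\SW_p$ positively one-homogeneous and convex, this is exactly the setting of the comparison principle for the minimal section of the subdifferential of a crystalline total variation energy established in \cite{GGP13JMPA} (and \cite{GGP13AMSA}); I would invoke that result, checking that its hypotheses on the energy density are met by $\SW_p$.

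The heart of the matter, if one instead proves the torus comparison directly, is a variational argument: suppose for contradiction that on a set of positive measure within the common facet we have $\partial^0 \SE_p(\tilde\psi_1) < \partial^0 \SE_p(\tilde\psi_2)$, i.e.\ $\divo z_1 > \divo z_2$ there, where $z_i$ is the minimizing Cahn--Hoffman field for $\tilde\psi_i$. The standard technique is to compare the two minimal sections on the (open) set where the ordering of the facets is strict and the curvatures are misordered; one patches the vector fields $z_1$ and $z_2$ across level sets via Lemma~\ref{le:cahn-hoffman-patch} (in its periodic form, or on a slightly enlarged bounded open set) to produce competitors that strictly lower the $L^2$-norm of the divergence for one of the minimizers, contradicting minimality. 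Making the patching legitimate requires the two support functions to coincide on a neighborhood of the relevant level set, which can be arranged after first replacing one of them by $\min$ or $\max$ with a suitable constant multiple, exploiting that $0 \in \partial\SW_p(q)$ for all $q$ (Lemma~\ref{le:one-homogeneous-subdiff}), exactly as in the proof of Proposition~\ref{pr:curvature-as-min-section}.

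The main obstacle I anticipate is handling the geometry of the region where the two facets overlap but are only weakly ordered: on the set $A_{1,-}^c \cap A_{1,+}^c \cap A_{2,-}^c \cap A_{2,+}^c$ both functions vanish, so $\nabla\tilde\psi_1 = \nabla\tilde\psi_2 = p$ there and the subdifferentials coincide, but the patching must be carried out in a neighborhood that straddles the facet boundaries where the two functions need not agree. The careful bookkeeping—choosing the cutoff level $\delta$ small enough relative to $\min_{\partial G}|\tilde\psi_i|$ on an appropriate bounded open set $G$ containing both facets, ensuring the patched field remains a valid Cahn--Hoffman field for the right function on all of $\Gamma$, and verifying the divergence identity \eqref{divz-patch} transfers the strict inequality—is where the real work lies. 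Once the patching is set up, the strict convexity of the $L^2$-norm delivers the contradiction just as in Proposition~\ref{pr:lambda-well-defined}. I expect the remaining steps (the reduction via Proposition~\ref{pr:curvature-as-min-section}, the reversal/ordering bookkeeping for $\mathcal P^k$) to be routine.
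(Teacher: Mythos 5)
Your reduction to the periodic setting via Proposition~\ref{pr:curvature-as-min-section} matches the paper's first step exactly, and your instinct that the sliced energy $\SE_p$ should satisfy a comparison principle for the minimal section of the subdifferential is also right. But the ``variational patching'' argument you sketch as the heart of the matter is the wrong mechanism, and it cannot be repaired as written.

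The specific gap: assuming for contradiction that $\divo z_1 > \divo z_2$ on a set $E$ of positive measure in the common facet, you would patch $z_2$ onto $z_1$ via Lemma~\ref{le:cahn-hoffman-patch} and invoke ``strict lowering of the $L^2$-norm of the divergence.'' But a pointwise inequality $\divo z_1 > \divo z_2$ does \emph{not} give $\int_E (\divo z_2)^2 < \int_E (\divo z_1)^2$; the signs matter, and if $\divo z_1 = 1$ while $\divo z_2 = -2$ on $E$ the patch \emph{raises} the norm. The strict-convexity competitor argument is exactly what proves \emph{uniqueness} of the divergence on a single facet (Proposition~\ref{pr:lambda-well-defined}), but it gives no access to an \emph{order} relation between two different facets. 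Moreover, Lemma~\ref{le:cahn-hoffman-patch} requires $\tilde\psi_1 = \tilde\psi_2$ on the level strip $G=\{a<\tilde\psi_1<b\}$, which will fail near the boundary of either facet whenever the pairs are distinct, and your remedy of truncating by a constant does not change the fact that the two functions vanish on \emph{different} sets outside the intersection of the facets.

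The ingredient you are missing is the monotonicity (maximum principle) of the resolvent $(I + \lambda\,\partial \SE_p)^{-1}$, which the paper exploits instead of a variational competitor. Since $\tilde\psi_1$ and $\tilde\psi_2$ are not themselves pointwise ordered (only their zero level sets are nested), one first composes each with a suitable strictly increasing Lipschitz bijection, built via Lemma~\ref{le:Lipschitz ordering}, to obtain pointwise-ordered functions $\xi_1 \le \xi_2$; positive one-homogeneity of $\SW_p$ ensures the admissible Cahn--Hoffman fields are unchanged by this composition, so $\xi_i$ still lie in $\domain(\partial \SE_p)$ with the \emph{same} minimal section as $\tilde\psi_i$. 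Applying the resolvent and its comparison principle (obtained from the smooth approximation in Proposition~\ref{pr:resolvent-problems}) gives ordered solutions $\zeta_1 \le \zeta_2$, and since $\xi_1 = \xi_2 = 0$ on the common facet, the difference quotients $(\zeta_i - \xi_i)/\lambda$ are ordered there. Sending $\lambda \to 0$ and using the convergence of the resolvent difference quotient to $-\partial^0\SE_p(\xi_i)$ gives the claimed inequality \eqref{curv-ordered-facet}. Your appeal to the comparison principle of \cite{GGP13JMPA,GGP13AMSA} points at this idea, but the present proposition is stated for the sliced energy on a lower-dimensional torus and general polyhedral $W$, so the argument has to be carried out here rather than cited; in any event the sketch you actually write down uses the wrong (variational rather than monotonicity) mechanism.
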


Before proceeding with the proof, we first give a technical lemma, which is a variant of
\cite[Lemma 4.2.9]{G06}; such a result goes back to \cite{CGG,ES} to establish a uniqueness of a
level set flow.

\begin{lemma}
\label{le:Lipschitz ordering}
Suppose that $\psi$ and $\varphi$ are two nonnegative periodic Lipschitz functions on $\R^d$, $d \geq 1$, such that
$\set{\psi = 0} \subset \set{\varphi = 0}$.
Then there exists a Lipschitz continuous function $\theta: [0, \infty) \to [0, \infty)$ such that
$\theta(0) = 0$, $\theta(s) > 0$ for $s > 0$ and $\theta'(s) > 0$ for almost every $s > 0$
and we have
\begin{align*}
\theta \circ \varphi \leq \psi \qquad \text{on $\R^d$.}
\end{align*}
\end{lemma}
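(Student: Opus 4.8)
The plan is to distill from the hypothesis $\set{\psi = 0} \subset \set{\varphi = 0}$ a nondecreasing ``modulus'' controlling $\psi$ from below in terms of $\varphi$, and then to build a Lipschitz, a.e.\ strictly increasing $\theta$ lying beneath it. Let $M := \norm{\varphi}_\infty < \infty$ (finite by periodicity and continuity of $\varphi$); if $M = 0$ then $\theta(s) = s$ works, so assume $M > 0$. For $\sigma \in (0, M]$ set
\begin{align*}
m(\sigma) := \inf\set{\psi(x) : x \in \R^d,\ \varphi(x) \geq \sigma}.
\end{align*}
First I would record two properties of $m$. It is nondecreasing, since the constraint set shrinks as $\sigma$ increases. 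And $m(\sigma) > 0$ for each $\sigma \in (0, M]$: by periodicity the infimum may be computed over a compact fundamental domain, where it is attained at some $x_0$ with $\varphi(x_0) \geq \sigma > 0$; the hypothesis is equivalent (using $\psi, \varphi \geq 0$) to $\set{\varphi > 0} \subset \set{\psi > 0}$, so $m(\sigma) = \psi(x_0) > 0$. By construction $\psi(x) \geq m(\varphi(x))$ whenever $\varphi(x) > 0$, while trivially $\psi(x) \geq 0 = \theta(0)$ where $\varphi(x) = 0$.

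Next I would construct $\theta$. Put $\sigma_n := M 2^{-n}$ for $n \geq 0$, so $\sigma_n \downarrow 0$ and $\bigcup_{n \geq 0}[\sigma_{n+1}, \sigma_n] = (0, M]$. Define a strictly decreasing positive null sequence $(a_n)$ by $a_0 := \min(1, m(\sigma_1))$ and $a_{n+1} := \min(a_n / 2, m(\sigma_{n+2}))$; then $0 < a_{n+1} < a_n$, $a_n \leq 2^{-n}$, and $a_n \leq m(\sigma_{n+1})$ for every $n \geq 0$. Let $\theta$ be continuous and piecewise linear on $[0, M]$ with $\theta(0) = 0$, $\theta(\sigma_n) = a_n$, linear on each $[\sigma_{n+1}, \sigma_n]$, and extend it by $\theta(\sigma) = a_0 + (\sigma - M)$ for $\sigma \geq M$. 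Then $\theta(0) = 0$, $\theta > 0$ on $(0, \infty)$, $\theta$ is strictly increasing, and $\theta'(\sigma) > 0$ at every $\sigma > 0$ off the countable (null) set $\set{\sigma_n}_{n \geq 0}$.

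The remaining checks are short. On $[\sigma_{n+1}, \sigma_n]$ the slope of $\theta$ is $(a_n - a_{n+1})/(M 2^{-n-1}) \leq 2^{-n}/(M 2^{-n-1}) = 2/M$, and it equals $1$ on $[M, \infty)$, so $\theta$ is Lipschitz with constant $\max(2/M, 1)$. For the domination: if $\varphi(x) = 0$ then $\theta(\varphi(x)) = 0 \leq \psi(x)$; if $\varphi(x) \in [\sigma_{n+1}, \sigma_n]$ then, using monotonicity of $m$ and $a_n \leq m(\sigma_{n+1})$,
\begin{align*}
\theta(\varphi(x)) \leq a_n \leq m(\sigma_{n+1}) \leq m(\varphi(x)) \leq \psi(x).
\end{align*}
Since $\varphi \leq M$ everywhere, this handles all $x$, and the lemma follows. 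The one delicate point is the second step: the modulus $m$ may decrease to $0$ arbitrarily fast as $\sigma \to 0^+$, so one has to keep $\theta$ below $m$ while keeping its difference quotients bounded; the dyadic node spacing $\sigma_n = M 2^{-n}$ together with the geometric bound $a_n \leq 2^{-n}$ is precisely what balances these two requirements, and everything else is routine.
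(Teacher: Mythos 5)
Your proof is correct, and it takes a genuinely different route from the paper's. Both proofs hinge on the same first idea: introduce the modulus $m(\sigma) := \inf\{\psi(x) : \varphi(x) \geq \sigma\}$ (the paper calls it $\eta$), use periodicity to see it is nondecreasing and strictly positive for $\sigma > 0$, and observe that $m \circ \varphi \leq \psi$ where $\varphi > 0$. The divergence is in how one turns this possibly-discontinuous modulus into a Lipschitz, a.e.\ strictly increasing $\theta$ beneath it. The paper regularizes by inf-convolution, setting $\sigma(s) := \inf\{\eta(t) + |s-t| : 0 \leq t \leq s\}$, which is automatically $1$-Lipschitz, nondecreasing and bounded above by $\eta$; since $\sigma$ may have flat stretches, the paper then multiplies by $(1 - e^{-s})$ to force $\theta'(s) \geq e^{-s}\sigma(s) > 0$ a.e. You instead build $\theta$ explicitly as a strictly increasing piecewise-linear function with dyadic nodes $\sigma_n = M 2^{-n}$, choosing the node values $a_n$ to decay geometrically and to sit below $m(\sigma_{n+1})$, so that both the Lipschitz bound (slope $\leq 2/M$) and the domination $\theta \leq m$ fall out at once; being strictly increasing and piecewise linear with countably many breakpoints, $\theta$ automatically has $\theta' > 0$ a.e. Your construction is more hands-on and requires the careful bookkeeping with $a_{n+1} = \min(a_n/2, m(\sigma_{n+2}))$, but it sidesteps the separate "fix up the derivative" step; it also neatly avoids the case distinction about whether $\{\psi = 0\}$ is empty, since you only ever evaluate $m$ on $(0, M]$. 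Both are valid; the paper's is a bit slicker, yours is more elementary.
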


\begin{proof}
We may assume that $\set{\psi = 0} \neq \emptyset$, otherwise the statement is trivial.
We define
\begin{align*}
\eta(s) := \inf \set{\psi(x): \varphi(x) \geq s}.
\end{align*}
Clearly by compactness $\eta(0) = 0$, $\eta(s) > 0$ for $s > 0$. Furthermore, $\eta$ is nondecreasing since
$s \mapsto \set{\varphi \geq s}$ is nonincreasing. Finally, $\eta \circ \varphi \leq \psi$ as
\begin{align*}
\eta(\varphi(x)) = \inf \set{\psi(y): \varphi(y) \geq \varphi(x)} \leq \psi(x).
\end{align*}

As $\eta$ can have jumps or be infinite, we now consider
\begin{align*}
\sigma(s) := \inf \set{\eta(t) + |s - t|: 0 \leq t \leq s}.
\end{align*}
We immediately obtain $0\leq \sigma(s) \leq \eta(s)$ and $\sigma(0) = 0$. On the other hand,
$\eta(t) + |s - t| \geq \min \set{\frac s2, \eta(\frac s2)} > 0$ for $s > 0$, $t \in [0, s]$, and
so $\sigma(s) > 0$ for $s > 0$.
As for monotonicity, a simple estimate for $s \geq u \geq 0$ yields
\begin{align*}
\sigma(s) &= \min \set{\inf \set{\eta(t) + |s - t|: 0 \leq t \leq u}, \inf \set{\eta(t) + |s - t|: u
\leq t \leq s}}\\
&\geq \min \set{\sigma(u) + |s - u|, \eta(u)}\\
&\geq \sigma(u).
\end{align*}
We also show that $\sigma$ is Lipschitz. Take $0 \leq u \leq s$ and $\delta > 0$ and find $t \in
[0,u]$ such that $\sigma(u) > \eta(t) + |u - t| - \delta$. Then we have
\begin{align*}
\sigma(s) \leq \eta(t) + |s - t| = \eta(t) + |u - t| + |s - u| < \sigma(u) + |s - u| + \delta.
\end{align*}
Since $\delta$ was arbitrary, $\sigma$ is Lipschitz.

Finally, set
\begin{align*}
\theta(s) := (1 - e^{-s}) \sigma(s).
\end{align*}
Clearly $\theta(0) = 0$, $\theta(s) > 0$ for $s > 0$. The product rule yields $\theta'(s) > 0$ for
almost every $s > 0$.
By construction,
\begin{align*}
\theta \circ \varphi \leq \sigma \circ \varphi \leq \eta \circ \varphi \leq \psi.
\end{align*}
\end{proof}

Now we complete the proof of the comparison principle for the crystalline curvature $\Lambda_p$.

\begin{proof}[Proof of Theorem~\ref{pr:comparison Lambda}]
By Proposition~\ref{pr:curvature-as-min-section},
we can for a sufficiently large $L > 0$ find
$L$-periodic functions, called $\tilde\psi_1$ and $\tilde\psi_2$,
such that $\SlCH_p(\tilde\psi_i; \Gamma)$ is nonempty, $\Gamma = \R^k / L \Z^k$,
and $\tilde\psi_i$ coincides with the original $\psi_i$ on the neighborhood of
the facet $A_{i,-}^c \cap A_{i,+}^c$, $i = 1, 2$,
and that
\begin{align}
\label{l-is-cononrest}
\Lambda_p[\psi_i] = -\partial^0 \SE_p(\tilde\psi_i; \Gamma)
\qquad \text{a.e. on $A_{i,-}^c \cap A_{i,+}^c$,
$i =1,2$.}
\end{align}

Since the pairs are ordered, if we consider the sets $A_{i, \pm}$ as subsets of $\Gamma$ we have
\begin{align*}
\{\tilde\psi_{2,+} = 0\} = A_{2,+}^c \subset A_{1,+}^c = \{\tilde\psi_{1,+} = 0\},\\
\{\tilde\psi_{1,-} = 0\} = A_{1,-}^c \subset A_{2,-}^c = \{\tilde\psi_{2,-} = 0\},
\end{align*}
where $\tilde\psi_{i,\pm} := \max (\pm\tilde\psi_i, 0)$ denote the positive and negative parts.
By Lemma~\ref{le:Lipschitz ordering}, there exist Lipschitz functions $\theta^-$ and $\theta^+$
on $[0, \infty)$ such that $\theta^\pm(0) = 0$, $\theta^\pm(s) > 0$ for $s > 0$, and
$(\theta^\pm)'(s) > 0$ for almost all $s > 0$, such that $\theta^+ \circ \tilde\psi_{1,+} \leq
\tilde\psi_{2,+}$ and $\theta^- \circ \tilde\psi_{2,-} \leq \tilde\psi_{1,-}$.
We introduce
\begin{align*}
\theta_1(s) :=
\begin{cases}
s, & s <0,\\
\theta^+(s), & s \geq 0,
\end{cases}
\qquad
\theta_2(s) :=
\begin{cases}
-\theta^-(-s), & s <0,\\
s, & s \geq 0.
\end{cases}
\end{align*}
and
\begin{align*}
\xi_1 := \theta_1 \circ \tilde\psi_1, \qquad \xi_2 := \theta_2 \circ \tilde\psi_2.
\end{align*}
By construction we have that $\xi_i$ are Lipschitz on $\Gamma$,
\begin{align*}
\xi_1 \leq \xi_2,
\end{align*}
and the chain rule for Lipschitz functions yields
\begin{align*}
\nabla \xi_i(x) = \theta_i'(\xi_i(x)) \nabla \tilde\psi_i(x), \qquad \text{for almost every $x$},
\end{align*}
if we interpret the right-hand side to be equal to zero if $\nabla \tilde\psi_i(x)$ is
zero, no matter if $\theta_i'$ is differentiable at $\xi_i(x)$ or not.
Since $\theta_i'(s) > 0$ for almost every $s \in \R$, we have
by the positive one-homogeneity of $\SW_p$
\begin{align*}
\partial \SW_p(\nabla \xi_i(x)) = \partial \SW_p(\nabla \tilde\psi_i(x)) \qquad \text{for almost
every $x$},
\end{align*}
and therefore
\begin{align}
\label{subd-bent-same}
\SlCH_p(\xi_i; \Gamma) = \SlCH_p(\tilde\psi_i; \Gamma) \neq \emptyset.
\end{align}

The functional $\SE_p(\cdot; \Gamma)$
is proper closed convex and therefore the resolvent problems
\begin{align*}
\zeta_i + \lambda \partial \SE_p(\zeta_i; \Gamma) \ni \xi_i
\end{align*}
have unique solutions $\zeta_i \in L^2(\Gamma)$.

By approximation by smooth problems that have a comparison principle,
as in Proposition~\ref{pr:resolvent-problems} and its proof,
we can deduce that $\zeta_i$ are Lipschitz since $\xi_i$ are Lipschitz, and
\begin{align*}
\zeta_1 \leq \zeta_2.
\end{align*}
On the intersection of the facets $K = A_{1,-}^c \cap A_{1,+}^c \cap A_{2,-}^c \cap A_{2,+}$
we have $\xi_1 = \xi_2 = 0$ and therefore
\begin{align}
\label{resol-ordering}
\frac{\zeta_1 - \xi_1}\lambda \leq \frac{\zeta_2 - \xi_2}\lambda \qquad \text{on $K$.}
\end{align}
By \eqref{subd-bent-same} and the characterization
of the subdifferential
Proposition~\ref{pr:subdiff-char-periodic},
we know that $\xi_i \in \domain(\partial \SE_p(\cdot; \Gamma))$
and therefore the standard result \cite[Proposition~3.56]{Attouch} yields
\begin{align*}
\frac{\zeta_i - \xi_i}\lambda \to - \partial^0 \SE_p(\xi_i; \Gamma) \qquad \text{in $L^2(\Gamma)$
as $\lambda\to0$.}
\end{align*}
We can send $\lambda\to0$, and
then use
\eqref{l-is-cononrest}, \eqref{subd-bent-same} and the ordering \eqref{resol-ordering}
to conclude that
\begin{align*}
\Lambda_p[\psi_1] = -\partial^0 \SE_p(\xi_1; \Gamma)
\leq -\partial^0 \SE_p(\xi_2; \Gamma) = \Lambda_p[\psi_2] \qquad \text{a.e. on $K$.}
\end{align*}
This is the comparison principle for the $\Lambda_p$.
\end{proof}

The following result is an immediate consequence of Proposition~\ref{pr:comparison Lambda}.

\begin{corollary}
\label{co:lambda support func indep}
Suppose that $p \in \R^n$ with $k := \dim \partial W(p) > 0$ and let $(A_-, A_+) \in \mathcal
P^k$ be a $p$-admissible pair. Then the value of $\Lambda_p$ on the facet $A_-^c \cap A_+^c$ is
independent of the choice of a $p$-admissible support function, that is, for any two $p$-admissible
support functions $\psi, \xi$ of pair $(A_-, A_+)$ we have
\begin{align*}
\Lambda_p[\psi] = \Lambda_p[\xi] \qquad \text{a.e. on $A_-^c \cap A_+^c$.}
\end{align*}
\end{corollary}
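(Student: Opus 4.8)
The plan is to obtain the corollary by applying Proposition~\ref{pr:comparison Lambda} to the pair $(A_-, A_+)$ compared against itself. Given two $p$-admissible support functions $\psi$ and $\xi$ of $(A_-, A_+)$, I would take $(A_{1,-}, A_{1,+}) = (A_{2,-}, A_{2,+}) = (A_-, A_+)$ with $\psi_1 = \psi$ and $\psi_2 = \xi$. Then the intersection of facets occurring in \eqref{curv-ordered-facet} collapses to the whole facet $A_-^c \cap A_+^c$, and the conclusion of the proposition reads $\Lambda_p[\psi] \leq \Lambda_p[\xi]$ a.e.\ on $A_-^c \cap A_+^c$. Exchanging the roles of $\psi$ and $\xi$ yields the reverse inequality $\Lambda_p[\xi] \leq \Lambda_p[\psi]$ a.e., and combining the two gives the claimed equality.

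The only point that needs a word of justification is that Proposition~\ref{pr:comparison Lambda} is phrased with the (ostensibly strict) relation $(A_{1,-}, A_{1,+}) \prec (A_{2,-}, A_{2,+})$, while here the two pairs coincide. This is harmless: inspecting the proof of Proposition~\ref{pr:comparison Lambda}, strictness is never used — only the inclusions $A_{1,+} \subseteq A_{2,+}$ and $A_{2,-} \subseteq A_{1,-}$ enter, in order to invoke Lemma~\ref{le:Lipschitz ordering}, whose hypothesis $\set{\psi = 0} \subseteq \set{\varphi = 0}$ is itself non-strict, and both inclusions hold trivially (with equality) when the two pairs are equal. If one prefers not to appeal to an inspection of an earlier proof, one can rerun its argument directly: let $\tilde\psi_1$ and $\tilde\psi_2$ be the $L$-periodic functions supplied by Proposition~\ref{pr:curvature-as-min-section} for $\psi$ and $\xi$; since the corresponding zero sets coincide, Lemma~\ref{le:Lipschitz ordering} produces bending functions $\theta^\pm$, and passing to the limit $\lambda \to 0$ in the ordered resolvent problems gives $-\partial^0 \SE_p(\tilde\psi_1; \Gamma) \leq -\partial^0 \SE_p(\tilde\psi_2; \Gamma)$ a.e.\ on the facet, hence $\Lambda_p[\psi] \leq \Lambda_p[\xi]$, and symmetry closes the argument. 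There is no genuine obstacle here: the substance of the statement is already contained in Proposition~\ref{pr:comparison Lambda}, and this corollary is merely its symmetric specialization.
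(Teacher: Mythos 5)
Your proposal is correct and matches the paper's own argument: the paper states the corollary is ``an immediate consequence of Proposition~\ref{pr:comparison Lambda}'', and you carry this out by taking both pairs equal and invoking symmetry. Your side remark about the ordering being non-strict is a useful clarification — as you observe, the proof of Proposition~\ref{pr:comparison Lambda} only uses the inclusions entering Lemma~\ref{le:Lipschitz ordering}, which are satisfied with equality when the pairs coincide.
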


\section{Viscosity solutions}
\label{sec:viscosity solutions}

In this section we introduce viscosity solutions of problem \eqref{P}.
For the definition of viscosity solutions we shall use \emph{stratified
faceted functions}
that rely on the concept of energy stratification that we have developed in
Section~\ref{sec:energy-stratification}.
Recall that for every $\hat p \in \Rn$
we have introduced the coordinate system $x = \TT(x', x'')$
using the rotation $\TT = \TT_{\hat p}$ from \eqref{rotation}.

\begin{definition}
\label{def:strat-faceted-test-function}
Let $(\hat x, \hat t) \in \Rn \times \R$ and $\hat p \in \Rn$, $V \subset \Rn$ be the subspace
parallel to $\aff \partial W(\hat p)$, $U = V^\perp$, $k = \dim V$.
We say that a function $\vp(x,t)$ is a
\emph{stratified faceted test function at $(\hat x, \hat t)$ with gradient
$\hat p$}
if
\begin{align*}
\vp(x,t) = \bar \psi\pth{x' - \hat x'}
    + f\pth{x'' - \hat x''}
    + \hat p \cdot x + g(t),
\end{align*}
where
\begin{itemize}
\item $\bar \psi: \R^k \to \R$ is a support function of a bounded facet
    $(A_-, A_+) \in \mathcal P^k$ with
    $0 \in \interior (A_-^c \cap A_+^c)$
    and $\bar \psi \in \domain(\Lambda_{\hat p})$,
\item $f \in C^2(\R^{n - k})$, $f(0) = 0$ and $\nabla f(0) = 0$,
\item $g \in C^1(\R)$.
\end{itemize}
\end{definition}

With this notion of test functions, we define viscosity solutions.

\begin{definition}
\label{def:visc-solution}
An upper semi-continuous function $u: \cl Q \to \R$ is a \emph{viscosity subsolution}
of \eqref{P} if the following hold:
\begin{enumerate}[(i)]
\item \emph{(faceted test)}
Let $\vp$ be a stratified faceted test function at $(\hat x, \hat t) \in Q$
with gradient $\hat p \in \Rn$ and pair $(A_-, A_+)$.
Then if there is $\rho > 0$ such that
\begin{align}
\label{general-position}
u(x + w,t) - \vp(x,t) \leq u(\hat x, \hat t) - \vp(\hat x, \hat t)
\end{align}
for all
\begin{align*}
\abs{w'} \leq \rho,\ w'' = 0, \quad \text{and }
x' - \hat x' \in \nbd\rho(\facet A),\ \abs{x'' - \hat x''} \leq \rho,\ \abs{t - \hat t} \leq \rho,
\end{align*}
then there exists $\de > 0$ such that $B_\delta(\hat x') \subset \interior(\facet A)$ and
\begin{align*}
\vp_t(\hat x, \hat t) + F(\hat p, \essinf_{B_\de(0)} \Lambda_{\hat p}[\bar \psi]) \leq 0.
\end{align*}
\item \emph{(off-facet test)} Let $\vp \in C^1(\mathcal U)$ where $\mathcal U$ is a neighborhood of some
point $(\hat x, \hat t) \in Q$ and suppose that
$\dim \partial W(\nabla \vp(\hat x, \hat t)) = 0$.
If $u - \vp$ has a local maximum at $(\hat x, \hat t)$ then
\begin{align*}
\vp_t(\hat x, \hat t) + F(\nabla \vp(\hat x, \hat t), 0) \leq 0.
\end{align*}
\end{enumerate}

Supersolutions are defined analogously.
\end{definition}

If for some $p$ the value of $F(p, \xi)$ does not depend on $\xi$
in the sense below,
we can replace the faceted test by a simpler test
that does not need an admissible faceted function.

\begin{definition}[Curvature-free type at $p_0$]
\label{def:level-set-type}
We say that $F$ is of \emph{curvature-free type} at $p_0 \in \Rn$ if
we have for any constant $C > 0$
\begin{align*}
\lim_{p \to p_0} \sup_{\abs\zeta \leq C} F(p, \zeta) = F(p_0, 0) =
\lim_{p \to p_0} \inf_{\abs\zeta \leq C} F(p, \zeta).
\end{align*}
\end{definition}

\begin{remark}
The function $F$ defined in \eqref{geometric F} is of curvature-free type at $p_0 = 0$.
\end{remark}

\begin{definition}[Faceted test at curvature-free gradients]
\label{def:level-set-test}
If $F$ is of curvature-free type at $p_0 = 0$, we replace
the faceted test (i) in Definition~\ref{def:visc-solution}
at $\hat{p} = p_0 = 0$
by the following test:
\begin{enumerate}
\item[(i-cf)] Let $g \in C^1(\R)$, $\vp(x, t) := g(t)$ and suppose that $u - \vp$ has a local maximum at $(\hat x, \hat t)$.
Then
\begin{align*}
g'(\hat t) + F(0, 0) = g'(\hat t) \leq 0.
\end{align*}
\end{enumerate}
\end{definition}

\section{Construction of faceted functions}
\label{sec:faceted functions}

To prove the uniqueness of viscosity solutions of \eqref{P},
we need to be able to construct a sufficiently wide class of test functions,
the \emph{faceted functions}.
In this section we will assume that $W$ is convex, positively one-homogeneous and
crystalline.
We shall also assume that there exists $\delta > 0$ such that $W(p) \geq \delta \abs p$.
The important case for us is $\SW_p$ from Definition~\ref{def:sliced-W}.

The polar function $W^\circ$ of $W$
is defined as
\begin{align}
\label{polar}
W^\circ(x) = \sup \set{x \cdot p: W(p) \leq 1}.
\end{align}
Clearly
\begin{align*}
(W^\circ)^\circ = W.
\end{align*}
We define the Wulff shape corresponding to $W$ as
\begin{align*}
\Wulff_W := \set{x \in \Rn: W^\circ(x) \leq 1}.
\end{align*}
Note that the Wulff shape of a one-homogeneous crystalline (polyhedral) $W$ with linear growth is a bounded polyhedron containing the
origin in its interior.

We want to establish a proposition similar to \cite[Proposition~2.12]{GGP13AMSA}, but for a crystalline energy:

\begin{proposition}
\label{pr:approximate-pair}
Let $k = 1$ or $2$, $(A_-, A_+) \in \mathcal P^k$ be a \emph{bounded} pair and let
$0 \leq \rho_1 < \rho_2$. Suppose that $W: \R^k \to \R$ is a convex, positively one-homogeneous
polyhedral function such that there exists $\delta > 0$ with $W(p) \geq \delta |p|$ for $p \in \R^k$. Then there
exists an \emph{admissible pair} $(G_-, G_+) \in \mathcal P^k$ such that
\begin{align}
\label{admiss pair approx}
\nbd{\rho_1}(A_-, A_+) \preceq (G_-, G_+) \preceq \nbd{\rho_2}(A_-, A_+),
\end{align}
that is, there exists a support function $\psi$ of pair $(G_-, G_+)$ such that $\CH_W(\psi; \R^k)$
is nonempty.
\end{proposition}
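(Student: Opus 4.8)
The plan, paralleling \cite[Proposition~2.12]{GGP13AMSA} but adapted to a polyhedral Wulff shape, is to produce a bounded open pair $(G_-, G_+)$ satisfying \eqref{admiss pair approx} together with a support function $\psi$ of $(G_-, G_+)$ and a Cahn-Hoffman field $z \in \CH_W(\psi; \R^k)$. Since $W$ is polyhedral and $W(p) \geq \delta \abs p$, the Wulff shape $\Wulff_W = \partial W(0)$ is a bounded convex polytope with $0$ in its interior, and on the facet $F := G_-^c \cap G_+^c$, where $\nabla \psi = 0$, any Cahn-Hoffman field must be $\Wulff_W$-valued with $\divo z \in L^2$. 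Moreover, at a point of $\partial F$ adjacent to $G_+$ where $\partial F$ is smooth with outer normal $\nu$, choosing $\psi$ with $\nabla \psi$ aligned with $\nu$ there forces $z$ to lie, near that point, on the face $\partial W(\nu) = \set{\xi \in \Wulff_W : \xi \cdot \nu = W(\nu)}$ of $\Wulff_W$, and symmetrically on the $G_-$ side $z$ must lie on $\partial W(-\nu)$. Away from $F$ the field extends by locally constant $\Wulff_W$-valued fields, matching these normal traces on $\partial F$ and becoming a fixed constant far out so that $\divo z$ has compact support, and the problem localizes to a neighborhood of $F$. It thus suffices to (a) choose $(G_-, G_+)$ with $F$ of a convenient shape inside the prescribed collar, and (b) construct the calibrating field on $F$.

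For step (a), the finite set $N \subset S^{k-1}$ of outer normals of the facets of $\Wulff_W$ positively spans $\R^k$, since otherwise the bounded polytope $\Wulff_W$ would be unbounded in some direction. When $k \leq 2$ one can approximate any bounded set $E$, to within its collar $\nbd r(E) \setminus \nbd{-r}(E)$ in the Hausdorff sense, by an open polyhedral set all of whose facets have normals in $N$ and whose consecutive facets meet along faces of $\Wulff_W$; for $k = 2$ this is a ``staircase'' built from the finitely many admissible edge directions, arranged so that consecutive edge normals are neighbors in $\Wulff_W$. Applying this separately to $A_+$ and to $A_-$, with radii chosen strictly between $\rho_1$ and $\rho_2$ and using the monotonicity and semigroup properties of generalized neighborhoods from Proposition~\ref{pr:nbd-properties}, produces open sets $G_\pm$ for which $F = G_-^c \cap G_+^c$ is such an admissible polyhedral facet and for which the ordering \eqref{admiss pair approx} holds.

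For step (b), on each facet of $\partial F$ with normal $n \in N$ adjacent to $G_+$ the field $z$ is constrained to the face $\partial W(n)$ of $\Wulff_W$, and to $\partial W(-n)$ on the $G_-$ side; because consecutive facets of $\partial F$ meet along faces of $\Wulff_W$, these constraints are pairwise compatible along the $(k-2)$-skeleton of $\partial F$. For $k = 1$ this reduces to an elementary monotone interpolation of $z$ between the endpoint values $c_-, c_+$ of $\Wulff_W = [c_-, c_+]$, inserting flat plateaus of $\psi$ --- harmlessly, as they lie strictly inside $G_\pm$ --- wherever $z$ must reverse. For $k = 2$ one decomposes a neighborhood of $F$ into finitely many cells adapted to the boundary combinatorics and defines $z$ cell by cell, $\Wulff_W$-valued, so that the prescribed normal traces hold on $\partial F$ and the normal components agree across the interior cell walls, tangential jumps being admissible in $X_2$; interpolating within $\Wulff_W$ gives $\divo z \in L^\infty \subset L^2$. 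Taking a support function $\psi$ of $(G_-, G_+)$ whose gradient near $F$ is aligned as above then yields $z \in \CH_W(\psi; \R^k)$, so $(G_-, G_+)$ is admissible.

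I expect the main obstacle to be step (b) for $k = 2$: building a single $\Wulff_W$-valued field with $L^2$ divergence that realizes exactly the direction-dependent normal traces dictated by $\partial F$ --- a crystalline calibration problem --- while simultaneously arranging in step (a) that the boundary data of the approximating facet is consistent with such a field, in particular at reflex corners of $F$. This is precisely the point that does not obviously extend to $k \geq 3$, where a vertex of $\partial F$ would have to be calibrated by a value lying on several two-dimensional faces of the Wulff polytope that meet only at a vertex, which is generically impossible.
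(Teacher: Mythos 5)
Your route is genuinely different from the paper's. The paper does not approximate the facet by a polygon whose edges all have normals in $\mathcal N$; instead it starts from a \emph{smooth} pair approximation $(H_-,H_+)$ of $(A_-,A_+)$ (via \cite[Lemma~2.11]{GGP13AMSA}), rotates by a small angle so that, by Sard's theorem, the boundary has nonzero curvature wherever the outer normal lies in $\mathcal N$, and then \emph{flattens} the boundary locally at those finitely many critical points. The resulting $\partial\hat V$ is a union of smooth arcs along which the normal avoids $\mathcal N$ (so the Cahn--Hoffman field there is \emph{forced} to be the constant vertex $\partial W(\nu)$ of $\Wulff_W$) and short flat segments with normals in $\mathcal N$ (on which $z$ interpolates between the two adjacent Wulff vertices using the Wulff-adapted coordinates $\xi_v^\Gamma,\xi_w^\Gamma$). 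The calibration is thus essentially determined by the geometry, and the matching across the arc/segment transitions reduces to the cylinder-intersection bookkeeping of Lemmas~\ref{le:one intersect}--\ref{le:C0 case}. Your ``staircase'' construction is dual in spirit: the whole of $\partial F$ has normals in $\mathcal N$, so on each edge $z$ lives on a one-dimensional Wulff face rather than at a single vertex, and must be specified cell by cell. This is not obviously wrong --- at both convex and reflex corners the two incident Wulff edges do share a vertex, and in the $G_\pm$ wedge at a convex corner of $F$ the field is even forced to that vertex --- but you leave unverified (i) that an arbitrary bounded two-dimensional set admits such a staircase approximation within the prescribed collar while keeping $\dist(G_-,G_+)>0$, (ii) the explicit cell decomposition and consistency of the Lipschitz extension of $z$ into $F$ and into the $G_\pm$ strips, and (iii) the uniform Lipschitz bound that makes $\divo z\in L^2$. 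The reflex-corner issue you flag is exactly the one the paper's smooth-plus-flattened geometry is designed to bypass; your proposal would need to supply that argument rather than note it as a potential obstacle.
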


We shall use this result in the following form:

\begin{corollary}
\label{co:approximate pair sliced}
Let $W: \R^n \to \R$ be a polyhedral convex function finite everywhere. Suppose that $p_0 \in \R^n$
such that $\dim \partial W(p_0) = k$ for $k = 1$ or $2$. Then for any bounded pair $(A_-, A_+) \in
\mathcal P^k$ and any $0 \leq \rho_1 < \rho_2$ there exists a $p_0$-\emph{admissible pair} $(G_-,
G_+)$ satisfying \eqref{admiss pair approx}.
\end{corollary}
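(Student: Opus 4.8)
The plan is to reduce the statement about the sliced energy on $\R^n$ to the lower-dimensional statement Proposition~\ref{pr:approximate-pair} applied to the sliced density $\SW_{p_0}$ on $\R^k$. First I would recall from Definition~\ref{def:sliced-W} that $\SW_{p_0} = W_{p_0}' \circ \TT_V : \R^k \to \R$, where $W_{p_0}'$ is the one-sided directional derivative of $W$ at $p_0$. Since $W$ is polyhedral and finite everywhere, $W_{p_0}'$ is a positively one-homogeneous convex polyhedral function on $\R^n$ by \eqref{W'-convex-conjugate}, and since $\TT_V$ is a linear isometry of $\R^k$ onto $V$, the function $\SW_{p_0}$ is itself a positively one-homogeneous convex polyhedral function on $\R^k$. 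The one point needing a small argument is the coercivity hypothesis of Proposition~\ref{pr:approximate-pair}, namely $\SW_{p_0}(z) \geq \delta \abs z$ for some $\delta > 0$: this follows from Lemma~\ref{le:linear growth SW} by choosing any $\xi_0 \in \ri \partial W(p_0)$, which gives $\SW_{p_0}(z) - \xi_0' \cdot z \geq \delta \abs z$, but a priori only controls $\SW_{p_0}$ from below by $\xi_0' \cdot z + \delta\abs z$, which could be negative. To get genuine coercivity I would instead note that $\partial \SW_{p_0}(0)$ is, by Lemma~\ref{le:decomposition-subdiff-W}, the $k$-dimensional slice of $\partial W_{p_0}'(0) = \partial W(p_0)$, which is a $k$-dimensional convex body in $\R^k$ (its affine hull is all of $\R^k$ after the rotation $\TT_V$ is applied, since $V$ is exactly the subspace parallel to $\aff \partial W(p_0)$); hence $0$ lies in the relative interior only if $\partial W(p_0)$ is full-dimensional in $V$, which it is by construction, so $\SW_{p_0}(z) = \sup\set{z \cdot \xi : \xi \in \partial \SW_{p_0}(0)} \geq \delta\abs z$ for $\delta = \dist(0, \partial(\partial \SW_{p_0}(0)))$ if $0 \in \ri \partial \SW_{p_0}(0)$, and otherwise one translates: replacing $\SW_{p_0}$ by $\SW_{p_0}(\cdot) - \xi_0' \cdot (\cdot)$ and correspondingly shearing the support function $\psi$ by the linear term $-\xi_0' \cdot x$ changes neither admissibility nor the pair, so without loss of generality $0 \in \ri \partial \SW_{p_0}(0)$ and coercivity holds.

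With $\SW_{p_0}$ verified to satisfy all hypotheses of Proposition~\ref{pr:approximate-pair}, I would apply that proposition with $W$ there taken to be $\SW_{p_0}$, the dimension $k$ equal to $\dim \partial W(p_0) \in \set{1,2}$, and the same pair $(A_-, A_+)$ and radii $\rho_1 < \rho_2$. This yields a pair $(G_-, G_+) \in \mathcal P^k$ satisfying $\nbd{\rho_1}(A_-, A_+) \preceq (G_-, G_+) \preceq \nbd{\rho_2}(A_-, A_+)$, together with a support function $\psi$ of $(G_-, G_+)$ such that $\CH_{\SW_{p_0}}(\psi; \R^k)$ is nonempty, i.e.\ there exists a vector field $z \in X_2(\R^k)$ with $z(x) \in \partial \SW_{p_0}(\nabla\psi(x))$ for a.e.\ $x$.

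It then remains to check that this $(G_-, G_+)$ is a $p_0$-admissible pair in the sense of Section~\ref{sec:crystalline curvature}, that is, that there is an open set $G \supset G_-^c \cap G_+^c$ with $\SlCH_{p_0}(\psi; G) = \CH_{\SW_{p_0}}(\psi; G)$ nonempty. Since $\CH_{\SW_{p_0}}(\psi; \R^k)$ is already nonempty, taking $G = \R^k$ (or if one needs the facet of $(G_-, G_+)$ to be bounded, any bounded open neighborhood of it together with restriction of the vector field $z$, whose divergence remains in $L^2$ locally) does the job, and $\psi$ becomes a $p_0$-admissible support function. If the translation normalization $0 \in \ri\partial\SW_{p_0}(0)$ was used, I would undo it: the support function of $(G_-, G_+)$ for the original $\SW_{p_0}$ is obtained by adding back the linear term, which does not affect the sign structure of $\psi$ and hence not the pair, while the Cahn--Hoffman vector field is shifted by the constant $\xi_0'$, preserving membership in $X_2$ and the subdifferential inclusion. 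This establishes $p_0$-admissibility of $(G_-, G_+)$ and completes the proof.

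The main obstacle I anticipate is not the reduction itself but the careful bookkeeping in the coercivity step: one must be sure that the dimension count is exactly right --- that $\partial \SW_{p_0}(0)$ really is a $k$-dimensional body in $\R^k$ rather than something lower-dimensional --- and that the normalization by a point of the relative interior of $\partial W(p_0)$ is compatible with the notion of admissible support function (adding a linear function to a support function, and a constant vector to a Cahn--Hoffman field). Everything else is a direct transcription of Proposition~\ref{pr:approximate-pair} through the isometry $\TT_V$, using Lemma~\ref{le:decomposition-subdiff-W} to identify $\partial\SW_{p_0}$ with the relevant slice of $\partial W$.
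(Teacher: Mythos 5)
Your overall plan is the paper's: observe that Lemma~\ref{le:linear growth SW} provides coercivity for $\hat W := \SW_{p_0}(\cdot) - \xi_0' \cdot (\cdot)$ with $\xi_0 \in \ri\partial W(p_0)$, apply Proposition~\ref{pr:approximate-pair} to $\hat W$, and convert the resulting Cahn--Hoffman vector field into one for $\SW_{p_0}$ by adding the constant $\xi_0'$. However, the mechanism you describe for the conversion contains an error. You write that passing to $\hat W$ should be paired with ``shearing the support function $\psi$ by the linear term $-\xi_0' \cdot x$,'' and later that ``adding back the linear term $\ldots$ does not affect the sign structure of $\psi$.'' Neither claim is correct: if $\xi_0' \neq 0$, replacing $\psi$ by $\psi - \xi_0' \cdot x$ changes the sets where $\psi$ is positive, zero, and negative, and hence changes the pair it supports. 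The support function must not be modified at all. The correct observation is that replacing the integrand $\SW_{p_0}$ by $\hat W$ translates its subdifferential by the constant $-\xi_0'$, i.e.\ $\partial \hat W(q) = \partial \SW_{p_0}(q) - \xi_0'$ for every $q$. Therefore if $z \in \CH_{\hat W}(\psi; \R^k)$ for the \emph{unchanged} support function $\psi$ delivered by Proposition~\ref{pr:approximate-pair}, then $z + \xi_0' \in \CH_{\SW_{p_0}}(\psi; \R^k) = \SlCH_{p_0}(\psi;\R^k)$: adding a constant vector preserves boundedness, the subdifferential inclusion, and the divergence in $L^2$. You appear to have conflated the operation $W \mapsto W - \xi_0' \cdot (\cdot)$ (which is Legendre-dual to translating the Wulff shape, and is compensated on the vector-field side) with the operation $W \mapsto W(\cdot + \xi_0')$ (which would indeed be compensated by shearing $\psi$). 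Separately, the extra coercivity discussion via $\ri \partial\SW_{p_0}(0)$ is redundant: Lemma~\ref{le:linear growth SW} already states, verbatim, the bound $\hat W(z) = \SW_{p_0}(z) - \xi_0'\cdot z \geq \delta|z|$, which is exactly the hypothesis needed to apply Proposition~\ref{pr:approximate-pair} to $\hat W$. With these corrections your argument coincides with the paper's one-line proof.
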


\begin{proof}
Let us take $\xi_0 \in \ri \partial W(p_0)$. The function $\hat W(p) := \SW_{p_0}(p) - \xi_0' \cdot p$ satisfies the assumptions of Proposition~\ref{pr:approximate-pair} by Lemma~\ref{le:linear growth
SW}. Therefore there exists a pair $(G_-, G_+) \in \mathcal P^k$, its support function $\psi$ and a
Cahn-Hoffman vector field $z \in \CH_{\hat W}(\psi; \R^k)$. It is easy to see that $z +
\xi_0 \in \SlCH_{p_0}(\psi; \R^k)$, and therefore $(G_-, G_+)$ is $p_0$-admissible.
\end{proof}

As of now we only know how to construct such admissible facets for dimensions $k = 1$ and $k = 2$.

For the construction of an admissible function we will basically use a signed-distance-like function
induced by $W$, and then define a possible Cahn-Hoffman vector field for this function.
For a given set $V \subset \R^k$ the signed-distance-like function $d_V$ is defined as
\begin{align}
\label{d_V-def}
d_V(x) := \inf_{y\in V} W^\circ(x - y) -\inf_{y\in V^c} W^\circ(y - x), \quad x \in \R^k,
\end{align}
where $W^\circ$ is the polar of $W$ given as \eqref{polar}.

\subsection{One-dimensional admissible facets}

We will give an explicit construction as a proof of Proposition~\ref{pr:approximate-pair} in the
one-dimensional case to illustrate the process and hopefully prepare the reader for the construction in the two-dimensional case.
\begin{figure}
\centering
\def\svgwidth{4.5in}
\begingroup%
  \makeatletter%
  \providecommand\color[2][]{%
    \errmessage{(Inkscape) Color is used for the text in Inkscape, but the package 'color.sty' is not loaded}%
    \renewcommand\color[2][]{}%
  }%
  \providecommand\transparent[1]{%
    \errmessage{(Inkscape) Transparency is used (non-zero) for the text in Inkscape, but the package 'transparent.sty' is not loaded}%
    \renewcommand\transparent[1]{}%
  }%
  \providecommand\rotatebox[2]{#2}%
  \ifx\svgwidth\undefined%
    \setlength{\unitlength}{324bp}%
    \ifx\svgscale\undefined%
      \relax%
    \else%
      \setlength{\unitlength}{\unitlength * \real{\svgscale}}%
    \fi%
  \else%
    \setlength{\unitlength}{\svgwidth}%
  \fi%
  \global\let\svgwidth\undefined%
  \global\let\svgscale\undefined%
  \makeatother%
  \begin{picture}(1,0.22222222)%
    \put(0,0){\includegraphics[width=\unitlength,page=1]{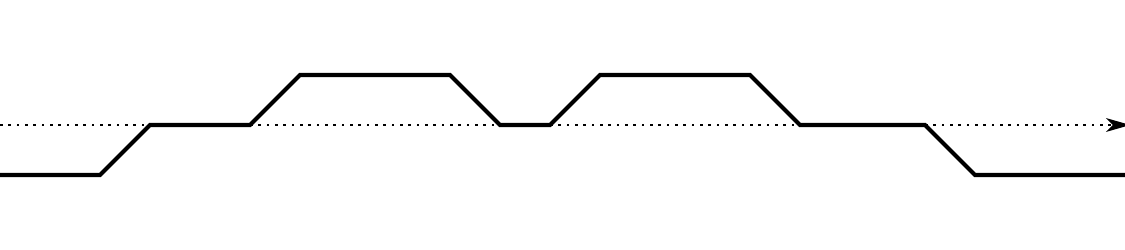}}%
    \put(0.42592593,0.02733674){\color[rgb]{0,0,0}\makebox(0,0)[lb]{\smash{$b_1$}}}%
    \put(0.11992946,0.02733674){\color[rgb]{0,0,0}\makebox(0,0)[lb]{\smash{$b_0$}}}%
    \put(0.21075837,0.02733674){\color[rgb]{0,0,0}\makebox(0,0)[lb]{\smash{$a_1$}}}%
    \put(0.80158729,0.02733674){\color[rgb]{0,0,0}\makebox(0,0)[lb]{\smash{$a_m$}}}%
    \put(0.68606703,0.02733674){\color[rgb]{0,0,0}\makebox(0,0)[lb]{\smash{$b_{m-1}$}}}%
    \put(0.56701939,0.07142853){\color[rgb]{0,0,0}\makebox(0,0)[lb]{\smash{$G_+$}}}%
    \put(0.28835979,0.07407399){\color[rgb]{0,0,0}\makebox(0,0)[lb]{\smash{$G_+$}}}%
    \put(0.00176366,0.14462069){\color[rgb]{0,0,0}\makebox(0,0)[lb]{\smash{$G_-$}}}%
    \put(0.85978838,0.14462069){\color[rgb]{0,0,0}\makebox(0,0)[lb]{\smash{$G_-$}}}%
    \put(0.9656085,0.08201052){\color[rgb]{0,0,0}\makebox(0,0)[lb]{\smash{$x$}}}%
    \put(0,0){\includegraphics[width=\unitlength,page=2]{admissible-pair-1d.pdf}}%
    \put(0.12433863,0.19047617){\color[rgb]{0,0,0}\makebox(0,0)[lb]{\smash{$\psi$}}}%
    \put(0.14827216,0.14550261){\color[rgb]{0,0,0}\makebox(0,0)[lb]{\smash{$\delta$}}}%
    \put(0.12380952,0.05820093){\color[rgb]{0,0,0}\makebox(0,0)[lb]{\smash{$-\delta$}}}%
    \put(0.23104056,0.17989403){\color[rgb]{0,0,0}\makebox(0,0)[lb]{\smash{$a_1+\delta$}}}%
    \put(0.36331572,0.17989403){\color[rgb]{0,0,0}\makebox(0,0)[lb]{\smash{$b_1-\delta$}}}%
    \put(0,0){\includegraphics[width=\unitlength,page=3]{admissible-pair-1d.pdf}}%
  \end{picture}%
\endgroup%
\caption{Construction of an one-dimensional admissible pair and its support function}
\label{fig:addissible-pair-1d}
\end{figure}
The situation is depicted in Figure~\ref{fig:addissible-pair-1d}.

Let $(A_-, A_+) \subset \mathcal P^1$ be a bounded pair in $\R$.
By making $\rho_1$ larger if necessary, we can assume that $0 < \rho_1 < \rho_2$.
Let us set $\e := \frac{\rho_2 - \rho_1}3$.

We define the open sets
\begin{align*}
G_- := \interior \cl{\nbd\e\pth{\nbd{-\rho_2}(A_-)}} \qquad \text{and} \qquad
G_+ := \interior \cl{\nbd{\rho_1 + \e}(A_+)}.
\end{align*}
Due to the properties of the set neighborhood, we have for all $\eta > 0$
\begin{align}
\label{G-pm-eta}
\nbd{-\rho_2} (A_-) \subset G_- \subset \nbd{-\rho_2 + \e+ \eta}(A_-), \qquad \nbd{\rho_1}(A_+) \subset G_+ \subset \nbd{\rho_1 + \e + \eta}(A_+).
\end{align}
In particular,
we take the interior of the closure in the definition of $G_\pm$ to regularize the boundary so that $G^c_\pm$ has no isolated points.

By definition $A_-\subset A_+^c$, and therefore Proposition~\ref{pr:nbd-properties} together with \eqref{G-pm-eta} imply that for any $\eta \in (0, 2\e)$
\begin{align*}
G_- &\subset \nbd{\e+\eta}\pth{\nbd{-\rho_2}(A_-)} \subset \nbd{-\rho_2 + \e + \eta}(A_-)\\
&\subset \nbd{-\rho_2 + \e + \eta}(A_+^c) \subset \nbd{-\e + 2\eta}\pth{\nbd{-\rho_1 - \e - \eta}(A_+^c)}\\
&= \nbd{-\e + 2\eta}\pth{\nbd{\rho_1 + \e + \eta}(A_+)^c}\\
&= \nbd{\e - 2\eta}\pth{\nbd{\rho_1 + \e + \eta}(A_+)}^c \subset \nbd{\e - 2\eta}\pth{G_+}^c
\end{align*}
We conclude that
\begin{align*}
\dist(G_-, G_+) = \e > 0.
\end{align*}
Therefore $(G_-, G_+)$ is an open pair, and due to \eqref{G-pm-eta}
\begin{align*}
\nbd{\rho_1}(A_-, A_+) \preceq (G_-, G_+) \preceq \nbd{\rho_2}(A_-, A_+),
\end{align*}
To prove that the pair $(G_-, G_+)$ is bounded, we recall that $(A_-, A_+)$ is a bounded pair therefore there exists $R > 0$ such that $B_R^c(0) \subset A_-$ or $B_R^c(0) \subset A_+$.
From \eqref{G-pm-eta} we have that $\nbd{-\rho_2}(B_R^c(0)) \subset G_-$ or $\nbd{\rho_1}(B_R^c(0)) \subset A_+$.
Therefore $B_{\tilde R}^c(0) \subset G_-$ or $B_{\tilde R}^c(0) \subset G_+$ for $\tilde R = R + \rho_2$, which implies that $(G_-, G_+)$ is bounded.

Since $G_\pm$ are open, we can write the union $G_- \cup G_+$ as at most a countable union of disjoint open intervals.
Since the facet $G_-^c \cap G_+^c$ is bounded, and the sets $G_\pm$ have the interior ball property with radius $\e$ by construction, the length of the intervals must be greater than or equal to $2\e$.
In particular, there must only be finitely many of them.
Since moreover $\dist (G_-, G_+) > 0$, we can find $m \in \N$ and $\set{a_i}_{i=0}^m$, $\set{b_i}_{i=0}^m$ such that
\begin{align*}
-\infty = a_0 < b_0 < a_1 < b_1 < \cdots < a_m < b_m = \infty
\end{align*}
and
\begin{align*}
G_- \cup G_+ = \bigcup_{i=0}^m (a_i, b_i).
\end{align*}
Finally, by construction,
\begin{align}
\label{def-of-delta}
\delta := \frac13 \min \set{\min_{0 \leq i \leq m} b_i - a_i, \min_{1 \leq i \leq m} a_i - b_{i-1}} > 0.
\end{align}
The facet $G_-^c \cap G_+^c$ is closed and
\begin{align*}
G_-^c \cap G_+^c = \bigcup_{i=1}^m [b_{i-1}, a_i].
\end{align*}

Let us now introduce the sign function
\begin{align*}
\sigma(x) :=
\begin{cases}
1 & x \in \cl{G_+},\\
-1 & x \in \cl{G_-},\\
0 & \text{otherwise}.
\end{cases}
\end{align*}
This allows us to define the function
\begin{align*}
\psi(x) := \min\set{\delta, \dist(x, G_+^c)} - \min \set{\delta, \dist(x, G_-^c)},
\end{align*}
as a clipped version of the function in Example~\ref{ex:trivial-support-function},
which is again clearly a support function of the pair $(G_-, G_+)$.
Moreover,
\begin{align*}
\psi(x) =
\begin{cases}
\delta \sigma(x) & x \in [a_i + \delta, b_i - \delta] \text{ for some $i$},\\
0 & x \in [b_{i-1}, a_i] \text{ for some $i$},\\
(x-a_i)\sigma(x) & x \in (a_i, a_i + \delta) \text{ for some $i$},\\
(b_i -x)\sigma(x) & x \in (b_i - \delta, b_i) \text{ for some $i$}.
\end{cases}
\end{align*}
Therefore the function $\psi$ is differentiable everywhere except at the points $a_i, b_i, a_i + \delta, b_i - \delta$ for $0 \leq i \leq m$.
We can evaluate the derivative at the other points as
\begin{align*}
\psi'(x) =
\begin{cases}
0 & x \in (a_i + \delta, b_i -\delta) \cup (b_{i-1}, a_i) \text{ for some $i$},\\
\sigma(x) & x \in (a_i, a_i + \delta) \text{ for some $i$},\\
-\sigma(x) & x \in (b_i - \delta, b_i) \text{ for some $i$}.
\end{cases}
\end{align*}

In one dimension, the subdifferential of one-homogeneous $W$ can be expressed as
\begin{align*}
\partial W(p) =
\begin{cases}
\set{w_-} & p < 0,\\
[w_-, w_+] & p = 0,\\
\set{w_+} & p < 0,
\end{cases}
\end{align*}
for $w_\pm = W'(\pm 1)$,
$w_- < 0 < w_+$.

Let us define the continuous Cahn-Hoffman vector field as
\begin{align*}
z(x) :=
\begin{cases}
W'(\sigma(x)) & x \in (a_i, a_i + \delta) \text{ for some $i$},\\
W'(-\sigma(x)) & x \in (b_i - \delta, b_i) \text{ for some $i$},\\
W'(\sigma(b_0)) & x \leq b_0 - \delta,\\
W'(\sigma(a_m)) & x \geq a_m + \delta,\\
\text{linear} & \text{otherwise},
\end{cases}
\end{align*}
One can easily see that the function $z$ is Lipschitz continuous on $\R$ and $\norm{\nabla z}_\infty \leq \frac{w_+ - w_-}\delta \leq \infty$ by the definition of $\delta$ in \eqref{def-of-delta}.
Therefore $\psi \in \domain(\partial E)$ and the facet $(G_-, G_+)$ is admissible, which finishes the proof of Proposition~\ref{pr:approximate-pair} in the case of $k = 1$.

\subsection{Two-dimensional admissible facets}

In this section we give a proof of Proposition~\ref{pr:approximate-pair} in the two-dimensional
case.
We can without loss suppose that $\rho_1 = 0$ and $\rho_2 = \rho > 0$.
Let us stress again that we do not assume that the Wulff shape of $W$ is symmetric with respect to the origin.

The proof of Proposition~\ref{pr:approximate-pair} for $k = 2$ uses a rather simple idea of an explicit construction that is unfortunately quite technical. It will be split in several
steps:
\begin{enumerate}[1.]
\item Approximate a general bounded facet by a smooth facet.
\item Rotate the smooth facet by a small angle so that the boundary has nonzero curvature at the
points where the normal is pointing in the direction of a corner of $W$.
\item Flatten the boundary locally at these points.
\item Use the Fenchel distance-like function induced by $W$ to construct a support function and a
Cahn-Hoffman vector field in the neighborhood of the boundary.
\end{enumerate}

We define the set of critical directions,
\begin{align*}
\mathcal N := \set{p \in S^1: \partial W(p) \text{ is not a singleton}} = \set{p \in S^1: W \text{
is not differentiable at $p$}},
\end{align*}
where $S^1 := \set{p \in \R^2: |p| = 1}$ is the unit circle.
Since $W$ is polyhedral, $\mathcal N$ is finite.
\begin{lemma}
\label{le:}
$\partial W: p \to 2^{\R^2}$ is constant on every connected component of $S^1 \setminus \mathcal
N$. Moreover, $\partial W(p)$ is a singleton for every such $p$.
\end{lemma}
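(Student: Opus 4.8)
The ``moreover'' part requires no argument: by definition $\mathcal N = \set{p \in S^1 : \partial W(p) \text{ is not a singleton}}$, so for every $p \in S^1 \setminus \mathcal N$ the set $\partial W(p)$ is already a singleton. The substance of the lemma is therefore that $\partial W$ is \emph{constant} on each connected component of $S^1 \setminus \mathcal N$, and the plan is to extract this from the feature decomposition of Proposition~\ref{pr:feature-decomposition}.

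First I would apply Proposition~\ref{pr:feature-decomposition} to write $\R^2 = \bigcup_i \Xi_i$ as a finite disjoint union of relatively open convex sets on which $\partial W$ is constant, with $\aff \Xi_i$ orthogonal to $\aff \partial W(p)$ for $p \in \Xi_i$. Since $W$ is positively one-homogeneous we have $W_{tp}' = W_p'$ for $t > 0$ (as in the proof of Lemma~\ref{le:aff Xi origin}), hence $\partial W(tp) = \partial W(p)$; because distinct features carry distinct values of $\partial W$ (the maximality in Proposition~\ref{pr:feature-decomposition}), this forces each $\Xi_i$ to be a cone, i.e.\ $t \Xi_i \subset \Xi_i$ for $t > 0$.

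Next I would identify the features that meet $S^1 \setminus \mathcal N$. If $\dim \Xi_i = 2$ then $\aff \Xi_i = \R^2$, so the orthogonality forces the subspace parallel to $\aff \partial W(p)$ to be $\set{0}$ for $p \in \Xi_i$; hence $\partial W(p)$ is a singleton and $\Xi_i \cap S^1 \subset S^1 \setminus \mathcal N$. Conversely, if $p \in S^1 \setminus \mathcal N$ then $\partial W(p)$ is a singleton, so $V = \set{0}$ and $U = \R^2$ in Proposition~\ref{pr:direction-decomposition}, and that proposition yields $\delta > 0$ with $W(q) = W_p'(q - p) + W(p)$ for $q \in B_\delta(p)$; the right-hand side is affine in $q$ (since $W_p'$ is then linear), so $W$ is differentiable on $B_\delta(p)$ with constant gradient and $\partial W$ is single-valued there. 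As the features partition $\R^2$ and distinct features carry distinct values of $\partial W$, all of $B_\delta(p)$ must lie in the feature $\Xi_i$ containing $p$, so $\Xi_i$ is open, i.e.\ two-dimensional. This proves $S^1 \setminus \mathcal N = \bigcup \set{\Xi_i \cap S^1 : \dim \Xi_i = 2}$.

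Finally I would observe that a two-dimensional $\Xi_i$ is an open convex cone, and that such a set meets $S^1$ in a connected set: either $\Xi_i = \R^2$, or $\Xi_i$ contains no line through the origin, and then any $p, q \in \Xi_i \cap S^1$ satisfy $q \neq -p$ (otherwise the midpoint $0$ of $[p, q] \subset \Xi_i$ would force $\Xi_i = \R^2$), so the minor arc from $p$ to $q$ is the radial projection of the segment $[p, q] \subset \Xi_i$ and hence lies in $\Xi_i \cap S^1$. Thus $S^1 \setminus \mathcal N$ is a finite disjoint union of nonempty sets that are connected and open in $S^1$, so these sets are precisely its connected components, and $\partial W$ is constant on each of them by Proposition~\ref{pr:feature-decomposition}. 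I expect the only delicate points to be the passage from convexity to connectedness of $\Xi_i \cap S^1$, which genuinely uses the cone property (a general bounded convex set can meet $S^1$ in two separate arcs), and verifying both inclusions in the identity $S^1 \setminus \mathcal N = \bigcup \set{\Xi_i \cap S^1 : \dim \Xi_i = 2}$.
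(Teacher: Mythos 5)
Your proof is correct, and it fleshes out an argument that the paper leaves as a one-line remark (the paper's proof of this lemma is simply ``This follows from the fact that $W$ is polyhedral.''). Your route is natural within the paper's framework: you invoke Proposition~\ref{pr:feature-decomposition} to partition $\R^2$ into finitely many relatively open convex features on which $\partial W$ is constant, use positive one-homogeneity (as in Lemma~\ref{le:aff Xi origin}) to make each feature a cone, show via the orthogonality statement and Proposition~\ref{pr:direction-decomposition} that the two-dimensional features are exactly those meeting $S^1 \setminus \mathcal N$, and finish by observing that an open convex cone meets $S^1$ in an arc. The one spot worth spelling out a hair more is the parenthetical claim that $0 \in \Xi_i$ ``would force $\Xi_i = \R^2$'': this is correct, but only because a two-dimensional feature is genuinely open (relatively open plus full-dimensional), so $0 \in \Xi_i$ gives a ball $B_\e(0) \subset \Xi_i$, and the cone over that ball is $\R^2 \setminus \set0$, hence $\Xi_i = \R^2$. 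The alternative, more visual argument the authors likely have in mind is that for crystalline $W$ in $\R^2$ the Wulff shape $\set{W^\circ \leq 1}$ is a convex polygon, and the components of $S^1 \setminus \mathcal N$ are the open arcs of normal directions dual to its edges, on each of which $\nabla W$ is the corresponding vertex; your feature-decomposition argument is a cleaner way to make the same geometry rigorous without invoking polar duality.
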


\begin{proof}
This follows from the fact that $W$ is polyhedral.
\end{proof}

We will also use some basic results of the convex analysis. In particular, recall the definition of
the polar $W^\circ$ in \eqref{polar}.
We will for short denote the associate Wulff shape as
\begin{align*}
\mathcal W := \set{x: W^\circ(x) \leq 1}.
\end{align*}
This is a polygon in two dimensions, with a finite number of vertices, corresponding to the number
of critical directions $\mathcal N$.
We have the following basic result:

\begin{lemma}
\label{le:wulff-frank-rel}
If $p \neq 0$ and $x \in \partial W(p)$ then $W^\circ(x) = 1$ and $x \cdot p = W(p)$. Similarly, if $x \neq 0$ and $p \in
\partial W^\circ(x)$ then $W(p) = 1$ and $x \cdot p = W^\circ(x)$.
Suppose now that $x \neq 0$ and $p \neq 0$. Then
\begin{align*}
\frac x{W^\circ(x)} \in \partial W(p) \quad \Leftrightarrow \quad \frac p{W(p)} \in \partial W^\circ(x).
\end{align*}
\end{lemma}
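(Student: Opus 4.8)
The plan is to deduce all three statements from the single Fenchel-type inequality $x\cdot p \le W(p)\,W^\circ(x)$ for all $x,p\in\R^2$, together with an analysis of its equality case. First I would establish this inequality directly from the definition \eqref{polar}: if $q\neq 0$ then $W(q)\ge\delta\abs q>0$, so $q/W(q)$ lies in $\{W\le 1\}$ and hence $x\cdot(q/W(q))\le W^\circ(x)$; the case $q=0$ is trivial. Since $(W^\circ)^\circ=W$ and $W^\circ$ inherits the standing assumptions of this section — finiteness, convexity, positive one-homogeneity, and a two-sided bound $c\abs x\le W^\circ(x)\le C\abs x$, all of which follow from the linear growth and the lower bound $W\ge\delta\abs{\cdot}$ — the same reasoning applied to $W^\circ$ yields the symmetric inequality with $W$ and $W^\circ$ interchanged.

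Next I would prove the first assertion. Given $p\neq0$ and $x\in\partial W(p)$, I test the subgradient inequality $W(q)\ge W(p)+x\cdot(q-p)$ with $q=2p$ (using $W(2p)=2W(p)$) to get $x\cdot p\ge W(p)$, and with $q=0$ (using $W(0)=0$) to get $x\cdot p\le W(p)$; hence $x\cdot p=W(p)$. Substituting this back, the subgradient inequality reads $x\cdot q\le W(q)$ for all $q$, so $W^\circ(x)\le 1$; conversely $q_0:=p/W(p)$ satisfies $W(q_0)=1$ and $x\cdot q_0=1$, so $W^\circ(x)\ge 1$, giving $W^\circ(x)=1$. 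The second assertion then follows by applying the first to $W^\circ$ in place of $W$ and to $(x,p)$ in place of $(p,x)$, invoking $(W^\circ)^\circ=W$.

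Finally, for the equivalence I would show that, for $x\neq0$ and $p\neq0$, the condition $\frac{x}{W^\circ(x)}\in\partial W(p)$ is equivalent to the equality $x\cdot p=W(p)\,W^\circ(x)$. The forward direction is the first assertion applied with $x/W^\circ(x)$ in place of $x$ (note $W^\circ(x/W^\circ(x))=1$ by one-homogeneity), which forces $\frac{x}{W^\circ(x)}\cdot p=W(p)$, i.e. the equality. For the converse, set $\hat x:=x/W^\circ(x)$; then $\hat x\cdot p=W(p)$ and $W^\circ(\hat x)=1$, so the Fenchel inequality gives $\hat x\cdot q\le W^\circ(\hat x)W(q)=W(q)$ for every $q$, whence $W(q)\ge\hat x\cdot q=W(p)+\hat x\cdot(q-p)$, i.e. $\hat x\in\partial W(p)$. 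Because the equality $x\cdot p=W(p)\,W^\circ(x)$ is symmetric under exchanging the pairs $(W,p)$ and $(W^\circ,x)$, the same claim applied to $W^\circ$ shows it is equivalent to $\frac{p}{W(p)}\in\partial W^\circ(x)$ as well, and chaining the two equivalences gives the lemma.

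I expect no serious obstacle here, as this is classical gauge/polar duality. The only points needing a little care are verifying that $W^\circ$ genuinely satisfies the hypotheses of the section (so that the symmetric steps and $(W^\circ)^\circ=W$ are legitimate) and handling the degenerate slope $q=0$, which the lower bound $W(q)\ge\delta\abs q$ isolates and at which every inequality used is trivial.
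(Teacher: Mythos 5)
The paper states this lemma as a ``basic result'' without supplying a proof, so there is no authorial argument to compare yours against; your proposal fills that gap and is essentially correct. It is the standard gauge--polar duality argument: establish the Fenchel-type inequality $x\cdot q\le W^\circ(x)\,W(q)$, extract from the subgradient inequality at $q=2p$ and $q=0$ that $x\cdot p=W(p)$, read off $W^\circ(x)=1$, and characterize the equivalence in the last claim as the equality case $x\cdot p=W(p)\,W^\circ(x)$, which is symmetric under swapping $(W,p)\leftrightarrow(W^\circ,x)$. One small transposition to fix: testing with $q=2p$ yields $W(p)\ge x\cdot p$, and testing with $q=0$ yields $x\cdot p\ge W(p)$, the reverse of what you wrote; since you use both, the conclusion $x\cdot p=W(p)$ is unaffected, but the attributions should be corrected. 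Your verification that $W^\circ$ inherits the standing hypotheses (finiteness, convexity, one-homogeneity, two-sided linear bounds) is exactly the point that makes the symmetric steps and $(W^\circ)^\circ=W$ legitimate, and you are right to flag it.
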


\subsubsection{Smooth pair approximation}

By the smooth approximation lemma, \cite[Lemma~2.11]{GGP13AMSA}, we can find smooth disjoint open
sets $H_-, H_+$ such that
\begin{align}
\label{A-Hausdorff-approx}
\begin{aligned}
\nbd{\rho/2}(A_-, A_+) \preceq (H_-, H_+) \preceq \nbd{3\rho/4}(A_-, A_+).
\end{aligned}
\end{align}
We note that $(H_-, H_+)$ is an smooth bounded pair.

We claim that we can choose $H_-, H_+$ in such a way that
\begin{align}
\label{nonzero curvature}
\begin{aligned}
\text{the curvature of $\partial H_-$ and
$\partial H_+$ at $x$ is nonzero}\\
\text{whenever $\nu_{\partial H_-}(x) \in \mathcal N$ or $-\nu_{\partial
H_+}(x) \in \mathcal N$, respectively.}
\end{aligned}
\end{align}
Indeed, let $V$ be $H_-$ or $\interior H_+^c$. Since $\partial V$ is smooth and bounded, it is a union of finitely many disjoint closed curves.
Each of these curves is a one-dimensional manifold without boundary and the unit outer normal
vector map $\nu: \partial V
\to S^1$ is smooth. By Sard's theorem we have $\mathcal H^1\pth{\nu\pth{\set{x \in \partial V: d\nu(x) \text{
has rank } < 1}}} = 0$. Note that the curvature $\kappa(x)$ of $\partial V$ at $x \in \partial V$
is zero if and only if the rank of $d\nu(x)$ is zero. Since the set of critical directions
$\mathcal N \subset S^1$ is finite, we can find a rotation $R$ of $\R^2$ by an arbitrary small
angle such that $R(\mathcal N) \cap \nu(\set{x \in \partial V : \kappa(x) = 0}) = \emptyset$. We therefore rotate
the set $V$ by $R^{-1}$ with a sufficiently small such angle so that the rotated set still
approximates the original one. Therefore whenever $x \in R^{-1}(V)$ such that
$\kappa_{R^{-1}(\partial V)}(x) = 0$, we
have $\nu_{R^{-1}(\partial V)}(x) \notin \mathcal N$.
We can therefore replace $H_-$ and $H_+$ with the rotated ones by a sufficiently small angle if
necessary and then $H_\pm$ satisfy \eqref{nonzero curvature}.

\subsubsection{Flattening of $\partial H_\pm$ in the critical directions}

Let $V$ denote either $H_-$ or $\interior H_+^c$ in what follows and let $\nu(x) = \nu_{\partial V}(x)$ be
the unit outer normal to $\partial V$ at $x \in \partial V$. We will modify $V$ in the
neighborhood of the critical points of its boundary $x \in \partial V$ with $\nu(x)
\in \mathcal N$ so that the boundary of the modified set has a flat part of nonzero length with the
same normal. Let us denote the set of these critical points by $S$,
\begin{align*}
S := \set{x \in \partial V: \nu(x) \in \mathcal N}.
\end{align*}
Note that $S$ is compact since $\nu$ is smooth and $\partial V$ is bounded.

We claim that $S$ is finite. Indeed, suppose that $S$ is infinite. Since $S$ is compact, there is
$\hat x \in S$ such that $B_\e(\hat x) \cap S$ is infinite for every $\e > 0$. Since $\mathcal N$
is discrete and $\nu$ is continuous, there exists $\e_0 > 0$ such that $\nu(x) \equiv \nu(\hat x)$
for all $x \in B_{\e_0}(\hat x) \cap S$. But that is a contradiction with $d \nu(\hat x) \neq 0$
from \eqref{nonzero curvature}.

Let us choose $\eta > 0$ such that
\begin{align*}
\eta < \min \set{\frac 1{40}\dist(\partial H_-,\partial H_+), \frac \rho8,
\min_{\substack{x, y \in S\\x \neq y}} |x - y|}.
\end{align*}
Since for any $\hat x \in S$ we have $\kappa(\hat x) \neq 0$, by making $\eta$ smaller if necessary, we may also
assume that $\partial V \cap B_{20\eta}(\hat x)$ is a graph of a convex or a concave function
$g = g_{\hat x}$
 in the sense that
\begin{align*}
V \cap B_{20\eta}(\hat x) = \set{y + \hat x \in B_{20\eta}(\hat x) : y \cdot \nu(\hat x) < g(y
\cdot \tau(\hat x))},
\end{align*}
where $\tau(\hat x) \perp \nu(\hat x)$, $|\tau(\hat x)| = 1$.
Note that $g(0) = g'(0) = 0$.
Since $\kappa(\hat x) \neq 0$, we have $g''(\hat x) \neq 0$ and by Taylor expansion we may also assume that
\begin{align*}
\frac 14 |g''(0)|s^2 \leq |g(s)| \leq |g''(0)| s^2, \quad |s| < 20 \eta.
\end{align*}

With this set-up, we can for every $\hat x \in S$ find $L_{\hat x} > 0$ such that $\set{s:
|g_{\hat x}(s)| < L_{\hat x}} \times [-L_{\hat x}, L_{\hat x}] \subset B_\eta(0)$.
We then define $\hat V$, the set with flattened boundary in the critical directions, as
\begin{align*}
\hat V := &\pth{V \setminus \bigcup_{\hat x \in S} B_{10 \eta}(\hat x)}\\
&\bigcup_{\substack{\hat x \in S\\g_{\hat x}''(0) > 0}} \set{y + \hat x \in B_{10\eta}(\hat x) :
y \cdot \nu(\hat x) < \max(L_{\hat x}, g_{\hat x}(y \cdot \tau(\hat x)))}\\
&\bigcup_{\substack{\hat x \in S\\g_{\hat x}''(0) < 0}} \set{y + \hat x \in B_{10\eta}(\hat x) :
y \cdot \nu(\hat x) < \min(-L_{\hat x}, g_{\hat x}(y \cdot \tau(\hat x)))}.
\end{align*}
Note that $\partial V \subset \nbd\eta(\partial \hat V)$ and $\partial \hat V \subset
\nbd\eta(\partial V)$.

We finish our construction of the admissible pair by defining $G_- = \hat V$ when starting
with $V = H_-$, and $G_+ = \interior \hat V^c$ when starting with $V = \interior H_+^c$.

\subsubsection{Construction of the support function and the Cahn-Hoffman vector field}

In this part we shall finally define a candidate for the admissible function with an appropriate
Cahn-Hoffman vector field in a small neighborhood of the flattened boundary $\partial \hat V$,
where $\hat V = G_-$ or $\hat V = \interior G_+^c$.

Let $\mathcal V$ denote the set of vertices of the Wulff shape $\mathcal W$.
We define $\mathcal C_0$ to be the family of connected components of $\partial \hat V \setminus
\partial V$, and $\mathcal C_r$ to be the family of connected components of $\partial \hat V \cap
\partial V$. We also define $\mathcal C = \mathcal C_0 \cup \mathcal C_r$. Every $\Gamma_0 \in
\mathcal C_0$ is the flattened part of the boundary $\partial \hat V$, the line segment with a
normal vector $\nu_0 \in \mathcal N$. Similarly, every $\Gamma \in \mathcal C_r$ is a connected
piece of the original smooth boundary $\partial V$, and by construction there exists a
unique vertex $v \in \mathcal V$ of the Wulff shape such that $\set{v} = \partial W(\nu(x))$ for $x
\in \Gamma$. We set $\mathcal V(\Gamma) = \set{v}$.

Given $\Gamma \in \mathcal C_0$ with normal $\nu_0$, there exists exactly two distinct vertices $v, w \in \mathcal V$
such that $\set{v, w} \subset \partial W(\nu_0)$. In this case we set $\mathcal V(\Gamma) =
\set{v, w}$.
There exist exactly two sets $\Gamma', \Gamma'' \in \mathcal C_r$ such that $\mathcal V(\Gamma') =
\set{v}$, $\mathcal V(\Gamma'') = \set{w}$, and $\cl \Gamma \cap \Gamma' = \set{x_v}$, $\cl \Gamma
\cap \Gamma'' = \set{x_w}$ for some points $x_v, x_w$;
see Figure~\ref{fig:local-admissible}.
\begin{figure}
\centering
\def\svgwidth{4in}
\begingroup%
  \makeatletter%
  \providecommand\color[2][]{%
    \errmessage{(Inkscape) Color is used for the text in Inkscape, but the package 'color.sty' is not loaded}%
    \renewcommand\color[2][]{}%
  }%
  \providecommand\transparent[1]{%
    \errmessage{(Inkscape) Transparency is used (non-zero) for the text in Inkscape, but the package 'transparent.sty' is not loaded}%
    \renewcommand\transparent[1]{}%
  }%
  \providecommand\rotatebox[2]{#2}%
  \ifx\svgwidth\undefined%
    \setlength{\unitlength}{288bp}%
    \ifx\svgscale\undefined%
      \relax%
    \else%
      \setlength{\unitlength}{\unitlength * \real{\svgscale}}%
    \fi%
  \else%
    \setlength{\unitlength}{\svgwidth}%
  \fi%
  \global\let\svgwidth\undefined%
  \global\let\svgscale\undefined%
  \makeatother%
  \begin{picture}(1,0.5)%
    \put(0,0){\includegraphics[width=\unitlength,page=1]{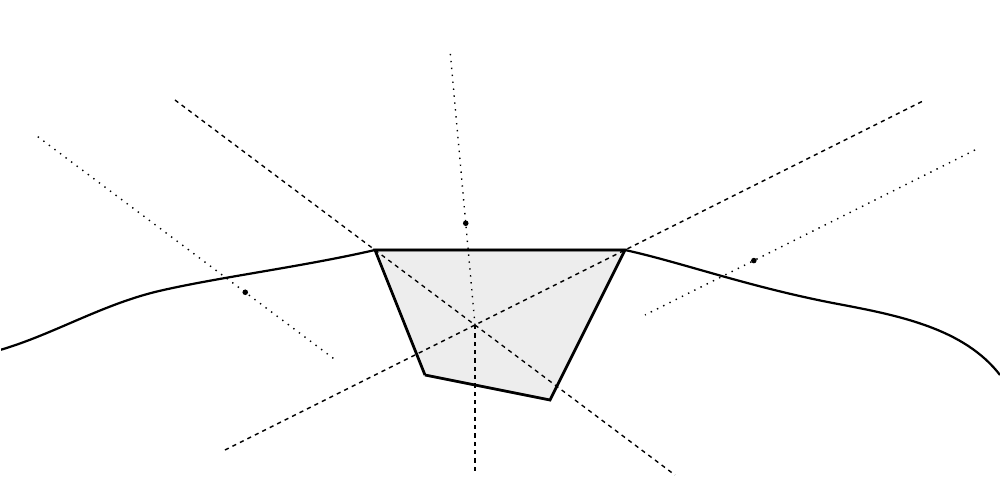}}%
    \put(0.47619277,0.18624524){\color[rgb]{0,0,0}\makebox(0,0)[lb]{\smash{$c^\Gamma$}}}%
    \put(0.525,0.26038208){\color[rgb]{0,0,0}\makebox(0,0)[lb]{\smash{$\Gamma$}}}%
    \put(1.00544951,0.1202816){\color[rgb]{0,0,0}\makebox(0,0)[lb]{\smash{$\partial \hat V$}}}%
    \put(0.15,0.325){\color[rgb]{0,0,0}\makebox(0,0)[lb]{\smash{}}}%
    \put(0.75,0.025){\color[rgb]{0,0,0}\makebox(0,0)[lb]{\smash{$\hat V$}}}%
    \put(0.725,0.45){\color[rgb]{0,0,0}\makebox(0,0)[lb]{\smash{$\hat V^c$}}}%
    \put(0.76245859,0.323688){\color[rgb]{0,0,0}\rotatebox{26.44762343}{\makebox(0,0)[lb]{\smash{$c^\Gamma + t w$}}}}%
    \put(0.17981704,0.40107844){\color[rgb]{0,0,0}\rotatebox{-36.25015355}{\makebox(0,0)[lb]{\smash{$c^\Gamma + t v$}}}}%
    \put(0.04263659,0.1780408){\color[rgb]{0,0,0}\rotatebox{23.80454921}{\makebox(0,0)[lb]{\smash{$\Gamma'$}}}}%
    \put(0.82082068,0.20922166){\color[rgb]{0,0,0}\rotatebox{-11.42487403}{\makebox(0,0)[lb]{\smash{$\Gamma''$}}}}%
    \put(0.33055556,0.21944444){\color[rgb]{0,0,0}\makebox(0,0)[lb]{\smash{$x_v$}}}%
    \put(0.62302195,0.21929311){\color[rgb]{0,0,0}\makebox(0,0)[lb]{\smash{$x_w$}}}%
  \end{picture}%
\endgroup%
\caption{Geometry at a flattened part of the boundary of $\hat V$. The shaded area represents the
rescaled Wulff shape touching the flattened part.}
\label{fig:local-admissible}
\end{figure}
Since $v, w$ are linearly independent, we have a
unique point $c^\Gamma$ at the intersection of $L_v(x_v)$ and $L_w(x_w)$. We have $c^\Gamma + t v = x_v$
and $c^\Gamma + sw = x_w$ for some $t, s \in \R \setminus \set0$.
However, since $(x_w - x_v) \cdot \nu_0 = 0$, we must have $c^\Gamma \cdot \nu_0 + s w \cdot \nu_0
= c^\Gamma \cdot \nu_0 + t v \cdot \nu_0$. As $v \cdot \nu_0 = w \cdot \nu_0 = W(\nu_0)$, it
follows that $t = s$ and we set $\alpha^\Gamma := t$. This induces a coordinate system on $\R^2$
with coordinates $x = \xi_v^\Gamma(x) v + \xi_w^\Gamma(x) w + c^\Gamma$ for every $x \in \R^2$. We note
that
\begin{align}
\label{Gamma alpha level}
\Gamma = \set{x: \xi_v^\Gamma(x) + \xi_w^\Gamma(x) = \alpha^\Gamma,\ \xi_v^\Gamma(x) \xi_w^\Gamma(x)
> 0}.
\end{align}
Clearly $\xi_v^\Gamma(x_v) = \xi_w^\Gamma(x_w) = \alpha^\Gamma$ and $\xi^\Gamma_v(x_w) =
\xi^\Gamma_w(x_v) = 0$.

We define the line through a point $x$ in the direction $v$ as
\begin{align*}
L_v(x) := \set{x + tv: t \in \R},
\end{align*}
and the cylinder through set $\Gamma$
\begin{align*}
L_v(\Gamma) := \set{x + tv: x \in \Gamma,\ t \in \R}.
\end{align*}
The thickness of a cylinder is denoted by
\begin{align*}
\theta(L_v(\Gamma)) := \sup_{x, y \in \Gamma} \dist \pth{L_v(x), L_v(y)}.
\end{align*}

We collect a few basic properties of the relationship between the components $\Gamma \in \mathcal
C$ and the associated cylinders. These results follow from the construction of $\hat V$ in the
previous section.

\begin{lemma}
\label{le:one intersect}
Suppose that $\Gamma \in \mathcal C$ and $x, y \in \Gamma$. Let $v \in \mathcal V(\Gamma)$. Then there exists $p \in \partial
W^\circ(v)$ with $v \cdot p = 1$ such that $(x-y) \cdot p = 0$.
In particular, if $x \in L_v(\Gamma)$
then $L_v(x) \cap \Gamma = \set{y}$ for some $y$, that is, there exists a unique $t \in \R$ such
that $x - tv \in \Gamma$.
\end{lemma}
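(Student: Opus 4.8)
The plan is to treat the two kinds of components separately: the case $\Gamma\in\mathcal C_0$ is immediate, while the case $\Gamma\in\mathcal C_r$ carries all the content. In either case the ``in particular'' assertion is a formal consequence of the first one: if $x=y_0+t_0v=y_1+t_1v$ with $y_0,y_1\in\Gamma$, then the first assertion applied to $y_0,y_1$ produces a $p$ with $v\cdot p=1$ and $(y_0-y_1)\cdot p=0$, i.e. $t_1-t_0=(t_1-t_0)(v\cdot p)=0$; so it suffices to prove the existence of $p$, and I may assume $x\ne y$ (otherwise any element of the nonempty set $\partial W^\circ(v)$ works, since $v\ne0$ gives $v\cdot p=W^\circ(v)=1$ by Lemma~\ref{le:wulff-frank-rel}).

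For $\Gamma\in\mathcal C_0$: here $\Gamma$ is a line segment with a fixed normal $\nu_0\in\mathcal N$ and $v\in\mathcal V(\Gamma)\subset\partial W(\nu_0)$. I would take $p:=\nu_0/W(\nu_0)$, which is well defined since $W(\nu_0)\ge\delta>0$. Because $v$ is a vertex of $\mathcal W$, it lies on $\partial\mathcal W$, so $W^\circ(v)=1$, and Lemma~\ref{le:wulff-frank-rel} then turns $v\in\partial W(\nu_0)$ into $p\in\partial W^\circ(v)$ and also gives $v\cdot\nu_0=W(\nu_0)$, hence $v\cdot p=1$. Finally $x-y$ is parallel to $\Gamma$, so $(x-y)\perp\nu_0$ and $(x-y)\cdot p=0$.

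For $\Gamma\in\mathcal C_r$: write $\mathcal V(\Gamma)=\{v\}$; if $\Gamma$ is a single point the claim is trivial, so I assume $\Gamma$ is a nondegenerate compact connected sub-arc of the smooth curve $\partial V$, with a continuous unit tangent $\tau$ and outer normal $\nu$, $\tau\perp\nu$. Let $\nu_1,\nu_2\in\mathcal N$ be the two critical directions bounding the open arc $I_v:=\{p\in S^1:\partial W(p)=\{v\}\}$; equivalently these are the two boundary rays of the normal cone of $\mathcal W$ at the vertex $v$, whose angular width (the exterior angle of $\mathcal W$ at $v$) lies in $(0,\pi)$. By the definition of $\mathcal V(\Gamma)$ one has $\nu(z)\in I_v$ for every $z\in\Gamma$, so $\tau(z)$ lies in the union of the two cones obtained from $\operatorname{cone}(\nu_1,\nu_2)$ by rotating by $\pm\pi/2$; since these two cones are disjoint (the original cone has width $<\pi$), continuity and connectedness of $\Gamma$ force $\tau$ to stay in a single one of them. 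Integrating $\tau$ along the sub-arc of $\Gamma$ from $y$ to $x$ then places $x-y$ in the closed cone spanned by $R\nu_1,R\nu_2$, or in its negative, where $R$ denotes rotation by $\pi/2$. Writing $x-y=a_1R\nu_1+a_2R\nu_2$ with $a_1,a_2$ of a common sign, and using $R\nu_i\cdot\nu_i=0$ together with the fact that $R\nu_1\cdot\nu_2$ and $R\nu_2\cdot\nu_1$ are nonzero of opposite sign, I get that $(x-y)\cdot\nu_1$ and $(x-y)\cdot\nu_2$ have opposite weak signs. To finish, exactly as in the $\mathcal C_0$ case, Lemma~\ref{le:wulff-frank-rel} yields $q_i:=\nu_i/W(\nu_i)\in\partial W^\circ(v)$ with $v\cdot q_i=1$; since $\partial W^\circ(v)$ is convex and $(x-y)\cdot q_i$ has the same sign as $(x-y)\cdot\nu_i$, the intermediate value theorem along the segment $[q_1,q_2]\subset\partial W^\circ(v)$ gives $\lambda\in[0,1]$ with $p:=\lambda q_1+(1-\lambda)q_2$ satisfying $p\in\partial W^\circ(v)$, $v\cdot p=1$ and $(x-y)\cdot p=0$.

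The main obstacle is the $\mathcal C_r$ step, in two respects: (i) confining the chord $x-y$ to the rotated normal cone, which relies on the elementary but crucial observation that along the connected arc $\Gamma$ the unit tangent cannot leave one of two disjoint cones of width $<\pi$; and (ii) the polar-duality bookkeeping, via Lemma~\ref{le:wulff-frank-rel}, that identifies (a segment inside) $\partial W^\circ(v)$ with the chord joining $\nu_1/W(\nu_1)$ and $\nu_2/W(\nu_2)$ — getting the signs to line up is where care is needed.
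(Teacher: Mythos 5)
Your proof is correct, but it takes a genuinely different route from the paper's. For $\Gamma\in\mathcal C_r$, the paper parametrizes the smooth arc from $y$ to $x$ and applies Rolle's/mean-value theorem to produce an actual intermediate point $\xi\in\Gamma$ with $(x-y)\cdot\nu(\xi)=0$; since $\set{v}=\partial W(\nu(\xi))$ and $W^\circ(v)=1$, Lemma~\ref{le:wulff-frank-rel} gives $p:=\nu(\xi)/W(\nu(\xi))\in\partial W^\circ(v)$ and $v\cdot p=1$ in one step. You instead never exhibit such a $\xi$: you show that the unit tangent along $\Gamma$ stays in one of two disjoint rotated cones (using that the normal cone of $\mathcal W$ at $v$ has width $<\pi$), integrate to place the chord $x-y$ in that cone, deduce that $(x-y)\cdot\nu_1$ and $(x-y)\cdot\nu_2$ have opposite weak signs at the bounding critical directions $\nu_1,\nu_2$, and then apply the intermediate value theorem along the dual edge $[q_1,q_2]=\partial W^\circ(v)$. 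Your argument is longer and more explicit in the convex-geometry bookkeeping; the paper's is shorter and produces $p$ from a concrete interior point of $\Gamma$. Both are sound — in effect your IVT on the edge $[q_1,q_2]$ is the projection of the paper's Rolle argument under $\nu\mapsto\nu/W(\nu)$. Your separate, immediate handling of $\mathcal C_0$ is also fine; the paper treats that case implicitly since a flat segment is a smooth curve with constant normal, so the chord is automatically orthogonal to $\nu_0$ and no mean-value step is needed.
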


\begin{proof}
Since $\Gamma$ is a smooth curve, by the mean value theorem there exists $\xi \in \Gamma$ such that
$(x-y) \cdot \nu(\xi) = 0$. But $p:= \frac{\nu(\xi)}{W(\nu(\xi))} \in \partial W^\circ(v)$ by
construction. Then $v \cdot p = 1$ follows from the characterization of the subdifferential of
$W^\circ$ in Lemma~\ref{le:wulff-frank-rel}.

Now let $x \in L_v(\Gamma)$. By definition, there exists $t \in \R$ such that $x - tv \in \Gamma$.
Suppose that $x - sv \in \Gamma$ for $s \in \R$. Then from the above there exists $p$ such that
$v \cdot p = 1$ and $0 = (x - tv - x + sv) \cdot p = (s - t) v \cdot p = s - t$. We have $s = t$.
\end{proof}

\begin{lemma}
\label{le:neighbor cylinders}
Let $\Gamma \in \mathcal C_0$ and $\Gamma' \in \mathcal C_r$ such that $\dist(\Gamma, \Gamma') =
0$. Then there exist $v$ such that $\set{v} = \mathcal V(\Gamma) \cap \mathcal V(\Gamma')$, and $\xi$ such that $\cl \Gamma \cap \Gamma' = \set{\xi}$. Moreover,
$L_v(\Gamma) \cap L_v(\Gamma') = \emptyset$ and
$\cl{L_v(\Gamma)} \cap L_v(\Gamma') = L_v(\xi)$.
\end{lemma}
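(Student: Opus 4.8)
The plan is to read everything off the explicit construction of $\hat V$ from the previous subsection, organising it through the coordinate system $x=\xi_v^\Gamma(x)\,v+\xi_w^\Gamma(x)\,w+c^\Gamma$ introduced before Lemma~\ref{le:one intersect}. First I would record the combinatorial structure of $\partial\hat V$: it is a finite union of simple closed curves, each an alternating cyclic concatenation of arcs from $\mathcal C_0$ (the open flat segments produced by flattening near the critical points $\hat x\in S$, each contained in the corresponding ball $B_{10\eta}(\hat x)$) and arcs from $\mathcal C_r$ (the closed arcs of $\partial V$ in between). From the flattening formulas one checks that the two corners $x_v,x_w$ of a flat piece $\Gamma\in\mathcal C_0$ lie on $\partial V$, so $\cl\Gamma\cap\partial V=\{x_v,x_w\}$; that exactly one $\mathcal C_r$-component abuts $\Gamma$ at each corner; and that these two components are distinct (a closed-curve component of $\partial\hat V$ carries at least two flat pieces, since the normal is constant on a flat piece and avoids $\mathcal N$ on an $\mathcal C_r$-piece, so one flat piece plus one smooth arc cannot close up with turning number $\pm1$). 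Hence, if $\Gamma'\in\mathcal C_r$ satisfies $\dist(\Gamma,\Gamma')=0$, compactness of $\Gamma'$ forces $\emptyset\neq\cl\Gamma\cap\Gamma'\subset\cl\Gamma\cap\partial V=\{x_v,x_w\}$, and $\Gamma'$ is exactly the $\mathcal C_r$-component abutting $\Gamma$ at that corner, so $\cl\Gamma\cap\Gamma'=\{\xi\}$ is a single point $\xi\in\{x_v,x_w\}$; this gives the existence and uniqueness of $\xi$. Relabelling, assume $\xi=x_v$.

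For the identity $\{v\}=\mathcal V(\Gamma)\cap\mathcal V(\Gamma')$: since $s_0$ is small, the slope of $g_{\hat x}$ at the corner is small by Taylor expansion, so the outer normal $\nu(\xi)$ at $\xi=x_v\in\partial V$ is close to $\nu_0$, and therefore $\partial W(\nu(\xi))\subset\partial W(\nu_0)$ because the $\argmax$ of $q\mapsto q\cdot\nu_0$ over $\mathcal W$ is the $2$-element set $\{v,w\}=\mathcal V(\Gamma)$ and it cannot grow under a small perturbation of $\nu_0$. As $W$ is polyhedral, $\partial W(\nu(\cdot))$ is constant along the connected set $\Gamma'$, so $\mathcal V(\Gamma')=\partial W(\nu(\xi))$ is a single vertex lying in $\mathcal V(\Gamma)$; I name it $v$, and then $\mathcal V(\Gamma)\cap\mathcal V(\Gamma')=\{v\}$.

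For the two cylinder assertions I would pass to the coordinates $(\xi_v^\Gamma,\xi_w^\Gamma)$, in which $v$ is the first basis direction, so each line $L_v(x)$ is a level set $\{\xi_w^\Gamma=\mathrm{const}\}$; by \eqref{Gamma alpha level} the cylinder $L_v(\Gamma)$ is the open strip strictly between $\xi_w^\Gamma=0$ and $\xi_w^\Gamma=\alpha^\Gamma$, its closure is the corresponding closed strip, and $L_v(\xi)=L_v(x_v)=\{\xi_w^\Gamma=0\}$ (take $\alpha^\Gamma>0$; the other sign is symmetric). Now $\xi_w^\Gamma(x)=(x-c^\Gamma)\cdot n$ for the vector $n$ dual to $w$, so $n\perp v$; parametrising $\Gamma'$ by arclength from $\xi=x_v$, with unit tangent $t$ and outer normal $\nu$, one has $\tfrac{d}{ds}\xi_w^\Gamma=t\cdot n$, and since $t\perp\nu$, $n\perp v$, a rotation in $\R^2$ gives $t\cdot n=\pm(v\cdot\nu)=\pm W(\nu)$, using $v\in\partial W(\nu)$ and Lemma~\ref{le:wulff-frank-rel}. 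As $W(\nu)>0$, this derivative never vanishes, so $\xi_w^\Gamma$ is \emph{strictly} monotone along $\Gamma'$ with value $0$ at $x_v$; Lemma~\ref{le:one intersect} ($L_v(x)\cap\Gamma'$ a single point for $x\in L_v(\Gamma')$) is consistent with this and also shows $L_v(\Gamma')=\{\xi_w^\Gamma\in\xi_w^\Gamma(\Gamma')\}$. It then only remains to fix the sign of the derivative, equivalently that $\Gamma'$ leaves $x_v$ into $\{\xi_w^\Gamma\le0\}$, the half-plane \emph{not} containing $\Gamma$: near $x_v$, $\partial\hat V$ is $\Gamma\subset\{\xi_w^\Gamma\ge0\}$ meeting the graph of $g_{\hat x}$, the portion of that graph on the side $\{\xi_w^\Gamma>0\}$ being exactly the bulge flattened away (hence not in $\mathcal C_r$), while $\Gamma'$ is the portion on the side $\{\xi_w^\Gamma<0\}$. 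Granting this, $\xi_w^\Gamma\le0$ on $\Gamma'$ with equality only at $x_v$, so $L_v(\Gamma')\subset\{\xi_w^\Gamma\le0\}$ is disjoint from the open strip $L_v(\Gamma)$, and $\cl{L_v(\Gamma)}\cap L_v(\Gamma')=\{\xi_w^\Gamma=0\}=L_v(x_v)=L_v(\xi)$, as claimed.

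I expect the main obstacle to be precisely this sign/orientation check in the last step: verifying, from the explicit formulas for $\hat V$, $g_{\hat x}$ and the points $x_v,x_w,c^\Gamma$, that the flattened-away bulge and the surviving arc $\Gamma'$ genuinely lie on opposite sides of $L_v(x_v)$. It is elementary but requires careful bookkeeping of the local geometry; everything else reduces to the combinatorics of $\partial\hat V$ together with Lemma~\ref{le:one intersect} and the coordinate identities \eqref{Gamma alpha level}.
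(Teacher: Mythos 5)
Your overall plan is sound, but it follows a genuinely different route from the paper, and the one place you flag as ``the main obstacle'' is exactly where the paper's argument is cleaner. You take the quantitative road: pass to the $(\xi_v^\Gamma,\xi_w^\Gamma)$ coordinates, identify $L_v(\Gamma)$ and $L_v(\xi)$ as strips and a line $\{\xi_w^\Gamma=\mathrm{const}\}$, compute $\tfrac{d}{ds}\xi_w^\Gamma = t\cdot n$ along $\Gamma'$, show it never vanishes (correct, since $t\perp\nu$ and $n\perp v$ in $\R^2$ give $t\cdot n$ a fixed nonzero multiple of $v\cdot\nu=W(\nu)>0$ by Lemma~\ref{le:wulff-frank-rel}), and then you still need to pin down the \emph{sign} --- that $\Gamma'$ departs $\xi$ into the half-plane $\{\xi_w^\Gamma<0\}$ not containing $\Gamma$. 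You leave this as a bookkeeping exercise on $g_{\hat x}$, $L_{\hat x}$, $x_v$, $c^\Gamma$. That check is doable but it is precisely the fiddly part, and as written your proof is incomplete there.

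The paper instead uses a soft topological argument that sidesteps the sign computation entirely. The observation is that $v\in\mathcal V(\Gamma)\cap\mathcal V(\Gamma')$ means $v\in\partial W(\nu(x))$ at \emph{every} boundary point $x$ of $\Gamma$ and of $\Gamma'$, hence $v\cdot\nu(x)=W(\nu(x))>0$ at all such $x$: moving along a line in direction $v$, every crossing of $\cl\Gamma\cup\Gamma'$ is an ``inside $\hat V$ $\to$ outside'' crossing. If there were distinct $x\in\cl\Gamma$, $y\in\Gamma'$ on a common line $L_v$, connectedness lets one choose them arbitrarily close to $\xi$, and then the line would make two consecutive inside-to-outside crossings with no room for the intervening outside-to-inside crossing (Lemma~\ref{le:one intersect} already says $L_v$ meets each of $\Gamma,\Gamma'$ at most once). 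That's the contradiction, and both $L_v(\Gamma)\cap L_v(\Gamma')=\emptyset$ and $\cl{L_v(\Gamma)}\cap L_v(\Gamma')=L_v(\xi)$ drop out. Your computation of the nonvanishing derivative of $\xi_w^\Gamma$ buys strict monotonicity, which is more information than you need; but it forces you to determine an orientation that the Jordan-curve / normal-direction argument gives for free. If you prefer your route, you should still import the paper's one-line observation ($v\cdot\nu>0$ on $\Gamma'$) to settle the sign: starting from $\xi$ and moving into $\Gamma'$, the tangent $t$ must point into $\hat V^c$ in the $w$-coordinate because $\Gamma$ occupies the $\{\xi_w^\Gamma>0\}$ side of $L_v(\xi)$ and $L_v$ meets $\Gamma\cup\Gamma'$ at most once per component, which together with $v\cdot\nu>0$ forces $t\cdot n<0$ at $\xi$. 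Everything else in your write-up (the combinatorics of $\partial\hat V$, the identification of $\xi\in\{x_v,x_w\}$, and the argument that $\mathcal V(\Gamma')$ is a single vertex contained in $\{v,w\}$) is correct and consistent with the construction, modulo the minor slip that $\partial W(\nu_0)$ is the segment $[v,w]$ rather than the two-point set $\{v,w\}$; the relevant set of Wulff-shape \emph{vertices} in that subdifferential is indeed $\{v,w\}$, which is what you use.
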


\begin{proof}
If $\dist(\Gamma, \Gamma') = 0$, then $\Gamma$ must be the flattened part and $\Gamma'$ must be the
adjacent smooth part of $\partial \hat V$.
By construction, $\mathcal V(\Gamma) \cap \mathcal V(\Gamma') = \set{v}$ for some $v \in \mathcal
V$, and $\cl \Gamma \cap \Gamma' = \set{\xi}$ for some $\xi$. In particular, $\cl{L_v(\Gamma)} \cap
L_v(\Gamma') \subset L_v(\xi)$.
Now suppose that there exist distinct points $x \in \cl\Gamma$, $y \in \Gamma'$ such that $L_v(x) = L_v(y)$.
Then by connectedness of $\cl\Gamma$ and $\Gamma'$, we can find such points arbitrarily close to
$\xi$. But this is a contradiction with the fact that $L_v(x)$ can intersect both $\Gamma$ and
$\Gamma'$ at most once by Lemma~\ref{le:one intersect}, and the line $L_v(x)$ in the direction of
$v$ travels from $\hat V$ to $\hat V^c$ at two consecutive points $x, y$, with no transition from $\hat
V^c$ to $\hat V$ in between.
\end{proof}

\begin{corollary}
\label{co:on intersect neighbor}
Suppose that $\Gamma \in \mathcal C_r$, $\Gamma', \Gamma'' \in \mathcal C_0$ are the adjacent flat
parts, $\dist(\Gamma', \Gamma) = \dist(\Gamma'', \Gamma) = 0$, and $x, y \in \Gamma \cup \Gamma'
\cup \Gamma''$. Let $v \in \mathcal V(\Gamma)$. Then there exists $p \in \partial
W^\circ(v)$ with $v \cdot p = 1$ such that $(x-y) \cdot p = 0$.
In particular, if $x \in L_v(\Gamma \cup \Gamma' \cup \Gamma'')$
then $L_v(x) \cap (\Gamma \cup \Gamma' \cup \Gamma'') = \set{y}$ for some $y$, that is, there exists a unique $t \in \R$ such
that $x - tv \in \Gamma \cup \Gamma' \cup \Gamma''$.
\end{corollary}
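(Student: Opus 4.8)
The plan is to treat $\mathcal A := \cl\Gamma \cup \Gamma' \cup \Gamma''$ as a single simple, piecewise-$C^1$ arc along $\partial\hat V$ and to run the mean value argument used for Lemma~\ref{le:one intersect} on all of $\mathcal A$, after identifying the generalized outer normals available along it. First, applying Lemma~\ref{le:neighbor cylinders} (with the roles of the flat and of the smooth component exchanged), $\dist(\Gamma,\Gamma') = 0$ forces $\mathcal V(\Gamma)\cap\mathcal V(\Gamma') = \set v$ and $\cl\Gamma\cap\Gamma' = \set{\xi'}$ for a single point $\xi'$, and likewise $\mathcal V(\Gamma)\cap\mathcal V(\Gamma'') = \set v$ and $\cl\Gamma\cap\Gamma'' = \set{\xi''}$; in particular $v\in\mathcal V(\Gamma')\cap\mathcal V(\Gamma'')$. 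Thus $\mathcal A$ is connected and, by the flattening construction in Section~\ref{sec:faceted functions} (near $\xi'$ and $\xi''$ the set $\hat V$ is locally the subgraph of $\max(L,g)$, resp.\ $\min(-L,g)$, for a $C^2$ function $g$ with $g(0) = g'(0) = 0$), $\mathcal A$ is a simple curve, $C^1$ on $\cl\Gamma$ and along the flat segments, with corners only at $\xi'$ and $\xi''$ whose one-sided tangents exist and whose turning angle can be made as small as we please once $\eta$ is small.

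Next I would identify the generalized outer normals $\nu$ realized along $\mathcal A$ and check that every one of them satisfies $v\in\partial W(\nu)$, so that by Lemma~\ref{le:wulff-frank-rel} the vector $p:=\nu/W(\nu)$ lies in $\partial W^\circ(v)$ with $v\cdot p = 1$. On the interior of $\Gamma$ this is $\partial W(\nu(x)) = \set v$ by definition of $\mathcal V(\Gamma)$; on $\Gamma'$ the constant normal $\nu_0$ has $v\in\partial W(\nu_0)$ since $v\in\mathcal V(\Gamma')$, and similarly on $\Gamma''$; at the corner $\xi'$ the one-sided normals are $\nu_0$ and the limit $\nu_a$ of $\nu$ along $\Gamma$, and since the turning is small the normal cone at $\xi'$ is the short arc of $S^1$ from $\nu_a$ to $\nu_0$, which lies in the closed arc delimited by the two critical directions bounding the component of $S^1\setminus\mathcal N$ on which $\partial W\equiv\set v$; on that closed arc $v\in\partial W(\nu)$ throughout, using Lemma~\ref{le:} together with the upper semicontinuity of $\partial W$ at the two endpoints. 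The same holds at $\xi''$.

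Finally, given $x,y\in\Gamma\cup\Gamma'\cup\Gamma''\subset\mathcal A$ with $x\neq y$, I would parametrize the sub-arc of $\mathcal A$ from $x$ to $y$ by a Lipschitz, piecewise-$C^1$ curve $r$, choose $n\perp(x-y)$ with $\abs n = 1$, and apply Rolle's theorem to $\psi(t):=r(t)\cdot n$: since $\psi$ has equal endpoint values it attains an interior extremum at some $t^*$, where the one-sided derivatives $r'_\pm(t^*)\cdot n$ have opposite (weak) signs, so by the intermediate value theorem over the connected tangent cone at $r(t^*)$ there is a tangent direction parallel to $x-y$, hence a generalized outer normal $\nu$ there with $\nu\perp(x-y)$. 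Then $p:=\nu/W(\nu)$ satisfies $p\in\partial W^\circ(v)$, $v\cdot p = 1$ and $(x-y)\cdot p = 0$; the case $x=y$ is trivial. The ``in particular'' assertion follows as in Lemma~\ref{le:one intersect}: if $x = y_1 + t_1 v = y_2 + t_2 v$ with $y_1,y_2\in\Gamma\cup\Gamma'\cup\Gamma''$, applying the above to $y_1,y_2$ gives $p$ with $v\cdot p = 1$ and $0 = (y_1-y_2)\cdot p = (t_2-t_1)(v\cdot p)$, so $t_1 = t_2$ and $y_1 = y_2$, whence $L_v(x)$ meets $\Gamma\cup\Gamma'\cup\Gamma''$ in exactly one point. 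I expect the main obstacle to be the corner analysis at $\xi'$ and $\xi''$: one must extract from the explicit construction in Section~\ref{sec:faceted functions} that the one-sided normal $\nu_a$ coming from $\Gamma$ sits strictly inside the same component of $S^1\setminus\mathcal N$ as the rest of $\Gamma$, that the corner is genuinely two-sided with small turning, and hence that its normal cone stays inside the closed arc on which $\partial W\ni v$; the Rolle/intermediate-value step and the polar-duality bookkeeping through Lemma~\ref{le:wulff-frank-rel} are then routine.
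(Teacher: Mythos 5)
Your argument is correct and takes essentially the same approach the paper compresses into one line ("combine Lemma~\ref{le:neighbor cylinders} and Lemma~\ref{le:one intersect}, since the flat parts have normals $\nu',\nu''\in\partial W^\circ(v)$"): treat $\cl\Gamma\cup\Gamma'\cup\Gamma''$ as a single connected piecewise-$C^1$ arc, run the Rolle/mean-value argument from Lemma~\ref{le:one intersect} along it, and check that the (generalized) normals including at the two kinks stay in $\partial W^\circ(v)$. Your aside about the turning angle being small for small $\eta$ is harmless but not needed—the corner turning is automatically of magnitude less than $\pi/2$ since the flattened boundary is locally the graph of $\max(L,g)$ or $\min(-L,g)$.
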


\begin{proof}
This follows by combining Lemma~\ref{le:neighbor cylinders} and Lemma~\ref{le:one intersect} for the neighboring components $\Gamma,
\Gamma', \Gamma''$, since the flat ones have normals $\nu', \nu'' \in \partial W^\circ(v)$.
\end{proof}

Given $\mu > 0$, we define the sets $U_\Gamma$ for $\Gamma \in \mathcal C$ by
\begin{align*}
U_\Gamma :=
\begin{cases}
\set{x + tv: x \in \Gamma, \ |t| \leq \mu,\ v \in \mathcal V(\Gamma)} & \text{if $\Gamma \in \mathcal
C_r$,}\\
\set{x: |\xi_v^\Gamma(x) + \xi_w^\Gamma(x) - \alpha^\Gamma| \leq \mu, \ \xi_v^\Gamma(x) \xi_w^\Gamma(x) > 0} &
\text{if $\Gamma \in \mathcal C_0$.}
\end{cases}
\end{align*}
We shall show below in \eqref{partialVcover} that $\set{U_\Gamma}_{\Gamma \in \mathcal C}$ cover a neighborhood of $\partial
\hat V$.
Note that if we take $\mu \leq |\alpha^\Gamma|/2$ we must have $\sign \xi_v^\Gamma(x) =
\sign \xi_w^\Gamma(x) = \sign \alpha^\Gamma$ on $U_\Gamma$ for $\Gamma \in \mathcal C_0$.

If we choose $\mu >0$ small enough, the sets $U_\Gamma$ are pair-wise disjoint.
\begin{lemma}
\label{le:UG disjoint}
Suppose that $0 < \mu < \min \set{\mu_1, \mu_2}$, where
\begin{align*}
\mu_1 := \frac{1}{3\max_{W^\circ(v)} |v|}
\min_{\substack{\Gamma, \Gamma' \in \mathcal C\\\dist(\Gamma, \Gamma')> 0
}}\dist(\Gamma, \Gamma'),\qquad
\mu_2 := \min_{\Gamma \in \mathcal C_0}
\frac{|\alpha^\Gamma|}2.
\end{align*}
Then $U_\Gamma \cap U_{\Gamma'} = \emptyset$ for all
$\Gamma, \Gamma' \in \mathcal C$, $\Gamma \neq \Gamma'$.
\end{lemma}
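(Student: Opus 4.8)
The plan is to split the argument by the value of $\dist(\Gamma, \Gamma')$. Write $M := \max\set{\abs v : W^\circ(v) = 1}$ for the constant appearing in $\mu_1$, so that $\mu < \mu_1$ gives $\mu M < \tfrac13 \dist(\Gamma, \Gamma')$ whenever $\Gamma \neq \Gamma'$ have positive distance.

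The first step is to show that each $U_\Gamma$ is contained in the $\mu M$-neighborhood of $\Gamma$, that is, $U_\Gamma \subset \set{x : \dist(x, \Gamma) \le \mu M}$. For $\Gamma \in \mathcal{C}_r$ this is immediate, since every point of $U_\Gamma$ is $x + tv$ with $x \in \Gamma$, $\abs t \le \mu$, and $v \in \mathcal V(\Gamma)$ a Wulff vertex, so $\abs v \le M$. For $\Gamma \in \mathcal{C}_0$ with $\mathcal V(\Gamma) = \set{v,w}$, the bound $\mu < \mu_2 \le \abs{\alpha^\Gamma}/2$ forces the coordinates $\xi_v^\Gamma, \xi_w^\Gamma$ to be strictly of the sign of $\alpha^\Gamma$ on $U_\Gamma$ (from $\xi_v^\Gamma + \xi_w^\Gamma \in [\alpha^\Gamma - \mu, \alpha^\Gamma + \mu]$ together with $\xi_v^\Gamma \xi_w^\Gamma > 0$); a point of $U_\Gamma$ is then either moved onto $\Gamma$ by a translation of norm $\le \mu M$ in the $w$-direction (using the description \eqref{Gamma alpha level} of $\Gamma$), or lies within $\mu M$ of one of the endpoints $x_v, x_w \in \cl\Gamma$. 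With this estimate in hand, the case $\dist(\Gamma, \Gamma') > 0$ is immediate: a common point $x$ of $U_\Gamma$ and $U_{\Gamma'}$ would give $\dist(\Gamma,\Gamma') \le \dist(x,\Gamma) + \dist(x,\Gamma') \le 2\mu M < \dist(\Gamma,\Gamma')$.

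The remaining, and main, case is $\dist(\Gamma, \Gamma') = 0$ with $\Gamma \neq \Gamma'$. I would first observe that, by the construction of $\hat V$, the flattened arcs in $\mathcal{C}_0$ and the retained smooth arcs in $\mathcal{C}_r$ alternate along $\partial \hat V$ and each has positive length (distinct flattened arcs lie in disjoint balls around distinct critical points of $\partial V$ and are separated by smooth arcs, and conversely); hence two distinct components of the same type are at positive distance, and we may assume $\Gamma \in \mathcal{C}_0$, $\Gamma' \in \mathcal{C}_r$ are adjacent. By Lemma~\ref{le:neighbor cylinders} there is a Wulff vertex $v$ with $\set v = \mathcal V(\Gamma) \cap \mathcal V(\Gamma')$ and a point $\xi = x_v$ with $\cl\Gamma \cap \Gamma' = \set\xi$ and $\xi_w^\Gamma(\xi) = 0$; write $\mathcal V(\Gamma) = \set{v, w}$. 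The idea is to use the affine coordinate $\xi_w^\Gamma$ as a separating function: (i) $\xi_w^\Gamma$ is constant along every line in direction $v$, hence constant on each $v$-fibre of $U_{\Gamma'} \subset L_v(\Gamma')$, with value equal to that of the corresponding point of $\Gamma'$; (ii) on $U_\Gamma$, $\xi_w^\Gamma$ has the sign of $\alpha^\Gamma$ and is nonzero (Step 1); (iii) on $\Gamma'$, $\xi_w^\Gamma$ has the opposite sign or is zero. Point (iii) is where Corollary~\ref{co:on intersect neighbor} enters: two points with the same $\xi_w^\Gamma$ lie on a common $v$-line, which meets $\Gamma \cup \Gamma'$ at most once, so $\xi_w^\Gamma$ is injective on $\Gamma \cup \Gamma'$; since it is continuous on the connected set $\cl\Gamma \cup \Gamma'$, equals $0$ at $\xi$, and by \eqref{Gamma alpha level} assumes on $\Gamma$ exactly the values strictly between $0$ and $\alpha^\Gamma$, it cannot take those values again on $\Gamma'$ nor jump past them, so on $\Gamma'$ it stays on the closed side of $0$ opposite to $\alpha^\Gamma$. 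Combining (i)--(iii) places $U_\Gamma$ and $U_{\Gamma'}$ in the disjoint sets $\set{\sign \xi_w^\Gamma = \sign \alpha^\Gamma}$ and $\set{\sign \xi_w^\Gamma \neq \sign \alpha^\Gamma}$, which finishes the proof.

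I expect the two genuinely delicate points to be the separation of same-type components (which I would dispatch briefly, citing the construction in the preceding subsections) and getting the signs straight in (iii); everything else — in particular the $\dist > 0$ case — is bookkeeping once the $\mu M$-neighborhood estimate of Step 1 is in place.
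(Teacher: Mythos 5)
Your proof is correct in substance, and the main ($\dist=0$) case takes a genuinely different route from the paper's. Where the paper argues via the center $c^\Gamma$: it notes $c^\Gamma \in L_v(\Gamma')$, uses convexity of the cylinder $L_v(\Gamma')$ to place $y(\lambda) = \lambda y + (1-\lambda)c^\Gamma$ in it, invokes $L_v(\Gamma) \cap L_v(\Gamma') = \emptyset$, and then splits into the two subcases $t\alpha^\Gamma \le 0$ (in which $y - tv \in \Gamma$) and $t\alpha^\Gamma > 0$ (in which $y(\lambda) \in \Gamma$ for $\lambda = \alpha/(\alpha+t)$), each giving a contradiction. You instead turn the affine coordinate $\xi_w^\Gamma$ into a separating functional: it is strictly of the sign of $\alpha^\Gamma$ on $U_\Gamma$, and by the one-intersection property (Corollary~\ref{co:on intersect neighbor}) plus connectedness it is $\le 0$ (or $\ge 0$) on $\Gamma'$, hence on all of $U_{\Gamma'}$ since $\xi_w^\Gamma$ is constant along $v$-fibres. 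Your version avoids the convexity of $L_v(\Gamma')$ (used but not justified in the paper), avoids the case split, and gives a single uniform inequality; it is arguably cleaner, and the paper's argument is slightly shorter but less transparent.

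One small inaccuracy in your Step~1: if the $w$-translation by $-tw$ overshoots $\Gamma$ (i.e.\ $\xi_w^\Gamma(x) \le t$), the distance to the endpoint $x_v$ is bounded by $2\mu M$, not $\mu M$ as you wrote, since both $|\xi_v^\Gamma(x) - \alpha^\Gamma|$ and $|\xi_w^\Gamma(x)|$ can be as large as $\mu$. This matters for the $\dist>0$ case, because with the constant $\mu_1 = \frac{1}{3M}\min\dist$ the bound $U_\Gamma \subset \nbd{2\mu M}(\cl\Gamma)$ does not give a contradiction. The fix is simply to notice the fallback is never needed: if $\xi_w^\Gamma(x) \le t$ then necessarily $\xi_v^\Gamma(x) = \alpha^\Gamma + t - \xi_w^\Gamma(x) > t$ (since $|\alpha^\Gamma| > 2\mu \ge 2t$), so the $v$-translation $x \mapsto x - tv$ lands in $\Gamma$ and the sharp inclusion $U_\Gamma \subset \nbd{\mu M}(\cl\Gamma)$ holds, which is what the paper asserts without comment. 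With that correction your argument for the $\dist > 0$ case matches the paper's exactly.
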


\begin{proof}
Suppose that $\dist(\Gamma, \Gamma') > 0$. Then $U_\Gamma \subset \nbd{t}(\Gamma)$ and
$U_{\Gamma'} \subset \nbd{t}(\Gamma')$ with $t = \mu \max_{W^\circ(v) \leq 1} |v|$.
Hence $U_\Gamma \cap U_{\Gamma'} = \emptyset$ by $\mu < \mu_1$.

On the other hand, if $\dist(\Gamma, \Gamma') = 0$, then one of the sets, say $\Gamma$, belongs to
$\mathcal C_0$, and the other belongs to $\mathcal C_r$.
Suppose that $y \in U_\Gamma \cap U_{\Gamma'}$. We will show that this leads to a contradiction.
Indeed, set $v \in \mathcal V(\Gamma')$ and note that $U_{\Gamma'} \subset L_v(\Gamma')$. We have
$c^\Gamma \in L_v(\Gamma')$. Therefore $y(\lambda) := \lambda y + (1-\lambda) c^\Gamma \in L_v(\Gamma')$ for
every $\lambda \in [0,1]$. By Lemma~\ref{le:neighbor cylinders}, we have $\Gamma \cap
L_v(y(\lambda)) =\emptyset$ for all $\lambda \in [0,1]$.

Let $t:= \xi_v^\Gamma(y) + \xi_w^\Gamma(y) - \alpha^\Gamma$. Since $y \in U_\Gamma$, we have $|t|
\leq \mu < \mu_2 \leq |\alpha^\Gamma|/2$ and $\xi_v^\Gamma(y) \xi_w^\Gamma(y) > 0$. If $t \alpha^\Gamma \leq 0$, we have $y - t
v \in \Gamma$ by \eqref{Gamma alpha level}, and
this is a contradiction with $\Gamma \cap L_v(y) = \emptyset$.
If $t \alpha^\Gamma > 0$, we set $\lambda := \frac{\alpha}{\alpha + t} \in (0,1)$. A simple
computation using \eqref{Gamma alpha level} shows that $y(\lambda) \in \Gamma$, which is a contradiction with $\Gamma \cap
L_v(y(\lambda)) = \emptyset$.
The conclusion $U_\Gamma \cap U_{\Gamma'} = \emptyset$ follows.
\end{proof}

We choose $\mu$ satisfying the assumption in Lemma~\ref{le:UG disjoint}.
Then on the pair-wise disjoint collection of sets $\set{U_\Gamma: \Gamma \in \mathcal C}$, we define functions $\psi$ and $z$ by
\begin{align*}
\psi(x) :=
\begin{cases}
t \text{ such that $x - tv \in \Gamma$, $v \in \mathcal V(\Gamma)$},& x \in U_\Gamma,\ \Gamma \in
\mathcal C_r,\\
\xi_v^\Gamma(x) + \xi_w^\Gamma(x) - \alpha^\Gamma, & x \in U_\Gamma,\ \Gamma \in \mathcal C_0,
\end{cases}
\end{align*}
and
\begin{align*}
z(x) :=
\begin{cases}
v \text{, where $v \in \mathcal V(\Gamma)$}& x \in U_\Gamma,\ \Gamma \in \mathcal C_r,\\
\frac{\xi_v^\Gamma(x) v + \xi_w^\Gamma(x) w}{\xi_v^\Gamma(x) + \xi_w^\Gamma(x)}, \text{ where $v, w
\in \mathcal V(\Gamma)$}, & x \in U_\Gamma,\ \Gamma \in \mathcal C_0.
\end{cases}
\end{align*}

Both $\psi$ and $z$ are well-defined by Lemma~\ref{le:one intersect}. Note that $|\psi| \leq
\mu$ on $U_\Gamma$.
We can easily see that $\psi$ is differentiable in the interior of $U_\Gamma$ for all $\Gamma \in
\mathcal C$ by the inverse function theorem. Moreover, the level set $\set{x: \psi(x) =
\psi(y)}$ in a neighborhood of $y \in \interior U_\Gamma$ is just a translation of $\Gamma$.
Therefore $\nabla \psi(y) = s \nu'$, where $\nu' = \nu^\Gamma$ for $\Gamma \in \mathcal C_0$, or
$\nu'
= \nu(y - \psi(y) v)$ for $\Gamma \in \mathcal C_r$, with $s = v \cdot \nu' > 0$. In particular,
\begin{align*}
z(y) \in \partial W(\nabla \psi(y)) \qquad \text{for $z \in \interior U_\Gamma$, $\Gamma \in
\mathcal C$.}
\end{align*}

We now conclude this part by showing that for small $\delta > 0$, the functions $\psi$ and $z$ are
well-defined, Lipschitz continuous functions on $\nbd\delta(\partial \hat V)$. We shall use the following
two lemmas that we prove first. We set
\begin{align*}
K := \max_{W(p)\leq 1} |p| \qquad \text{and} \qquad
\delta_\theta = \min_{\Gamma \in \mathcal C} \min_{v \in \mathcal V(\Gamma)} \theta(L_v(\Gamma)),
\qquad \text{and} \qquad \delta_\mu := \frac \mu K.
\end{align*}
Finally, we find $\delta_s > 0$ such that for every $\Gamma \in \mathcal C_0$, $\Gamma' \in
\mathcal C_r$ the adjacent component to $\Gamma$, $\dist(\Gamma', \Gamma) = 0$, $v \in \mathcal
V(\Gamma')$, $v \neq w \in \mathcal V(\Gamma)$, we have
\begin{align}
\label{delta sign}
\dist (\Gamma' \cap \nbd{\delta_s}(L_v(\Gamma)), L_w(c^\Gamma)) > \delta_s.
\end{align}
This is possible since $\Gamma' \cap \cl{L_v(\Gamma)} = \set{x_v}$, $\Gamma'$ is smooth (in fact
detaching from $L_v(\Gamma)$ linearly), with $\xi_v^\Gamma(x_v) =
\alpha^\Gamma \neq 0$, and $L_w(c^\Gamma) = \set{x: \xi_v^\Gamma(x) = 0}$.

\begin{lemma}
\label{le:Cr case}
Let $x \in \Gamma \in \mathcal C_r$, $v \in \mathcal V(\Gamma)$, and let $\Gamma', \Gamma'' \in
\mathcal C_0$ be the
neighboring components with $\dist(\Gamma', \Gamma) = \dist(\Gamma'', \Gamma) = 0$. Then for every
$y$, $|y - x| \leq \min\set{\delta_\theta, \delta_\mu, \delta_s}$ there exists a unique $t(y)$ such that $y - t(y) v \in \Gamma
\cup \Gamma' \cup \Gamma''$. Moreover, $|t(y)| \leq \mu$.
Finally, $\sign\xi_v^{\Gamma'}(y) = \sign \alpha^{\Gamma'}$ and $\sign \xi_v^{\Gamma''}(y) = \sign
\alpha^{\Gamma''}$.
\end{lemma}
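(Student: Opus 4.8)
The plan is to deduce the existence and uniqueness of $t(y)$ from a transverse--projection argument combined with Corollary~\ref{co:on intersect neighbor}, to read off the bound $\abs{t(y)}\le\mu$ from that same corollary, and finally to obtain the two sign statements from the defining property \eqref{delta sign} of $\delta_s$ together with the flattening construction of $\hat V$. For the first part, I would introduce the orthogonal projection $\pi_v$ of $\R^2$ onto $v^\perp$, which is $1$-Lipschitz with $L_v(z)=\pi_v^{-1}(\set{\pi_v(z)})$ and $\dist(L_v(z_1),L_v(z_2))=\abs{\pi_v(z_1)-\pi_v(z_2)}$. By Lemma~\ref{le:one intersect}, applied in turn on $\Gamma$, $\Gamma'$, $\Gamma''$ with the common $v$, the map $\pi_v$ is injective on each of these connected sets (if $\pi_v(z_1)=\pi_v(z_2)$ then $z_1-z_2=tv$ and the $p$ from that lemma gives $0=(z_1-z_2)\cdot p=t$); hence $\pi_v(\Gamma),\pi_v(\Gamma'),\pi_v(\Gamma'')$ are intervals of lengths $\theta(L_v(\Gamma)),\theta(L_v(\Gamma')),\theta(L_v(\Gamma''))$, each $\ge\delta_\theta$, and, since $\Gamma$ is an arc with endpoints the junction points $\xi'\in\cl{\Gamma'}\cap\Gamma$ and $\xi''\in\cl{\Gamma''}\cap\Gamma$, injectivity of $\pi_v|_\Gamma$ makes $\pi_v(\xi'),\pi_v(\xi'')$ the two endpoints of $\pi_v(\cl\Gamma)$. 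Applying Lemma~\ref{le:neighbor cylinders} with the roles of $\mathcal C_0$ and $\mathcal C_r$ exchanged, $\pi_v(\Gamma)\cap\pi_v(\Gamma')=\emptyset$ and $\cl{\pi_v(\Gamma')}\cap\pi_v(\Gamma)=\set{\pi_v(\xi')}$ (similarly for $\Gamma''$), so $\pi_v(\Gamma\cup\Gamma'\cup\Gamma'')$ is the interval obtained from $\pi_v(\Gamma)$ by appending $\pi_v(\Gamma')$ beyond $\pi_v(\xi')$ and $\pi_v(\Gamma'')$ beyond $\pi_v(\xi'')$. As $\abs{\pi_v(y)-\pi_v(x)}\le\abs{y-x}\le\delta_\theta$ and the distance from $\pi_v(x)\in\pi_v(\Gamma)$ to either endpoint of this combined interval is at least $\min\set{\theta(L_v(\Gamma')),\theta(L_v(\Gamma''))}\ge\delta_\theta$, we get $\pi_v(y)\in\pi_v(\Gamma\cup\Gamma'\cup\Gamma'')$, i.e.\ $y\in L_v(\Gamma\cup\Gamma'\cup\Gamma'')$, and then Corollary~\ref{co:on intersect neighbor} provides a unique $t(y)$ with $q:=y-t(y)v\in\Gamma\cup\Gamma'\cup\Gamma''$.

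For the bound I would apply Corollary~\ref{co:on intersect neighbor} once more, now to the two points $x,q\in\Gamma\cup\Gamma'\cup\Gamma''$: it yields $p\in\partial W^\circ(v)$ with $v\cdot p=1$ and $(x-q)\cdot p=0$. Substituting $q=y-t(y)v$ gives $0=(x-y)\cdot p+t(y)$, so $t(y)=(y-x)\cdot p$. By Lemma~\ref{le:wulff-frank-rel}, $p\in\partial W^\circ(v)$ with $v\ne 0$ forces $W(p)=1$, whence $\abs p\le K=\max_{W(q)\le 1}\abs q$ and therefore $\abs{t(y)}\le\abs{y-x}\,\abs p\le\delta_\mu K=\mu$.

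It then remains to establish the sign statements, and it suffices to treat $\Gamma'$. Let $w$ be the second vertex with $\mathcal V(\Gamma')=\set{v,w}$, so that $L_w(c^{\Gamma'})=\set{\xi_v^{\Gamma'}=0}$ and $\xi_v^{\Gamma'}(\xi')=\alpha^{\Gamma'}\ne 0$; since $\xi_v^{\Gamma'}$ is affine with $\abs{\xi_v^{\Gamma'}(z)}=\dist(z,L_w(c^{\Gamma'}))/\dist(v,\R w)$, the content of \eqref{delta sign} is precisely that $\xi_v^{\Gamma'}$ retains the sign of $\alpha^{\Gamma'}$ on the portion of $\Gamma$ lying within $\delta_s$ of the cylinder $L_v(\Gamma')$, with a lower bound controlled by $\delta_s$. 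I would combine this with the fact, read off from the construction of $\hat V$, that all normals of $\Gamma$ lie in the normal cone of the $v$-vertex of the Wulff shape, so that $\Gamma$ emanates from $\xi'$ without crossing the line $L_w(c^{\Gamma'})$ (which passes through the interior point $c^{\Gamma'}$ of the rescaled Wulff shape touching $\Gamma'$), hence $\Gamma$ lies entirely on the $\sign\alpha^{\Gamma'}$-side. Then, since $x\in\Gamma$ and $\abs{y-x}\le\delta_s$, distinguishing whether $q\in\Gamma'$ (in which case $x$ is within $\delta_s$ of $L_v(\Gamma')$ and \eqref{delta sign} applies at $x$, so the sign cannot flip along $[x,y]$) or $q\notin\Gamma'$ (in which case $x$, and hence the whole segment $[x,y]$, stays uniformly on the correct side), one obtains $\sign\xi_v^{\Gamma'}(y)=\sign\alpha^{\Gamma'}$; the same reasoning with $\Gamma''$ and $\alpha^{\Gamma''}$ finishes the proof.

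The step I expect to be the main obstacle is the last one: converting the qualitative Wulff-shape picture into the quantitative assertion that the sign of $\xi_v^{\Gamma'}$ is preserved under a $\delta_s$-perturbation, uniformly over all $x\in\Gamma$, which forces one to unwind the explicit flattening construction of $\hat V$ and the geometry behind the choice of $\delta_s$ in \eqref{delta sign}. By contrast, existence, uniqueness, and the bound $\abs{t(y)}\le\mu$ are comparatively routine consequences of Lemmas~\ref{le:one intersect}, \ref{le:wulff-frank-rel}, \ref{le:neighbor cylinders} and Corollary~\ref{co:on intersect neighbor}.
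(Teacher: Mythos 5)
Your existence, uniqueness and $\abs{t(y)}\le\mu$ arguments are correct and follow the paper's route: the paper observes $y\in L_v(\Gamma\cup\Gamma'\cup\Gamma'')$ directly from $\abs{x-y}\le\delta_\theta$ being below the widths of the adjacent cylinders, then applies Corollary~\ref{co:on intersect neighbor} and Cauchy--Schwarz exactly as you do; your projection argument with $\pi_v$ is a more explicit version of the same idea, and your identity $t(y)=(y-x)\cdot p$ via Lemma~\ref{le:wulff-frank-rel} is the paper's computation.

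The sign statement is where your writeup has a genuine gap, and it is precisely where you flagged one. Your argument hinges on the global assertion that $\Gamma$ lies \emph{entirely} on the $\sign\alpha^{\Gamma'}$-side of $L_w(c^{\Gamma'})$. The ingredient you cite (normals of $\Gamma$ lie in the normal cone at $v$) only makes $\Gamma$ a graph over a transverse direction, via Lemma~\ref{le:one intersect}; it does not bound $\xi_v^{\Gamma'}$ away from zero, nor fix its sign, away from $\xi'$. The part that does go through, and which is what the paper's terse sentence amounts to, is the localized statement: $\Gamma\cap\nbd{\delta_s}(L_v(\Gamma'))$ is connected (because $\pi_v|_\Gamma$ is injective, so it is the preimage of an interval), $\abs{\xi_v^{\Gamma'}}$ is bounded below there by \eqref{delta sign} applied with the roles of $\mathcal C_0$ and $\mathcal C_r$ swapped, and the junction $\xi'$ (where $\xi_v^{\Gamma'}=\alpha^{\Gamma'}$) lies in its closure, so $\xi_v^{\Gamma'}$ carries the sign of $\alpha^{\Gamma'}$ on that set. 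Then, when $q\in\Gamma'$, $y\in L_v(\Gamma')$ forces $x\in\Gamma\cap\nbd{\delta_s}(L_v(\Gamma'))$, and $\abs{x-y}\le\delta_s<\dist(x,L_w(c^{\Gamma'}))$ carries the sign to $y$. That is your Case~1 and it is fine. Your Case~2 ($q\notin\Gamma'$) is unsupported: nothing you or the paper have established controls $\dist(x,L_w(c^{\Gamma'}))$ when $x$ is far from $L_v(\Gamma')$, so ``stays uniformly on the correct side'' does not follow. In fairness, the paper's own one-sentence justification is equally brief on this point, and the lemma is used (proof of \eqref{partialVcover}) only when $y-t(y)v$ lands in $\Gamma'$ or $\Gamma''$, which is exactly your Case~1; so the gap does not propagate, but it should not be glossed over in a proof of the lemma as stated.
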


\begin{proof}
Using Lemma~\ref{le:neighbor cylinders} and $|x - y| \leq \delta_\theta$, that is, that the
distance between $x$ and $y$ is smaller than the width of the cylinders $L_v(\Gamma')$ and $L_v(\Gamma'')$, we
are guaranteed that $y \in L_v(\Gamma \cup \Gamma' \cup \Gamma'')$. Therefore there exists a unique
$t \in \R$ with $y - tv \in \Gamma \cup \Gamma' \cup \Gamma''$.
By Corollary~\ref{co:on intersect neighbor}, there exists $p \in \partial W^\circ(v)$ such that
\begin{align*}
0 = (x - y + tv) \cdot p = (x - y) \cdot p + t.
\end{align*}
By Cauchy-Schwarz $|t| \leq K |x - y| \leq \mu$ and the conclusion follows.
The sign of $\xi_v^{\Gamma'}(y)$ and $\xi_v^{\Gamma''}(y)$ must match the sign at $\Gamma \cap
\cl\Gamma'$, $\Gamma \cap \Gamma''$, which matches that of
$\alpha^{\Gamma'}$, $\alpha^{\Gamma''}$, respectively, since $\delta \leq \delta_s$ and $\delta_s$ satisfies
\eqref{delta sign}.
\end{proof}

\begin{lemma}
\label{le:C0 case}
Let $x \in \Gamma \in \mathcal C_0$, $v \in \mathcal V(\Gamma)$, and let $\Gamma', \Gamma'' \in
\mathcal C_r$ be the
neighboring components with $\dist(\Gamma', \Gamma) = \dist(\Gamma'', \Gamma) = 0$. Let $v \in
\mathcal V(\Gamma')$ and $w \in \mathcal V(\Gamma'')$. Then for every
$y$, $|y - x| \leq \min\set{\delta_\theta, \delta_\mu}$ where $\delta_\mu = \mu/K$. Then exactly
one of the following holds:
\begin{enumerate}
\item $\xi_v^\Gamma(y) \xi_w^\Gamma(y) > 0$, $|\xi_v^\Gamma(y) + \xi_w^\Gamma(y) - \alpha| \leq \mu$, or
\item $y \in L_v(\Gamma')$, there exists $t$ such that $y - t v \in \Gamma'$, and $|t| \leq \mu
$, or
\item $y \in L_w(\Gamma'')$, there exists $t$ such that $y - t w \in \Gamma''$, and $|t| \leq \mu$.
\end{enumerate}
\end{lemma}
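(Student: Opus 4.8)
The plan is to argue entirely in the coordinates $\xi_v = \xi_v^\Gamma$ and $\xi_w = \xi_w^\Gamma$ attached to the flat piece $\Gamma$, via a trichotomy on the signs of $\xi_v(y)$ and $\xi_w(y)$. After possibly interchanging the labels $v \leftrightarrow w$ we may assume $\alpha := \alpha^\Gamma > 0$, so by \eqref{Gamma alpha level} $\Gamma = \set{\xi_v + \xi_w = \alpha,\ \xi_v > 0,\ \xi_w > 0}$ with endpoints $x_v$ (where $\xi_w = 0$) and $x_w$ (where $\xi_v = 0$). Two preliminary observations. \textbf{(i)} If $\nu_0 \in \mathcal N$ is the unit normal of $\Gamma$, then $v \cdot \nu_0 = w \cdot \nu_0 = W(\nu_0)$, so $p_0 := \nu_0/W(\nu_0)$ has $p_0 \cdot v = p_0 \cdot w = 1$ and, by Lemma~\ref{le:wulff-frank-rel}, $W(p_0) = 1$, hence $\abs{p_0} \le K$; writing points as $\xi_v v + \xi_w w + c^\Gamma$ one gets $\xi_v(y) + \xi_w(y) - \alpha = (y - x)\cdot p_0$ for every $x \in \Gamma$, so
\[
\abs{\xi_v(y) + \xi_w(y) - \alpha} \le K\abs{y - x} \le K\delta_\mu = \mu \qquad \text{when } \abs{y - x} \le \delta_\mu.
\]
Thus the inequality in alternative (1) is automatic, and $\xi_v(y) + \xi_w(y) > \alpha - \mu > \alpha/2 > 0$ since $\mu < \abs{\alpha^\Gamma}/2$ by the choice of $\mu$ in Lemma~\ref{le:UG disjoint}. \textbf{(ii)} By Lemma~\ref{le:neighbor cylinders} the cylinders $L_v(\Gamma)$ and $L_v(\Gamma')$ are disjoint with $\cl{L_v(\Gamma)} \cap L_v(\Gamma') = L_v(x_v)$; translation by $v$ preserves $\xi_w$, so $L_v(\Gamma) = \set{0 < \xi_w < \alpha}$, $L_v(x_v) = \set{\xi_w = 0}$, and $\xi_w(\Gamma')$ is an interval disjoint from $(0,\alpha)$ with $0$ in its closure, whence $\xi_w(\Gamma') \subset (-\infty, 0]$ and $L_v(\Gamma') \subset \set{\xi_w \le 0}$; symmetrically $L_w(\Gamma'') \subset \set{\xi_v \le 0}$.

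Now the trichotomy: \textbf{(A)} $\xi_v(y) > 0$ and $\xi_w(y) > 0$, or \textbf{(B)} $\xi_w(y) \le 0$, or \textbf{(C)} $\xi_v(y) \le 0$; (B) and (C) exclude each other by (i). In (A), alternative (1) holds by (i), while (2) and (3) fail by (ii) (they would force $\xi_w(y) \le 0$, resp.\ $\xi_v(y) \le 0$). Case (C) is case (B) with $v$ and $w$ swapped, so consider (B). Then $\xi_v(y) \ge \alpha - \mu > 0$, so (1) fails ($\xi_v(y)\xi_w(y) \le 0$) and (3) fails ($y \notin L_w(\Gamma'')$); it remains to produce the data of (2). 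The crucial point is that $\xi_w$ is strictly monotone along $\Gamma'$: by Lemma~\ref{le:one intersect} each $v$-line meets $\Gamma'$ at most once, so $x' \mapsto \xi_w(x')$ is continuous and injective on the connected curve $\Gamma'$, hence has an interval as range. With $d := \dist(w, \R v) > 0$ one has $\dist(L_v(a), L_v(b)) = d\,\abs{\xi_w(a) - \xi_w(b)}$, so that range has length $\theta(L_v(\Gamma'))/d \ge \delta_\theta/d$, while $\abs{\xi_w(y) - \xi_w(x)} \le \abs{y-x}/d \le \delta_\theta/d$; since $\xi_w(x) > 0 \ge \xi_w(y)$ this forces $\xi_w(y)$ into the closure of $\xi_w(\Gamma')$, so the line $L_v(y)$ meets $\cl{\Gamma'}$ and there is $t$ with $x' := y - tv \in \Gamma'$, $y \in L_v(\Gamma')$. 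Finally, to bound $t$ I repeat the estimate in the proof of Lemma~\ref{le:Cr case}: since $\Gamma' \in \mathcal C_r$ and $x, x'$ both lie in $\Gamma'$ together with its two flat neighbours (one of which is $\Gamma$), Corollary~\ref{co:on intersect neighbor} gives $p \in \partial W^\circ(v)$ with $v \cdot p = 1$ and $(x - x')\cdot p = 0$; then $0 = (x - x')\cdot p = (x - y)\cdot p + t$, so $t = (y-x)\cdot p$ and $\abs{t} \le K\abs{y-x} \le K\delta_\mu = \mu$ because $W(p) = 1$ (Lemma~\ref{le:wulff-frank-rel}) forces $\abs p \le K$. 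Together with the exclusions this shows exactly one of (1)--(3) holds.

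The only real difficulty is constant-bookkeeping: here only $\delta_\theta$ and $\delta_\mu$ are available (not $\delta_s$, which belongs to Lemma~\ref{le:Cr case}), so fact (ii) and the monotonicity argument must be driven by these alone. There is also the degenerate configuration $\xi_w(y) = 0$ (resp.\ $\xi_v(y) = 0$), in which $y$ lies on the shared line $L_v(x_v) = \cl{L_v(\Gamma)} \cap \cl{L_v(\Gamma')}$; there alternative (2) should be read with $\cl{\Gamma'}$ in place of $\Gamma'$, and with that convention ``exactly one'' still holds. No estimate beyond Lemmas~\ref{le:one intersect}, \ref{le:neighbor cylinders} and Corollary~\ref{co:on intersect neighbor} is needed.
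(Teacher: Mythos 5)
Your proof is correct and follows essentially the same route as the paper's proof. Both arguments: (i) establish the $\abs{t}\le\mu$ estimate from $\abs{y-x}\le\delta_\mu$, showing the bound in alternative (a) always holds and that $\xi_v^\Gamma(y)+\xi_w^\Gamma(y)$ keeps the sign of $\alpha^\Gamma$; (ii) use Lemma~\ref{le:neighbor cylinders} to place $L_v(\Gamma')$ in $\set{\xi_w^\Gamma\alpha^\Gamma\le 0}$ and $L_w(\Gamma'')$ in $\set{\xi_v^\Gamma\alpha^\Gamma\le 0}$, so the three alternatives are mutually exclusive; and (iii) conclude via Corollary~\ref{co:on intersect neighbor}. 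The one place you add value is the step where the paper writes ``Otherwise since $\abs{y-x}\le\delta_\theta$, $y$ must be in exactly one of the cylinders\ldots due to the discussion above'' — you replace this appeal to the preceding heuristic with an explicit intermediate-value/monotonicity argument for $\xi_w^\Gamma$ along $\Gamma'$, using $\theta(L_v(\Gamma'))\ge\delta_\theta$ and the formula $\dist(L_v(a),L_v(b))=d\abs{\xi_w^\Gamma(a)-\xi_w^\Gamma(b)}$. Two small remarks on presentation: your ``after interchanging $v\leftrightarrow w$ we may assume $\alpha^\Gamma>0$'' is not literally achieved by a relabeling (the sign of $\alpha^\Gamma$ is fixed by the geometry), though of course the case $\alpha^\Gamma<0$ is handled symmetrically; and the degenerate case $\xi_w^\Gamma(y)=0$ needs no special reading of alternative (b), since Lemma~\ref{le:neighbor cylinders} places $x_v\in\Gamma'$ directly, so $y-tv=x_v\in\Gamma'$ as stated.
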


\begin{proof}
Let us set $t = \xi_v^\Gamma(y) + \xi_w^\Gamma(y) - \alpha$. Then $\xi_v^\Gamma(y - tv) +
\xi_w^\Gamma(y - tv) - \alpha^\Gamma = 0$ and therefore $(x - y + t v) \cdot \nu_0 =0$, where $\nu_0$ is
the normal of $\Gamma$. In particular, $t = (x - y) \cdot \frac{\nu_0}{W(\nu_0)}$ and hence
$|t| \leq K |x - y| \leq \mu$, which implies the estimate in (a).

Since $|y - x| \leq \delta_\mu$, we have $|t| \leq \mu < \mu_2 \leq |\alpha^\Gamma|/2$ and
therefore $\xi_v^\Gamma(y) + \xi_w^\Gamma(y)$ has the same sign as $\alpha^\Gamma$. We conclude
that at least one of $\xi_v^\Gamma(y)$, $\xi_v^\Gamma(y)$ has the same sign as $\alpha^\Gamma$.
Due to Lemma~\ref{le:neighbor cylinders},
$L_v(\Gamma') \subset \set{\xi_w^\Gamma \alpha^\Gamma \leq 0}$ and $L_w(\Gamma'') \subset \set{\xi_v^\Gamma \alpha^\Gamma \leq 0}$.
Therefore $y \notin L_v(\Gamma') \cap L_w(\Gamma'')$.

If $\xi_v^\Gamma(y) \xi_w^\Gamma(y) > 0$ then we are at case (a).
Otherwise since $|y - x| \leq \delta_\theta$, $y$ must be in exactly one of the cylinders
$L_v(\Gamma')$ or $L_w(\Gamma'')$ due to the discussion above.

Suppose therefore $y \in
L_v(\Gamma')$. Then there exists a unique $t$ such that $y - t v \in \Gamma'$, and Corollary~\ref{co:on intersect neighbor} implies the estimate $|t| \leq K |y -
x| \leq \mu$ as in Lemma~\ref{le:Cr case}.
The case $y \in L_w(\Gamma'')$ can be handled similarly.
\end{proof}

We therefore take
\begin{align*}
0 < \delta < \min\set{\delta_\theta, \delta_\mu, \delta_s}.
\end{align*}
With this choice,
\begin{align}
\label{partialVcover}
\nbd\delta(\partial \hat V) \subset \bigcup_{\Gamma \in \mathcal C} U_\Gamma.
\end{align}
Indeed,
let us fix $y \in \nbd\delta(\partial \hat V)$. Then there exists $x \in \partial \hat V$ with
$|x - y| \leq \delta$.

In the case that $x \in \Gamma \in \mathcal C_r$, we apply Lemma~\ref{le:Cr case} to conclude that there is a
unique $t$, $|t| \leq K |y-x| \leq K\delta \leq \mu$, such that $y - t v \in \Gamma \cup \Gamma'
\cup \Gamma''$ where $v \in \mathcal V(\Gamma)$.
If $y - tv \in \Gamma$, then clearly $y \in U_\Gamma$.
On the other hand, if $y - tv \in \Gamma'$, we have
\begin{align*}
0 = \xi_v^{\Gamma '}(y - tv) + \xi_w^{\Gamma '}(y - tv) - \alpha^{\Gamma'} =
\xi_v^{\Gamma '}(y) + \xi_w^{\Gamma '}(y) - \alpha^{\Gamma'} - t.
\end{align*}
Since also $\sign \xi_v^{\Gamma'}(y) = \sign \alpha^{\Gamma'}$, we conclude that $y \in
U_{\Gamma'}$.
An analogous argument works if $y - tv \in \Gamma''$.

Now if $x \in \Gamma \in \mathcal C_0$, we apply Lemma~\ref{le:C0 case}, and we argue as above to conclude
that $y \in U_{\Gamma}$, $U_{\Gamma'}$ or $U_{\Gamma''}$. Therefore we recover
\eqref{partialVcover}.

\medskip

Now we finally show that $\psi$ and $z$ are Lipschitz on $\nbd\delta(\partial \hat V)$. Since $\psi$ and $z$ are smooth in the interior of $U_\Gamma$, we only need to address the continuity
across the transition between $U_\Gamma$, $U_{\Gamma'}$, $\Gamma \in \mathcal C_0$, $\Gamma' \in
\mathcal C_r$, with $\dist(\Gamma, \Gamma') = 0$.
The function $z$ is clearly Lipschitz across this boundary, since we can alternatively define $z$ in the
neighborhood of this boundary using
\begin{align*}
\zeta(x):=
\begin{cases}
\xi_w^\Gamma(x), & \xi_w^\Gamma(x) \alpha^\Gamma > 0,\\
0, & \text{otherwise}.
\end{cases}
\end{align*}
Then we have in the neighborhood of the boundary between $U_\Gamma$ and $U_{\Gamma'}$ that
\begin{align*}
z(x) = \frac{\xi_v^\Gamma(x) v + \zeta(x) w}{\xi_v^\Gamma(x) + \zeta(x)},
\end{align*}
which is clearly a Lipschitz function when $\abs{\xi_v^\Gamma(x)} > \e > 0$, as is the case near
the boundary.

Similarly, we can alternatively define $\psi$ in the neighborhood of the boundary between
$U_\Gamma$ and $U_{\Gamma'}$ as
\begin{align*}
\psi(x) = t \quad \text{where $t$ is such that $x - t v \in \Gamma \cup \Gamma'$.}
\end{align*}
This function is Lipschitz continuous by Corollary~\ref{co:on intersect neighbor}.

\subsubsection{Completion of the proof of Proposition~\ref{pr:approximate-pair}}

We now have two Lipschitz functions $\psi^-, \psi^+$ and Lipschitz continuous vector fields $z^-,
z^+$ defined in $\nbd{\delta}(\partial G_-)$ and $\nbd{\delta}(\partial G_+)$, respectively, such
that $z^\pm(x) \in \partial W(\nabla \psi^\pm)$. Furthermore, $\partial G_\pm = \set{\psi^\pm =
0}$ almost everywhere.
We now have to connect them to produce an admissible support function of the pair $(G_-,
G_+)$.
We define the constant $\eta = \min (\eta^-, \eta^+) > 0$ by
\begin{align*}
\eta^\pm := \frac 12 \min \set{|\psi_\pm(x)|: \delta/2 \leq \dist(x, \partial G_\pm) \leq \delta}.
\end{align*}
We find smooth cutoff functions $\varphi^\pm \in C^\infty_c$ such that
\begin{align*}
\text{
$0 \leq \varphi^\pm \leq 1$,
$\supp \varphi^\pm \subset \nbd{3\delta/4}(\partial G_\pm)$, $\varphi_\pm = 1$ on
$\nbd{\delta/2}(\partial G_\pm)$.
}
\end{align*}
We define the support function of the pair $(G_-, G_+)$ as
\begin{align*}
\psi(x) :=
\begin{cases}
\eta & x\in G_+ \setminus \nbd\delta(\partial G_+),\\
\min(\eta, \max (\psi^+, 0)) & x\in \nbd\delta(\partial G_+),\\
0 & x \in G_-^c \cap G_+^c \setminus \nbd\delta(\partial G_- \cup \partial
G_+),\\
\max(-\eta, \min (\psi^-, 0)) & x\in \nbd\delta(\partial G_-),\\
-\eta & x\in G_- \setminus \nbd\delta(\partial G_-).
\end{cases}
\end{align*}
It is easy to check that $\psi$ is a Lipschitz support function of $(G_-, G_+)$.
Moreover, it is admissible with the Lipschitz Cahn-Hoffman vector field
\begin{align*}
z(x) := z^-(x) \varphi^-(x) + z^+(x) \varphi^+(x).
\end{align*}
by Lemma~\ref{le:one-homogeneous-subdiff}.

\section{Comparison principle}
\label{sec:comparison principle}

In this section we prove the comparison principle
on a spacetime cylinder $Q := \Rn \times (0,T)$
for some $T > 0$.

\begin{theorem}[Comparison principle]
\label{th:comparison principle}
Let $W: \R^n \to \R$ be a positively one-homogeneous convex polyhedral function such that the
conclusion of
Corollary~\ref{co:approximate pair sliced} holds for $1 \leq k \leq n-1$, and let $F$ be of curvature-free type at
$p_0 = 0$.
Suppose that $u$ and $v$ are a subsolution and a supersolution of
\eqref{P} on $\Rn \times [0, T]$ for some $T > 0$, respectively.
Moreover, suppose that there exist a compact set $K\subset \Rn$ and constants $c_u \leq c_v$
such that
$u \equiv c_u$, $v \equiv c_v$ on $\pth{\Rn \setminus K} \times [0,T]$.
Then $u(\cdot, 0) \leq v(\cdot, 0)$ on $\Rn$
implies $u \leq v$ on $\Rn \times [0, T]$.
\end{theorem}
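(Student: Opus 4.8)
The plan is to follow the by-now-standard doubling-of-variables scheme for viscosity solutions of singular parabolic equations, but with the extra parameter $\zeta$ as in \cite{GGP13JMPA,GGP13AMSA}, and with the faceted test functions replaced by the \emph{stratified} faceted test functions of Definition~\ref{def:strat-faceted-test-function}. First I would reduce to a strict comparison: replacing $v$ by $v + \eta/(T-t) + \eta t$ for small $\eta > 0$, it suffices to rule out $\sup_{\Rn\times[0,T)}(u-v) > 0$ under the strengthened hypothesis that $v$ is a strict supersolution with a time-blowup term forcing the supremum to be attained on a compact subset of $\Rn\times[0,T)$ and bounded away from $t = T$. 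Because $u \equiv c_u \le c_v \equiv v$ outside $K$, any positive supremum of $u-v$ is attained at an interior point, and by compactness the maxima of
\begin{align*}
\Phi_{\zeta,\e}(x,t,y,s) = u(x,t) - v(y,s) - \frac{\abs{x-y-\zeta}^2}{2\e} - S_\e(t,s)
\end{align*}
are attained and, for $\e$ small, concentrate near the diagonal. Standard arguments (the penalization in $S_\e$ handling the time variable) give that at a maximum point $(\hat x, \hat t, \hat y, \hat s)$ one has $\hat t, \hat s$ close together and bounded away from $0$ and $T$, that $\frac{\hat x - \hat y - \zeta}{\e}$ is bounded uniformly in $\zeta$ small, and that after passing to a subsequence this quantity converges to some $\hat p$.

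The heart of the argument is then the construction of stratified faceted test functions for $u$ at $(\hat x,\hat t)$ and for $v$ at $(\hat y,\hat s)$ with \emph{ordered} facets. If $\dim\partial W(\hat p) = 0$, the off-facet test (ii) applies directly and the ellipticity \eqref{F ellipticity} together with continuity of $F$ yields the contradiction, much as in the classical case; if $\hat p = 0$ one uses the curvature-free test (i-cf) since $F$ is of curvature-free type at $0$. The interesting case is $k = \dim\partial W(\hat p) \in \{1,\dots,n-1\}$. Here one decomposes coordinates via $\TT = \TT_{\hat p}$ into $x = \TT(x',x'')$. From the maximum of $\Phi_{\zeta,\e}$ one extracts, by varying $\zeta$, information about the behavior of $u$ and $v$ near $\hat x', \hat y'$ in the facet directions: the functions $x' \mapsto u(\TT(x', \cdot),\cdot)$ and $y'\mapsto v$ sandwich between translates, producing a pair $(A_-,A_+) \in \mathcal P^k$ (roughly, where $u$ lies strictly above/below the quadratic) for $u$ and a pair for $v$ that are ordered. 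Using Corollary~\ref{co:approximate pair sliced} (valid for $1 \le k \le n-1$ by hypothesis) I replace these pairs by nearby \emph{admissible} pairs $(G_-^u, G_+^u) \succeq (G_-^v, G_+^v)$ with admissible support functions $\bar\psi_u, \bar\psi_v$; together with quadratic functions in $x''$ and linear terms $\hat p\cdot x$ and suitable $g(t)$ coming from $S_\e$, these assemble into stratified faceted test functions touching $u$ from above and $v$ from below in the sense of \eqref{general-position}. Applying the faceted test (i) in Definition~\ref{def:visc-solution} gives
\begin{align*}
\vp_t^u(\hat x,\hat t) + F(\hat p, \essinf_{B_\de} \Lambda_{\hat p}[\bar\psi_u]) \le 0, \qquad
\vp_t^v(\hat y,\hat s) + F(\hat p, \esssup_{B_\de}\Lambda_{\hat p}[\bar\psi_v]) \ge 0,
\end{align*}
and the comparison principle for the crystalline curvature, Proposition~\ref{pr:comparison Lambda}, yields $\Lambda_{\hat p}[\bar\psi_v] \le \Lambda_{\hat p}[\bar\psi_u]$ on the intersection of the facets, which combined with the ellipticity of $F$ and the (strict) sign coming from the $S_\e$ and $\eta/(T-t)$ terms produces the contradiction.

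The main obstacle is the facet-construction step: one must verify that the geometric configuration read off from the maximum of $\Phi_{\zeta,\e}$ genuinely yields \emph{ordered bounded} pairs in $\mathcal P^k$ to which Corollary~\ref{co:approximate pair sliced} applies, that the approximating admissible pairs can be kept ordered \emph{and} close enough that the assembled stratified faceted functions still satisfy the ``general position'' hypothesis \eqref{general-position} (the role of the extra $\zeta$-freedom is precisely to absorb the small perturbations introduced by this approximation), and that $0$ lies in the interior of the relevant facets so the test applies. A secondary technical point is bookkeeping the $x''$-directions: since $u,v$ need only be touched by $C^2$ functions there, one must check the stratified structure $\bar\psi(x'-\hat x') + f(x''-\hat x'')$ is compatible with the maximum of $\Phi_{\zeta,\e}$, using that $\Lambda_{\hat p}$ ignores the $x''$-dependence (Lemma~\ref{le:subdiff-slicing}). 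These steps are substantial elaborations of \cite{GGP13AMSA} but structurally parallel; the remaining estimates on $S_\e$, on the localization of maxima, and the final contradiction via continuity and monotonicity of $F$ are routine.
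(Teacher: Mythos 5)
Your overall scaffold is the right one — doubling of variables with the extra parameter $\zeta$, localization of maxima, case split by $k = \dim \partial W(\hat p)$, approximation by admissible pairs via Corollary~\ref{co:approximate pair sliced}, and the contradiction via Proposition~\ref{pr:comparison Lambda} together with ellipticity of $F$. The strict-supersolution device $v \mapsto v + \eta/(T-t) + \eta t$ is a cosmetic variant of the paper's choice of $S_\e$, which already carries the $\e/(T-t) + \e/(T-s)$ terms; that is not a gap.

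The genuine gap is in how $\hat p$ and the sandwich facets are obtained. You write that $\frac{\hat x - \hat y - \zeta}{\e}$ is bounded and ``after passing to a subsequence this quantity converges to some $\hat p$,'' and later describe the role of $\zeta$ as ``absorbing small perturbations introduced by the approximation.'' Neither is the mechanism the argument actually needs, and without the right one the facet construction does not get off the ground. Because $W$ is polyhedral, $\Rn$ decomposes into finitely many relatively open convex sets $\Xi_i$ on which $\partial W$ is constant (Proposition~\ref{pr:feature-decomposition}). What is required is a whole open ball $B_{2\lambda}(\zeta_0)$ of shifts $\zeta$ such that for every such $\zeta$ some maximum of $\Phi_{\zeta,\e}$ has gradient $\frac{\hat x - \hat y - \zeta}{\e}$ lying in the \emph{same} $\Xi_{i_0}$. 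The paper proves this by a Baire category argument on the compact graph of $\zeta \mapsto \mathcal B(\zeta)$ (Propositions~\ref{pr:max-graph} and \ref{pr:ball-of-gradients}); the dimension $k$ and the subspace $V$ are then read off from $\Xi_{i_0}$, not from a limit point of a subsequence. This is not merely a choice of $\hat p$ — it is what makes the Constancy Lemma (Lemma~\ref{le:constancy}, applied as Lemma~\ref{le:max-constancy}) applicable, which shows that $\ell(\zeta) - p_0\cdot\zeta$ is constant for $\zeta$ in $\zeta_0 + V$ near $\zeta_0$. That constancy is the content of Corollary~\ref{co:contact-ordering}, namely the uniform ordering $\ta(x,t,y,s) \le \ta(\hat x,\hat t,\hat y,\hat s)$ over all shifts with $|x'-y'-(\hat x'-\hat y')|\le\lambda$. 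Only from this can one define the sets $\hat U = \{\hat u \ge 0\}$, $\hat V = \{\hat v \le 0\}$ that give ordered bounded pairs and verify condition \eqref{general-position} for the stratified test functions with some positive $\rho$. A single limiting gradient $\hat p$, with no control on the full ball of $\zeta$, provides none of this: the gradient could jump between strata as $\zeta$ varies, the constancy lemma would fail, and there is no guarantee of any flat contact region on which to build facets. You should replace the ``pass to a subsequence to get $\hat p$'' step by the compactness-plus-Baire argument of Proposition~\ref{pr:ball-of-gradients}, and re-attribute the role of the $\zeta$-parameter accordingly: it is what produces the flatness at the contact point (Lemma~\ref{le:max-constancy} and Corollary~\ref{co:contact-ordering}), with the tolerance for the admissible-pair approximation coming for free once the ball $B_{2\lambda}(\zeta_0)$ is in hand.
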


We will use the standard doubling-of-variables technique with an additional parameter to enforce a
certain facet-like behavior of the functions at a contact point, which will allow us to construct
faceted test functions there.

Let us suppose that the comparison theorem does not hold
for a given subsolution $u$ and supersolution $v$, that is, suppose that
\begin{align}
\label{m_0}
m_0 := \sup_Q [u - v] > 0.
\end{align}

For arbitrary $\zeta \in \Rn$, $\e > 0$ we define
\begin{align*}
\Phi_{\zeta,\e}(x,t,y,s) := u(x,t) - v(y,s) - \frac{\abs{x-y-\zeta}^2}{2\e}-
    S_\e(t,s),
\end{align*}
where
\begin{align}
\label{Se}
S_\e(t,s) := \frac{\abs{t-s}^2}{2\e} + \frac{\e}{T - t}
+\frac\e{T-s}.
\end{align}

As in \cite{GG98ARMA}, we define the maximum of $\Phi_{\zeta,\e}$ as
\begin{align*}
\ell(\zeta, \e) = \max_{\cl Q \times \cl Q} \Phi_{\zeta,\e}
\end{align*}
and
the set of maxima of $\Phi_{\zeta,\e}$
over $\cl Q \times \cl Q$
\begin{align*}
\mathcal A(\zeta,\e) := \argmax_{\cl Q \times \cl Q} \Phi_{\zeta,\e}
    := \set{(x,t,y,s) \in \cl Q \times \cl Q :
    \Phi_{\zeta,\e}(x,t,y,s) = \ell(\zeta,\e)}.
\end{align*}
Moreover, we define
the set of gradients
\begin{align*}
\mathcal B(\zeta,\e) :=
    \set{\frac{x - y -\zeta}{\e} : (x,t,y,s) \in \mathcal A(\zeta,\e)}.
\end{align*}

\begin{proposition}[{cf. \cite{GG98ARMA}}]
\label{pr:maxima-interior}
There exists $\e_0 > 0$ such that
for all $\e \in (0, \e_0)$
we have
\begin{align*}
\mathcal A(\zeta,\e)\subset Q \times Q
\qquad \text{for all $\abs{\zeta} \leq \kappa(\e)$},
\end{align*}
where $\kappa(\e) := \frac 18 (m_0 \e)^{\frac 12}$.
\end{proposition}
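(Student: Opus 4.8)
The plan is to run the classical doubling-of-variables estimate of \cite{GG98ARMA}. The two terms $\frac{\e}{T-t}$ and $\frac{\e}{T-s}$ in \eqref{Se} are there to keep the maxima of $\Phi_{\zeta,\e}$ away from the final time, while the strict gap $m_0>0$ from \eqref{m_0}, the initial ordering $u(\cdot,0)\le v(\cdot,0)$, and the smallness of $\zeta$ keep them away from the initial time. Throughout, $u$ and $v$ are bounded because they equal the constants $c_u\le c_v$ off the compact set $K$, and $u$ is continuous and $v$ is continuous, hence uniformly continuous on $\cl Q$ (with moduli $\omega_u$, $\omega_v$).

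\emph{Step 1 (uniform lower bound and attainment).} Since $m_0>0$, I would fix once and for all a point $(x^*,t^*)\in Q$ with $u(x^*,t^*)-v(x^*,t^*)>m_0/2$. Evaluating on the diagonal and using $|\zeta|\le\kappa(\e)$, so that $\frac{|\zeta|^2}{2\e}\le\frac{m_0}{128}$, gives
\begin{align*}
\ell(\zeta,\e)\ \ge\ \Phi_{\zeta,\e}(x^*,t^*,x^*,t^*) &= u(x^*,t^*)-v(x^*,t^*)-\frac{|\zeta|^2}{2\e}-\frac{2\e}{T-t^*}\\
&\ge \frac{m_0}{2}-\frac{m_0}{128}-\frac{2\e}{T-t^*}.
\end{align*}
Hence there are $\e_0>0$ and $c_1>0$ (say $c_1=m_0/4$) with $\ell(\zeta,\e)>c_1$ for all $\e\in(0,\e_0)$, $|\zeta|\le\kappa(\e)$. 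Moreover $\Phi_{\zeta,\e}$ is upper semicontinuous (sum of the USC functions $u$ and $-v$ with continuous/USC penalty terms), bounded above, and any point with $x,y\notin K$ has $\Phi_{\zeta,\e}\le c_u-c_v\le 0<c_1$; together with the coercivity in $x-y$ this confines maximizing sequences to a bounded set, so the maximum is attained and $\mathcal A(\zeta,\e)\neq\emptyset$.

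\emph{Step 2 (excluding the parabolic boundary).} If $t=T$ or $s=T$ then $\Phi_{\zeta,\e}=-\infty$, which contradicts $\ell(\zeta,\e)>c_1>0$, so every maximizer has $t,s<T$. Suppose now that some $(x_0,0,y_0,s_0)\in\mathcal A(\zeta,\e)$ (the case $s_0=0$ is entirely analogous, using $\omega_u$ in place of $\omega_v$). Comparing the maximal value with the value of $\Phi_{\zeta,\e}$ at $(x_0,0,y_0,0)$ and at $(x_0,0,x_0-\zeta,s_0)$, and using only that $u,v$ are bounded, one gets
\[
\frac{s_0^2}{2\e}\ \le\ v(y_0,0)-v(y_0,s_0),\qquad \frac{|x_0-y_0-\zeta|^2}{2\e}\ \le\ v(x_0-\zeta,s_0)-v(y_0,s_0),
\]
so $s_0$ and $|x_0-y_0-\zeta|$ are $O(\sqrt\e)$; since $|\zeta|\le\kappa(\e)=\tfrac18(m_0\e)^{1/2}$, this yields $|x_0-y_0|+s_0\le \rho_\e$ with $\rho_\e\to0$ as $\e\to0$. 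Then, dropping the nonnegative penalties and using $u(\cdot,0)\le v(\cdot,0)$ and the uniform continuity of $v$,
\begin{align*}
c_1<\ell(\zeta,\e)=\Phi_{\zeta,\e}(x_0,0,y_0,s_0)&\le u(x_0,0)-v(y_0,s_0)\\
&=\bigl(u(x_0,0)-v(x_0,0)\bigr)+\bigl(v(x_0,0)-v(y_0,s_0)\bigr)\le \omega_v(\rho_\e).
\end{align*}
Shrinking $\e_0$ so that $\omega_v(\rho_\e)<c_1$ (and $\omega_u(\rho_\e)<c_1$) for $\e<\e_0$ gives a contradiction; hence no maximizer has $t_0=0$ or $s_0=0$. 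Combined with the first sentence of this step, $\mathcal A(\zeta,\e)\subset Q\times Q$ for all $\e\in(0,\e_0)$ and $|\zeta|\le\kappa(\e)$.

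I expect the exclusion of the initial time (the second half of Step 2) to be the main obstacle: one must extract the smallness $|x_0-y_0|+s_0\to0$ purely from the a priori sup-bounds on $u$ and $v$ and the penalization structure, and only then bring in the initial ordering through uniform continuity. The extra parameter $\zeta$ plays no harmful role — it contributes only the term $|\zeta|\le\kappa(\e)\to0$ — so the whole estimate is uniform over $|\zeta|\le\kappa(\e)$, which is exactly what the statement asserts.
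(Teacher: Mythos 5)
Your Step 1 (the uniform lower bound $\ell(\zeta,\e)>c_1$ for $\e$ small, $|\zeta|\le\kappa(\e)$, and the attainment of the maximum via boundedness off $K$ and coercivity of the penalty) is correct, and the exclusion of $t=T$ or $s=T$ is immediate. The derivation of $s_0+|x_0-y_0|=O(\sqrt\e)$ in Step 2, using only that $u,v$ are bounded and the quadratic penalty, is also correct and uniform in $|\zeta|\le\kappa(\e)$, which is the right observation.

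The gap is the line ``$u$ is continuous and $v$ is continuous, hence uniformly continuous on $\cl Q$ (with moduli $\omega_u$, $\omega_v$)''. By Definition~\ref{def:visc-solution} a subsolution is merely upper semi-continuous and a supersolution merely lower semi-continuous, and the comparison principle must hold at that generality (indeed, in the proof of Theorem~\ref{th:well-posedness} it is applied to the half-relaxed limits $\overline u$ and $\underline u$, which a priori are only USC resp.\ LSC). So the final inequality $v(x_0,0)-v(y_0,s_0)\le\omega_v(\rho_\e)$ is unavailable: for a function that is only LSC the increment $v(x_0,0)-v(y_0,s_0)$ can be large and positive even when $|x_0-y_0|+s_0$ is small. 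The standard repair is to argue by contradiction with a compactness extraction: suppose there are $\e_k\to0$ and $|\zeta_k|\le\kappa(\e_k)$ with a maximizer $(x_k,0,y_k,s_k)$ (say); since $x_k,y_k$ stay in a bounded set and $s_k\to0$, $|x_k-y_k|\to0$, pass to a subsequence $x_k,y_k\to\bar x$ and use only semi-continuity to get $c_1\le\limsup_k[u(x_k,0)-v(y_k,s_k)]\le u(\bar x,0)-v(\bar x,0)\le0$, a contradiction. With that substitution your argument becomes a complete proof of the proposition; without it the last step fails for general USC/LSC sub- and supersolutions.
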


From now on, we \textbf{fix} $\e \in (0, \e_0)$ so that Proposition~\ref{pr:maxima-interior} holds and we
write $\kappa = \kappa(\e)$ for simplicity, and drop $\e$ from our notation.

We have the following properties of $\mathcal A(\zeta)$
and $\mathcal B(\zeta)$.

\begin{proposition}
\label{pr:max-graph}
The graphs of $\mathcal A(\zeta)$ and $\mathcal B(\zeta)$
over $\zeta \in \cl B_\kappa(0)$ are compact.
\end{proposition}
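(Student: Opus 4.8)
The plan is to prove compactness of the graphs by a standard closedness-plus-boundedness argument, exploiting the upper semicontinuity of $u$ and lower semicontinuity of $v$, together with the fact that $u$ and $v$ are constant outside a compact set. First I would observe that, since $u \equiv c_u$ and $v \equiv c_v$ on $(\Rn \setminus K) \times [0,T]$ and $c_u \le c_v$, the supremum $m_0$ in \eqref{m_0} is attained only when $x \in K$ or $y \in K$ (otherwise $u(x,t) - v(y,s) \le 0 < m_0$). Combined with the penalization terms $\abs{x-y-\zeta}^2/(2\e)$ and $\abs{t-s}^2/(2\e)$ in $\Phi_{\zeta,\e}$, and the uniform bound $\abs\zeta \le \kappa$, this forces any maximizer $(x,t,y,s)$ to lie in a fixed bounded set: if $x \in K$ then $\abs{y} \le \abs{x} + \abs\zeta + \e^{1/2}(2(\sup u - \inf v))^{1/2}$ or else $\Phi_{\zeta,\e}$ would be far below $\ell(\zeta,\e) \ge \Phi_{\zeta,\e}$ evaluated at a near-optimal point, and symmetrically if $y \in K$. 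Hence $\bigcup_{\abs\zeta \le \kappa} \mathcal A(\zeta)$ is contained in a fixed compact subset of $\cl Q \times \cl Q$, and consequently $\bigcup_{\abs\zeta\le\kappa}\mathcal B(\zeta)$ is bounded as well since $\mathcal B(\zeta) = \set{(x-y-\zeta)/\e : (x,t,y,s) \in \mathcal A(\zeta)}$.

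Next I would establish closedness of the graph of $\mathcal A$. Suppose $\zeta_j \to \zeta$ in $\cl B_\kappa(0)$ and $(x_j, t_j, y_j, s_j) \in \mathcal A(\zeta_j)$ with $(x_j,t_j,y_j,s_j) \to (x,t,y,s)$; by the boundedness just established and Proposition~\ref{pr:maxima-interior} (which keeps the maximizers in the open set $Q \times Q$, so the blow-up term $\e/(T-t) + \e/(T-s)$ stays bounded near the limit) we may pass to such a convergent subsequence. The function $(x,t,y,s,\zeta) \mapsto \Phi_{\zeta,\e}(x,t,y,s)$ is upper semicontinuous, being the sum of the u.s.c.\ function $u(x,t) - v(y,s)$ and the continuous penalization $-\abs{x-y-\zeta}^2/(2\e) - S_\e(t,s)$. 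Therefore $\ell(\zeta_j, \e) = \Phi_{\zeta_j,\e}(x_j,t_j,y_j,s_j) \le \Phi_{\zeta,\e}(x,t,y,s) + o(1)$ in the appropriate sense; more precisely, $\limsup_j \ell(\zeta_j,\e) \le \Phi_{\zeta,\e}(x,t,y,s) \le \ell(\zeta,\e)$. For the reverse inequality I would use lower semicontinuity of $\zeta \mapsto \ell(\zeta,\e)$: for any competitor $(x^*,t^*,y^*,s^*)$, $\ell(\zeta_j,\e) \ge \Phi_{\zeta_j,\e}(x^*,t^*,y^*,s^*) \to \Phi_{\zeta,\e}(x^*,t^*,y^*,s^*)$, so $\liminf_j \ell(\zeta_j,\e) \ge \ell(\zeta,\e)$. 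Combining, $\ell(\zeta_j,\e) \to \ell(\zeta,\e)$ and $\Phi_{\zeta,\e}(x,t,y,s) = \ell(\zeta,\e)$, i.e.\ $(x,t,y,s) \in \mathcal A(\zeta)$. This proves the graph of $\mathcal A$ over $\cl B_\kappa(0)$ is closed, and being a closed subset of a compact set, it is compact.

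Finally, the graph of $\mathcal B$ over $\cl B_\kappa(0)$ is the image of the graph of $\mathcal A$ under the continuous map $(\zeta, x, t, y, s) \mapsto (\zeta, (x-y-\zeta)/\e)$, hence it is compact as the continuous image of a compact set. The main obstacle here is bookkeeping with the semicontinuity directions: one must be careful that $u$ being only u.s.c.\ and $v$ only l.s.c.\ still yields joint upper semicontinuity of $\Phi_{\zeta,\e}$ in all variables including $\zeta$, and that the singular term $\e/(T-t)+\e/(T-s)$ does not cause trouble — this is exactly where Proposition~\ref{pr:maxima-interior} is used to keep $t, s$ bounded away from $T$ uniformly for $\abs\zeta \le \kappa$, so no compactness is lost at the terminal time. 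Everything else is routine.
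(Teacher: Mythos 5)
Your proof is correct and takes essentially the same route as the paper's (which largely defers to \cite{GG98ARMA}*{Proposition~7.3}): closedness of $\graph \mathcal A$ from upper semicontinuity of $\Phi_\zeta$ in all variables and lower semicontinuity of $\zeta \mapsto \ell(\zeta)$, then $\graph\mathcal B$ as a continuous image. You add an explicit boundedness step for $\graph\mathcal A$ using the constancy of $u,v$ outside $K$ together with the quadratic penalization; this is indeed required here since $\cl Q = \Rn\times[0,T]$ is unbounded, a point the paper leaves to the cited reference when it passes from ``closed'' to ``compact.''
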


\begin{proof}
See \cite[Proposition 7.3]{GG98ARMA}.
Since $\Phi_\zeta - \ell(\zeta) \leq 0$
by definition of $\ell$, we observe that
\begin{align*}
\graph \mathcal A(\zeta) &:=
\{(\zeta, x, t, y, s) \subset \cl B_\kappa(0) \times \cl Q
\times \cl Q:\\
&\qquad\qquad\Phi_\zeta(x,t,y,s) - \ell(\zeta) \geq 0\},
\end{align*}
which is closed since $\Phi_\zeta$
is an upper semi-continuous function of $(\zeta, x, t, y, s)$
and $\ell(\zeta)$ is a lower semi-continuous function.
$\graph \mathcal B(\zeta)$ is a continuous image of
$\graph \mathcal A(\zeta)$ and therefore also compact.
\end{proof}

\begin{proposition}
\label{pr:ball-of-gradients}
With $\kappa = \kappa(\e)$ fixed above, there exists a maximal relatively open convex set $\Xi
\subset \Rn$ on which $\partial W$ is constant, $\zeta_0 \in \Rn$ and $\lambda > 0$
such that $\abs{\zeta_0} + 2\lambda < \kappa$ and
\begin{align*}
\mathcal B(\zeta) \cap \Xi \neq \emptyset \qquad \text{for all } \zeta \in B_{2\lambda}(\zeta_0).
\end{align*}
Moreover, $\aff \Xi \perp \aff \partial W(p)$ for all $p \in \Xi$.

In other words,
for every $\zeta \in B_{2\lambda}(\zeta_0)$ there exists
a point of maximum $(\hat x, \hat t, \hat y, \hat s)
\in \mathcal A(\zeta)$ of $\Phi_\zeta$
such that
\begin{align}
\label{grad-in-Xi}
\frac{\hat x - \hat y - \zeta}{\e} \in \Xi.
\end{align}
\end{proposition}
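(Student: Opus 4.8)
The idea is to combine the feature decomposition (Proposition~\ref{pr:feature-decomposition}) with a Baire category argument that relies on the compactness of $\graph\mathcal B$ from Proposition~\ref{pr:max-graph}. Recall that $\Rn = \bigcup_{i\in\mathcal N}\Xi_i$ is a finite partition into relatively open convex sets on which $\partial W$ is constant, with $\aff\Xi_i\perp\aff\partial W(p)$ for $p\in\Xi_i$. For $i\in\mathcal N$ put $C_i := \set{\zeta\in\cl B_{\kappa/2}(0): \mathcal B(\zeta)\cap\cl\Xi_i\neq\emptyset}$, using the \emph{closures} $\cl\Xi_i$ so that each $C_i$ is \emph{closed}: if $\zeta_m\to\zeta$ and $p_m\in\mathcal B(\zeta_m)\cap\cl\Xi_i$, then by compactness of $\graph\mathcal B$ over $\cl B_\kappa(0)$ a subsequence of $(\zeta_m,p_m)$ converges to some $(\zeta,p)$ with $p\in\mathcal B(\zeta)\cap\cl\Xi_i$, so $\zeta\in C_i$. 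Since each $\mathcal A(\zeta)$, hence $\mathcal B(\zeta)$, is nonempty for $\abs\zeta\le\kappa$ (the maximum $\ell(\zeta,\e)$ is attained, as is implicit in Proposition~\ref{pr:maxima-interior}), the finite family $\set{C_i}$ covers $\cl B_{\kappa/2}(0)$, so by Baire at least one $C_i$ has nonempty interior. Among all such $i$, choose $i^*$ with $\dim\Xi_{i^*}$ minimal, set $\Xi := \Xi_{i^*}$, and pick a ball $B := B_{2\lambda}(\zeta_0)$ with $\cl B\subset\interior C_{i^*}\cap B_{\kappa/2}(0)$, so in particular $\abs{\zeta_0}+2\lambda<\kappa$.

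Next I would record the geometric fact that the relative boundary $\cl\Xi\setminus\Xi$ is contained in a finite union $\bigcup_{j\in J}\Xi_j$ with $\dim\Xi_j<\dim\Xi$ for every $j\in J$. Indeed, if $p\in\Xi_j\cap\cl\Xi$ and $q\in\Xi$, then $[q,p)\subset\Xi$ (the segment from a relative-interior point to a closure point of a convex set stays in the relative interior), so $\partial W\equiv\partial W(q)$ on $[q,p)$; upper semicontinuity of the subdifferential then gives $\partial W(q)\subset\partial W(p)$, and since $p,q$ lie in distinct pieces this inclusion is strict. As $\partial W(q)$ and $\partial W(p)$ are faces of the bounded polyhedron $\partial W(0)$, a strict inclusion of faces drops the dimension, and together with the normal-cone identity $\dim\Xi_i=n-\dim\partial W(p_i)$ for $p_i\in\Xi_i$ this yields $\dim\Xi_j<\dim\Xi$.

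It remains to upgrade $\cl\Xi$ to $\Xi$ on a smaller ball. Split $B = B_\Xi\sqcup B_\partial$ with $B_\Xi := \set{\zeta\in B: \mathcal B(\zeta)\cap\Xi\neq\emptyset}$. For $\zeta\in B_\partial\subset C_{i^*}$ the set $\mathcal B(\zeta)$ meets $\cl\Xi\setminus\Xi\subset\bigcup_{j\in J}\Xi_j$, so $\zeta$ lies in the closed set $C_J := \bigcup_{j\in J}C_j$. Hence $B\setminus C_J\subset B_\Xi$. If $B_\Xi$ had empty interior, then $B\setminus C_J$ would have empty interior, so the closed set $C_J$ would contain $B$, and by Baire some $C_j$ with $j\in J$ would have nonempty interior — contradicting the minimality of $\dim\Xi_{i^*}$, since $\dim\Xi_j<\dim\Xi_{i^*}$. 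Therefore $B_\Xi$ contains a ball $B_{2\lambda'}(\zeta_0')$; replacing $(\zeta_0,\lambda)$ by $(\zeta_0',\lambda')$ gives $\mathcal B(\zeta)\cap\Xi\neq\emptyset$ for all $\zeta\in B_{2\lambda}(\zeta_0)$ (still with $\abs{\zeta_0}+2\lambda<\kappa$), the orthogonality being part of Proposition~\ref{pr:feature-decomposition}, and \eqref{grad-in-Xi} is just the definition of $\mathcal B$.

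The main obstacle is exactly this final upgrade from the closed pieces $\cl\Xi_i$ to the relatively open pieces $\Xi_i$: using $\cl\Xi_i$ is what makes $C_i$ closed and Baire applicable, but the construction of a single stratified faceted test function near the contact point genuinely requires a ball of parameters $\zeta$ whose associated gradient lies in one relatively open feature $\Xi$, on which $\partial W$ is truly constant. The minimal-dimension choice of $\Xi_{i^*}$, combined with the fact that the relative boundary of a feature is a finite union of strictly lower-dimensional features, is what closes this gap.
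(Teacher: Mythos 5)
Your argument is correct, but it takes a genuinely different and more involved route than the paper. The paper handles the relatively-open-versus-closed tension much more directly: instead of using the closures $\cl\Xi_i$, it exhausts each relatively open $\Xi_i$ by an increasing sequence of \emph{compact} subsets $K_{i,j}\subset\Xi_i$, and sets $Z_{i,j}:=\set{\zeta\in\cl B_\kappa(0):\mathcal B(\zeta)\cap K_{i,j}\neq\emptyset}$. The compactness of $K_{i,j}$ together with the compactness of $\graph\mathcal B$ makes each $Z_{i,j}$ closed, and the countable family $\set{Z_{i,j}}_{i\in\mathcal N,\,j\in\N}$ still covers $\cl B_\kappa(0)$, so Baire applies directly; the ball found inside some $Z_{i_0,j_0}$ already produces gradients in $\Xi_{i_0}$ itself (since $K_{i_0,j_0}\subset\Xi_{i_0}$), with no further upgrade step required. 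By contrast, your use of $\cl\Xi_i$ forces the additional machinery: the description of the relative boundary $\cl\Xi\setminus\Xi$ as a finite union of strictly lower-dimensional features, the minimal-dimension selection of $\Xi_{i^*}$, and a second Baire argument. That structure is valid, but one small ingredient could be shored up: the ``normal-cone identity'' $\dim\Xi_i=n-\dim\partial W(p_i)$ requires $\partial W(0)$ to be full-dimensional (equivalently $W(p)>0$ for $p\neq0$), which holds in the intended application but is not assumed in the proposition as stated; a cleaner justification of the dimension drop goes through the face lattice of $\epi W$ directly (Rockafellar, Theorems~18.1--18.2): if $\hat\Xi_j\cap\cl\hat\Xi_i\neq\emptyset$ and $i\neq j$, then the face containing $\hat\Xi_j$ is a proper face of the one containing $\hat\Xi_i$, hence has strictly smaller dimension, and the projection to $\Rn$ preserves dimensions. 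Once that detail is fixed, your proof is complete; but the paper's compact-exhaustion trick is worth internalizing, as it removes the whole difficulty in one stroke.
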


\begin{proof}
Recall the decomposition of $\Rn$ from Proposition~\ref{pr:feature-decomposition} into relatively open convex sets $\Xi_i$, $i \in \mathcal N$.
Moreover, since $\Xi_i$ is relatively open
we can find an increasing sequence of compact sets $K_{i,j} \subset \Xi_i$
such that
\begin{align*}
\Xi_i = \bigcup_{j\in\N} K_{i,j}.
\end{align*}
Let us now define the sets
\begin{align*}
Z_{i,j} := \set{\zeta \in \cl B_\kappa(0) : K_{i,j} \cap \mathcal B(\zeta)
\neq \emptyset}.
\end{align*}
We observe that $Z_{i,j}$ are compact due to Proposition~\ref{pr:max-graph}.
Since
\begin{align*}
\cl B_\kappa(0) = \bigcup_{i\in\mathcal N} \bigcup_{j\in\N} Z_{i,j},
\end{align*}
the Baire category theorem implies that
there exists $i_0 \in \mathcal N$, $j_0 \in \N$
such that $\interior Z_{i_0, j_0} \neq \emptyset$.
In particular, we can find $\zeta_0$ and $\lambda > 0$
with $B_{2\lambda}(\zeta_0) \subset Z_{i_0,j_0}$, and we set $\Xi = \Xi_{i_0}$.
Note that $\Xi$ is maximal by Proposition~\ref{pr:feature-decomposition}.
\end{proof}

\subsection{Flatness at a contact point}

We will now use the information about the behavior of $u$ and $v$
at the point of maximum to show that there is enough space to construct
faceted test functions.
We shall use the Constancy lemma from \cite{GG98ARMA}.

\begin{lemma}[Constancy lemma]
\label{le:constancy}
Let $1\leq k < N$, $K \subset \R^N$ be compact and $G \subset \R^k$
be a bounded domain.
Denote $P: \R^N \to \R^k$ the natural projection
$w \mapsto (w_1, \ldots, w_k)$.
Assume that $h$ is an upper semi-continuous function and $\phi\in C^2(\R^k)$,
and define for $w \in K$ and $z \in G$
\begin{align*}
h_z(w) &:= h(w) - \phi(Pw - z),\\
H(z) &:= \max_K h_z.
\end{align*}
If for all $z\in G$ there exists $w \in K$ such that
$h_z(w) = H(z)$ and $(\nabla \phi)(P w - z) = 0$
then $H(z)$ is constant on $G$.
\end{lemma}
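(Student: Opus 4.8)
The plan is to argue that $H$ is locally constant on $G$; since $G$ is connected (being a domain), this yields that $H$ is constant. So fix $z_0 \in G$ and I will show $H$ is constant in a neighborhood of $z_0$. The key geometric idea is that the hypothesis says: at every $z$, some maximizer $w$ of $h_z$ over $K$ is also a critical point of the shifted part $\phi(P\cdot - z)$, i.e. $(\nabla\phi)(Pw - z) = 0$. Writing $H(z) = h_z(w) = h(w) - \phi(Pw - z)$ at such a $w$, the vanishing of $\nabla\phi$ there makes $z \mapsto \phi(Pw - z)$ stationary, which should make $H$ stationary too. The technical device to make this precise is to bound the one-sided difference quotients of $H$ and conclude $H$ is (locally) Lipschitz with zero gradient a.e., or more directly to show $H$ is both upper- and lower-semicontinuous with a local Lipschitz-type estimate that forces $\nabla H = 0$.

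Concretely, first I would record two elementary facts. (1) $H$ is finite and upper semi-continuous on $G$: for fixed $z$, $h_z$ is upper semi-continuous on the compact set $K$, hence attains its max, and $H$ is an infimum-type envelope... more carefully, $H(z) = \max_{w \in K}(h(w) - \phi(Pw-z))$ is a supremum of functions continuous in $z$ (each $w$ fixed), hence lower semi-continuous; combined with the compactness of $K$ and continuity of $\phi$ one gets $H$ continuous, in fact locally Lipschitz since $\nabla\phi$ is locally bounded: $|H(z_1) - H(z_2)| \le \sup_{w\in K}|\phi(Pw-z_1) - \phi(Pw-z_2)| \le C|z_1 - z_2|$ where $C$ bounds $|\nabla\phi|$ on the relevant compact set. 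So $H$ is locally Lipschitz, hence differentiable a.e. by Rademacher. (2) By the hypothesis, for each $z$ pick $w(z) \in K$ with $h_{z}(w(z)) = H(z)$ and $(\nabla\phi)(Pw(z) - z) = 0$.

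Next, for $z$ a point of differentiability of $H$ and any unit vector $e \in \R^k$, I estimate $H(z + te)$ for small $t$. On one hand, using $w(z)$ as a competitor,
\begin{align*}
H(z + te) \ge h(w(z)) - \phi(Pw(z) - z - te) = H(z) + \phi(Pw(z)-z) - \phi(Pw(z)-z-te).
\end{align*}
Since $(\nabla\phi)(Pw(z)-z) = 0$, a Taylor expansion of $\phi$ (which is $C^2$) gives $\phi(Pw(z)-z) - \phi(Pw(z)-z-te) = O(t^2)$, so $H(z+te) \ge H(z) - C't^2$. Symmetrically, using $w(z+te)$ as a competitor for $H(z)$ and $(\nabla\phi)(Pw(z+te) - z - te) = 0$, we get $H(z) \ge H(z+te) - C't^2$ with $C'$ uniform on a compact neighborhood (using the uniform bound on $\nabla^2\phi$ over the compact set $P(K) - \overline{B_r(z_0)}$). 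Combining, $|H(z+te) - H(z)| \le C' t^2$, so the directional derivative of $H$ at $z$ in every direction is $0$, hence $\nabla H(z) = 0$. Since this holds at a.e.\ $z$ and $H$ is locally Lipschitz, $H$ is locally constant, and by connectedness of $G$, constant on all of $G$.

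**Main obstacle.** The delicate point is the uniformity of the $O(t^2)$ remainder: one must ensure the maximizers $w(z)$ (and $w(z+te)$) stay in a fixed compact set so that $Pw(z) - z$ ranges over a compact set on which $\nabla^2\phi$ is bounded — this is immediate since $w(z) \in K$ with $K$ compact and $z$ near $z_0$. A secondary subtlety is justifying that $H$ is genuinely locally Lipschitz (not merely that difference quotients vanish at differentiability points); the estimate $|H(z_1)-H(z_2)| \le \sup_{w\in K}|\phi(Pw-z_1)-\phi(Pw-z_2)|$ handles this cleanly. Everything else is routine: compactness of $K$, continuity of $\phi$, and connectedness of the domain $G$.
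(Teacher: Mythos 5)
The paper does not prove Lemma~\ref{le:constancy}; it is cited verbatim from \cite{GG98ARMA}, so there is no in-paper proof to compare against. Your argument is nevertheless correct and is the natural one for this type of statement: the two competitor estimates
\begin{align*}
H(z+te) &\geq h(w(z)) - \phi(Pw(z)-z-te),\\
H(z) &\geq h(w(z+te)) - \phi(Pw(z+te)-z),
\end{align*}
combined with the vanishing of $\nabla\phi$ at $Pw(z)-z$ and at $Pw(z+te)-(z+te)$ respectively, and the uniform bound on $\nabla^2\phi$ over the compact set $P(K)-\cl{B_r(z_0)}$, yield $\abs{H(z+te)-H(z)}\le C' t^2$ for all $z$ near $z_0$ and all small $t$. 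Your preliminary Lipschitz estimate for $H$ is also correct.

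One remark: the invocation of Rademacher's theorem is unnecessary. The two-sided bound $\abs{H(z+te)-H(z)}\leq C' t^2$ is uniform in $z$ (not only at differentiability points), so a telescoping argument along the segment from $z_1$ to $z_2$, partitioned into $n$ equal pieces, gives $\abs{H(z_1)-H(z_2)}\leq C' \abs{z_1-z_2}^2/n \to 0$, hence $H$ is locally constant and, by connectedness of the domain $G$, constant. This avoids both the Lipschitz-regularity step and a.e.\ differentiability; otherwise your argument is complete and correct.
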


In what follows, we will decompose $\R^n$ into two orthogonal subspaces $V$ and $U$, as in
Section~\ref{sec:slicing of W}, of dimensions $k = \dim V$ and $n - k = \dim U$.  Therefore we will
use $\TT$, $\TT_U$, $\TT_V$, and the decomposition $x = \TT(x', x'')$ as introduced in
\eqref{rotation}, \eqref{rotation'} and \eqref{rotationUV}.
\begin{lemma}
\label{le:max-constancy}
Suppose that there exist $p_0, \zeta_0 \in \Rn$, a subspace $U \subset \Rn$ and $\lambda > 0$
such that $\abs{\zeta_0} + 2\lambda < \kappa$
and for all $\zeta \in B_{2\lambda}(\zeta_0)$
we have
\begin{align*}
\mathcal B(\zeta) \cap (p_0 + U) \neq \emptyset.
\end{align*}
Then
\begin{align*}
\ell(\zeta) - p_0 \cdot \zeta = const
\quad \text{for } \zeta \in (\zeta_0 + V) \cap B_{2\lambda}(\zeta_0),
\end{align*}
where $V := U^\perp$.
\end{lemma}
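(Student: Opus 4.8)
The statement is an immediate application of the Constancy lemma (Lemma~\ref{le:constancy}), so the plan is to set up the objective function in the right coordinates and verify the hypotheses. Fix $\zeta \in (\zeta_0 + V) \cap B_{2\lambda}(\zeta_0)$. The idea is to view $\ell(\zeta)$ as a function of the $V$-component of $\zeta$ only, by absorbing the $U$-component into the quadratic penalty. Concretely, write $\zeta = \TT(\zeta', \zeta'')$; since $U = \TT(\set0 \times \R^{n-k})$ and $V = \TT(\R^k \times \set0)$, restricting $\zeta$ to $\zeta_0 + V$ means holding $\zeta'' = \zeta_0''$ fixed and letting $\zeta'$ vary in a ball around $\zeta_0'$ in $\R^k$.

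First I would recast $\Phi_\zeta$ in a form suitable for Lemma~\ref{le:constancy}. Complete the square in the penalty term: with $r := x - y$ fixed as the new variable in place of $y$ (equivalently, parametrize the domain by $(x,t,r,s)$ via $y = x - r$), we have $\Phi_\zeta(x,t,y,s) = h(x,t,r,s) - \frac{\abs{r - \zeta}^2}{2\e}$, where $h(x,t,r,s) := u(x,t) - v(x-r,s) - S_\e(t,s)$ is upper semi-continuous and does not depend on $\zeta$. Since the maxima lie in a fixed compact set by Proposition~\ref{pr:maxima-interior} and the boundedness assumptions on $u,v$, we can work on a fixed compact set $K$ in the $(x,t,r,s)$-variables. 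Now apply Lemma~\ref{le:constancy} with $N$ the dimension of the $(x,t,r,s)$-space, the projection $P$ picking out the $r'$-component (the $\R^k$-part of $r = \TT(r',r'')$), and $\phi(\eta) = \frac{|\eta|^2}{2\e}$ on $\R^k$, after subtracting off the part of the penalty that depends only on $r''$ and $\zeta''$ (which is constant on the slice since $\zeta'' \equiv \zeta_0''$). The remaining penalty is $\frac{\abs{r' - \zeta'}^2}{2\e} + (\text{const})$, so $H(\zeta') = \ell(\zeta) + (\text{const depending only on } \zeta_0'')$.

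The key step is verifying the hypothesis of Lemma~\ref{le:constancy}: for every $\zeta'$ in the relevant ball there must exist a maximizer $(x,t,r,s)$ of $h_{\zeta'}$ with $(\nabla \phi)(P(x,t,r,s) - \zeta') = \frac{r' - \zeta'}{\e} = 0$, i.e. $r' = \zeta'$. This is exactly where the hypothesis $\mathcal B(\zeta) \cap (p_0 + U) \neq \emptyset$ enters: it gives a point of maximum of $\Phi_\zeta$ with $\frac{x - y - \zeta}{\e} = \frac{r - \zeta}{\e} \in p_0 + U$, hence $(r - \zeta)'' = p_0'' $ and, crucially, $(r - \zeta)' = 0$ because $U = \TT(\set0 \times \R^{n-k})$ means the $\R^k$-component of any vector in $p_0 + U$ is $p_0'$... wait, I need $p_0 \in U$; actually the hypothesis as stated uses $p_0 + U$, so $(r-\zeta)' = p_0'$ is constant but not necessarily zero. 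This is handled by translating: replace $\phi$ by $\phi(\eta) = \frac{|\eta - \e p_0'|^2}{2\e}$, whose gradient vanishes precisely when $r' - \zeta' = \e p_0'$, matching the condition from $\mathcal B(\zeta) \cap (p_0+U) \neq \emptyset$; this only shifts $H$ by an affine function of $\zeta'$. Lemma~\ref{le:constancy} then yields $H \equiv \text{const}$ on $B_{2\lambda}(\zeta_0') \subset \R^k$, and unwinding the definitions gives $\ell(\zeta) - (\text{affine in } \zeta') = \text{const}$, i.e. $\ell(\zeta) - p_0 \cdot \zeta = \text{const}$ on $(\zeta_0 + V) \cap B_{2\lambda}(\zeta_0)$, using that $p_0 \cdot \zeta = p_0' \cdot \zeta' + p_0'' \cdot \zeta''$ and $\zeta''$ is fixed on the slice.

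The main obstacle is bookkeeping rather than conceptual: one must be careful that the change of variables $(x,t,y,s) \mapsto (x,t,r,s)$ is a linear isometry-compatible reparametrization so that maxima correspond, that the compact set $K$ can be chosen uniformly in $\zeta$ (which follows from Proposition~\ref{pr:maxima-interior} together with the $c_u, c_v$ cutoff hypotheses bounding where $u - v$ can be large), and that the affine correction terms are tracked correctly so the final identity is precisely $\ell(\zeta) - p_0\cdot\zeta = \text{const}$ and not merely affine-plus-constant. Once the setup is correct, the conclusion is purely a citation of Lemma~\ref{le:constancy}.
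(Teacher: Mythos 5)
Your overall route — reparametrize via $r = x - y$, project onto the $V$-component $r'$, and invoke the Constancy lemma (Lemma~\ref{le:constancy}) — is the same as the paper's. The problem is the step where you handle $p_0' \neq 0$ by translating the quadratic: replacing $\phi(\eta) = \frac{|\eta|^2}{2\e}$ by $\phi(\eta) = \frac{|\eta - \e p_0'|^2}{2\e}$ \emph{without also changing $h$} breaks the argument. After this replacement $h_z(w) = h(w) - \phi(r'-z)$ is no longer $\Phi_{\TT(z,\zeta_0'')}(w)$ plus a $z$-dependent constant; completing the square shows $h_z(w) = \Phi_{\TT(z+\e p_0',\zeta_0'')}(w)$, i.e.\ the argument of $\Phi$ has been shifted. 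The maximizers of $h_z$ over $K$ are therefore elements of $\mathcal A(\TT(z+\e p_0', \zeta_0''))$, and applying the hypothesis $\mathcal B(\cdot)\cap(p_0+U)\neq\emptyset$ at this shifted $\zeta$ yields $r' - z - \e p_0' = \e p_0'$, so $\nabla\phi(r'-z) = \nabla\phi(2\e p_0') = p_0' \neq 0$: the hypothesis of the Constancy lemma is \emph{not} verified. Relatedly, the claim that the modification ``only shifts $H$ by an affine function of $\zeta'$'' is false: one obtains $H(z) = \ell(\TT(z + \e p_0', \zeta_0''))$, which is the original $H$ with a translated argument, not an affine correction.

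The paper avoids this by inserting the $p_0'$-linear term into \emph{both} $h$ and $\phi$: they take $h$ containing an extra $-p_0'\cdot\xi'$ and $\phi(\eta) = \frac{|\eta-\zeta_0'|^2}{2\e} - p_0'\cdot\eta$. The two linear terms cancel in $h_z(w) = h(w)-\phi(\xi'-z)$, so that $h_z = \Phi_{\zeta_0 + \TT_V z} - p_0'\cdot z$ and hence $H(z) = \ell(\zeta_0 + \TT_V z) - p_0'\cdot z$, while the linear part survives in $\nabla\phi(\eta) = \frac{\eta-\zeta_0'}{\e} - p_0'$, which is precisely what vanishes at the maximizer supplied by $\mathcal B(\zeta)\cap(p_0+U)\neq\emptyset$. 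The lemma is stated for general $p_0$ (the paper only uses $p_0'=0$ downstream, but deliberately keeps general $p_0'$ to cover non-homogeneous polyhedral $W$), so this gap cannot be dismissed. As a minor side note, $\frac{r-\zeta}{\e}\in p_0+U$ gives $(r-\zeta)'=\e p_0'$ (note the factor $\e$) and leaves $(r-\zeta)''$ unconstrained, rather than forcing $(r-\zeta)''=p_0''$ as you wrote.
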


\begin{proof}
We apply the constancy lemma, Lemma~\ref{le:constancy}.  Set $k = \dim V$ and $N = 2 (n+1)$.  We
will denote $w = (\xi', \xi'', t, y, s)$ for $\xi, y \in \Rn$, $t, s \in \R$, so that the
natural projection $P:\R^N \to \R^k$ is given as $Pw = \xi'$ (for the notation $'$ and $''$ see
\eqref{rotation'}).  Additionally, let us set
\begin{align*}
K = \set{(\xi', \xi'', t, y, s) : (\xi + y,t) \in \cl Q,
\ (y,s) \in \cl Q}
\quad \text{and} \quad G = B^d_{2\lambda}(0).
\end{align*}
And, finally, let us define the functions
\begin{align*}
h(w) &= u(\xi + y) - v(y) - p_0' \cdot \xi'
- \frac{\abs{\rho'' -  \zeta_0''}^2}{2\e} - S(t,s), &&w \in \R^N,\\
\phi(\eta) &= \frac{\abs{\eta - \zeta_0'}^2}{2\e} - p_0' \cdot \eta, \qquad &&\eta \in \R^k.
\end{align*}
The Pythagorean theorem $|\xi' - \zeta_0' -z|^2 + |\xi'' - \zeta_0''|^2 = |\xi - \zeta_0 - \TT_V
z|^2$, due to the fact that $\TT$ is a rotation, yields
\begin{align*}
h_z(w) := h(w) - \phi(\xi' - z) = \Phi_{\zeta_0 + \TT_V z} (\xi + y, y) - p_0' \cdot z.
\end{align*}
We know, by assumption, that for every $\zeta = \zeta_0 + \TT_V z$,
$z \in G$,
there exists a point of maximum $(x,t,y,s) \in \mathcal A(\zeta)$ of $\Phi_{\zeta}$
such that
$\frac{x - y -\zeta}{\e} \in p_0 + U$. This yields $\frac{x' - y' - z -\zeta_0'}{\e} = p_0'$. In
particular,
\begin{align*}
(\nabla \phi)(x' - y' - z) = 0.
\end{align*}
Thus, by Lemma~\ref{le:constancy}, we infer that
\begin{align*}
H(z) = \max_K h_z = \ell(\zeta_0 + \TT_V z) - p_0' \cdot z
\end{align*}
is constant for $z \in G$, which is what we wanted to prove
since $p_0 \cdot \zeta = p_0' \cdot z + p_0 \cdot \zeta_0$.
\end{proof}

The previous lemma has the following important corollary.

\begin{corollary}
\label{co:contact-ordering}
Suppose that we have $p_0$, $\zeta_0$, $\lambda$, $U$ and $V$
as in Lemma~\ref{le:max-constancy}.
Define
\begin{align*}
\ta(x,t,y,s) := u(x,t) - v(y,s) - \frac{\abs{x'' - y''- \zeta_0''}^2}{2\e}
- p_0' \cdot (x' - y'-\zeta_0') - S(t,s).
\end{align*}
Then for any $(\hat x,\hat t,\hat y,\hat s) \in \mathcal A(\zeta_0)$
such that $\frac{\hat x' - \hat y' - \zeta_0'}{\e} = p_0'$
we have
\begin{align*}
\ta(x,t,y,s) \leq \ta(\hat x, \hat t, \hat y,\hat s) \quad \text{for } (x,t),(y,s) \in \cl Q,
\ \abs{x' - y' - (\hat x' - \hat y')} \leq \lambda.
\end{align*}
\end{corollary}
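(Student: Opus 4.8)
The plan is to read off the desired inequality directly from the constancy statement of Lemma~\ref{le:max-constancy} combined with the defining property $\Phi_\zeta \leq \ell(\zeta)$ on $\cl Q \times \cl Q$, after rewriting everything in the sliced coordinates $x = \TT(x', x'')$.

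First I would record the elementary consequences of $\TT$ being a rotation. For $\zeta = \zeta_0 + \TT_V z$, $z \in \R^k$, we have $\zeta' = \zeta_0' + z$ and $\zeta'' = \zeta_0''$, hence
\[
\abs{x - y - \zeta}^2 = \abs{x' - y' - \zeta_0' - z}^2 + \abs{x'' - y'' - \zeta_0''}^2,
\]
and $p_0 \cdot \zeta = p_0 \cdot \zeta_0 + p_0' \cdot z$, exactly as in the proof of Lemma~\ref{le:max-constancy}. Consequently, for $\abs z < 2\lambda$ that lemma gives $\ell(\zeta_0 + \TT_V z) = \ell(\zeta_0) + p_0' \cdot z$, and substituting into $\Phi_{\zeta_0 + \TT_V z}(x,t,y,s) \leq \ell(\zeta_0 + \TT_V z)$ and rearranging by the definition of $\ta$ yields
\[
\ta(x,t,y,s) \leq \ell(\zeta_0) + p_0' \cdot z - p_0' \cdot (x' - y' - \zeta_0') + \frac{\abs{x' - y' - \zeta_0' - z}^2}{2\e}
\]
for all $(x,t), (y,s) \in \cl Q$ and all $\abs z < 2\lambda$.

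Next I would optimize the right-hand side in $z$. The key point is that the hypothesis $\frac{\hat x' - \hat y' - \zeta_0'}{\e} = p_0'$ means $\hat x' - \hat y' - \zeta_0' = \e p_0'$, so the choice $z := x' - y' - (\hat x' - \hat y')$ is admissible precisely when $\abs{x' - y' - (\hat x' - \hat y')} \leq \lambda$ (since then $\abs z \leq \lambda < 2\lambda$), which is our standing assumption on $(x,y)$; and for this $z$ one has $x' - y' - \zeta_0' - z = \e p_0'$, so the three $z$-dependent terms collapse and the bound becomes simply $\ta(x,t,y,s) \leq \ell(\zeta_0) - \frac{\e}{2}\abs{p_0'}^2$.

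Finally I would evaluate $\ta$ at $(\hat x, \hat t, \hat y, \hat s)$. Since this point lies in $\mathcal A(\zeta_0)$, expanding $\Phi_{\zeta_0}(\hat x, \hat t, \hat y, \hat s) = \ell(\zeta_0)$ in the sliced coordinates and using $\abs{\hat x' - \hat y' - \zeta_0'}^2 = \e^2 \abs{p_0'}^2$ gives
\[
u(\hat x, \hat t) - v(\hat y, \hat s) - \frac{\abs{\hat x'' - \hat y'' - \zeta_0''}^2}{2\e} - S(\hat t, \hat s) = \ell(\zeta_0) + \frac{\e}{2}\abs{p_0'}^2,
\]
and since $p_0' \cdot (\hat x' - \hat y' - \zeta_0') = \e \abs{p_0'}^2$ this gives $\ta(\hat x, \hat t, \hat y, \hat s) = \ell(\zeta_0) - \frac{\e}{2}\abs{p_0'}^2$. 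Comparing with the previous bound yields $\ta(x,t,y,s) \leq \ta(\hat x, \hat t, \hat y, \hat s)$ on the stated range. I do not expect a genuine obstacle here: every step is an explicit computation, and the only care required is to keep the primed and double-primed coordinates straight and to notice that the admissibility constraint $\abs z < 2\lambda$ is exactly what the assumption $\abs{x' - y' - (\hat x' - \hat y')} \leq \lambda$ provides.
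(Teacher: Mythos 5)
Your proof is correct and follows essentially the same route as the paper: both choose $\zeta = \zeta_0 + \TT_V\bigl(x' - y' - (\hat x' - \hat y')\bigr)$, invoke the constancy $\ell(\zeta) - p_0 \cdot \zeta = \ell(\zeta_0) - p_0 \cdot \zeta_0$ from Lemma~\ref{le:max-constancy}, and substitute into $\Phi_\zeta \leq \ell(\zeta)$. The only cosmetic difference is that you compute the common value $\ell(\zeta_0) - \tfrac{\e}{2}\abs{p_0'}^2$ on both sides of the desired inequality, while the paper expresses $\ta$ directly in terms of $\Phi_\zeta$ and shows the residual quadratic and linear terms cancel; these are the same algebraic manipulation written in a different order.
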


\begin{proof}
For the sake of clarity, we will drop $t,s, \hat t$ and $\hat s$ from the following formulas.
Let us fix $x, y, \hat x, \hat y$ that satisfy the assumptions
and set
\begin{align}
\label{choice-of-zeta}
\zeta = \zeta_0 + \TT_V(x' - y' - (\hat x' - \hat y')).
\end{align}
Since $|\zeta - \zeta_0| \leq \lambda$ and $\zeta \in \zeta_0 + V$, Lemma~\ref{le:max-constancy}
implies $\ell(\zeta) - p_0 \cdot \zeta = \ell(\zeta_0) - p_0 \cdot \zeta_0$
and we infer from the definition of $\ell$
\begin{align}
\label{Phi-estim}
\Phi_\zeta(x,y)  \leq
\ell(\zeta) =
\ell(\zeta_0) + p_0 \cdot (\zeta - \zeta_0) = \Phi_{\zeta_0} (\hat x, \hat y) + p_0 \cdot (\zeta - \zeta_0).
\end{align}
Note also that $p_0 \cdot (\zeta - \zeta_0) = p_0' \cdot (\zeta' - \zeta_0')$.
We express $\ta$ in terms of $\Phi_\zeta$ and use \eqref{Phi-estim}
to obtain
\begin{align*}
\ta(x,y) &= \Phi_\zeta(x,y) + \frac{\abs{x'-y'-\zeta'}^2}{2\e}
- p_0' \cdot (x' - y' - \zeta_0')\\
&\leq \Phi_{\zeta_0}(\hat x,\hat y) + \frac{\abs{x'-y'-\zeta')}^2}{2\e}
+ p_0' \cdot (\zeta' - \zeta_0' - (x' - y' - \zeta_0'))\\
&= \ta(\hat x, \hat y) + \frac{\abs{x'-y'-\zeta'}^2}{2\e}
-\frac{\abs{\hat x'-\hat y'-\zeta_0'}^2}{2\e}\\
&\quad + p_0' \cdot (-(x' - y' - \zeta') + (\hat x' - \hat y' - \zeta_0'))\\
&= \ta(\hat x, \hat y) + \frac{\abs w^2}{2\e} - \frac{\abs z^2}{2\e}
+ p_0' \cdot (-w + z),
\end{align*}
where we set $w = x'-y'-\zeta'$ and
$z = \hat x' - \hat y' -\zeta_0'$.
We now just have to show that the extra terms cancel out.
First, we see that $w - z = 0$ by \eqref{choice-of-zeta}.
Furthermore, by the choice of $\hat x,\hat y$
we have $z/\e = p_0'$. Therefore we have,
using $\abs{w -z}^2 = \abs w^2 + \abs z^2 - 2w \cdot z$,
\begin{align*}
\frac{\abs{w}^2}{2\e} - \frac{\abs{z}^2}{2\e} =
\frac{\abs{w - z}^2}{2\e} - \frac{\abs{z}^2}{\e} + \frac{w \cdot z}{\e}
= 0 -p_0' \cdot z + p_0' \cdot w.
\end{align*}
Therefore $\ta(x,t,y,s) \leq \ta(\hat x, \hat t, \hat y, \hat s)$, which is what we wanted to
prove.
\end{proof}

\subsection{Construction of faceted test functions}

We shall use Corollary~\ref{co:contact-ordering} to construct
test functions for $u$ and $v$ following the idea
from \cite{GGP13AMSA}.

Let us fix $\Xi, \zeta_0 \in \Rn$ and $\lambda>0$ to be a triplet provided by
Proposition~\ref{pr:ball-of-gradients}.  Then we fix a point of maximum $(\hat x, \hat t, \hat y,
\hat s) \in \mathcal A(\zeta_0)$ that satisfies \eqref{grad-in-Xi} with $\zeta = \zeta_0$.  We set
$p_0 := \frac{\hat x - \hat y - \zeta_0}\e \in \Xi$, $U := \Span(\Xi - p_0)$ and $V \subset \Rn$ be
the subspace parallel to $\aff
\partial W(p_0)$. We have $U = V^\perp$ by Proposition~\ref{pr:feature-decomposition}. It is easy to check that $p_0$, $\zeta_0$, $\lambda$, $U$, $V$ and
$(\hat x, \hat t, \hat y, \hat s)$  satisfy the hypothesis of Corollary~\ref{co:contact-ordering}.
Let us also set $k = \dim V$ as usual.

Depending on the value $k$ and $p_0$, we split the
situation into three cases:

\begin{itemize}
\item[Case I]: $k = 0$;
\item[Case II]: $k = n$, $p_0 = 0$ and $F$ is of curvature-free type at $p_0 = 0$;
\item[Case III]: none of the above.
\end{itemize}

We will deal with each case individually in the following three subsections and show that they all
lead to a contradiction. Therefore \eqref{m_0} cannot occur and the comparison principle holds.

\subsubsection{Case I}
We have $k = 0$.  In this case we use the off-facet test in Definition~\ref{def:visc-solution}(ii).
Corollary~\ref{co:contact-ordering} in this case reduces to
\begin{align}
\label{off-facet max order}
u(x, t) - v(y, s) - \frac{|x - y - \zeta_0|^2}{2\e} - S(t,s)
\leq
u(\hat x, \hat t) - v(\hat y, \hat s) - \frac{|\hat x - \hat y - \zeta_0|^2}{2\e} - S(\hat t,\hat s)
\end{align}
for all $(x,t), (y,s) \in \cl Q$.
We define the test functions
\begin{align*}
\varphi_u(x,t) &:= \frac{|x - \hat y - \zeta_0|^2}{2\e} + S(t,\hat s),\\
\varphi_v(x,t) &:= - \frac{|\hat x - x - \zeta_0|^2}{2\e} - S(\hat t, t).
\end{align*}
From \eqref{off-facet max order} we deduce that $u - \varphi_u$ has a global maximum at $(\hat x,
\hat t)$ and $v - \varphi_v$ has a global minimum at $(\hat y, \hat s)$.
Therefore we must have from the definition of viscosity solutions
\begin{align*}
(\varphi_u)_t(\hat x, \hat t) + F(\nabla \varphi_u(\hat x, \hat t), 0) &\leq 0,\\
(\varphi_v)_t(\hat y, \hat s) + F(\nabla \varphi_v(\hat y, \hat s), 0) &\geq 0.\\
\end{align*}
Since $\nabla \varphi_u(\hat x, \hat t) = \nabla \varphi_v(\hat y, \hat s)$, subtracting the
second inequality from the first and evaluating the time derivatives yields
\begin{align*}
0 \geq (\varphi_u)_t(\hat x, \hat t) - (\varphi_v)_t(\hat y, \hat s) = \frac{\e}{(T - \hat t)^2} + \frac{\e}{(T - \hat s)^2} > 0,
\end{align*}
a contradiction.

\subsubsection{Case II}

Now $k = n$, or, in other words, $V = \aff \partial W(p_0) = \Rn$. Since we now assume that $p_0 =
0$ and that $F$ is of curvature-free type at $p_0 = 0$, we use Definition~\ref{def:level-set-test}.
Then this case is just a minor modification of Case I. Indeed, Corollary~\ref{co:contact-ordering}
now reads
\begin{align}
\label{curvature-free order}
u(x, t) - v(y, s) - S(t,s)
\leq
u(\hat x, \hat t) - v(\hat y, \hat s) - S(\hat t,\hat s)
\end{align}
for all $(x, t), (y, t) \in \cl Q$, $|x - y - (\hat x - \hat y)| \leq \lambda$. Thus if we define
the test functions
\begin{align*}
\varphi_u(x,t) := S(t, \hat s), \quad \text{and} \quad \varphi_v(x,t):= - S(\hat t, t),
\end{align*}
we see from \eqref{curvature-free order} that $u - \varphi_u$ has a local maximum at $(\hat x, \hat
t)$, and $v - \varphi_v$ has a local minimum at $(\hat y, \hat s)$. The definition of viscosity
solution for the curvature-free type case yields
\begin{align*}
(\varphi_u)_t(\hat x, \hat t) + F(0, 0) &\leq 0,\\
(\varphi_v)_t(\hat y, \hat s) + F(0, 0) &\geq 0.\\
\end{align*}
The contradiction then follows as in Case I.

\subsubsection{Case III}

\begin{figure}[t]
\centering
\def\svgwidth{4.5in}
\begingroup%
  \makeatletter%
  \providecommand\color[2][]{%
    \errmessage{(Inkscape) Color is used for the text in Inkscape, but the package 'color.sty' is not loaded}%
    \renewcommand\color[2][]{}%
  }%
  \providecommand\transparent[1]{%
    \errmessage{(Inkscape) Transparency is used (non-zero) for the text in Inkscape, but the package 'transparent.sty' is not loaded}%
    \renewcommand\transparent[1]{}%
  }%
  \providecommand\rotatebox[2]{#2}%
  \ifx\svgwidth\undefined%
    \setlength{\unitlength}{324bp}%
    \ifx\svgscale\undefined%
      \relax%
    \else%
      \setlength{\unitlength}{\unitlength * \real{\svgscale}}%
    \fi%
  \else%
    \setlength{\unitlength}{\svgwidth}%
  \fi%
  \global\let\svgwidth\undefined%
  \global\let\svgscale\undefined%
  \makeatother%
  \begin{picture}(1,0.44444444)%
    \put(0,0){\includegraphics[width=\unitlength,page=1]{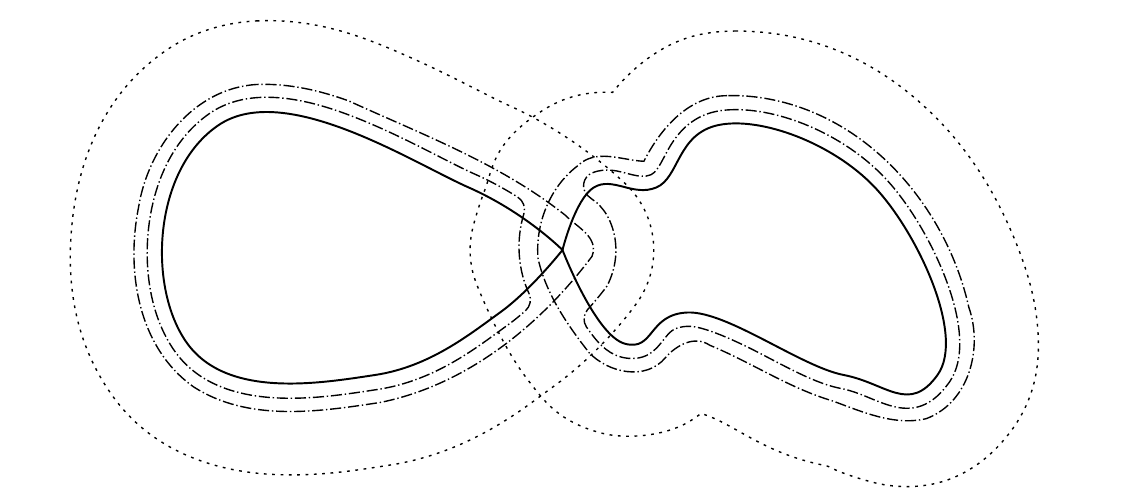}}%
    \put(0.17094889,0.27628104){\color[rgb]{0,0,0}\makebox(0,0)[lb]{\smash{$\hat U$}}}%
    \put(0.7217842,0.26074023){\color[rgb]{0,0,0}\makebox(0,0)[lb]{\smash{$\hat V$}}}%
    \put(0.77445031,0.39370048){\color[rgb]{0,0,0}\makebox(0,0)[lb]{\smash{$\nbd\lambda(\hat V)$}}}%
    \put(0.15775622,0.38050781){\color[rgb]{0,0,0}\makebox(0,0)[lb]{\smash{$\nbd\lambda(\hat U)$}}}%
    \put(0.7217842,0.04082076){\color[rgb]{0,0,0}\makebox(0,0)[lb]{\smash{$(V_-, V_+)$}}}%
    \put(0.18735304,0.05007592){\color[rgb]{0,0,0}\makebox(0,0)[lb]{\smash{$(U_-, U_+)$}}}%
    \put(0.49989601,0.38765683){\color[rgb]{0,0,0}\makebox(0,0)[lb]{\smash{$0$}}}%
    \put(0,0){\includegraphics[width=\unitlength,page=2]{facet-construction.pdf}}%
  \end{picture}%
\endgroup%
\caption{Settings for the facet construction. The dot-dashed lines represent the boundaries of the
constructed pairs $(U_-,U_+)$ and $(V_-, V_+)$.}
\label{fig:facet-construction}
\end{figure}

This is the most involved situation.
Since $W$ is positively one-homogeneous, we have $p_0 \perp V$ by Lemma~\ref{le:aff Xi origin} and the orthogonality from
Proposition~\ref{pr:feature-decomposition}, and therefore $p_0' = 0$ in what follows.
Nevertheless, we keep the terms with $p_0'$ below for completeness, they are necessary when
handling a case of general polyhedral $W$.
We first reduce the problem to the subspace $V$ by introducing
the functions
\begin{align*}
\begin{aligned}
\hat u(w) &:= u(\TT_V w + \hat x, \hat t) - p_0' \cdot w - u(\hat x, \hat t),\\
\hat v(w) &:= v(\TT_V w + \hat y, \hat s) - p_0' \cdot w - v(\hat y, \hat s),
\end{aligned}
&&& w \in \R^k.
\end{align*}
Then we build facets on $\R^k$ using the closed sets
\begin{align*}
\hat U := \set{w \in \R^k : \hat u(w) \geq 0}, &&&
\hat V := \set{w \in \R^k : \hat v(w) \leq 0}.
\end{align*}
as in \cite{GGP13AMSA}; see Figure~\ref{fig:facet-construction}. Note that these sets were
denoted there as $U$ and $V$. This allows us to create test functions for both subsolution and
supersolution and arrive at a contradiction as before.

Let us review the construction. For convenience we set
\begin{align*}
\xi_u(x'', t) &:= \frac{|x'' - \hat y'' - \zeta_0''|^2}{2\e}
- \frac{|\hat x'' - \hat y'' - \zeta_0''|^2}{2\e} + S(t, \hat s) - S(\hat t, \hat s),\\
\xi_v(y'', s) &:= \frac{|\hat x'' - \hat y'' - \zeta_0''|^2}{2\e}
- \frac{|\hat x'' - y'' - \zeta_0''|^2}{2\e} + S(\hat t, \hat s) - S(\hat t, s).
\end{align*}
Then
\begin{align*}
u(x,t) - u(\hat x, \hat t) - p_0' \cdot (x' - \hat x') - \xi_u(x'', t) &\leq 0,
&&\text{for } (x,t) \in \cl Q, x' - \hat x \in \nbd\lambda(\hat V),\\
v(y,s) - v(\hat y, \hat s) - p_0' \cdot (y' - \hat y') - \xi_v(y'', s) &\geq 0,
&&\text{for } (y,s) \in \cl Q, y' - \hat y \in \nbd\lambda(\hat U).
\end{align*}

We set $r := \lambda/10$ and introduce the closed sets
\begin{align*}
X := \cl{(\nbd{r}(\hat U))^c}, \qquad Y:= \cl{(\nbd{r}(\hat V))^c}.
\end{align*}

Since $\dist(\hat U, X) = \dist(\hat V, Y) = r$, the semi-continuity of $u$ and $v$ imply that
there exists $\delta > 0$ such that
\begin{align*}
u(x,t) - u(\hat x, \hat t) - p_0' \cdot (x' - \hat x') - \xi_u(x'', t) &< 0,
&&x' - \hat x' \in X, |x'' - \hat x''| \leq \delta, |t - \hat t| \leq \delta,\\
v(y,s) - v(\hat y, \hat s) - p_0' \cdot (y' - \hat y') - \xi_v(y'', s) &> 0,
&&y' - \hat y' \in Y, |y'' - \hat y''| \leq \delta, |s - \hat s| \leq \delta.
\end{align*}
Note that if $X$ is unbounded, then $u(x, t) = c_u < u(\hat x, \hat t)$ for all $x \notin K$ and
therefore we only need to use semi-continuity of $u$ on a compact subset of $X$ to get the $\delta$
above. We can similarly handle the
case of unbounded $Y$.

Therefore as in \cite{GGP13AMSA}, we define the pairs
\begin{align*}
S_u := (\hat U^c, \hat U \setminus \nbd{\lambda-3r}(\hat V)), \qquad
S_v := (\hat V^c, \hat V \setminus \nbd{\lambda-3r}(\hat U)).
\end{align*}
We note that both $S_u$ and $S_v$ are bounded pairs. Indeed, $S_u$ bounded if $\hat U$ is bounded
or $\hat U^c \cup \hat V$ is bounded.
Since $u(\hat x, \hat t) - v(\hat y, \hat t) \geq m_0$, we deduce that $u(\hat x, \hat t) > v(\hat
y, \hat t)$. Then if $\hat U$ is unbounded, we have $u(\hat x, \hat t) \leq c_u$ and therefore
$v(\hat y, \hat s) < u(\hat x, \hat t) \leq c_u \leq c_v$, and so we conclude that $\hat U^c \cup \hat V$ are both bounded.
We can argue similarly for $S_v$.

Since both $S_u$ and $S_v$ are bounded pairs, Corollary~\ref{co:approximate pair sliced} (currently
only for $k = 1, 2$) implies that there exist $p_0$-admissible pairs $(U_-, U_+)$ and $(V_-, V_+)$ such that
\begin{align*}
\nbd{2r}(S_u) &\preceq (U_-, U_+) \preceq \nbd{3r}(S_u),\\
\nbd{2r}(S_v) &\preceq (V_-, V_+) \preceq \nbd{3r}(S_v).
\end{align*}

We have the following lemma.

\begin{lemma}
\label{le:pair properties}
The pair $(U_-, U_+)$ and the pair $(V_-, V_+)$ have the following properties:
\begin{enumerate}
\item The pairs are strictly ordered in the sense
\begin{align}
\label{pair order}
\nbd r(U_-, U_+) \preceq (V_+, V_-) = - (V_-, V_+).
\end{align}
\item The origin $0$ lies in the interior of the intersection of the facets, that is,
\begin{align*}
\cl B_r(0) \subset U_-^c \cap U_+^c \cap V_-^c \cap V_+^c.
\end{align*}
\item
The pairs are in general position with respect to $R_u$ and $R_v$, that is,
\begin{align*}
\nbd r(R_u) \preceq (U_-, U_+), \qquad \nbd r(R_v) \preceq (V_-, V_+),
\end{align*}
where
\begin{align*}
R_u &:= (X, X^c \setminus \nbd\lambda(\hat V)),\\
R_v &:= (Y, Y^c \setminus \nbd\lambda(\hat U)).
\end{align*}

\end{enumerate}
\end{lemma}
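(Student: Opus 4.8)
The plan is to follow the argument for the analogous statement in \cite{GGP13AMSA}: all three assertions reduce to elementary arithmetic of generalized neighborhoods (Proposition~\ref{pr:nbd-properties}) applied to the sandwich inclusions $\nbd{2r}(S_u) \preceq (U_-, U_+) \preceq \nbd{3r}(S_u)$ and $\nbd{2r}(S_v) \preceq (V_-, V_+) \preceq \nbd{3r}(S_v)$, together with the single geometric fact that $0 \in \hat U \cap \hat V$ (which holds since $\hat u(0) = \hat v(0) = 0$). Unwinding $\nbd\rho(A_-,A_+) = (\nbd{-\rho}(A_-), \nbd\rho(A_+))$ and the definition of $\preceq$, the sandwich for $(U_-,U_+)$ becomes
\[
\nbd{-3r}(\hat U^c) \subseteq U_- \subseteq \nbd{-2r}(\hat U^c), \qquad
\nbd{2r}\bigl(\hat U \setminus \nbd{\lambda-3r}(\hat V)\bigr) \subseteq U_+ \subseteq \nbd{3r}\bigl(\hat U \setminus \nbd{\lambda-3r}(\hat V)\bigr),
\]
and symmetrically for $(V_-, V_+)$ with $\hat U$ and $\hat V$ interchanged; here $\lambda - 3r = 7\lambda/10 > 0$, so all outer neighborhoods make sense, and recall $r = \lambda/10$.

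I would prove (b) first. From $U_- \subseteq \nbd{-2r}(\hat U^c) = (\nbd{2r}(\hat U))^c$ and $0 \in \hat U$ we get $U_- \cap \cl B_{2r}(0) = \emptyset$. From the upper bound on $U_+$, every point of $U_+$ lies within $3r$ of a point $z \in \hat U$ with $\dist(z, \hat V) > \lambda - 3r$; since $0 \in \hat V$ this forces $\abs{z} > \lambda - 3r$, so every point of $U_+$ has modulus $> \lambda - 6r = 4r$, i.e. $U_+ \cap \cl B_{4r}(0) = \emptyset$. The same estimates for $V_-$, $V_+$ give $\cl B_r(0) \subseteq U_-^c \cap U_+^c \cap V_-^c \cap V_+^c$.

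For (a) it is enough to check $\nbd r(U_+) \subseteq V_-$ and $V_+ \subseteq \nbd{-r}(U_-)$. By the computation above $U_+$ lies at distance $> \lambda - 6r = 4r$ from $\hat V$, so $\nbd r(U_+)$ lies at distance $> 3r$ from $\hat V$, i.e. $\nbd r(U_+) \subseteq (\nbd{3r}(\hat V))^c = \nbd{-3r}(\hat V^c) \subseteq V_-$ by the lower bound for $V_-$. Symmetrically $V_+$ lies at distance $> 4r$ from $\hat U$, and since $U_- \supseteq \nbd{-3r}(\hat U^c) = (\nbd{3r}(\hat U))^c$, any point whose distance to $\hat U$ exceeds $4r$ has its closed $r$-ball at distance $> 3r$ from $\hat U$, hence contained in $U_-$, so it lies in $\nbd{-r}(U_-)$; thus $V_+ \subseteq \nbd{-r}(U_-)$. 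For (c), the relation $\nbd r(R_u) \preceq (U_-, U_+)$ asks for $\nbd r(X^c \setminus \nbd\lambda(\hat V)) \subseteq U_+$ and $U_- \subseteq \nbd{-r}(X)$. The second holds because $X \supseteq (\nbd r(\hat U))^c$ and a point $x \in U_-$ has $\dist(x, \hat U) > 2r$, so $\cl B_r(x) \subseteq (\nbd r(\hat U))^c \subseteq X$. For the first, $X^c \subseteq \nbd r(\hat U)$, so a point $y \in \nbd r(X^c \setminus \nbd\lambda(\hat V))$ lies within $r$ of some $w$ with $\dist(w, \hat U) \leq r$ and $\dist(w, \hat V) > \lambda$; its nearest point $z \in \hat U$ then satisfies $\abs{y - z} \leq 2r$ and $\dist(z, \hat V) \geq \dist(w, \hat V) - \abs{w - z} > \lambda - 3r$, so $y \in \nbd{2r}(\hat U \setminus \nbd{\lambda-3r}(\hat V)) \subseteq U_+$. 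The statements for $R_v$ and $(V_-, V_+)$ follow by symmetry.

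The computations are entirely elementary, so there is no real hard part; the only points needing a little care are the closures in the definitions of $X$ and $Y$ (one always passes instead through the open sets $(\nbd r(\hat U))^c$, $(\nbd r(\hat V))^c$ that they contain), the bookkeeping of the buffer widths dictated by the choice $r = \lambda/10$, and the degenerate cases in which some of $\hat U$, $\hat V$, $\hat U^c$, $\hat V^c$ are unbounded — these are handled exactly as in \cite{GGP13AMSA}, and since $0 \in \hat U \cap \hat V$ none of the relevant sets is ever empty.
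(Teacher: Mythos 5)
Your proposal is correct, and it supplies exactly the detail that the paper delegates to a citation (the paper's own ``proof'' is just ``See [GGP13AMSA, Lemma~4.6]''). The strategy---expand $\nbd{2r}(S_u) \preceq (U_-,U_+) \preceq \nbd{3r}(S_u)$ and its $v$-counterpart into the four sandwich inclusions, then combine the complement identity $(\nbd\rho(A))^c = \nbd{-\rho}(A^c)$ with the single geometric fact $0 \in \hat U \cap \hat V$ and the bookkeeping constant $r = \lambda/10$---is the standard one, and all the arithmetic checks out: in (b), $U_-$ avoids $\cl B_{2r}(0)$ since $U_- \subset (\nbd{2r}(\hat U))^c$ and $0\in\hat U$, and $U_+$ avoids $\cl B_{4r}(0)$ since each point of $U_+$ is within $3r$ of some $z\in\hat U$ with $\dist(z,\hat V)>\lambda-3r = 7r$ and $0\in\hat V$ forces $|z|>7r$; the same lower bound $\dist(\cdot,\hat V) > 4r$ on $U_+$ gives (a) after shrinking by one more $r$-neighborhood and using $\nbd{-3r}(\hat V^c) \subseteq V_-$; and (c) is the analogous estimate with $X^c \subseteq \nbd r(\hat U)$ in place of the $S_u$ bound. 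Your closing caveats about the closure in $X, Y$ and the unbounded cases are the right things to flag, and both are handled by the observations you make (pass through the open set $(\nbd r(\hat U))^c \subseteq X$; $0\in\hat U\cap\hat V$ rules out degenerate empty sets).
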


\begin{proof}
See \cite[Lemma~4.6]{GGP13AMSA}.
\end{proof}

Now we have all that we need to reach a contradiction.
Let us define
\begin{align*}
\tilde u(x' - \hat x') &:=\sup_{|x'' - \hat x''| \leq \delta}\sup_{|t - \hat t| \leq \delta} \bra{ u(x,t) - u(\hat
x, \hat t) - p_0' \cdot (x' - \hat x') - \xi_u(x'', t)},\\
\tilde v(y' - \hat y') &:= \inf_{|y'' - \hat y''| \leq \delta}\inf_{|s - \hat s| \leq \delta} \bra{v(y,s) - v(\hat
y, \hat s) - p_0' \cdot (y' - \hat y') - \xi_v(y'', s)}.
\end{align*}
By the construction above, we have $\tilde u < 0$ on $X$ and $\tilde u \leq 0$ on $X \cup
\nbd\lambda(\hat V)$. Similarly, we have $\tilde v > 0$ on $Y$ and $\tilde v \geq 0$ on $Y \cup
\nbd\lambda(\hat U)$.
Lemma~\ref{le:pair properties}(c) implies that $X \supset \nbd{r}(U_-)$ and $X \cup \nbd\lambda(
\hat V) \supset \nbd{r}(U_+^c)$.
Therefore for any support function $\psi$ of pair $(U_-, U_+)$ we can by upper semi-continuity of
$\tilde u$ find two constants $\alpha,
\beta > 0$ so that $\alpha \psi_+ - \beta \psi_- \geq \tilde u(\cdot - w)$ for all $|w| \leq r/2$
in a neighborhood of the facet $U_-^c \cap U_+^c$. An analogous reasoning applies to $\tilde v$.

Since the pairs $(U_-, U_+)$ and $(V_-,
V_+)$ are $p_0$-admissible, there exist faceted functions $\psi_u, \psi_v \in \domain(\Lambda_{p_0})$
that are the support functions of the respective pairs. By applying the observation in the previous
paragraph, we can assume that $\tilde u(\cdot - w) \leq \psi_u$ and $\psi_v \leq \tilde v(\cdot -
w)$ for all $|w| \leq r/2$ in a neighborhood of the respective facets $U_-^c \cap U_+^c$ and $V_-^c
\cap V_+^c$. Therefore the functions $\varphi_u(x,t) := \psi_u(x') + p_0' \cdot (x' - \hat x') + \xi_u(x'',
t)$, $\varphi_v := \psi_v(x') + p_0' \cdot (y' - \hat y')
+ \xi_v(x'', t)$ are test functions for $u$ and $v$, respectively, in the sense of
Definition~\ref{def:visc-solution}(i). Due to Lemma~\ref{le:pair properties}(a--b), the comparison principle for the
crystalline curvature Proposition~\ref{pr:comparison Lambda} yields
\begin{align}
\label{essinf sup order}
\essinf_{B_r(0)} \left[ \Lambda_{p_0}[\psi_u] \right] \leq \esssup_{B_r(0)} \left[
\Lambda_{p_0}[\psi_v] \right].
\end{align}

From the definition of viscosity solutions, namely Definition~\ref{def:visc-solution}(i), we infer
\begin{align*}
(\xi_u)_t(\hat t) + F\pth{p_0, \essinf_{B_r(0)} \left[ \Lambda_{p_0}[\psi_u] \right]} &\leq 0,\\
(\xi_v)_t(\hat s) + F\pth{p_0, \esssup_{B_r(0)} \left[ \Lambda_{p_0}[\psi_v] \right]} &\geq 0.
\end{align*}
Using \eqref{essinf sup order} and the ellipticity of $F$, we get after subtracting the above two inequalities
\begin{align*}
0 < \frac \e{(T - \hat t)^2} + \frac \e{(T - \hat s)^2} + F\pth{p_0, \essinf_{B_r(0)} \left[
\Lambda_{p_0}[\psi_u] \right]} - F\pth{p_0, \esssup_{B_r(0)} \left[ \Lambda_{p_0}[\psi_v] \right]}
\leq 0,
\end{align*}
a contradiction.

This finished the proof of the comparison principle Theorem~\ref{th:comparison principle} since we
have shown that \eqref{m_0} always yields a contradiction.

\section{Stability}
\label{se:stability}

We will show the stability of \eqref{P} under the approximation
by parabolic problems
\begin{align}
\label{regularized-problem}
\begin{cases}
u_t + F(\nabla u, \trace\bra{(\nabla_p^2 W_m)(\nabla u) \nabla^2 u}) = 0,\\
\at{u}{t=0} = u_0,
\end{cases}
\end{align}
where $W_m$ approximate $W$ as in Section~\ref{sec:resolvent-approximation}.
An example of such sequence $\set{W_m}$ is given in Example~\ref{ex:wm-example}.

The main result of this section is the following stability theorem.
We recall the definition of \emph{half-relaxed limits} (semi-continuous limits)
\begin{align*}
\halflimsup_{m\to\infty} u_m(x,t) &:= \lim_{k \to \infty} \sup_{m > k} \sup_{|y - x| < \frac 1k} \sup_{|t-s| <
\frac 1k} u_m(y,s),\\
\halfliminf_{m\to\infty} u_m(x,t) &:= -\halflimsup_{m\to\infty} \big(-u_m(x,t)\big).
\end{align*}

\begin{theorem}[Stability]
\label{th:stability quadratic}
Let $u_m$ be a locally bounded sequence of viscosity solutions of \eqref{regularized-problem}
(without the initial condition).
Then $\halflimsup_{m\to\infty} u_m$ is a viscosity subsolution
of \eqref{P}
and $\halfliminf_{m\to\infty} u_m$ is a viscosity supersolution of \eqref{P}.
\end{theorem}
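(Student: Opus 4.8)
\textbf{Proof plan for Theorem~\ref{th:stability quadratic}.}
The plan is to follow the standard half-relaxed-limits method for viscosity solutions, adapted to the stratified faceted test functions of Definition~\ref{def:strat-faceted-test-function}. By symmetry (replacing $u_m$ by $-u_m$ and $F(p,\xi)$ by $-F(-p,-\xi)$, which preserves the structural assumptions) it suffices to prove that $u := \halflimsup_{m\to\infty} u_m$ is a viscosity subsolution. We know $u$ is upper semi-continuous and locally bounded by the local boundedness assumption. The two tests in Definition~\ref{def:visc-solution} must be checked: the off-facet test (ii), where $\dim\partial W(\nabla\vp(\hat x,\hat t))=0$, and the faceted test (i). The off-facet test is essentially routine: near such a point $W$ is differentiable and $W_m \to W$ locally uniformly in $C^1$ (convexity plus $W_m \searrow W$ with locally uniform convergence gives gradient convergence on the set of differentiability), so a strict-maximum perturbation argument together with the standard stability of viscosity solutions under locally uniform convergence of the operator yields $\vp_t + F(\nabla\vp, 0) \leq 0$; here one uses that $\trace[(\nabla_p^2 W_m)(\nabla\vp)\nabla^2\vp] \to 0$ as $m \to \infty$ at a point where $W$ is affine near $\nabla\vp(\hat x,\hat t)$ and $\vp$ can be taken with small Hessian.

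The substantive part is the faceted test. Suppose $\vp(x,t) = \bar\psi(x'-\hat x') + f(x''-\hat x'') + \hat p\cdot x + g(t)$ is a stratified faceted test function at $(\hat x,\hat t)$ with gradient $\hat p$, pair $(A_-,A_+) \in \mathcal P^k$, and $\bar\psi \in \domain(\Lambda_{\hat p})$, and suppose $u - \vp$ attains a maximum at $(\hat x, \hat t)$ in the sense of \eqref{general-position}. We want $\vp_t(\hat x,\hat t) + F(\hat p, \essinf_{B_\delta(0)} \Lambda_{\hat p}[\bar\psi]) \leq 0$ for some small $\delta$. The idea, following \cite{GGP13JMPA} and the description in the introduction, is to \emph{perturb the test function by one step of the implicit Euler scheme} for the sliced energy. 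Concretely, fix a large periodic box $\Gamma' = \R^k/L\Z^k$ containing the facet and, using Proposition~\ref{pr:curvature-as-min-section}, replace $\bar\psi$ by an $L$-periodic support function $\psi_2$ of the periodized pair that agrees with $\bar\psi$ near the facet and satisfies $\SlCH_{\hat p}(\psi_2;\Gamma') \neq \emptyset$, so that $\Lambda_{\hat p}[\bar\psi] = -\partial^0 \SE_{\hat p}(\psi_2;\Gamma')$ there. For $a > 0$ small let $\psi_{2,a} = (I + a\,\partial\SE_{\hat p})^{-1}\psi_2$ and, for each $m$, $\psi_{2,a,m} = (I + a\,\partial E^{(m)})^{-1}\psi_2$ for the corresponding sliced regularized energy $E^{(m)}$ of $W_m$; by Proposition~\ref{pr:resolvent-problems} (applied to the sliced energies, which satisfy the same hypotheses by Lemma~\ref{le:lingrowthapproximation} and the construction in Section~\ref{sec:energy-stratification}) these are Lipschitz with gradient bound inherited from $\psi_2$, converge uniformly $\psi_{2,a,m} \rightrightarrows \psi_{2,a}$ as $m\to\infty$, and $h_{a,m} := (\psi_{2,a,m}-\psi_2)/a \rightrightarrows h_a := (\psi_{2,a}-\psi_2)/a$, with $h_a \to -\partial^0\SE_{\hat p}(\psi_2;\Gamma')$ in $L^2$ as $a \to 0$. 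Here Lemma~\ref{le:subdiff-slicing} is the crucial ingredient that lets us reduce the full $n$-dimensional regularized resolvent to the sliced $k$-dimensional one: if we build the test function for $u_m$ by adding $f(x'')$ back and extending trivially in the $U$-directions, the perturbed test function's regularized operator value is exactly $h_{a,m}(x')$ plus the contribution of $f$ and $g$.

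The argument then proceeds as follows. Build, for each $m$, the perturbed test function $\vp_{a,m}(x,t) := \psi_{2,a,m}(x'-\hat x') + f(x''-\hat x'') + \hat p\cdot x + g(t) + (\text{small quadratic/linear correction in }t \text{ and in } x' \text{ to restore a strict local max})$. Since $\psi_{2,a,m} \rightrightarrows \psi_{2,a}$ and $\psi_{2,a} \rightrightarrows \psi_2$ uniformly (as $m\to\infty$ then $a\to0$), and $u_m \to u$ in the half-relaxed sense, a standard argument (perturbing the maximum in \eqref{general-position} to a strict one, then using upper semicontinuity and uniform convergence) produces points $(x_m,t_m)$ at which $u_m - \vp_{a,m}$ has a local maximum with $(x_m,t_m) \to (\hat x,\hat t)$. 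Since $\vp_{a,m} \in C^{2,\alpha}$ near such points (regularity from Proposition~\ref{pr:resolvent-problems}) with $\dim\partial W_m(\cdot) = 0$ as $W_m$ is $C^2$, we may apply the viscosity subsolution property of $u_m$ for \eqref{regularized-problem} at $(x_m,t_m)$:
\begin{align*}
(\vp_{a,m})_t(x_m,t_m) + F\!\left(\nabla\vp_{a,m}(x_m,t_m), \trace\!\left[(\nabla_p^2 W_m)(\nabla\vp_{a,m})\nabla^2\vp_{a,m}\right]\!\right) \leq 0.
\end{align*}
By Lemma~\ref{le:subdiff-slicing} applied to $W_m$, the trace term equals $h_{a,m}(x_m - \hat x') + (\text{trace term from } f)$, and since $f \in C^2$ with $\nabla f(0)=0$ the latter can be made uniformly small near $\hat x''$; $\nabla\vp_{a,m} \to \hat p$ and $(\vp_{a,m})_t \to g'(\hat t) = \vp_t(\hat x,\hat t)$. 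Sending $m \to \infty$ and using continuity of $F$ together with $h_{a,m} \rightrightarrows h_a$, then sending $a \to 0$ and using $h_a \to -\partial^0\SE_{\hat p}(\psi_2;\Gamma') = \Lambda_{\hat p}[\bar\psi]$ in $L^2$, one obtains, after passing to $\essinf$ over a small ball $B_\delta(0) \subset \interior(\facet A)$ (the $\essinf$ appears because the maximum points $x_m$ may wander over the facet and convergence of $h_{a,m}$ is only $L^2$, so one argues as in \cite{GGP13JMPA} using the structure of the minimal section together with the comparison property of $\Lambda_{\hat p}$ from Proposition~\ref{pr:comparison Lambda} to control the relevant value from below), the desired inequality $\vp_t(\hat x,\hat t) + F(\hat p, \essinf_{B_\delta(0)}\Lambda_{\hat p}[\bar\psi]) \leq 0$.

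\textbf{Main obstacle.} The hardest point is the interchange of limits and the passage from the pointwise regularized operator $\trace[(\nabla_p^2 W_m)(\nabla\vp_{a,m})\nabla^2\vp_{a,m}]$ at the moving maximum points $(x_m,t_m)$ to the nonlocal quantity $\essinf_{B_\delta}\Lambda_{\hat p}[\bar\psi]$: the convergence $h_{a,m} \to h_a \to -\partial^0\SE_{\hat p}$ holds only in $L^2$, while we need a pointwise (in fact $\essinf$) lower bound along a sequence of points that we do not control precisely. Resolving this requires the same careful machinery as in \cite{GGP13JMPA}—using that $h_{a,m}$ are Lipschitz with controlled constants on the facet (so that one can upgrade $L^2$ to locally uniform convergence after extracting subsequences), and using the comparison principle for $\Lambda_{\hat p}$ together with the fact that the maximum points stay in a neighborhood of the facet—and the main new work is verifying that all of these estimates survive the slicing reduction via Lemma~\ref{le:subdiff-slicing}, i.e.\ that working with the $k$-dimensional sliced energy $\SE_{\hat p}$ rather than the full energy introduces no new difficulty. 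A secondary technical point is handling the case when the facet pair is unbounded in some directions (so that $u_m \equiv c$ far away), which is dealt with exactly as in the comparison principle proof by restricting attention to a compact piece of the facet.
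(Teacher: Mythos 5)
The general framework is right — perturb the test function via the resolvent of the energy and its regularizations, localize with a periodic box, use the slicing lemma, and pass along subsequences of maximum points — but the key mechanism at what you correctly identify as the "hardest point" is misdescribed, and this is a real gap.

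First, the claimed method for upgrading $L^2$ convergence of $h_a \to -\partial^0\SE_{\hat p}$ to an $\essinf$ estimate does not work as stated. You cannot "upgrade $L^2$ to locally uniform convergence": the Lipschitz bounds on $h_a$ are \emph{not} uniform in $a$ (they involve $1/a$), so Arzel\`a--Ascoli is unavailable for $a\to0$. Nor does Proposition~\ref{pr:comparison Lambda} enter here — it is used in the comparison principle (Section~\ref{sec:comparison principle}), not in the stability proof. The actual device in the paper, following \cite{GGP13JMPA}, is an extra \emph{shift parameter} $z'\in \cl B_\delta(0)\subset \facet A$: one considers the shifted perturbed test function $\psi_a(\cdot + \TT_V z')$, and then chooses $z'$ to minimize $w'\mapsto \psi_a(\TT_V w')$ over $\cl B_\delta(0)$ (see \eqref{choice-of-z'}; on $\facet A$ the unperturbed $\psi$ vanishes, so this is the same as minimizing $h_a(\TT_V w')$). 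This choice, combined with the maximum-point structure and the geometric lemma Lemma~\ref{le:u-psi-shift}, yields the crucial Lemma~\ref{le:resolvent-order}: $h_a(x_a + \TT_V z') \leq \min_{|w'|\leq\delta} h_a(\TT_V w')$. One then uses only the elementary fact that if $h_a \to h$ in $L^2$ then $\liminf_{a\to0}\min_{B_\delta} h_a \leq \essinf_{B_\delta} h$, together with the slicing identity $h_a(x)=\bar h_a(x')$ from Lemma~\ref{le:subdiff-slicing}, to get the $\essinf_{B_\delta}\Lambda_{\hat p}[\bar\psi]$ bound. Without the $z'$-shift and Lemma~\ref{le:resolvent-order} the argument does not close, because the maximum points $x_a$ are uncontrolled and $L^2$ convergence gives no pointwise information at them.

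Two secondary points. (i) You omit the $\alpha$-scaling of $\bar\psi$ (the paper uses $\psi_\alpha = \alpha\xi(x') + f_\alpha(x'')$ with small $\alpha$). This is not cosmetic: the energy $E_{\hat p}$ built from $W(\cdot+\hat p)-W(\hat p)$ is \emph{not} one-homogeneous, so Proposition~\ref{pr:subdiff-char-periodic} does not directly apply to it; one needs $\|\nabla\psi_\alpha\|_\infty$ small to invoke Lemma~\ref{le:subdiff-homog-relation} and identify $\partial E_{\hat p}$ with the one-homogeneous $\partial E_{\hat p}'$. The small $\alpha$ is also what makes the gradient perturbation $|p_a - \hat p|\leq C\alpha$ vanish at the end. (ii) The statement that "the perturbed test function's regularized operator value is exactly $h_{a,m}(x')$" is not quite right: the slicing identity Lemma~\ref{le:subdiff-slicing} holds for the one-homogeneous limit energy $E_{\hat p}'$, where the minimal section is independent of $x''$, but \emph{not} for the elliptic regularization $E_m$. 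The paper solves the $E_m$-resolvent on the full torus $\Gamma$; $h_{a,m}$ depends on both $x'$ and $x''$, and only in the limit $m\to\infty,\ a\to 0$ does the dependence on $x''$ disappear via the slicing lemma.
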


\medskip

\noindent
\emph{Proof of stability}
We will only show the subsolution part, the proof of the supersolution part is analogous.
Let $u = \halflimsup_m u_m$.
Clearly $u$ is upper semi-continuous.
We want to show that $u$ is a subsolution of \eqref{P}.

We have to verify (i)--(ii) of Definition~\ref{def:visc-solution} and (i-cf) of
Definition~\ref{def:level-set-test} for curvature-free type $F$.

\subsection{Case (i)}
\label{se:stability case (i)}
Suppose that $\vp$ is a stratified faceted test function at $(\hat x, \hat t)$
with gradient $\hat p$ and with  $\bar \psi$, $f$ and $g$ as in
Definition~\ref{def:strat-faceted-test-function},
and suppose that this test function is a test function for $u$
in the sense of Definition~\ref{def:visc-solution}(i), i.e.,
it satisfies \eqref{general-position} with some $\rho > 0$.
Let $(A_-, A_+)$ be the pair supported by $\bar \psi$.
We will set $V \subset \Rn$ to be the subspace parallel to $\aff \partial W(\hat p)$, $U=V^\perp$ and $k = \dim V$.
We recall that we have the rotated coordinate system $x = \TT (x', x'')$ with $\TT = \TT_{\hat p}$ introduced
in \eqref{rotation}.

Let us define the function $\bar u: \R^k \to \R$
\begin{align*}
\bar u(x') := \sup_{\substack{\abs{x''} \leq \rho\\\abs{t - \hat t} \leq \rho}} u(\hat x + x, t)
&- u(\hat x, \hat t) - f(x'') - g(t) + g(\hat t) - \hat p \cdot x
\end{align*}
and the closed subsets of $\R^k$
\begin{align*}
Y &:= \set{x' \in \R^k : \bar u(x') \geq 0}, \qquad\\
Z &:= \set{x'\in O: \bar \psi(x') \leq 0} = A_+^c \cap O,
\end{align*}
where $O = \nbd\rho(\facet A)$,
see Figure~\ref{fig:stability-geometry}.
Note that with this definition of $\bar u$, the condition \eqref{general-position} is equivalent to
\begin{align}
\label{gp-bar}
\bar u(y') \leq \bar\psi(x') \qquad \text{for all $x' \in O$, $\abs{y' - x'} \leq \rho$.}
\end{align}

\begin{figure}
\centering
\def\svgwidth{4.5in}
\begingroup%
  \makeatletter%
  \providecommand\color[2][]{%
    \errmessage{(Inkscape) Color is used for the text in Inkscape, but the package 'color.sty' is not loaded}%
    \renewcommand\color[2][]{}%
  }%
  \providecommand\transparent[1]{%
    \errmessage{(Inkscape) Transparency is used (non-zero) for the text in Inkscape, but the package 'transparent.sty' is not loaded}%
    \renewcommand\transparent[1]{}%
  }%
  \providecommand\rotatebox[2]{#2}%
  \ifx\svgwidth\undefined%
    \setlength{\unitlength}{324bp}%
    \ifx\svgscale\undefined%
      \relax%
    \else%
      \setlength{\unitlength}{\unitlength * \real{\svgscale}}%
    \fi%
  \else%
    \setlength{\unitlength}{\svgwidth}%
  \fi%
  \global\let\svgwidth\undefined%
  \global\let\svgscale\undefined%
  \makeatother%
  \begin{picture}(1,0.44444444)%
    \put(0,0){\includegraphics[width=\unitlength,page=1]{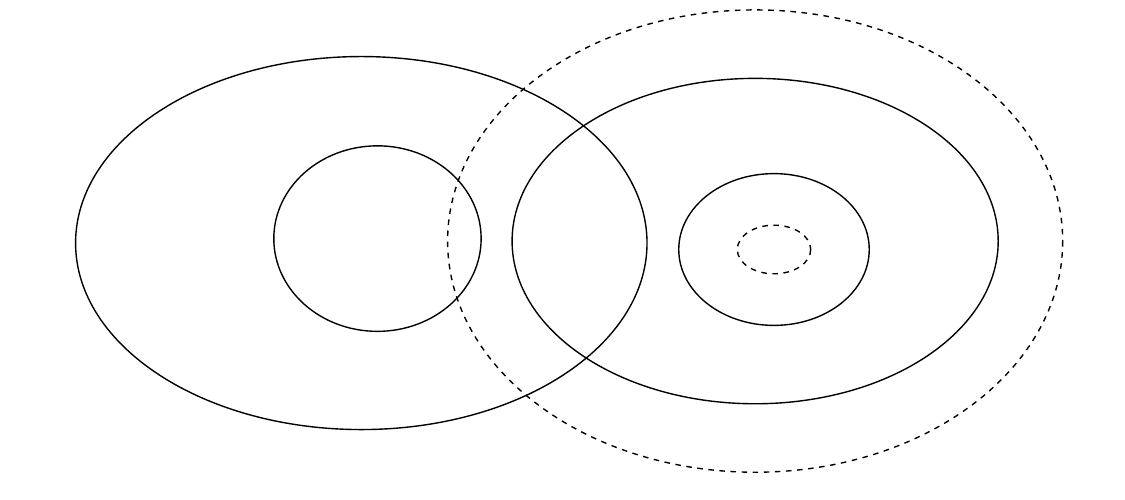}}%
    \put(0.32380951,0.35273369){\color[rgb]{0,0,0}\makebox(0,0)[lb]{\smash{$Y$}}}%
    \put(0.64215171,0.32804233){\color[rgb]{0,0,0}\makebox(0,0)[lb]{\smash{$Z$}}}%
    \put(0,0){\includegraphics[width=\unitlength,page=2]{stability-geometry.pdf}}%
    \put(0.27266314,0.24162258){\color[rgb]{0,0,0}\makebox(0,0)[lb]{\smash{$\bar u > 0$}}}%
    \put(0.64532629,0.2506172){\color[rgb]{0,0,0}\makebox(0,0)[lb]{\smash{$\bar \psi <0$}}}%
    \put(0.51164023,0.20335098){\color[rgb]{0,0,0}\makebox(0,0)[lb]{\smash{$\hat x'$}}}%
    \put(0.81181656,0.36596114){\color[rgb]{0,0,0}\makebox(0,0)[lb]{\smash{$O$}}}%
    \put(0.49647266,0.28042331){\color[rgb]{0,0,0}\makebox(0,0)[lb]{\smash{$N$}}}%
    \put(0,0){\includegraphics[width=\unitlength,page=3]{stability-geometry.pdf}}%
  \end{picture}%
\endgroup%
\caption{Situation at the contact point of $\bar u$ and $\bar \psi$.
The thick line denotes the boundary of $N$.}
\label{fig:stability-geometry}
\end{figure}

We immediately have the following ``geometrical'' lemma.
Intuitively, since $\bar u$ and $\bar\psi$ are ordered even when shifted by a small distance,
we must have that $\bar\psi$ is nonnegative in a neighborhood of the set $Y$ where $\bar u$ is nonnegative,
and, analogously, $\bar u$ is nonpositive in a neighborhood of the set $Z$ where $\bar \psi$ is
nonpositive.

\begin{lemma}[{cf. \cite[Lemma~5.6]{GGP13JMPA}}]
\label{le:u-psi-shift}
Suppose that $u$ and $\vp$ satisfy \eqref{general-position} for some
$\rho > 0$, $(\hat x, \hat t) \in \Rn$.
Then
\begin{align*}
\bar u(x') \leq 0 \qquad \text{for all $x' \in \nbd\rho(Z)$,}
\end{align*}
or, more explicitly,
\begin{align*}
u(x,t) \leq f\pth{x''- \hat x''} + g(t) - g(\hat t) + u(\hat x,\hat t)
+ \hat p \cdot (x - \hat x)
\end{align*}
for
$x' - \hat x \in \nbd\rho(Z)$, $\abs{x'' - \hat x''} \leq \rho$, $\abs{t - \hat t} \leq \rho$.
Furthermore, we have
\begin{align*}
\bar \psi(x') \geq 0 \qquad \text{for } x' \in \nbd\rho(Y) \cap O.
\end{align*}
\end{lemma}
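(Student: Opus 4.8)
The plan is to obtain both statements as direct consequences of the reformulated ordering \eqref{gp-bar}, using nothing more than the identity $\nbd\rho(E) = E + \cl B_\rho(0)$ valid for $\rho > 0$ (the definition of the generalized neighborhood for positive radius). The whole argument is a matter of matching up the two quantified slots of \eqref{gp-bar}---the ``base'' point constrained to lie in $O$, and the ``shifted'' point allowed to move by at most $\rho$---with the sets $Y$ and $Z$; no new analysis is needed.

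First I would prove $\bar u \leq 0$ on $\nbd\rho(Z)$. Fix $x' \in \nbd\rho(Z)$; since $\rho > 0$ we may pick $x_0' \in Z$ with $\abs{x' - x_0'} \leq \rho$, and by definition $Z \subset O$. Applying \eqref{gp-bar} with base point $x_0' \in O$ and shifted point $x'$ gives $\bar u(x') \leq \bar\psi(x_0')$, and $\bar\psi(x_0') \leq 0$ precisely because $x_0' \in Z$. The ``more explicit'' form is then just the unpacking of the inequality $\bar u(x' - \hat x') \leq 0$ against the definition of $\bar u$: since $\TT$ is linear we have $x - \hat x = \TT(x'-\hat x', x''-\hat x'')$, so for $x' - \hat x' \in \nbd\rho(Z)$, $\abs{x'' - \hat x''} \leq \rho$, $\abs{t - \hat t} \leq \rho$ the corresponding summand in the supremum defining $\bar u(x'-\hat x')$ equals $u(x,t) - u(\hat x,\hat t) - f(x''-\hat x'') - g(t) + g(\hat t) - \hat p\cdot(x - \hat x)$; bounding it by $0$ and rearranging yields the displayed inequality.

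For the second statement, $\bar\psi \geq 0$ on $\nbd\rho(Y) \cap O$: fix $x' \in \nbd\rho(Y) \cap O$ and, using $\rho > 0$, pick $y_0' \in Y$ with $\abs{y_0' - x'} \leq \rho$. Now \eqref{gp-bar} applies with base point $x'$ (legitimate since $x' \in O$) and shifted point $y_0'$, giving $\bar\psi(x') \geq \bar u(y_0')$, and $\bar u(y_0') \geq 0$ precisely because $y_0' \in Y$.

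I do not expect a genuine obstacle. The only point demanding care is not swapping the two roles in \eqref{gp-bar}: in the first claim the point forced to lie in $O$ is the near point $x_0' \in Z$, whereas in the second it is $x'$ itself. One should also keep in mind that \eqref{gp-bar} is equivalent to the hypothesis \eqref{general-position} because the horizontal shift $w = \TT_V w'$ leaves the test function unchanged (as $\hat p \perp V$ for the positively one-homogeneous $W$ in play, so $\hat p' = 0$), so shifting the argument of $\bar u$ by $w'$ costs nothing in the comparison.
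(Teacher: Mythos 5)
Your proof is correct and follows essentially the same two-line argument as the paper: pick a nearby point in $Z$ (resp.\ $Y$) at distance at most $\rho$ and plug both points into \eqref{gp-bar} in the appropriate slots. The added remark that \eqref{gp-bar} and \eqref{general-position} match because $\hat p \perp V$ (so the term $\hat p \cdot w$ for $w \in V$ drops out) is a correct and useful clarification of a step the paper leaves implicit.
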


\begin{proof}
Let us prove the first statement.
If $x' \in \nbd\rho(Z)$ then there exists $z' \in Z \subset O$ such that
$\abs{x' - z'} \leq \rho$.
Thus \eqref{gp-bar} and the definition of $Z$ imply
\begin{align*}
\bar u(x') \leq \bar\psi(z') \leq 0,
\end{align*}
and that is what we wanted to prove.

Similarly, if we suppose that
$x' \in \nbd\rho(Y) \cap O$, there exists $y' \in Y$ with $\abs{x' - y'} \leq \rho$.
Then \eqref{gp-bar} and the definition of $Y$
imply
\begin{align*}
\bar\psi(x') \geq \bar u(y') \geq 0.
\end{align*}
The lemma is proved.
\end{proof}

We obtain the following corollary.

\begin{corollary}
Suppose that \eqref{gp-bar} holds with $\rho > 0$.
Then there exists $\delta$, $0 < \delta \leq \rho/5$,
such that $\nbd{4\delta}(N) \subset O$
where
\begin{align*}
N := \nbd\delta(Z) \cap Y \cap O,
\end{align*}
and moreover
\begin{align*}
\cl B^k_\delta(0) \subset A_-^c \cap A_+^c
\end{align*}
and
\begin{align}
\label{u-psi-scaled-comp}
\bar u(x') \leq \alpha \bar\psi(x' + z') \qquad \text{for all $\alpha > 0$, $x' \in \nbd{3\delta}(N)$, $\abs{z'} \leq \delta$,}
\end{align}
with strict inequality for $x' \notin N$.
\end{corollary}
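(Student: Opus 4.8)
The plan is to choose $\delta$ and then verify the three assertions in turn, the inclusion $\nbd{4\delta}(N)\subset O$ being the crux.

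First I would fix $\delta$. Since $0\in\interior(A_-^c\cap A_+^c)$ by Definition~\ref{def:strat-faceted-test-function}, there is $\delta_0>0$ with $\cl B^k_{\delta_0}(0)\subset A_-^c\cap A_+^c$; take $\delta:=\min\set{\rho/5,\,\delta_0}$. Then $5\delta\le\rho$ (hence also $4\delta\le\rho$ and $3\delta\le\rho$), which will be used repeatedly, and the second assertion $\cl B^k_\delta(0)\subset A_-^c\cap A_+^c$ is immediate.

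The heart of the proof is the geometric fact that $N$ lies within distance $\delta$ of the facet $F:=A_-^c\cap A_+^c$, i.e. $N\subset\nbd\delta(F)$; once this is known, $\nbd{4\delta}(N)\subset\nbd{5\delta}(F)\subset\nbd\rho(F)=O$ by the composition and monotonicity rules of Proposition~\ref{pr:nbd-properties}, which is the first assertion. To prove $N\subset\nbd\delta(F)$ I would argue as follows. The second part of Lemma~\ref{le:u-psi-shift} says $\bar\psi\ge0$ on $\nbd\rho(Y)\cap O$, and $\bar\psi<0$ on $A_-$ since $\bar\psi$ supports $(A_-,A_+)$; hence $Y\cap O\subset A_-^c$, and in particular $N\subset\nbd\delta(Z)\cap A_-^c$. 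Now let $x'\in N$ and pick $z'\in Z=A_+^c\cap O$ with $\abs{x'-z'}\le\delta$. If $z'\in A_-^c$ then $z'\in A_-^c\cap A_+^c=F$ and $\dist(x',F)\le\delta$. If instead $z'\in A_-$, then since $x'\in A_-^c$ the segment $[x',z']$ meets $\partial A_-$ at a point $p'$ with $\abs{x'-p'}\le\delta$; and $\partial A_-\subset A_+^c$ because $A_+$ is open and disjoint from $A_-$, so $p'\in F$ and again $\dist(x',F)\le\delta$. This step --- extracting the proximity of $N$ to $F$ from the shifted ordering in Lemma~\ref{le:u-psi-shift} together with the topology of the pair --- is the one I expect to require the most care; the remaining estimates are routine manipulations of the generalized neighborhoods.

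Finally, for \eqref{u-psi-scaled-comp}: from $\nbd{4\delta}(N)\subset\nbd\rho(Y)\cap O$ (using $4\delta\le\rho$ and the first assertion) Lemma~\ref{le:u-psi-shift} gives $\bar\psi\ge0$ on $\nbd{4\delta}(N)$, while from $\nbd{3\delta}(N)\subset\nbd{4\delta}(Z)\subset\nbd\rho(Z)$ the same lemma gives $\bar u\le0$ on $\nbd{3\delta}(N)$. Fix $\alpha>0$, $x'\in\nbd{3\delta}(N)$, $\abs{z'}\le\delta$; then $x'+z'\in\nbd{4\delta}(N)$, so $\bar\psi(x'+z')\ge0$. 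If $\bar u(x')<0$ the inequality $\bar u(x')<\alpha\bar\psi(x'+z')$ holds for every $\alpha>0$. Otherwise $\bar u(x')=0$, so $x'\in Y$; if moreover $x'\notin N$ then, since $x'\in O$, necessarily $\dist(x',Z)>\delta$, whence $\dist(x'+z',Z)>0$, i.e. $x'+z'\notin Z$, and since $x'+z'\in O$ this forces $x'+z'\in A_+$ and $\bar\psi(x'+z')>0$, giving again the strict inequality. For $x'\in N$ we have $x'\in Y$ and $\bar u\le0$, so $\bar u(x')=0\le\alpha\bar\psi(x'+z')$. This is precisely \eqref{u-psi-scaled-comp}, with strictness off $N$.
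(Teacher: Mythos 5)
Your proof is correct and follows essentially the same route as the paper: both reduce the matter to showing $N \subset \nbd\delta(A_-^c\cap A_+^c)$ and then compose generalized neighborhoods, and both verify \eqref{u-psi-scaled-comp} from the two halves of Lemma~\ref{le:u-psi-shift} together with the sign properties of the support function. The only cosmetic difference is in establishing $N\subset\nbd\delta(A_-^c\cap A_+^c)$: the paper argues set-algebraically that $N\subset\nbd\delta(A_+^c)\cap A_-^c\subset\nbd\delta(\partial A_+)\cup(A_-^c\cap A_+^c)$ and uses $\partial A_+\subset A_-^c\cap A_+^c$, whereas you take a point of $N$, pick a nearby point of $Z$, and follow the segment until it crosses $\partial A_-$ (which likewise lies in the facet); these are the same idea seen from $A_+$'s boundary versus $A_-$'s boundary.
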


\begin{proof}
By definition, $Z \subset A_+^c$. Moreover, the second result in
Lemma~\ref{le:u-psi-shift} is equivalent to
\begin{align*}
\nbd\rho(Y) \cap O \subset A_-^c.
\end{align*}
We can therefore estimate
\begin{align}
\label{N-bound}
N = \nbd\delta(Z) \cap Y \cap O \subset \nbd\delta(A_+^c) \cap A_-^c
\subset \nbd\delta(\partial A_+) \cup (A_+^c \cap A_-^c).
\end{align}
Since $A_+$ is open, we have $\partial A_+ \subset A_+^c$.
But since $A_- \cap A_+ = \emptyset$ and $A_-$ is also open, we must also have
$\partial A_+ \subset A_-^c$.
Therefore $\partial A_+$ is in the facet, and by assumption on $O$ we have
\begin{align}
\label{boundary-in-facet}
\partial A_+ \subset A_-^c \cap A_+^c \subset O.
\end{align}
Since $O$ is open and $A_-^c \cap A_+^c$ is compact, and
$0\in \interior A_-^c \cap A_+^c$, for $\delta > 0$ small enough we will have
\begin{align*}
\nbd{5\delta}(A_-^c \cap A_+^c) \subset O \qquad \text{and} \qquad
\cl B_\delta^k(0) \subset A_-^c \cap A_+^c.
\end{align*}
Using \eqref{boundary-in-facet} in \eqref{N-bound}, we obtain
\begin{align}
\label{N-4delta}
\nbd{4\delta}(N) \subset \nbd{5\delta}(A_-^c \cap A_+^c) \subset O.
\end{align}
Let us now fix $\alpha > 0$ and $\abs{z'} \leq \delta$.
Using the definition and \eqref{N-4delta}, we can estimate
\begin{align*}
\nbd{3\delta}(N) \subset \nbd{4\delta}(Z) \cap \nbd{3\delta}(Y) \cap
\nbd{-\delta}(O).
\end{align*}
In particular, if $x' \in \nbd{3\delta}(N)$ then $x' + z' \in \nbd\rho(Y) \cap O$
and $x' \in \nbd\rho(Z)$.
Hence Lemma~\ref{le:u-psi-shift} applies, yielding
\begin{align*}
\bar u(x') \leq 0 \leq \bar\psi(x' + z'),
\end{align*}
and therefore \eqref{u-psi-scaled-comp} follows.
If $x' \in \nbd{3\delta}(N) \setminus N$, then we must have $x' + z' \in O$
and at least one of the following:
\begin{itemize}
\item
$x' \notin \nbd\delta(Z)$: Thus $x' + z' \in O \setminus Z$ and therefore $\psi(x'+z') > 0$.
\item
$x' \notin Y$: Thus $u(x') < 0$.
\end{itemize}
We deduce the strict ordering in \eqref{u-psi-scaled-comp} for $x \notin N$.
\end{proof}

The previous corollary has the following important direct consequence.

\begin{lemma}[cf. {\cite[Lemma 5.4]{GGP13JMPA}}]
\label{le:strict_order}
Suppose that \eqref{gp-bar} is satisfied for some $\rho > 0$.
By adding the term $\abs{y}^2$ to $f(y)$ and $\abs{t - \hat t}^2$ to $g(t)$ if necessary,
there exists $0 < \delta < \rho/5$ such that
for all $\abs{z'} \leq \delta$ and $\alpha > 0$
\begin{align*}
u(x,t) - \alpha \bar \psi(x' + z' - \hat x')
- f(x'' - \hat x'') - g(t) - \hat p \cdot (x - \hat x)
\leq u(\hat x, \hat t) - g(\hat t)
\end{align*}
whenever
\begin{align*}
x' - \hat x' \in \nbd{3\delta}(N),\ \abs{x'' - \hat x''} \leq \rho,\ \abs{t - \hat t} \leq \rho,
\end{align*}
with a \emph{strict} inequality outside of  $\set{(x,t): x' - \hat x' \in N,\ x'' = \hat x'',\ t = \hat t}$.
\end{lemma}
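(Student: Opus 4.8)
The plan is to first reduce to the situation where $f$ and $g$ already contain the indicated quadratic terms, and then simply read off the inequality from the definition of $\bar u$ together with \eqref{u-psi-scaled-comp} of the preceding corollary.

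\emph{Step 1 (normalization).} Write $f,g$ for the functions appearing in the original stratified faceted test function $\vp$, and replace them by $\tilde f(y):=f(y)+\abs y^2$ and $\tilde g(t):=g(t)+\abs{t-\hat t}^2$; these are the functions meant in the statement. The replacement keeps $\tilde f\in C^2$ with $\tilde f(0)=0$, $\nabla\tilde f(0)=0$, and $\tilde g\in C^1$, so $\vp$ remains a stratified faceted test function at $(\hat x,\hat t)$ with gradient $\hat p$ and pair $(A_-,A_+)$. Moreover, it only decreases the left-hand side of \eqref{general-position} while leaving the right-hand side unchanged (the added terms vanish at $(\hat x,\hat t)$), so \eqref{general-position}, hence \eqref{gp-bar}, still holds with the same $\rho$. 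I will keep $\bar u$ and the sets $Y,Z,N$ and the constant $0<\delta\le\rho/5$ exactly as produced by the preceding corollary, i.e. built from the \emph{original} $f,g$; with these choices \eqref{u-psi-scaled-comp} is available.

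\emph{Step 2 (reduction to a pointwise bound).} Fix $\abs{z'}\le\delta$, $\alpha>0$, and $(x,t)$ with $x'-\hat x'\in\nbd{3\delta}(N)$, $\abs{x''-\hat x''}\le\rho$, $\abs{t-\hat t}\le\rho$. Since $x-\hat x=\TT(x'-\hat x',x''-\hat x'')$ we have $\hat p\cdot(x-\hat x)=\hat p\cdot\TT(x'-\hat x',x''-\hat x'')$ and $u(\hat x+\TT(x'-\hat x',x''-\hat x''),t)=u(x,t)$, so the difference of the two sides of the claimed inequality equals
\begin{align*}
\Big[u(x,t)-u(\hat x,\hat t)-f(x''-\hat x'')-g(t)+g(\hat t)-\hat p\cdot(x-\hat x)\Big]-\abs{x''-\hat x''}^2-\abs{t-\hat t}^2-\alpha\bar\psi(x'+z'-\hat x').
\end{align*}
Because the pair $(x''-\hat x'',t)$ lies in the range over which the supremum defining $\bar u$ is taken, the bracket is bounded above by $\bar u(x'-\hat x')$ directly from that definition.

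\emph{Step 3 (conclusion).} By \eqref{u-psi-scaled-comp}, valid since $x'-\hat x'\in\nbd{3\delta}(N)$ and $\abs{z'}\le\delta$, the bracket is $\le\bar u(x'-\hat x')\le\alpha\bar\psi(x'+z'-\hat x')$, so the displayed difference is $\le-\abs{x''-\hat x''}^2-\abs{t-\hat t}^2\le0$, which is the non-strict inequality. For strictness I would split into cases: if $(x''-\hat x'',t)\neq(0,\hat t)$ then $\abs{x''-\hat x''}^2+\abs{t-\hat t}^2>0$ and the bound is strict; if $(x''-\hat x'',t)=(0,\hat t)$ but $x'-\hat x'\notin N$ then the strict part of \eqref{u-psi-scaled-comp} gives $\bar u(x'-\hat x')<\alpha\bar\psi(x'+z'-\hat x')$ and the bound is again strict; the remaining case $x'-\hat x'\in N$, $x''=\hat x''$, $t=\hat t$ is precisely the exceptional set. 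The only genuinely delicate point — and hence the ``main obstacle'' — is Step~2, namely keeping the rotated coordinates $x=\TT(x',x'')$ straight so that the bracket is literally an admissible argument of the supremum defining $\bar u$; everything else is bookkeeping and a direct appeal to the already-proven \eqref{u-psi-scaled-comp}.
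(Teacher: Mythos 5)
Your argument is correct and is essentially the intended one: the paper gives no explicit proof here (referring only to the analogue \cite[Lemma 5.4]{GGP13JMPA}), and the natural route is exactly what you do — unwrap the definition of $\bar u$ in the rotated coordinates, apply \eqref{u-psi-scaled-comp}, and let the added quadratic terms $\abs{x''-\hat x''}^2$ and $\abs{t-\hat t}^2$ supply strictness off the slice $x''=\hat x''$, $t=\hat t$, while the strict part of \eqref{u-psi-scaled-comp} supplies strictness when $x'-\hat x'\notin N$. The bookkeeping you flag in Step~2 is handled correctly: with $w'=x'-\hat x'$, $w''=x''-\hat x''$, the constraints $\abs{w''}\le\rho$, $\abs{t-\hat t}\le\rho$ make the bracket an admissible argument of the supremum defining $\bar u(w')$, and your Step~1 normalization correctly notes that adding the nonnegative quadratic terms only lowers the left side of \eqref{general-position}, so \eqref{gp-bar} persists with the same $\rho$ and the corollary's $\delta$, $N$ built from the original $f,g$ remain available.
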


We shall now proceed with the proof of stability.
By Proposition~\ref{pr:curvature-as-min-section},
for $L > 0$ sufficiently large and $\Gamma' = \R^k / L \Z^k$,
we can find a function $\xi \in \Lip(\Gamma')$
such that $\xi(x') = \bar\psi(x')$ on a neighborhood of the facet
$\facet A$ such that $\xi \in \domain(\partial \SE_{\hat p}(\cdot; \Gamma'))$
and
$\Lambda_{\hat p}(\bar \psi) = -\partial^0 \SE_{\hat p}(\xi; \Gamma')$
a.e. on $\facet A$.
By making the set $O$ smaller if necessary,
we can assume that $\xi = \bar\psi$ on $O$.
Let $\delta > 0$ be from Lemma~\ref{le:strict_order}.

Fix $\alpha > 0$.
Since $\nabla f(0) = 0$,
we can find $\theta_\alpha > 0$ and $f_\alpha \in \Lip(\Gamma'')$,
$\Gamma'' = \R^{n-k}/L \Z^{n-k}$,
such that $f_\alpha(x'') = f(x'')$ for $\abs{x''} \leq 2\theta$
with $\norm{\nabla f_\alpha}_\infty \leq \alpha \norm{\nabla\xi}_\infty$.
Let us define the function
\begin{align}
\label{def-psi}
\psi(x) = \psi_\alpha(x) = \alpha \xi\pth{x'} + f_\alpha\pth{x''},
\qquad x \in \Gamma = \TT(\Gamma' \times \Gamma'').
\end{align}
We see that $\psi \in \Lip(\Gamma)$ and therefore by
Lemma~\ref{le:subdiff-slicing} $\psi \in \domain(\partial E_{\hat p}'(\cdot; \Gamma))$.
We can estimate
\begin{align}
\label{lip-bound-psi-alpha}
\norm{\nabla \psi_\alpha}_\infty \leq 2\alpha \norm{\nabla \xi}_\infty.
\end{align}
In particular,
if $\alpha$ is sufficiently small,
$\partial E_{\hat p}(\psi_\alpha) = \partial E_{\hat p}'(\psi_\alpha)$
by Lemma~\ref{le:subdiff-homog-relation}.

From now on we fix one such $\alpha$ and we write $\psi = \psi_\alpha$.
For given $a >0$ let $\psi_a$ and $\psi_{a,m}$ be the solutions
of the resolvent problems in Proposition~\ref{pr:resolvent-problems}
for energies $E_W = E_{\hat p}$ and $E_m = E_{W_m(\cdot - \hat p) - W(\hat p)}$, respectively, on $\Gamma$.
Note that these energies satisfy all the assumptions of Proposition~\ref{pr:resolvent-problems}.

For given $a > 0$ and $\abs{z'} \leq \delta$, we
define the set of maxima
\begin{align*}
A_{a, z'} := \argmax_{(x,t) \in M_2} \bra{u(x + \hat x, t + \hat t) -
\psi_a(x + \TT_V z') - \hat p \cdot x - g(t + \hat t)}
\end{align*}
where $M_s := \set{(x,t): x' \in \nbd{s\delta}(N),\ \abs{x''} \leq s \theta,\ \abs t \leq s\delta}$.
Note that $\psi(x + \TT_V z') = \alpha \bar \psi(x' + z') + f(x'')$ for $(x,t) \in M_2$,
$\abs{z'} \leq \delta$.
Due to the uniform convergence $\psi_a \rightrightarrows \psi$
on $\Gamma$ from Proposition~\ref{pr:resolvent-problems},
and the strict ordering of Lemma~\ref{le:strict_order},
we have that there exists $a_0 > 0$, independent of $z'$, such that
\begin{align}
\label{Az-in-M1}
\emptyset \neq A_{a,z'} \subset M_1 \qquad \text{for all $\abs{z'} \leq \delta$, $a < a_0$.}
\end{align}

We now fix one such $a < a_0$ and find $\abs{z'} \leq \delta$ such that
\begin{align}
\label{choice-of-z'}
\psi_a(\TT_V z') - \alpha\bar\psi(z')
= \min_{\abs{w'} \leq \delta} \bra{\psi_a(\TT_V w') - \alpha\bar\psi(w')}.
\end{align}
As in \cite{GGP13JMPA}, $z'$ is chosen in such a way that Lemma~\ref{le:resolvent-order} below holds.

Due to the uniform convergence $\psi_{a,m} \rightrightarrows \psi_a$
as $m\to\infty$ there exists $(x_a, t_a) \in A_{a,z'}$ and
a sequence $(x_{a,m}, t_{a,m})$ (for a subsequence of $m$) of local
maxima of
\begin{align*}
(x,t) \mapsto u_m(x + \hat x,t + \hat t) - \psi_{a,m}(x + \TT_V z') - \hat p \cdot x - g(t + \hat t)
\end{align*}
such that $(x_{a,m}, t_{a,m}) \to (x_a, t_a)$ as $m\to\infty$
(along a subsequence).

Recall the definitions of $h_a$ and $h_{a,m}$ from Proposition~\ref{pr:resolvent-problems}.
Since $\psi_{a,m} \in C^{2,\al}(\Gamma)$
and $u_m$ is a viscosity subsolution of \eqref{regularized-problem}, we must have
\begin{align*}
&g'(t_{a,m} + \hat t) + F(\nabla \psi_{a,m}(x_{a,m} + \TT_V z') + \hat p, h_{a,m}(x_{a,m} + \TT_V z'))\\
&
\begin{aligned}
= g'(t_{a,m} + \hat t) + F\big(&\nabla \psi_{a,m}(x_{a,m} + \TT_V z') + \hat p,\\
&\trace\bra{(\nabla_p^2
W_m)(\nabla \psi_{a,m} + \hat p) \nabla^2 \psi_{a,m}}(x_{a,m} + \TT_V z')\big) \leq 0.
\end{aligned}
\end{align*}

By the uniform Lipschitz bound
$\norm{\nabla \psi_{a,m}}_\infty \leq \norm{\nabla \psi}_\infty \leq C\alpha$ from \eqref{lip-bound-psi-alpha},
and $h_{a,m} \rightrightarrows h_a$ as $m \to\infty$,
we can find a point $p_a \in \Rn$, $\abs{p_a - \hat p} \leq C\alpha$,
and send $m\to\infty$ along a \emph{subsequence} to recover
\begin{align}
\label{subsol ha}
g'(t_a + \hat t) &+ F(p_a, h_a(x_a + \TT_V z')) \leq 0.
\end{align}
To estimate $h_a(x_a + \TT_V z')$, we prove the following lemma.

\begin{lemma}[{cf. \cite[Lemma~5.5]{GGP13JMPA}}]
\label{le:resolvent-order}
We have
\begin{align*}
h_a(x_a + \TT_V z') \leq h_a(\TT_V z') = \min_{\abs{w'}\leq \delta} h_a(\TT_V w').
\end{align*}
\end{lemma}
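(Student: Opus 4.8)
The plan parallels \cite[Lemma~5.5]{GGP13JMPA}: use the slicing of the resolvent to see that $h_a$ depends, near the facet, only on the $V$-component of its argument, and then sandwich $h_a(x_a+\TT_V z')$ by playing the maximality of $(x_a,t_a)$ over $M_2$ against the strict ordering of Lemma~\ref{le:strict_order}.

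First I would record the decomposition. Since $\psi=\psi_\alpha$ has the form $\alpha\xi(x')+f_\alpha(x'')$, Lemma~\ref{le:subdiff-slicing} gives $\psi_a(x)=\bar\psi_a(x')+f_\alpha(x'')$, where $\bar\psi_a:=(I+a\partial\SE_{\hat p})^{-1}(\alpha\xi)$ on $\Gamma'$; for this one uses that $\alpha$, and hence $\norm{\nabla\psi_a}_\infty\le\norm{\nabla\psi}_\infty$, is small enough that $\partial E_{\hat p}=\partial E_{\hat p}'$ at $\psi_a$ by Lemma~\ref{le:subdiff-homog-relation}. Since $\xi=\bar\psi$ on $O$ and $\TT$ is linear, it follows that
\begin{align*}
a\,h_a(\TT(w',w'')) = \bar\psi_a(w') - \alpha\bar\psi(w') \qquad \text{for all $w'\in O$ and all $w''$,}
\end{align*}
so $h_a$ is independent of $w''$ on this set; in particular $h_a(x_a+\TT_V z')=h_a(\TT_V(x_a'+z'))$. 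For $\abs{w'}\le\delta$ the point $w'$ lies in the facet $\facet A$, which contains $\cl B^k_\delta(0)$, so $\bar\psi(w')=0$ and $a\,h_a(\TT_V w')=\bar\psi_a(w')$. The equality $h_a(\TT_V z')=\min_{\abs{w'}\le\delta}h_a(\TT_V w')$ is then precisely the choice \eqref{choice-of-z'} of $z'$, which in this notation reads $\bar\psi_a(z')=\min_{\abs{w'}\le\delta}\bar\psi_a(w')$.

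For the inequality $h_a(x_a+\TT_V z')\le h_a(\TT_V z')$ I would first note that $(0,0)\in M_2$: indeed $0\in N$, since $0\in\interior(\facet A)\subset A_+^c\cap O\subset Z$, since $\bar u(0)\ge0$ (take $x''=0$, $t=\hat t$ in the definition of $\bar u$, using $f(0)=0$), and since $0\in O$. Comparing the value of $(x,t)\mapsto u(x+\hat x,t+\hat t)-\psi_a(x+\TT_V z')-\hat p\cdot x-g(t+\hat t)$ — which is maximized over $M_2$ at $(x_a,t_a)$ — at the points $(x_a,t_a)$ and $(0,0)$ yields a lower bound for $u(x_a+\hat x,t_a+\hat t)-\hat p\cdot x_a-g(t_a+\hat t)$, while the strict ordering of Lemma~\ref{le:strict_order} applied at $(x_a+\hat x,t_a+\hat t)$ (legitimate since $x_a'\in\nbd\delta(N)\subset\nbd{3\delta}(N)$, $\abs{x_a''}\le\theta$, $\abs{t_a}\le\delta$) yields a matching upper bound in terms of $\alpha\bar\psi(x_a'+z')+f(x_a'')$. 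Subtracting the two bounds and converting everything through the decomposition above — using $x_a'+z'\in\nbd{2\delta}(N)\subset O$ and $\abs{x_a''}\le\theta$ so that $f_\alpha(x_a'')=f(x_a'')$ and $f_\alpha(0)=0$ — the $f$-terms cancel and one is left with exactly $\bar\psi_a(x_a'+z')-\alpha\bar\psi(x_a'+z')\le\bar\psi_a(z')$, that is, $a\,h_a(x_a+\TT_V z')\le a\,h_a(\TT_V z')$.

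The main obstacle here is bookkeeping rather than any single estimate: one must keep track of which generalized neighborhoods ($O$, the $\nbd{j\delta}(N)$) contain the relevant competitor points, check $0\in N$, and, crucially, fix $\alpha$ small enough at the outset so that the uniform Lipschitz bounds of Proposition~\ref{pr:resolvent-problems}, the identification $\partial E_{\hat p}=\partial E_{\hat p}'$ of Lemma~\ref{le:subdiff-homog-relation}, and the slicing identity of Lemma~\ref{le:subdiff-slicing} all apply to $\psi_a$, not only to $\psi$. Once these are in place, the two one-sided bounds — one from the maximality of $(x_a,t_a)$, one from the strict ordering — close up to give the asserted inequality, and the equality is immediate from the defining property \eqref{choice-of-z'} of $z'$.
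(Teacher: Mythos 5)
Your proof is correct and follows essentially the same strategy as the paper's. Both reduce the inequality to a pairing of (i) the maximality of $(x_a,t_a)$ tested against the competitor $(0,0)\in M_2$, and (ii) the one-sided ordering of $u$ against $\psi$ near the facet, followed by a cancellation of the $f$-terms via the slicing/decomposition of $\psi$ and $\psi_a$, and both read the equality directly off \eqref{choice-of-z'} together with $\bar\psi\equiv 0$ on $\cl B_\delta^k(0)$.

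The only cosmetic difference is the choice of auxiliary ordering inequality: the paper invokes the two halves of Lemma~\ref{le:u-psi-shift} separately (once for the bracket in \eqref{main-ineq-2}, once through $\bar\psi(x_a'+z')\ge 0$ to obtain \eqref{main-ineq-1}), whereas you invoke Lemma~\ref{le:strict_order} once, which packages both halves into the single inequality $\bar u(x')\le\alpha\bar\psi(x'+z')$. This is slightly leaner since the $\alpha\bar\psi(x_a'+z')$ term is exactly the missing piece of $\psi(x_a+\TT_V z')$ and you never need to discard its sign; but the underlying content is identical, as Lemma~\ref{le:strict_order} is itself derived from Lemma~\ref{le:u-psi-shift}. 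Your explicit verification of $(0,0)\in M_2$ (via $0\in N$) is a detail the paper leaves tacit, and your use of Lemma~\ref{le:subdiff-slicing} to express $h_a$ in terms of $\bar\psi_a-\alpha\bar\psi$ makes the reduction to the $V$-variables explicit rather than implicit, but both of these are bookkeeping rather than new ideas.
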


\begin{proof}
We chose $z'$ so that \eqref{choice-of-z'} holds and therefore the equality above holds as well.
Therefore we only need to show the inequality.
Recalling the definition of $h_a$, we have to show that
\begin{align}
\label{main-ineq-order}
\psi_a(x_a + \TT_Vz') - \psi(x_a + \TT_V z') \leq
\psi_a(\TT_Vz') - \psi(\TT_V z').
\end{align}
We begin by expressing the second term on the left-hand side
using \eqref{def-psi}, which yields
\begin{align*}
- \psi(x_a + \TT_V z') = -\alpha \bar\psi\pth{x_a'+ z'} - f\pth{x_a''}.
\end{align*}
Since $(x_a, t_a) \in A_{a,z'} \in M_1$ by \eqref{Az-in-M1},
clearly
\begin{align}
\label{point-in-neighb}
x_a' + z' \in \nbd{3\delta}(Z) \cap
\nbd{2\delta}(Y) \cap O
\end{align}
and therefore $\bar\psi\pth{x_a' + z'} \geq 0$ by Lemma~\ref{le:u-psi-shift}.
This implies
\begin{align}
\label{main-ineq-1}
- \psi(x_a + \TT_V z') \leq -f\pth{x_a''}.
\end{align}
For the first term in \eqref{main-ineq-order},
we use the fact that $(x_a, t_a)$ is a point of maximum
and therefore
\begin{align*}
u(x_a + \hat x, t_a + \hat t) - \psi_a(x_a + \TT_V z') - \hat p \cdot x_a -g(t_a + \hat t) \geq u(\hat x, \hat t) - \psi_a(\TT_V z') - g(\hat t).
\end{align*}
After rearranging the terms, we obtain
\begin{align*}
\psi_a(x_a + \TT_V z') \leq \bra{u(x_a + \hat x, t_a + \hat t) - u(\hat x, \hat t) - \hat p \cdot x_a - g(t_a + \hat t) + g(\hat t)} + \psi_a(\TT_V z').
\end{align*}
We use \eqref{point-in-neighb} again and therefore the first inequality of Lemma~\ref{le:u-psi-shift}
allows us estimate the term in the bracket from above by $f(x_a'')$,
yielding
\begin{align}
\label{main-ineq-2}
\psi_a(x_a + \TT_V z') \leq \psi_a(\TT_V z')
+ f\pth{x_a''}.
\end{align}
Finally, by the choice of $\de$ we have $z' \in A_-^c \cap A_+^c$ and therefore
\begin{align*}
\psi(\TT_V z') = 0.
\end{align*}
Hence using this observation, and taking the sum of \eqref{main-ineq-1} and \eqref{main-ineq-2}
we arrive at \eqref{main-ineq-order} and the proof of the lemma is finished.
\end{proof}

Then, by the ellipticity of $F$ in \eqref{F ellipticity},
\begin{align*}
g'(t_a + \hat t) + F(p_a + \hat p, \min_{\abs{w'} \leq \delta}  h_a(\TT_V w')) \leq g'(t_a + \hat t) + F(p_a, h_a(x_a + \TT_V z')) \leq 0.
\end{align*}

We send $a \to 0$ along a subsequence $a_l$ such that
$\min h_{a_l} \to \liminf_{a\to 0} \min h_a$
and $p_a \to p_0$ as $l \to\infty$, for some $p_0 \in \R^n$, $\abs{p_0 - \hat p} \leq C\alpha$,
to obtain
\begin{align*}
g'(\hat t) + F(p_0, \liminf_{a\to 0} \min_{\abs{w'} \leq \delta} h_a(T_V w')) \leq 0.
\end{align*}
Now we use Lemma~\ref{le:subdiff-slicing},
in particular the fact that $h_a(x) = \bar h_a(x')$
for some $\bar h_a = (\bar \psi_a - \bar \psi)/a \in \Lip(\Gamma')$
and that $\bar h_a \to -\partial^0 \SE_{\hat p}(\bar\psi; \Gamma')$
in $L^2(\Gamma')$.
Thus, recalling Proposition~\ref{pr:curvature-as-min-section},
\begin{align*}
\liminf_{a\to 0} \min_{\abs{w'} \leq \delta} h_a(\TT_V w')
=\liminf_{a\to 0} \min_{\abs{w'} \leq \delta} \bar h_a(w')
\leq \essinf_{B_\delta(0)} -\partial^0 \SE_{\hat p}(\xi; \Gamma')
= \essinf_{B_\delta(0)} \Lambda_{\hat p}[\bar\psi],
\end{align*}
and ellipticity yields
\begin{align*}
g'(\hat t) + F(p_0,  \essinf_{B_\delta(0)} \Lambda_{\hat p}[\bar\psi]) \leq 0.
\end{align*}
Since this holds for any $\alpha > 0$ small,
and therefore continuity of $F(p,\xi)$ in $p$ and the estimate
$\abs{p_0 - \hat p} \leq C \alpha$ yields
\begin{align*}
g'(\hat t) + F(\hat p,  \essinf_{B_\delta(0)} \Lambda_{\hat p}[\bar\psi]) \leq 0,
\end{align*}
which we needed to prove.

\subsection{Case (ii)}
In this case the test function is also a test function \eqref{regularized-problem} and therefore
the stability follows the standard viscosity solution argument.

\subsection{Case (curvature-free type)}

In this part we will assume that $F$ is of curvature-free type at $p = 0$
in the sense of Definition~\ref{def:level-set-type}.
We need to verify Definition~\ref{def:level-set-test}(i-cf).

Suppose therefore that $\phi(x,t) = g(t)$ on a neighborhood $U$ of a point $(\hat x, \hat t)$
and $u - \phi$ has a local maximum 0 at $(\hat x, \hat t)$.
We want to show that
$g_t (\hat t) + F(0, 0) \leq 0$.

This can be accomplished by perturbing the test function $\phi(x,t)$
and considering the function
\begin{align*}
\phi_{m,q}(x,t) = W^\star_{m; A, q}(x - \hat x) + g(t) +\abs{t - \hat t}^2,
\end{align*}
with $W^\star_{m;A,q}$ given by \cite[Lemma~5.8]{GGP13AMSA},
and with suitable parameters $A, q > 0$.

Let us recall that $W^\star_{m; A,q}$ is the Legendre-Fenchel transform
of
\begin{align*}
W_{m;A,q}(p) := A\pth{W_m(p) + q \psi\pth{\frac pq} - W_m(0)}.
\end{align*}
Here $\psi : \Rn \to [0, \infty]$ is a lower semi-continuous nonnegative convex
function such that $\psi \in C^\infty(B_1(0))$, $\psi(0) = 0$
and $\psi(p) = \infty$ for $\abs p \geq 1$.
The semi-continuity then implies $\psi(p) \to \infty$ as $ \abs p \to 1^-$.

The following lemma was proved in \cite{GGP13AMSA}.

\begin{lemma}[c.f. {\cite[Lemma~5.6]{GGP13AMSA}}]
\label{le:Wstar-props}
For any $m, A, q$ positive, $W^\star_{m;A,q}$ is a strictly convex,
nonnegative, $C^2$ function on $\Rn$ and
\begin{align*}
\abs{\nabla W^\star_{m;A,q}(x)} \leq q, \qquad
0 \leq \mathcal L_m(W^\star_{m;A,q})(x) \leq A^{-1} n,
\qquad x \in \Rn,
\end{align*}
where $\mathcal L_m(u)(x) := \trace \bra{(\nabla^2 W_m) (\nabla u(x)) \nabla^2 u(x)}$ for $u \in C^2(\Rn)$.
\end{lemma}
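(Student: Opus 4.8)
The plan is to obtain every assertion from the theory of convex functions of Legendre type (\cite[Section~26]{Rockafellar}) applied to $W_{m;A,q}$. First I would record the structure of $W_{m;A,q}$. Its effective domain is the open ball $B_q(0)$, since $\psi(p/q) = +\infty$ precisely when $\abs p \geq q$; on this ball $W_{m;A,q}$ is $C^2$ (here $W_m \in C^2(\Rn)$ by assumption~(b) and $\psi \in C^\infty(B_1(0))$), with
\[
\grad^2 W_{m;A,q}(p) = A\bra{(\grad^2 W_m)(p) + q^{-1}(\grad^2\psi)(p/q)} \succeq A a_m^{-1} I \succ 0
\]
by assumption~(d) on $W_m$ and convexity of $\psi$. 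Hence $W_{m;A,q}$ is strictly convex on its open domain, so it is essentially strictly convex, and (see the second-to-last paragraph) it is essentially smooth. Thus $W_{m;A,q}$ is a closed proper convex function of Legendre type, and by \cite[Theorems~26.1 and~26.5]{Rockafellar} its Legendre--Fenchel transform $W^\star_{m;A,q}$ is again of Legendre type, finite on all of $\Rn$ (since $\operatorname{dom} W_{m;A,q}$ is bounded), strictly convex, and differentiable on $\Rn$, with $\grad W^\star_{m;A,q} = (\grad W_{m;A,q})^{-1}$. Since $\grad W_{m;A,q}\colon B_q(0)\to\Rn$ is then a $C^1$-diffeomorphism (inverse function theorem, using the positive definite Hessian), in fact $W^\star_{m;A,q} \in C^2(\Rn)$ and, differentiating $\grad W_{m;A,q}(\grad W^\star_{m;A,q}(x)) = x$,
\[
\grad^2 W^\star_{m;A,q}(x) = \bra{\grad^2 W_{m;A,q}\pth{\grad W^\star_{m;A,q}(x)}}^{-1}.
\]

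Next come the two pointwise bounds. Nonnegativity is immediate from $W_{m;A,q}(0) = A q\psi(0) = 0$, which gives $W^\star_{m;A,q}(x) \geq x\cdot 0 - W_{m;A,q}(0) = 0$. The gradient bound follows from conjugate duality: $\grad W^\star_{m;A,q}(x) \in \partial W^\star_{m;A,q}(x)$ forces $x \in \partial W_{m;A,q}(\grad W^\star_{m;A,q}(x))$, so $\grad W^\star_{m;A,q}(x)$ lies in $\operatorname{dom} W_{m;A,q} = B_q(0)$ and $\abs{\grad W^\star_{m;A,q}(x)} \leq q$.

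For the bound on $\mathcal L_m$, fix $x$, set $p := \grad W^\star_{m;A,q}(x)$, and write $\grad^2 W_{m;A,q}(p) = A(M+N)$ with $M := (\grad^2 W_m)(p)\succ 0$ and $N := q^{-1}(\grad^2\psi)(p/q)\succeq 0$. By the Hessian identity above, $\grad^2 W^\star_{m;A,q}(x) = A^{-1}(M+N)^{-1}$, so
\[
\mathcal L_m(W^\star_{m;A,q})(x) = \trace\bra{M\,\grad^2 W^\star_{m;A,q}(x)} = A^{-1}\trace\bra{M(M+N)^{-1}} = A^{-1}\trace\bra{(M+N)^{-1/2}M(M+N)^{-1/2}}.
\]
The symmetric matrix $(M+N)^{-1/2}M(M+N)^{-1/2}$ is positive definite and, since $0\prec M\preceq M+N$, satisfies $(M+N)^{-1/2}M(M+N)^{-1/2}\preceq I$; hence its trace lies in $(0,n]$, and $0 \leq \mathcal L_m(W^\star_{m;A,q})(x) \leq A^{-1} n$.

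The one step that is not completely routine is the essential smoothness of $W_{m;A,q}$, i.e.\ that $\abs{\grad W_{m;A,q}(p)}\to\infty$ as $\abs p\to q^-$; this is exactly what guarantees that $W^\star_{m;A,q}$ is differentiable (indeed $C^2$) on \emph{all} of $\Rn$, not merely where $\partial W_{m;A,q}$ is single-valued. Since $A\grad W_m$ is bounded on $B_q(0)$, it suffices to show $\abs{\grad\psi(q^{-1}p)}\to\infty$, i.e.\ $\abs{\grad\psi(\omega)}\to\infty$ as $\abs\omega\to1^-$. I would extract this from convexity of $\psi$ together with $\psi(0)=0$: $\grad\psi(\omega)\cdot\omega \geq \psi(\omega)-\psi(0) = \psi(\omega)$, so $\abs{\grad\psi(\omega)} \geq \psi(\omega)/\abs\omega \geq \psi(\omega)$ on $B_1(0)$, and $\psi(\omega)\to\infty$ uniformly as $\abs\omega\to1^-$ by lower semicontinuity of $\psi$ and compactness of the unit sphere. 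Everything else is a direct application of the Legendre-duality facts recalled in \cite[Section~26]{Rockafellar} and of the inverse function theorem, exactly as in \cite[Lemma~5.6]{GGP13AMSA}.
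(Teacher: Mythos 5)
Your proof is correct. The paper itself gives no proof of this lemma---it states it and refers to \cite[Lemma~5.6]{GGP13AMSA}---so there is nothing to compare against line by line, but your argument via Legendre-type convex functions (\cite[Section~26]{Rockafellar}) is the standard route and surely the one taken in the cited reference; the key non-routine step, that $\psi(p)\to\infty$ as $\abs p\to 1^-$ (giving essential smoothness of $W_{m;A,q}$ and hence global $C^2$-regularity of $W^\star_{m;A,q}$), is exactly the observation the present paper highlights right after introducing $\psi$, and the trace bound $\trace\bra{(M+N)^{-1/2}M(M+N)^{-1/2}}\in(0,n]$ is the clean way to get $0\leq\mathcal L_m(W^\star_{m;A,q})\leq A^{-1}n$.
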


We will add the following modification of \cite[Lemma~5.8]{GGP13AMSA}.

\begin{lemma}
\label{le:Wstar-growth}
For every $\delta > 0$ there exists $A > 0$
such that for every $q > 0$ there exist $\e > 0$ and $m_0 > 0$
for which
\begin{align*}
W^\star_{m; A, q}(x) > \e, \qquad \text{for all } x,  \abs x \geq \delta,
\text{ and } m \geq m_0.
\end{align*}
\end{lemma}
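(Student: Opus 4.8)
Our starting point is the simple but crucial observation that the Legendre--Fenchel transform is order-reversing, so $W^\star_{m;A,q}$ is \emph{increasing} in $m$: indeed $W_{m;A,q}$ decreases in $m$ since $W_m$ does. More quantitatively, set $W_{A,q}(p):=A\bigl(W(p)+q\psi(p/q)\bigr)$ (recall $W$ is positively one-homogeneous, so $W(0)=0$ and $W\ge 0$ in our setting). Using $W_m\ge W$, $W_m(0)\ge W(0)=0$, and the local uniform convergence $W_m\searrow W$ (Dini: $\varepsilon_0:=\sup_{\overline{B_q(0)}}(W_m-W)\to 0$), we get $W_{m;A,q}\le W_{A,q}+A\varepsilon_0$ on $\R^n$ (both sides are $+\infty$ outside $\overline{B_q(0)}$), and therefore
\[
W^\star_{m;A,q}(x)\ \ge\ W^\star_{A,q}(x)-A\varepsilon_0 ,\qquad x\in\R^n,\ m\ge m_0 .
\]
So it suffices to produce $A=A(\delta)$ and, for each $q>0$, a constant $c(q)>0$ with $W^\star_{A,q}(x)\ge c(q)$ for all $|x|\ge\delta$; then pick $m_0$ so large that $A\varepsilon_0\le c(q)/2$ for $m\ge m_0$ and take $\varepsilon:=c(q)/4$.

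To bound $W^\star_{A,q}$ from below on $\{|x|\ge\delta\}$ I would test the supremum defining $W^\star_{A,q}(x)=\sup_p\bigl(x\cdot p-W_{A,q}(p)\bigr)$ against radial vectors. Fix $x$ with $|x|\ge\delta$, let $\omega:=x/|x|$, and for $0<t<q$ plug in $p=t\omega$, using one-homogeneity $W(t\omega)=tW(\omega)$:
\[
W^\star_{A,q}(x)\ \ge\ t|x|-AtW(\omega)-Aq\,\psi\!\left(\tfrac{t\omega}{q}\right)\ \ge\ t\delta-AtM_W-Aq\,\psi\!\left(\tfrac{t\omega}{q}\right),
\]
where $M_W:=\max_{S^{n-1}}W<\infty$. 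Choosing $A:=\dfrac{\delta}{2M_W}$ — which depends only on $\delta$ and $W$ — makes $\delta-AM_W=\delta/2$. Since $\psi\ge0$, $\psi(0)=0$, $\psi\in C^\infty(B_1(0))$, the origin minimizes $\psi$, so $\nabla\psi(0)=0$ and Taylor's theorem gives $\psi(v)\le \tfrac{C_\psi}{2}|v|^2$ for $|v|\le\tfrac12$ with $C_\psi:=\|\nabla^2\psi\|_{C^0(\overline{B_{1/2}})}$. Hence for $0<t\le q/2$,
\[
W^\star_{A,q}(x)\ \ge\ \frac{t\delta}{2}-\frac{AC_\psi t^2}{2q}.
\]
Optimizing the right-hand side over $t\in(0,q/2]$ (the unconstrained maximizer is $t^\ast=q\delta/(2AC_\psi)$): if $t^\ast\le q/2$ this yields $W^\star_{A,q}(x)\ge q\delta^2/(8AC_\psi)$, and if $t^\ast>q/2$ the right-hand side is increasing on $(0,q/2]$, so $t=q/2$ gives $W^\star_{A,q}(x)\ge \tfrac{q}{4}\bigl(\delta-\tfrac{AC_\psi}{2}\bigr)>\tfrac{q\delta}{8}$ (using $AC_\psi<\delta$ in that case). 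Either way $W^\star_{A,q}(x)\ge c(q):=\min\{q\delta^2/(8AC_\psi),\,q\delta/8\}>0$ uniformly in $|x|\ge\delta$, which finishes the plan. (If $C_\psi=0$ then $\psi\equiv0$ on $\overline{B_{1/2}}$, the penalty term drops, and one gets the stronger bound $W^\star_{A,q}(x)\ge q\delta/4$.)

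The point that needs the most care — and the reason for the case split above — is that a \emph{single} $A$, fixed before $q$, must work for all $q>0$: the penalty $Aq\psi(p/q)$ scales with $q$ while the admissible test vectors are constrained by $|p|<q$, so one has to check that the quadratic vanishing of $\psi$ at the origin always beats the penalty, no matter how small $q$ is. One-homogeneity of $W$ is exactly what lets the genuinely linear contribution $AtW(\omega)\le AtM_W$ be absorbed into $t\delta$ once and for all by the choice $A=\delta/(2M_W)$; the rest is the elementary optimization and the passage $m\to\infty$ via the monotonicity estimate of the first paragraph.
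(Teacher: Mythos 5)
Your proof is correct, but it takes a somewhat different route from the paper's, and your opening ``observation'' is actually false as stated. You claim that $W_{m;A,q}$ is decreasing in $m$ because $W_m$ is, and hence that $W^\star_{m;A,q}$ is increasing in $m$. This is not true: $W_{m;A,q}(p) = A\bigl(W_m(p) - W_m(0) + q\psi(p/q)\bigr)$, and while both $W_m(p)$ and $W_m(0)$ decrease in $m$, the difference $W_m(p)-W_m(0)$ need not (take $W=|p|$ and $W_m(p)=\sqrt{p^2+1/m}$; then $W_m(1)-W_m(0)$ increases in $m$). Fortunately, this remark is only a throwaway motivation and is not used: the quantitative bound you actually invoke, namely $W_{m;A,q}\le W_{A,q}+A\varepsilon_0$ on all of $\R^n$ (which uses only $W_m\ge W$, $W_m(0)\ge 0$, and $\sup_{\overline B_q}(W_m-W)\le\varepsilon_0$), is correct, and with the order-reversal and constant-shift rules for the Legendre--Fenchel transform it gives $W^\star_{m;A,q}\ge W^\star_{A,q}-A\varepsilon_0$. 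That is all you need, so the proof stands once the monotonicity claim is deleted.

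As for the comparison with the paper's proof: both arguments test the sup in the conjugate against a radial vector $p=t\,x/|x|$ and exploit one-homogeneity of $W$, but the details differ. The paper works directly with $W_{m;A,q}$, sets $t=q/2$, and absorbs both $W$ and $\psi$ at radius $1/2$ into a single constant $\mu=\sup_{|p|=1/2}[W(p)+\psi(p)]$, so that $A=\delta/(8\mu)$ and the estimate closes in one line without any Taylor expansion; the error from $W_m\to W$ enters as the single term $q\mu$. You instead first strip out the $m$-dependence via the constant-shift inequality, reducing to a lower bound on $W^\star_{A,q}$, and then bound $W^\star_{A,q}$ by Taylor-expanding $\psi$ to second order near the origin and optimizing over $t\in(0,q/2]$, which forces a case split in $t^\ast\lessgtr q/2$. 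The paper's choice is more economical (a fixed $t$, no Taylor, no case analysis); your version has the small advantage that $A=\delta/(2M_W)$ depends only on $W$ and not on $\psi$, and it makes explicit why a single $A$ suffices for all $q$, namely that the quadratic vanishing of $\psi$ at the origin scales compatibly with the penalty $q\psi(\cdot/q)$. Both are valid.
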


\begin{proof}
Let us define
\begin{align}
\label{def-mu}
\mu := \sup_{\abs p = 1/2} \bra{W(p) + \psi(p)} \in (0, \infty)
\end{align}
and set for given $\de > 0$
\begin{align*}
A := \frac{\delta}{8\mu}.
\end{align*}
Now we fix $q > 0$ and set
\begin{align*}
\e := \frac{q\delta}{8}.
\end{align*}
By the locally uniform convergence of $W_m \to W$, we can find $m_0 > 0$
such that
\begin{align}
\label{def-m0}
\sup_{\abs p = q/2} \abs{W_m(p) - W_m(0) - W(p)} \leq q \mu
\qquad m \geq m_0.
\end{align}
Now whenever $\abs{x} \geq \delta$ and $m \geq m_0$,
we can take $p = \frac q2 \frac{x}{\abs{x}}$
and estimate,
using \eqref{def-m0}, one-homogeneity of $W$, and \eqref{def-mu},
\begin{align*}
W^\star_{m; A, q} &\geq x \cdot p - W_{m; A, q} (p)\\
&= \frac{q}{2} \abs{x} - A \pth{W_m(p) + q \psi\pth{\frac pq}
- W_m(0)}\\
&\geq \frac{q}{2} \abs{x} - A \pth{W(p) + q \psi\pth{\frac pq} + q\mu}\\
&= \frac{q}{2} \abs{x} - A \pth{qW\pth{\frac pq} + q \psi\pth{\frac pq} + q\mu}\\
&\geq \frac{q}{2} \abs{x} - 2 Aq \mu \geq \frac{q\delta}{4} > \e.
\end{align*}
\end{proof}

\begin{lemma}
\label{le:Wstar-zero}
For any $A, q$ positive
\begin{align*}
W^\star_{m; A, q}(0) \to 0 \qquad \text{as } m \to \infty.
\end{align*}
\end{lemma}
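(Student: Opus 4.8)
The plan is to evaluate $W^\star_{m;A,q}(0)$ directly from the definition of the Legendre--Fenchel transform. Since $W^\star_{m;A,q}$ is the conjugate of
\begin{align*}
W_{m;A,q}(p) = A\pth{W_m(p) + q\,\psi\pth{\tfrac pq} - W_m(0)},
\end{align*}
we have $W^\star_{m;A,q}(0) = \sup_{p\in\Rn}\bra{0\cdot p - W_{m;A,q}(p)} = -\inf_{p\in\Rn} W_{m;A,q}(p)$, so the whole question reduces to pinning down $\inf_p W_{m;A,q}$ and showing it tends to $0$ from below.

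First I would record the lower bound $W^\star_{m;A,q}(0)\geq 0$; this is immediate from the nonnegativity of $W^\star_{m;A,q}$ in Lemma~\ref{le:Wstar-props}, or alternatively from $\psi(0)=0$, which gives $W_{m;A,q}(0)=0$ and hence $\inf_p W_{m;A,q}(p)\leq 0$. For the matching upper bound I would use that $\psi\geq 0$ and $W_m\geq W\geq 0$ (the latter because $W_m\searrow W$ with $W$ nonnegative): for every $p$ with $\abs p<q$ this yields $W_{m;A,q}(p)\geq A\pth{0+0-W_m(0)} = -A\,W_m(0)$, while for $\abs p\geq q$ the term $q\,\psi(p/q)=+\infty$ forces $W_{m;A,q}(p)=+\infty$. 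Thus $\inf_p W_{m;A,q}(p)\geq -A\,W_m(0)$ and therefore $0\leq W^\star_{m;A,q}(0)\leq A\,W_m(0)$.

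Finally, since $W_m\searrow W$ locally uniformly and $W(0)=0$, we have $W_m(0)\to 0$ as $m\to\infty$, and the squeeze $0\leq W^\star_{m;A,q}(0)\leq A\,W_m(0)\to 0$ finishes the proof. There is no genuine obstacle here; the only point needing a little care is recalling that $W$ — and hence each regularization $W_m$ — is nonnegative and vanishes at the origin, which is what makes the crude bound by $-W_m(0)$ effective.
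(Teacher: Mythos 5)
Your proof is correct and follows essentially the same argument as the paper: both use $\psi\geq 0$ and $W_m\geq 0$ (from $W_m\searrow W\geq 0$) to squeeze $0\leq W^\star_{m;A,q}(0)\leq A\,W_m(0)\to A\,W(0)=0$. You simply unpack the Legendre transform more explicitly, but the content is identical.
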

\begin{proof}
Since $W_m$ is a decreasing sequence
converging to $W$ locally uniformly, we have $W_m \geq \min W = 0$
and $W_m(0) \to W(0) = 0$.
As also $\psi \geq 0$, it follows that
\begin{align*}
0 \leq W^\star_{m; A, q}(0) \leq A W_m(0) \to 0.
\end{align*}
\end{proof}

Let us now choose $\de > 0$ small enough
so that $Q := \cl B_\delta(\hat x) \times [\hat t - \delta, \hat t + \delta] \subset U$.
We have $u - \phi \leq 0$ on $Q$ with equality at $(\hat x, \hat t)$.
For this $\delta$ we fix $A > 0$ from Lemma~\ref{le:Wstar-growth}.

Now due to the same lemma for any $q > 0$ we also have $\e, m_0 > 0$
such that
\begin{align*}
u - \phi_{m, q} < -\e \qquad \text{on } \pth{\partial B_\delta(\hat x)}
\times [\hat t - \delta, \hat t + \delta],
\text{ for } m \geq m_0.
\end{align*}
Because $W^\star_{m;A,q} \geq 0$ by Lemma~\ref{le:Wstar-props},
we also have
\begin{align*}
u - \phi_{m,q} \leq  - \delta^2 \qquad \text{on } x \in B_\delta(\hat t),
\ t = \hat t \pm \delta, \text{ for all } m.
\end{align*}
Since $\phi_{m,q}(0) \to 0$ as $m \to \infty$ by Lemma~\ref{le:Wstar-zero}
and since $\phi_{m,q}$ is uniformly Lipschitz in $m$ by Lemma~\ref{le:Wstar-props},
we conclude that there must exist a subsequence $m_j$
and a sequence of points $(x_j, t_j)$
such that $u_{m_j} - \phi_{m_j, q}$ has a local maximum at $(x_j, t_j)$,
$x_j \in B_\delta(\hat x)$,
and, moreover, $t_j \to \hat t$.

Let us now choose $q_k = 1/k$. By the standard diagonalization
argument we can find a subsequence $m_k$ such that
$u_{m_k} - \phi_{m_k, q_k}$ has a local maximum at a point $(x_k, t_k)$,
$x_k \in B_\delta(\hat x)$,
and $\abs{t_k - \hat t} \leq 1/k$.
Thus we introduce
\begin{align*}
p_k &:= \nabla \phi_{m_k, q_k}(x_k, t_k) = \nabla W^\star_{m_k; A, q_k}(x_k - \hat x), \text{ and}\\
\xi_k &:= \mathcal L_{m_k}\pth{\phi_{m_k, q_k}(\cdot, t_k)}(x_k) =
\mathcal L_{m_k}\pth{W^\star_{m_k; A, q_k}}(x_k - \hat x).
\end{align*}
By the assumption that $u_{m_k}$ is a subsolution of \eqref{regularized-problem},
we have
\begin{align*}
g'(t_k) + 2 (t_k - \hat t) + F(p_k, \xi_k) \leq 0.
\end{align*}
Furthermore, from Lemma~\ref{le:Wstar-props} and the choice of $q_k$
we have the bounds
\begin{align*}
\abs{p_k} \leq 1/k, \qquad \abs{\xi_k} \leq A^{-1} n \qquad \text{for all } k,
\end{align*}
where $A$ is independent of $k$.

Since $F$ is of curvature-free type at $p = 0$, Definition~\ref{def:level-set-type},
we finally obtain
\begin{align*}
g'(\hat t) + F(0, 0) &=
g'(\hat t) + \liminf_{p \to 0} \inf_{\abs{\xi} \leq A^{-1} n} F(p, \xi)
\\
&\leq \liminf_{k \to \infty} \bra{g'(t_k) + 2(t_k - \hat t) + F(p_k, \xi_k)} \leq 0.
\end{align*}

The supersolution case can be handled similarly with a test function
\begin{align*}
\phi_{m,q}(x,t) = -W^\star_{m; A, q}(-x + \hat x) + g(t) +\abs{t - \hat t}^2.
\end{align*}
This finishes the proof of stability for the curvature-free test function case.

The proof of Theorem~\ref{th:stability quadratic} is complete.

\subsection{Approximation by linear growth functionals}

In this section we prove the following approximation result:

\begin{theorem}
\label{th:linear growth stability}
Suppose that $F$ is of curvature-free type at $p_0 = 0$ and that $\set{W_m}_{n\in \N} \subset C(\R^n) \cap
C^2(\R^n\setminus\set0)$ are positively one-homogeneous functions with bounded, strictly convex sub-level
sets $\set{W_m
\leq 1}$ such that $W_m \rightrightarrows W$ uniformly on $\cl B_1(0)$.
Let $u_m$ be the unique viscosity solutions of
\begin{align*}
\left\{
\begin{aligned}
u_t + F(\nabla u, \divo \nabla_p W_m(\nabla u)) &= 0, && \text{in $\R^n \times (0,\infty)$,}\\
u(\cdot, 0) &= u_{0, m}, && \text{in $\R^n$},
\end{aligned}
\right.
\end{align*}
where $u_{0, m} \in C(\R^n)$ are uniformly bounded.
Then
\begin{align*}
\overline{u} &:= \halflimsup_{m\to\infty} u_m, &
\underline{u} &:= \halfliminf_{m\to\infty} u_m
\end{align*}
are a viscosity subsolution and a viscosity supersolution of \eqref{P}.
\end{theorem}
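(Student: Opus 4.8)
The plan is to mimic the proof of Theorem~\ref{th:stability quadratic}, replacing the smooth elliptic regularizations by the one-homogeneous densities $W_m$ and using Lemma~\ref{le:lingrowthapproximation} (Mosco convergence of one-homogeneous energies) in place of Proposition~\ref{pr:energy-convergence}. First I would record a reduction. Since each $W_m$ is $C^2$ on $\Rn\setminus\set0$ with bounded strictly convex sublevel sets, the equation for $u_m$ is a classical (smooth, singular only at the origin) anisotropic curvature flow: a continuous function is a viscosity solution exactly when it passes the usual test with $C^2$ functions with nonzero gradient, where $\divo\nabla_p W_m(\nabla\vp)=\trace\bra{(\nabla_p^2 W_m)(\nabla\vp)\nabla^2\vp}$ enters with its full value, plus a special test at zero gradient which, since $F$ is of curvature-free type at $0$, is just the test (i-cf) of Definition~\ref{def:level-set-test}. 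Approximating each $W_m$ by $W_{m,j}\in C^2(\Rn)$ with $W_{m,j}\searrow W_m$ satisfying (1)--(4) of Section~\ref{sec:resolvent-approximation} (as in Example~\ref{ex:wm-example} applied to $W_m$), the argument of Theorem~\ref{th:stability quadratic} --- which for the smooth-away-from-origin limit $W_m$ only involves its off-facet and curvature-free cases --- together with the comparison principle for that problem \cite{GG92,IS} shows that the solutions $u_{m,j}$ of \eqref{regularized-problem} with density $W_{m,j}$ and initial data $u_{0,m}$ converge locally uniformly to $u_m$ as $j\to\infty$.

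Now set $u:=\halflimsup_{m\to\infty}u_m$ (upper semicontinuous) and verify tests (i), (ii) of Definition~\ref{def:visc-solution} and (i-cf) of Definition~\ref{def:level-set-test}. For a stratified faceted test function $\vp=\bar\psi(x'-\hat x')+f(x''-\hat x'')+\hat p\cdot x+g(t)$ at $(\hat x,\hat t)$ satisfying the general-position condition \eqref{general-position}, I would run the construction of Section~\ref{se:stability case (i)} verbatim: form $\bar u$ and the sets $Y$, $Z$, $N$, invoke Lemmas~\ref{le:u-psi-shift} and \ref{le:strict_order}, use Proposition~\ref{pr:curvature-as-min-section} to produce $\xi\in\Lip(\Gamma')$, $\Gamma'=\R^k/L\Z^k$, with $\xi=\bar\psi$ near the facet and $\Lambda_{\hat p}[\bar\psi]=-\partial^0\SE_{\hat p}(\xi;\Gamma')$ a.e.\ on $\facet A$, and form $\psi_\alpha(x)=\alpha\xi(x')+f_\alpha(x'')$ on $\Gamma=\TT(\Gamma'\times\Gamma'')$ with $\norm{\nabla\psi_\alpha}_\infty\leq 2\alpha\norm{\nabla\xi}_\infty$. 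The key modification is the perturbed test function: I would solve the resolvent problem $\psi_{a,m}=(I+a\partial E_{W_m(\cdot+\hat p)-W_m(\hat p)}(\cdot;\Gamma))^{-1}\psi_\alpha$. Because $\norm{\nabla\psi_\alpha}_\infty$ is small, the maximum principle for the resolvent keeps $\hat p+\nabla\psi_{a,m}$ in a small neighbourhood of $\hat p\neq0$, where $W_m$ is $C^2$ and uniformly elliptic, so $\psi_{a,m}\in C^{2,\alpha_m}(\Gamma)$ and $\hat p\cdot x+\psi_{a,m}$ is an admissible $C^2$ test function for the $W_m$-problem with $\divo\nabla_p W_m(\hat p+\nabla\psi_{a,m})=h_{a,m}:=(\psi_{a,m}-\psi_\alpha)/a$. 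To pass to the limit I need $\psi_{a,m}\rightrightarrows\psi_a:=(I+a\partial E_{W(\cdot+\hat p)-W(\hat p)}(\cdot;\Gamma))^{-1}\psi_\alpha$ and $h_{a,m}\rightrightarrows h_a$ as $m\to\infty$; this follows, exactly as in Proposition~\ref{pr:resolvent-problems}, from the Mosco convergence $E_{W_m(\cdot+\hat p)-W_m(\hat p)}\to E_{W(\cdot+\hat p)-W(\hat p)}$ on $\Gamma$, which I would obtain by the argument of Lemma~\ref{le:lingrowthapproximation} adapted to convex densities of linear growth converging locally uniformly (lower bound from the supremum representation of the relaxed functional, recovery sequence from smooth approximation and the elementary estimate bounding the difference of the two functionals on a Lipschitz function by $\abs\Gamma$ times the sup of $\abs{W_m-W}$ over the relevant ball). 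Combining the off-facet test for $u_m$ with the geometric ordering and the analogue of Lemma~\ref{le:resolvent-order} (choosing $\abs{z'}\leq\delta$ minimising $\psi_a(\TT_V z')-\alpha\bar\psi(z')$), then sending $m\to\infty$, $a\to0$ and finally $\alpha\to0$ using Lemma~\ref{le:subdiff-homog-relation}, Lemma~\ref{le:subdiff-slicing} (together with $\partial\SE_{\hat p}(\alpha\xi)=\partial\SE_{\hat p}(\xi)$ by one-homogeneity) and the continuity of $F$ in its first argument, I arrive at $g'(\hat t)+F(\hat p,\essinf_{B_\delta(0)}\Lambda_{\hat p}[\bar\psi])\leq0$. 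Test (ii) is the special case $k=\dim\partial W(\hat p_0)=0$: then $W$ is affine near $\hat p_0\neq0$, so $\partial E_{W(\cdot+\hat p_0)-W(\hat p_0)}(\psi_\alpha)=\set0$, $h_a\to0$, and one obtains $\vp_t(\hat x,\hat t)+F(\nabla\vp(\hat x,\hat t),0)\leq0$.

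For the curvature-free test (i-cf) --- $\vp(x,t)=g(t)$ with $u-\vp$ having a local maximum at $(\hat x,\hat t)$ --- the dual test functions $W^\star_{m;A,q}$ of \cite{GGP13AMSA} are not available for the one-homogeneous $W_m$, since $\partial W_m(0)$ is $n$-dimensional so $W^\star$ has a flat part and is only $C^{1,1}$. Instead I would return to the smooth approximations $W_{m,j}$ of the reduction and run the ``Case (curvature-free type)'' argument of the proof of Theorem~\ref{th:stability quadratic} along a diagonal sequence $(m_l,j_l)\to(\infty,\infty)$ chosen so that $u_{m_l,j_l}\to u$ in the half-relaxed sense and $W_{m_l,j_l}\to W$ locally uniformly: Lemmas~\ref{le:Wstar-props}, \ref{le:Wstar-growth}, \ref{le:Wstar-zero} apply to $W^\star_{m_l,j_l;A,q}$ ($\nabla^2 W_{m_l,j_l}$ bounded above and below; $W_{m_l,j_l}(0)=(W_{m_l}*\eta_{j_l})(0)\leq C/j_l\to0$ by the uniform bound on $\set{W_m}$ on $\cl B_1(0)$), the construction produces local maxima of $u_{m_l,j_l}-\phi_{m_l,j_l;A,q}$ near $(\hat x,\hat t)$, and in the limit $g'(\hat t)+F(0,0)=g'(\hat t)\leq0$. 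The supersolution statement is proved symmetrically. The main obstacle is twofold: first, arranging the perturbed resolvent test functions so that they land in the region where $W_m$ is smooth and uniformly elliptic (this is why the facet gradient $\hat p$ must be genuinely away from the origin and the amplitude $\alpha$ small, and why $\psi_{a,m}$ is used as an off-facet test for $u_m$ rather than a faceted one); second, supplying the Mosco-convergence input for the shifted linear-growth energies that replaces Proposition~\ref{pr:energy-convergence} --- with the curvature-free case additionally forcing the extra smoothing layer $W_{m,j}$ because the Legendre-type test functions do not exist for the one-homogeneous $W_m$.
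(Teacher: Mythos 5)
Your proposal has the right overall shape — approximate the one-homogeneous $W_m$ by genuinely uniformly convex densities, use Mosco convergence of the one-homogeneous energies from Lemma~\ref{le:lingrowthapproximation}, and handle the curvature-free test via the Legendre-type test functions $W^\star_{m,j;A,q}$ with a diagonal argument — but there is one step in your Case~(i) argument that fails, and it is precisely the point the paper is forced to navigate around.

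You claim that, because the maximum principle keeps $\hat p+\nabla\psi_{a,m}$ in a small neighbourhood of $\hat p\neq 0$ where $W_m$ is $C^2$, the quasilinear operator is ``uniformly elliptic'' there, giving $\psi_{a,m}\in C^{2,\alpha_m}(\Gamma)$. This is false. Since each $W_m$ is positively one-homogeneous, $\nabla_p W_m$ is $0$-homogeneous, so $\nabla_p^2 W_m(p)\,p=0$ for every $p\neq 0$: the Hessian has a nontrivial kernel in the radial direction at every point, not just at the origin. Being $C^2$ away from $0$ with bounded strictly convex sublevel set does not rescue this; the second-order coefficient matrix $\nabla_p^2 W_m(\hat p+\nabla\psi_{a,m})$ is degenerate in the direction of $\hat p+\nabla\psi_{a,m}$ no matter how small $\norm{\nabla\psi_\alpha}_\infty$ is. Consequently the resolvent equation $\psi_{a,m}-a\,\divo\nabla_p W_m(\hat p+\nabla\psi_{a,m})=\psi_\alpha$ is only degenerate elliptic, and $\psi_{a,m}$ is in general not $C^2$ — so $\hat p\cdot x+\psi_{a,m}$ cannot be used directly as a smooth test function for the $W_m$-problem. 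The paper states this obstacle explicitly at the start of the proof (``the solutions $\psi_{a,m}$ of the resolvent problem for the linear growth energy $E_m$ might not be smooth'') and resolves it by inserting a second regularization layer $W_m^\delta:=W_m*\eta_\delta+\delta|p|^2$, for which $\nabla_p^2 W_m^\delta\geq 2\delta I$, so that $\psi_{a,m}^\delta\in C^{2,\gamma}$ by genuine elliptic regularity; one then passes $\delta\to 0$ first (fixed $m$, Proposition~\ref{pr:resolvent-problems} applied to $E_m^\delta\to E_m$), then $m\to\infty$ (Mosco via Lemma~\ref{le:lingrowthapproximation}), then $a\to 0$. You already introduce exactly such a smoothing $W_{m,j}$ in your preliminary reduction and in your curvature-free case; the fix is to carry it through Case~(i) as well rather than bypassing it, i.e.\ replace the $E_m$-resolvent by the $E_{m}^\delta$-resolvent (equivalently, your $E_{W_{m,j}}$-resolvent), take local maxima against $u_m^\delta$ (your $u_{m,j}$), and send $\delta\to 0$ before $m\to\infty$. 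With that change your argument reduces to the paper's.
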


\begin{proof}
We will follow the proof of Theorem~\ref{th:stability quadratic} with an additional approximation
because the solutions $\psi_{a,m}$ of the resolvent problem for the linear growth energy $E_m$
might not be smooth.
Let us set for $\delta > 0$
\begin{align*}
W_m^\delta(p) := (W_m * \eta_\delta)(p) + \delta |p|^2,
\end{align*}
where $\eta_\delta$ is the standard mollifier with radius $\delta$,
and let $u_m^\delta$ be the unique viscosity solution of
\begin{align}
\label{quad approximation}
\left\{
\begin{aligned}
u_t + F(\nabla u, \divo \nabla_p W_m^\delta(\nabla u_m)) &= 0, && \text{in $\R^n \times (0,\infty)$,}\\
u_m(\cdot, 0) &= u_{0, m}, && \text{in $\R^n$}.
\end{aligned}
\right.
\end{align}
From the standard theory we have that $u_m^\delta \rightrightarrows u_m$ as $\delta \to 0$ locally uniformly on $\R^n
\times [0, \infty)$.

Suppose now that $\varphi$ is a stratified test function at $(\hat x, \hat t)$ with gradient $\hat
p$, as in the proof in Section~\ref{se:stability case (i)}, Case (i) above, for a subsolution.
We proceed as in that proof, but we use an additional perturbation of the test function by solving
the resolvent problem for the energy $E_m^\delta := E_{W_m^\delta(\cdot - \hat p) - W(\hat
p)}$:
we define the unique solution $\psi_{a,m}^\delta \in L^2(\Gamma)$ of
\begin{align*}
\psi_{a,m}^\delta + a \partial E_m^\delta(\psi_{a,m}^\delta) \ni \psi,
\end{align*}
where $\psi$ and $\Gamma$ were given in \eqref{def-psi}. Recall that $\psi_{a,m}^\delta \in
C^{2,\gamma}(\Gamma)$ by the elliptic regularity.

We can apply Proposition~\ref{pr:resolvent-problems} to $E_m^\delta$ and $E_m$ for fixed $m$ in the
limit $\delta \to 0$. We in particular have $\psi_{a,m}^\delta \rightrightarrows \psi_{a,m}$ and
$h_{a,m}^\delta \rightrightarrows h_{a,m}$ as $\delta \to 0$ for fixed $a, m$.

Due to the Mosco convergence of $E_m$ to $E_{\hat p}$ in Lemma~\ref{le:lingrowthapproximation}, we also can apply
Proposition~\ref{pr:resolvent-problems} to $E_m$ and $E_{\hat p}$ in the limit $m \to \infty$.

We now fix $a$ and $z'$ as in \eqref{choice-of-z'}.
Due to the uniform convergence $\psi_{a,m}^\delta \rightrightarrows \psi_{a,m}$ as $\delta \to 0$
and $\psi_{a,m} \rightrightarrows \psi_a$
as $m\to\infty$, there exists $(x_a, t_a) \in A_{a,z'}$ and
a sequence $(x_{a,m}, t_{a,m})$ (for a subsequence of $m$) of local
maxima of
\begin{align*}
(x,t) \mapsto u_m(x + \hat x,t + \hat t) - \psi_{a,m}(x + \TT_V z') - \hat p \cdot x - g(t + \hat t)
\end{align*}
such that $(x_{a,m}, t_{a,m}) \to (x_a, t_a)$ as $m\to\infty$
(along a subsequence), and for each $m$ in this subsequence there exist a sequence
$(x_{a,m}^\delta, t_{a,m}^\delta)$ (for a subsequence of $\delta$ as $\delta \to 0$) of local
maxima of
\begin{align*}
(x,t) \mapsto u_m^\delta(x + \hat x,t + \hat t) - \psi^\delta_{a,m}(x + \TT_V z') - \hat p \cdot x
- g(t + \hat t),
\end{align*}
such that $(x_{a,m}^\delta, t_{a,m}^\delta) \to (x_{a,m}, t_{a,m})$ as $\delta \to 0$ (along a
subsequence).

Since $u_m^\delta$ is a viscosity solution of \eqref{quad approximation}, we have
\begin{align*}
&g'(t^\delta_{a,m} + \hat t) + F(\nabla \psi^\delta_{a,m}(x^\delta_{a,m} + \TT_V z') + \hat p,
h^\delta_{a,m}(x^\delta_{a,m} + \TT_V z'))\\
&
\begin{aligned}
= g'(t^\delta_{a,m} + \hat t) + F\Big(&\nabla \psi^\delta_{a,m}(x^\delta_{a,m} + \TT_V z') + \hat p,\\
&\trace\bra{(\nabla_p^2
W^\delta_m)(\nabla \psi^\delta_{a,m} + \hat p) \nabla^2 \psi^\delta_{a,m}}(x^\delta_{a,m} + \TT_V
z')\Big) \leq 0.
\end{aligned}
\end{align*}
Sending $\delta \to 0$ along a \emph{subsequence} and using the uniform convergence of $h^\delta_{a,m}
\rightrightarrows h_{a,m}$, we can find $p_{a,m}$ with $|p_{a,m} - \hat p| \leq C \alpha$ such that
\begin{align}
\label{ham subsol}
&g'(t_{a,m} + \hat t) + F(p_{a,m},
h_{a,m}(x_{a,m} + \TT_V z')) \leq 0.
\end{align}
Sending $m \to \infty$ along a \emph{subsequence}, we obtain $p_a$ and \eqref{subsol ha}.
Then we finish the proof as in the proof of Theorem~\ref{th:stability quadratic} for Case(i).

Case (ii) as well as the curvature-free case are both straightforward.
\end{proof}

\section{Well-posedness}
\label{sec:well-posedness}

Once the stability with respect to the approximation of the energy density $W$ is established,
we get existence of solutions as in \cite{GGP13JMPA}.

\begin{theorem}[Well-posedness]
\label{th:well-posedness}
Let $W: \R^n \to \R$ be a positively one-homogeneous convex polyhedral function such that the
conclusion of
Corollary~\ref{co:approximate pair sliced} holds for $1 \leq k \leq n-1$, and let $F$ be of curvature-free type at
$p_0 = 0$.
Then for given $u_0 \in C(\R^n)$ such that $u \equiv
c$ on $\R^n \setminus K$ for some compact $K \subset \R^n$ and $c \in \R$ there exists a unique viscosity solution
of
\begin{align}
\label{limit problem}
\left\{
\begin{aligned}
u_t + F(\nabla u, \divo \partial W(\nabla u)) &= 0, && \text{in $\R^n \times (0, \infty)$},\\
u(\cdot, 0) &= u_0, && \text{in $\R^n$.}
\end{aligned}
\right.
\end{align}
Moreover, if $u_0$ is Lipschitz, then
\begin{align*}
\norm{\nabla u(\cdot, t)}_\infty \leq \norm{\nabla u_0}_\infty, \qquad t \geq 0.
\end{align*}
\end{theorem}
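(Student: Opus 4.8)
The plan is to obtain uniqueness directly from the comparison principle Theorem~\ref{th:comparison principle}, and to obtain existence by approximating the crystalline energy $W$ by the smooth regularizations $W_m$ of Section~\ref{sec:resolvent-approximation} (for instance those of Example~\ref{ex:wm-example}), solving the regularized parabolic problems \eqref{regularized-problem} with the common initial datum $u_0$, and passing to the limit via the stability result Theorem~\ref{th:stability quadratic}. For uniqueness: if $u_1$ and $u_2$ are two viscosity solutions of \eqref{limit problem} that are constant $=c$ outside a compact set for each $t$ (or, more generally, converge to $c$ at spatial infinity uniformly on compact time intervals), then $u_1(\cdot,0)=u_2(\cdot,0)$ and Theorem~\ref{th:comparison principle}, applied in both directions on every cylinder $\R^n\times(0,T)$, gives $u_1\equiv u_2$.

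For existence, first observe that each regularized problem \eqref{regularized-problem} is a quasilinear uniformly parabolic equation whose operator $(p,X)\mapsto F(p,\trace\bra{(\nabla_p^2 W_m)(p)X})$ is degenerate elliptic, because $\nabla_p^2 W_m\geq 0$ and $F$ is nonincreasing in its second argument; hence by standard viscosity solution theory \cite{CIL,G06} it admits a unique global viscosity solution $u_m\in C(\R^n\times[0,\infty))$ with $u_m(\cdot,0)=u_0$, and it satisfies a comparison principle. Since $F(0,0)=0$, constants solve \eqref{regularized-problem}, so comparison yields the uniform bound $\inf u_0\leq u_m\leq \sup u_0$. Translation invariance of the equation together with comparison gives $\sup_x\abs{u_m(x+h,t)-u_m(x,t)}\leq \sup_x\abs{u_0(x+h)-u_0(x)}$; in particular, if $u_0$ is Lipschitz then $\norm{\nabla u_m(\cdot,t)}_\infty\leq\norm{\nabla u_0}_\infty$ for all $m$ and $t$. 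By Theorem~\ref{th:stability quadratic}, $\overline u:=\halflimsup_{m\to\infty}u_m$ is a viscosity subsolution and $\underline u:=\halfliminf_{m\to\infty}u_m$ a viscosity supersolution of \eqref{P}, and $\underline u\leq\overline u$ holds trivially.

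The heart of the argument is to verify the initial condition, $\overline u(\cdot,0)=\underline u(\cdot,0)=u_0$, together with enough control at spatial infinity to run the comparison principle. This is done with barriers built from the strictly convex $C^2$ functions $W^\star_{m;A,q}$ of Lemma~\ref{le:Wstar-props}. Fix $x_0$ and $\epsilon>0$. Using the uniform continuity of $u_0$ (valid since $u_0$ is continuous and constant outside a compact set) and Lemmas~\ref{le:Wstar-props}--\ref{le:Wstar-growth}, one selects parameters $A,q,\lambda>0$ and a large constant $C$, all independent of $m$ (this is crucial and relies on the \emph{uniform} bounds $\abs{\nabla W^\star_{m;A,q}}\leq q$ and $0\leq \mathcal L_m(W^\star_{m;A,q})\leq A^{-1}n$), so that $\phi_m^+(x,t):=u_0(x_0)+\epsilon+\lambda W^\star_{m;A,q}(x-x_0)+Ct$ dominates $u_0$ at $t=0$ for $m$ large and is a supersolution of \eqref{regularized-problem}; comparison then gives $u_m(x_0,t)\leq u_0(x_0)+\epsilon+\lambda W^\star_{m;A,q}(0)+Ct$, and letting $m\to\infty$ with $W^\star_{m;A,q}(0)\to 0$ (Lemma~\ref{le:Wstar-zero}) yields $\overline u(x_0,0)\leq u_0(x_0)+\epsilon$. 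The symmetric lower barrier and $\epsilon\to 0$ give $\overline u(\cdot,0)=u_0$, and similarly $\underline u(\cdot,0)=u_0$. Placing the same barriers near $\partial K$ shows in addition that $u_m\to c$ as $\abs x\to\infty$, uniformly in $m$ and in $t$ on compact intervals, so $\overline u,\underline u\to c$ at spatial infinity with the same uniformity.

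It remains to compare $\overline u$ and $\underline u$: they agree at $t=0$ and both converge to the common constant $c$ at infinity uniformly on $[0,T]$, so the doubling-of-variables argument of Theorem~\ref{th:comparison principle} applies after a minor modification accommodating uniform decay to a common constant at infinity in place of the hypothesis ``$\equiv c$ outside a compact set'' (the supremum $m_0=\sup[\overline u-\underline u]$, if positive, cannot be approached at spatial infinity, and the facet construction in Case~III is unaffected); this gives $\overline u\leq\underline u$. Hence $u:=\overline u=\underline u$ is continuous, is simultaneously a sub- and a supersolution of \eqref{P} with $u(\cdot,0)=u_0$, and $u_m\to u$ locally uniformly. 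Passing to the limit in the Lipschitz estimate for $u_m$ along this convergence yields $\norm{\nabla u(\cdot,t)}_\infty\leq\norm{\nabla u_0}_\infty$ when $u_0$ is Lipschitz. The main obstacle is precisely the barrier step for the initial layer (and the accompanying behavior at spatial infinity): the regularized operators are not uniformly elliptic in $m$ since the bounds $a_m$ on $\nabla_p^2 W_m$ blow up, so naive paraboloid-type barriers fail, and one is forced to use the special functions $W^\star_{m;A,q}$, whose curvature $\mathcal L_m(W^\star_{m;A,q})$ is bounded uniformly in $m$.
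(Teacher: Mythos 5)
The proposal takes a genuinely different route from the paper. The paper regularizes $W$ by \emph{positively one-homogeneous} smooth convex functions $W_m$ with strictly convex sub-level sets, invokes the linear-growth stability Theorem~\ref{th:linear growth stability}, and compares $u_m$ with the Wulff-type barriers $\psi^\pm_{m;a,b}$ built from the polar $W_m^\circ$. Because $\psi^+_{m;a,b}(x_0,t)=a(W_m^\circ(0)-bt)_+=0$ for $t\geq 0$, these barriers pin $u_m$ to $c$ exactly at points $x_0$ far from $K$, and the half-relaxed limits are shown to equal $c$ \emph{exactly} outside a compact set $K_T$. This matches the hypothesis of Theorem~\ref{th:comparison principle} verbatim, so uniqueness follows with no further work. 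You instead use the quadratic-growth regularizations $W_m$ of Section~\ref{sec:resolvent-approximation}, apply the quadratic stability Theorem~\ref{th:stability quadratic}, and build initial-layer and spatial barriers from the functions $W^\star_{m;A,q}$ of Lemma~\ref{le:Wstar-props}. You correctly identify why naive paraboloid barriers fail (the constants $a_m$ are not bounded in $m$) and the $W^\star_{m;A,q}$ ansatz does handle the initial layer as in the proof of Theorem~\ref{th:stability quadratic}.

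However, two steps are left as sketches and are weaker than what the paper obtains. First, $W^\star_{m;A,q}$ never vanishes, so the barrier only yields $u_m(x_0,t)\leq c+\e'+W^\star_{m;A,q}(0)+C't$; to make $\overline u(x_0,\cdot)\to c$ as $\abs{x_0}\to\infty$ one must scale $q,\e',C'\to 0$ and $A\to\infty$ with $\dist(x_0,K)$, which you do not spell out. Moreover, the asserted uniformity ``in $m$'' is false as stated: $W^\star_{m;A,q}(0)\leq A\,W_m(0)$, and as $A\to\infty$ the convergence $W^\star_{m;A,q}(0)\to 0$ is arbitrarily slow in $m$, so only the half-relaxed limits---not $u_m$ uniformly---decay to $c$. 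Second, and more seriously, you therefore do \emph{not} obtain $\overline u=\underline u=c$ outside a compact set, which is what Theorem~\ref{th:comparison principle} requires. You observe that a modified comparison principle under uniform spatial decay to a common constant would suffice and sketch why $m_0>0$ cannot be approached at spatial infinity, but this is an additional statement that must actually be proved (one has to re-check Proposition~\ref{pr:maxima-interior} and the constructions in Case III under the weakened hypothesis), not a ``minor modification''. The paper's choice of one-homogeneous $W_m$ and Wulff-shape barriers is precisely what makes this step unnecessary.
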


\begin{proof}
We follow a standard approximation argument using the stability result from
Section~\ref{se:stability}.
Let $W_m \in C(\R^n) \cap C^2(\R^n \setminus \set0)$ be a sequence of convex positively
one-homogeneous functions with $\set{W_m \leq 1}$ strictly convex, such that $W_m
\rightrightarrows W$ on $\cl B_1(0)$.
We can find the unique viscosity solutions $u_m$ of the problem
\begin{align*}
\left\{
\begin{aligned}
u_t + F(\nabla u, \divo \nabla_p W_m(\nabla u)) &= 0, && \text{in $\R^n \times (0, \infty)$},\\
u(\cdot, 0) &= u_0, && \text{in $\R^n$.}
\end{aligned}
\right.
\end{align*}
We define the limits
\begin{align*}
\overline{u} &:= \halflimsup_{m\to\infty} u_m, &
\underline{u} &:= \halfliminf_{m\to\infty} u_m.
\end{align*}
These limits are well-defined since $u_m$ are uniformly bounded.
By the stability result Theorem~\ref{th:linear growth stability}, we see that $\overline u$ is a viscosity subsolution and
$\underline u$ is a viscosity supersolution of \eqref{limit problem}.

We need to prove that $\overline u$ and $\underline u$ have the correct initial data. We can
compare $u_m$ with translations of barriers
\begin{align*}
\psi^+_{m; a, b} &:= a (W_m^\circ(x - x_0) - bt)_+, &
\psi^-_{m; a, b} &:= -a (-W_m^\circ(-x + x_0) + bt)_-,
\end{align*}
where $W_m^\circ$ is the polar of $W_m$.
The comparison with such barriers shows that $\overline u(\cdot, 0) = \underline u(\cdot, 0) =
u_0$,
and for every $T > 0$ there exists a compact set $K_T \subset \R^n$ such that $\overline u =
\underline u = c$ on $(\R^n \setminus K_T) \times [0, T]$.

Then the comparison principle Theorem~\ref{th:comparison principle} yields $\overline u \leq
\underline u$ and thus $u := \overline u = \underline u$ is the unique solution of \eqref{limit
problem}.

The Lipschitz continuity follows from the comparison principle.
\end{proof}

We now present the proofs of Theorems~\ref{th:unique existence} and \ref{th:convergence}.

\begin{proof}[Proof of Theorems~\ref{th:unique existence} and \ref{th:convergence}]
Find $R > 0$ such that $D_0 \subset B_{R/2}(0)$. Let $F$ and $W$ be as in \eqref{geometric F}.
Let $u_0$ be a continuous function with $D_0 = \set{u_0 > 0}$ such that $u_0 = -c$ for some $c > 0$
for $\abs x \geq R$. For instance, take a cutoff of the signed distance function to $\Gamma_0$,
$u_0(x) :=
-\min(\dist(x, D_0), 1) + \dist(x, D_0^c)$.
Then there is a unique solution $u$ of \eqref{P} with initial data $u_0$ by
Theorem~\ref{th:well-posedness}. This establishes the existence of a level set flow
$\set{\Gamma_t}_{t\geq0}$ as $\Gamma_t := \set{x: u(x, t) = 0}$.

We therefore only need to show that the zero level set of $u$ does not depend on $u_0$. For this we
simply argue as in \cite[Section~4.1.1]{G06} to show that $\theta \circ u := \theta(u)$ is also a
viscosity solution of \eqref{P} for any continuous, nondecreasing $\theta$.  Then for any given two
continuous level set functions $u_0$, $\tilde u_0$ of $\Gamma_0$ we can find $\theta_1$, $\theta_2
\in C(\R)$, strictly increasing, such that $\theta_1 \circ u_0 \leq \tilde u_0$ and $\theta_2 \circ
\tilde u_0 \leq u_0$. Let $u$, $\tilde u$ be the two unique viscosity solutions of \eqref{P} with
initial data $u_0$, $\tilde u_0$, respectively. By the comparison principle
Theorem~\ref{th:comparison principle} we get $\theta_1 \circ u \leq
\tilde u$ and $\theta_2 \circ \tilde u \leq u$. Since $\theta_1 \circ u$ and $\theta_2 \circ \tilde
u$ have the same zero level sets as $u$ and $\tilde u$, respectively, we conclude that the
level set flow $\set{\Gamma_t}_{t \geq 0}$ is unique.

The stability result of Theorem~\ref{th:convergence} follows from Theorem~\ref{th:linear growth
stability} and the comparison principle Theorem~\ref{th:comparison principle}.
\end{proof}

\subsection*{Acknowledgments}

% begin Acknowledgments
Y. G. is partially supported by Grants-in-Aid for
Scientific Research No. 26220702 (Kiban S), No. 23244015 (Kiban A) and
No. 25610025 (Houga) of Japan Society for the Promotion of Science (JSPS).
N. P. is partially supported by JSPS KAKENHI Grant Number 26800068 (Wakate B).
% end Acknowledgments

\begin{bibdiv}
\begin{biblist}

\bib{AB}{article}{
   author={Amar, M.},
   author={Bellettini, G.},
   title={A notion of total variation depending on a metric with
   discontinuous coefficients},
   language={English, with English and French summaries},
   journal={Ann. Inst. H. Poincar\'e Anal. Non Lin\'eaire},
   volume={11},
   date={1994},
   number={1},
   pages={91--133},
   issn={0294-1449},
   review={\MR{1259102 (97a:49057)}},
}

\bib{ACM}{book}{
   author={Andreu-Vaillo, Fuensanta},
   author={Caselles, Vicent},
   author={Maz{\'o}n, Jos{\'e} M.},
   title={Parabolic quasilinear equations minimizing linear growth
   functionals},
   series={Progress in Mathematics},
   volume={223},
   publisher={Birkh\"auser Verlag, Basel},
   date={2004},
   pages={xiv+340},
   isbn={3-7643-6619-2},
   review={\MR{2033382 (2005c:35002)}},
   doi={10.1007/978-3-0348-7928-6},
}

\bib{AG89}{article}{
   author={Angenent, Sigurd},
   author={Gurtin, Morton E.},
   title={Multiphase thermomechanics with interfacial structure. II.\
   Evolution of an isothermal interface},
   journal={Arch. Rational Mech. Anal.},
   volume={108},
   date={1989},
   number={4},
   pages={323--391},
   issn={0003-9527},
   review={\MR{1013461 (91d:73004)}},
   doi={10.1007/BF01041068},
}

\bib{Anzellotti}{article}{
   author={Anzellotti, Gabriele},
   title={Pairings between measures and bounded functions and compensated
   compactness},
   journal={Ann. Mat. Pura Appl. (4)},
   volume={135},
   date={1983},
   pages={293--318 (1984)},
   issn={0003-4622},
   review={\MR{750538 (85m:46042)}},
   doi={10.1007/BF01781073},
}

\bib{Attouch}{book}{
   author={Attouch, H.},
   title={Variational convergence for functions and operators},
   series={Applicable Mathematics Series},
   publisher={Pitman (Advanced Publishing Program), Boston, MA},
   date={1984},
   pages={xiv+423},
   isbn={0-273-08583-2},
   review={\MR{773850 (86f:49002)}},
}

\bib{BCCN06}{article}{
   author={Bellettini, Giovanni},
   author={Caselles, Vicent},
   author={Chambolle, Antonin},
   author={Novaga, Matteo},
   title={Crystalline mean curvature flow of convex sets},
   journal={Arch. Ration. Mech. Anal.},
   volume={179},
   date={2006},
   number={1},
   pages={109--152},
   issn={0003-9527},
   review={\MR{2208291 (2007a:53126)}},
   doi={10.1007/s00205-005-0387-0},
}

\bib{BCCN09}{article}{
   author={Bellettini, Giovanni},
   author={Caselles, Vicent},
   author={Chambolle, Antonin},
   author={Novaga, Matteo},
   title={The volume preserving crystalline mean curvature flow of convex
   sets in $\Bbb R^N$},
   journal={J. Math. Pures Appl. (9)},
   volume={92},
   date={2009},
   number={5},
   pages={499--527},
   issn={0021-7824},
   review={\MR{2558422 (2011b:53155)}},
   doi={10.1016/j.matpur.2009.05.016},
}

\bib{BGN00}{article}{
   author={Bellettini, G.},
   author={Goglione, R.},
   author={Novaga, M.},
   title={Approximation to driven motion by crystalline curvature in two
   dimensions},
   journal={Adv. Math. Sci. Appl.},
   volume={10},
   date={2000},
   number={1},
   pages={467--493},
   issn={1343-4373},
   review={\MR{1769163 (2001i:53109)}},
}

\bib{BN00}{article}{
   author={Bellettini, G.},
   author={Novaga, M.},
   title={Approximation and comparison for nonsmooth anisotropic motion by
   mean curvature in ${\bf R}^N$},
   journal={Math. Models Methods Appl. Sci.},
   volume={10},
   date={2000},
   number={1},
   pages={1--10},
   issn={0218-2025},
   review={\MR{1749692 (2001a:53106)}},
   doi={10.1142/S0218202500000021},
}

\bib{BNP99}{article}{
   author={Bellettini, G.},
   author={Novaga, M.},
   author={Paolini, M.},
   title={Facet-breaking for three-dimensional crystals evolving by mean
   curvature},
   journal={Interfaces Free Bound.},
   volume={1},
   date={1999},
   number={1},
   pages={39--55},
   issn={1463-9963},
   review={\MR{1865105 (2003i:53099)}},
   doi={10.4171/IFB/3},
}

\bib{BNP01a}{article}{
   author={Bellettini, G.},
   author={Novaga, M.},
   author={Paolini, M.},
   title={On a crystalline variational problem. I. First variation and
   global $L^\infty$ regularity},
   journal={Arch. Ration. Mech. Anal.},
   volume={157},
   date={2001},
   number={3},
   pages={165--191},
   issn={0003-9527},
   review={\MR{1826964 (2002c:49072a)}},
   doi={10.1007/s002050010127},
}

\bib{BNP01b}{article}{
   author={Bellettini, G.},
   author={Novaga, M.},
   author={Paolini, M.},
   title={On a crystalline variational problem. II. $BV$ regularity and
   structure of minimizers on facets},
   journal={Arch. Ration. Mech. Anal.},
   volume={157},
   date={2001},
   number={3},
   pages={193--217},
   issn={0003-9527},
   review={\MR{1826965 (2002c:49072b)}},
   doi={10.1007/s002050100126},
}

\bib{BP96}{article}{
   author={Bellettini, G.},
   author={Paolini, M.},
   title={Anisotropic motion by mean curvature in the context of Finsler
   geometry},
   journal={Hokkaido Math. J.},
   volume={25},
   date={1996},
   number={3},
   pages={537--566},
   issn={0385-4035},
   review={\MR{1416006 (97i:53079)}},
   doi={10.14492/hokmj/1351516749},
}

\bib{BouchitteDalMaso}{article}{
   author={Bouchitt{\'e}, Guy},
   author={Dal Maso, Gianni},
   title={Integral representation and relaxation of convex local functionals
   on ${\rm BV}(\Omega)$},
   journal={Ann. Scuola Norm. Sup. Pisa Cl. Sci. (4)},
   volume={20},
   date={1993},
   number={4},
   pages={483--533},
   issn={0391-173X},
   review={\MR{1267597 (95d:49021)}},
}

\bib{B78}{book}{
   author={Brakke, Kenneth A.},
   title={The motion of a surface by its mean curvature},
   series={Mathematical Notes},
   volume={20},
   publisher={Princeton University Press, Princeton, N.J.},
   date={1978},
   pages={i+252},
   isbn={0-691-08204-9},
   review={\MR{485012 (82c:49035)}},
}

\bib{CasellesChambolle06}{article}{
   author={Caselles, Vicent},
   author={Chambolle, Antonin},
   title={Anisotropic curvature-driven flow of convex sets},
   journal={Nonlinear Anal.},
   volume={65},
   date={2006},
   number={8},
   pages={1547--1577},
   issn={0362-546X},
   review={\MR{2248685 (2007d:35143)}},
   doi={10.1016/j.na.2005.10.029},
}

\bib{CMP}{article}{
   author={Chambolle, Antonin},
   author={Morini, Massimiliano},
   author={Ponsiglione, Marcello},
   title={Existence and uniqueness for a crystalline mean curvature flow},
   eprint={http://arxiv.org/abs/1508.03598},
   status={preprint},
}

\bib{CGG}{article}{
   author={Chen, Yun Gang},
   author={Giga, Yoshikazu},
   author={Goto, Shun'ichi},
   title={Uniqueness and existence of viscosity solutions of generalized
   mean curvature flow equations},
   journal={J. Differential Geom.},
   volume={33},
   date={1991},
   number={3},
   pages={749--786},
   issn={0022-040X},
   review={\MR{1100211 (93a:35093)}},
}

\bib{CIL}{article}{
   author={Crandall, Michael G.},
   author={Ishii, Hitoshi},
   author={Lions, Pierre-Louis},
   title={User's guide to viscosity solutions of second order partial
   differential equations},
   journal={Bull. Amer. Math. Soc. (N.S.)},
   volume={27},
   date={1992},
   number={1},
   pages={1--67},
   issn={0273-0979},
   review={\MR{1118699 (92j:35050)}},
   doi={10.1090/S0273-0979-1992-00266-5},
}

\bib{Evans}{book}{
   author={Evans, Lawrence C.},
   title={Partial differential equations},
   series={Graduate Studies in Mathematics},
   volume={19},
   edition={2},
   publisher={American Mathematical Society, Providence, RI},
   date={2010},
   pages={xxii+749},
   isbn={978-0-8218-4974-3},
   review={\MR{2597943 (2011c:35002)}},
   doi={10.1090/gsm/019},
}

\bib{ES}{article}{
   author={Evans, L. C.},
   author={Spruck, J.},
   title={Motion of level sets by mean curvature. I},
   journal={J. Differential Geom.},
   volume={33},
   date={1991},
   number={3},
   pages={635--681},
   issn={0022-040X},
   review={\MR{1100206 (92h:35097)}},
}

\bib{GiaquintaModicaSoucek}{article}{
   author={Giaquinta, M.},
   author={Modica, G.},
   author={Sou{\v{c}}ek, J.},
   title={Functionals with linear growth in the calculus of variations. I,
   II},
   journal={Comment. Math. Univ. Carolin.},
   volume={20},
   date={1979},
   number={1},
   pages={143--156, 157--172},
   issn={0010-2628},
   review={\MR{526154 (80b:35047)}},
}

\bib{G06}{book}{
   author={Giga, Yoshikazu},
   title={Surface evolution equations - a level set approach},
   series={Monographs in Mathematics},
   volume={99},
   note={(earlier version: Lipschitz Lecture Notes \textbf{44}, University of Bonn, 2002)},
   publisher={Birkh\"auser Verlag, Basel},
   date={2006},
   pages={xii+264},
   isbn={978-3-7643-2430-8},
   isbn={3-7643-2430-9},
   review={\MR{2238463 (2007j:53071)}},
}

\bib{GG98ARMA}{article}{
   author={Giga, Mi-Ho},
   author={Giga, Yoshikazu},
   title={Evolving graphs by singular weighted curvature},
   journal={Arch. Rational Mech. Anal.},
   volume={141},
   date={1998},
   number={2},
   pages={117--198},
   issn={0003-9527},
   review={\MR{1615520 (99j:35118)}},
}

\bib{GG01}{article}{
   author={Giga, Mi-Ho},
   author={Giga, Yoshikazu},
   title={Generalized motion by nonlocal curvature in the plane},
   journal={Arch. Ration. Mech. Anal.},
   volume={159},
   date={2001},
   number={4},
   pages={295--333},
   issn={0003-9527},
   review={\MR{1860050 (2002h:53117)}},
   doi={10.1007/s002050100154},
}

\bib{GGP13AMSA}{article}{
   author={Giga, Mi-Ho},
   author={Giga, Yoshikazu},
   author={Po{\v{z}}{\'a}r, Norbert},
   title={Anisotropic total variation flow of non-divergence type on a
   higher dimensional torus},
   journal={Adv. Math. Sci. Appl.},
   volume={23},
   date={2013},
   number={1},
   pages={235--266},
   issn={1343-4373},
   isbn={978-4-7625-0665-9},
   review={\MR{3155453}},
}

\bib{GGP13JMPA}{article}{
   author={Giga, Mi-Ho},
   author={Giga, Yoshikazu},
   author={Po{\v{z}}{\'a}r, Norbert},
   title={Periodic total variation flow of non-divergence type in
   $\Bbb{R}^n$},
   language={English, with English and French summaries},
   journal={J. Math. Pures Appl. (9)},
   volume={102},
   date={2014},
   number={1},
   pages={203--233},
   issn={0021-7824},
   review={\MR{3212254}},
   doi={10.1016/j.matpur.2013.11.007},
}

\bib{GG92}{article}{
   author={Giga, Yoshikazu},
   author={Goto, Shun'ichi},
   title={Motion of hypersurfaces and geometric equations},
   journal={J. Math. Soc. Japan},
   volume={44},
   date={1992},
   number={1},
   pages={99--111},
   issn={0025-5645},
   review={\MR{1139660 (93b:58025)}},
   doi={10.2969/jmsj/04410099},
}

\bib{GOS}{article}{
   author={Giga, Yoshikazu},
   author={Ohtsuka, Takeshi},
   author={Sch{\"a}tzle, Reiner},
   title={On a uniform approximation of motion by anisotropic curvature by
   the Allen-Cahn equations},
   journal={Interfaces Free Bound.},
   volume={8},
   date={2006},
   number={3},
   pages={317--348},
   issn={1463-9963},
   review={\MR{2273232 (2007k:35258)}},
   doi={10.4171/IFB/146},
}

\bib{Go}{article}{
   author={Goto, Shun'ichi},
   title={Generalized motion of hypersurfaces whose growth speed depends
   superlinearly on the curvature tensor},
   journal={Diff. Integral Eq.},
   volume={7},
   date={1994},
   number={2},
   pages={323--343},
   issn={0893-4983},
   review={\MR{1255892 (94m:35143)}},
}

\bib{Gr89}{article}{
   author={Grayson, Matthew A.},
   title={A short note on the evolution of a surface by its mean curvature},
   journal={Duke Math. J.},
   volume={58},
   date={1989},
   number={3},
   pages={555--558},
   issn={0012-7094},
   review={\MR{1016434 (90h:53010)}},
   doi={10.1215/S0012-7094-89-05825-0},
}

\bib{GSS}{article}{
   author={Gurtin, M. E.},
   author={Soner, H. M.},
   author={Souganidis, P. E.},
   title={Anisotropic motion of an interface relaxed by the formation of
   infinitesimal wrinkles},
   journal={J. Differential Equations},
   volume={119},
   date={1995},
   number={1},
   pages={54--108},
   issn={0022-0396},
   review={\MR{1334488 (97a:73013)}},
   doi={10.1006/jdeq.1995.1084},
}

\bib{Il93}{article}{
   author={Ilmanen, Tom},
   title={Convergence of the Allen-Cahn equation to Brakke's motion by mean
   curvature},
   journal={J. Differential Geom.},
   volume={38},
   date={1993},
   number={2},
   pages={417--461},
   issn={0022-040X},
   review={\MR{1237490 (94h:58051)}},
}

\bib{I96}{article}{
   author={Ishii, Hitoshi},
   title={Degenerate parabolic PDEs with discontinuities and generalized
   evolutions of surfaces},
   journal={Adv. Differential Equations},
   volume={1},
   date={1996},
   number={1},
   pages={51--72},
   issn={1079-9389},
   review={\MR{1357954 (97j:35083)}},
}

\bib{IS}{article}{
   author={Ishii, Hitoshi},
   author={Souganidis, Panagiotis},
   title={Generalized motion of noncompact hypersurfaces with velocity
   having arbitrary growth on the curvature tensor},
   journal={Tohoku Math. J. (2)},
   volume={47},
   date={1995},
   number={2},
   pages={227--250},
   issn={0040-8735},
   review={\MR{1329522 (96e:35069)}},
   doi={10.2748/tmj/1178225593},
}

\bib{Moll}{article}{
   author={Moll, J. S.},
   title={The anisotropic total variation flow},
   journal={Math. Ann.},
   volume={332},
   date={2005},
   number={1},
   pages={177--218},
   issn={0025-5831},
   review={\MR{2139257 (2006d:35113)}},
   doi={10.1007/s00208-004-0624-0},
}

\bib{OS93}{article}{
   author={Ohnuma, Masaki},
   author={Sato, Moto-Hiko},
   title={Singular degenerate parabolic equations with applications to
   geometric evolutions},
   journal={Diff. Integral Eq.},
   volume={6},
   date={1993},
   number={6},
   pages={1265--1280},
   issn={0893-4983},
   review={\MR{1235192 (94h:35133)}},
}

\bib{Resetnjak}{article}{
   author={Re{\v{s}}etnjak, Ju. G.},
   title={The weak convergence of completely additive vector-valued set
   functions},
   language={Russian},
   journal={Sibirsk. Mat. \u Z.},
   volume={9},
   date={1968},
   pages={1386--1394},
   issn={0037-4474},
   review={\MR{0240274 (39 \#1623)}},
}

\bib{Rockafellar}{book}{
   author={Rockafellar, R. Tyrrell},
   title={Convex analysis},
   series={Princeton Mathematical Series, No. 28},
   publisher={Princeton University Press, Princeton, N.J.},
   date={1970},
   pages={xviii+451},
   review={\MR{0274683 (43 \#445)}},
}

\bib{Si83}{book}{
   author={Simon, Leon},
   title={Lectures on geometric measure theory},
   series={Proceedings of the Centre for Mathematical Analysis, Australian
   National University},
   volume={3},
   publisher={Australian National University, Centre for Mathematical
   Analysis, Canberra},
   date={1983},
   pages={vii+272},
   isbn={0-86784-429-9},
   review={\MR{756417 (87a:49001)}},
}

\bib{TT}{article}{
   author={Takasao, K.},
   author={Tonegawa, Y.},
   title={Existence and regularity of mean curvature flow with transport term in higher dimension},
   status={to appear in Math. Annalen},
   eprint={http://arxiv.org/abs/1307.6629},
}

\bib{T91}{article}{
   author={Taylor, Jean E.},
   title={Constructions and conjectures in crystalline nondifferential
   geometry},
   conference={
      title={Differential geometry},
   },
   book={
      title={Proceedings of the Conference on Differential Geometry, Rio de Janeiro},
      editor={Lawson, B.},
      editor={Tanenblat, K.},
      series={Pitman Monogr. Surveys Pure Appl. Math.},
      volume={52},
      publisher={Longman Sci. Tech., Harlow},
   },
   date={1991},
   pages={321--336},
   review={\MR{1173051 (93e:49004)}},
   doi={10.1111/j.1439-0388.1991.tb00191.x},
}

\bib{TC98}{article}{
   author={Taylor, Jean E.},
   author={Cahn, John W.},
   title={Diffuse interfaces with sharp corners and facets: phase field
   models with strongly anisotropic surfaces},
   note={With an appendix by Jason Yunger},
   journal={Phys. D},
   volume={112},
   date={1998},
   number={3-4},
   pages={381--411},
   issn={0167-2789},
   review={\MR{1607466 (98i:35185)}},
   doi={10.1016/S0167-2789(97)00177-2},
}
\end{biblist}
\end{bibdiv}

\end{document}